	\def\MR#1{}
\newcommand{\vsup}{\mathrel{\rotatebox{90}{$\supset$}}}
\newcommand{\veq}{\mathrel{\rotatebox{90}{$=$}}}
\newcommand{\vcong}{\mathrel{\rotatebox{90}{$\cong$}}}
\newcommand{\m}{\mathfrak{m}}
\newcommand{\rank}{\normalfont\text{rank}}
\newcommand{\reg}{\normalfont\text{reg}}
\newcommand{\sat}{{\normalfont\text{sat}}}
\newcommand{\depth}{\normalfont\text{depth}}
\newcommand{\grade}{\normalfont\text{grade}}
\newcommand{\Ext}{\normalfont\text{Ext}}
\newcommand{\Supp}{\normalfont\text{Supp}}
\newcommand{\Hom}{{\normalfont\text{Hom}}}
\newcommand{\Fitt}{\normalfont\text{Fitt}}
\newcommand{\B}{\mathcal{B}}
\newcommand{\indeg}{\normalfont\text{indeg}}
\newcommand{\Spec}{{\normalfont\text{Spec}}}
\newtheorem{theorem}{Theorem}[section]
\newaliascnt{headcor}{headthm}
\newaliascnt{headconj}{headthm}
\newaliascnt{corollary}{theorem}
\newtheorem{corollary}[corollary]{Corollary}
\def\ms{\medskip}
\def\Der{\operatorname{Der}}
\def\Lmult{\operatorname{Lmult}}
\def\Sing{\operatorname{Sing}}
\def\coker{\operatorname{coker}}
\def\Hom{\operatorname{Hom}}
\def\Quot{\operatorname{Quot}}
\def\Spec{\operatorname{Spec}}
\def\ann{\operatorname{ann}}
\def\Der{\operatorname{Der}}
\def\Ext{\operatorname{Ext}}
\def\socle{\operatorname{socle}}
\def\im{\operatorname{im}}
\def\fitt{\operatorname{Fitt}}
\def\min{\operatorname{min}}
\def\inf{\operatorname{inf}}
\def\reg{\operatorname{reg}}
\def\edim{\operatorname{edim}}
\def\dim{\operatorname{dim}}
\def\depth{\operatorname{depth}}
\def\grade{\operatorname{grade}}
\def\rank{\operatorname{rank}}
\def\htt{\operatorname{ht}}
\def\findeg{\operatorname{findeg}}
\def\deg{\operatorname{deg}}
\def\indeg{\operatorname{indeg}}
\def\trdg{\operatorname{trdeg}}
\def\Z{\mathcal Z}
\def\H{\mathcal H}
\def\D{\mathcal D}
\def\J{\mathcal J}
\def\C{\mathcal C}
\def\q{\mathfrak q}
\def\p{\mathfrak p}
\def\gC{\mathfrak C}
\def\gc{\mathfrak c}
\def\b{\mathfrak b}
\def\a{\mathfrak a}
\def\m{\mathfrak m}
\def\f{\mathfrak f}
\def\p{\mathfrak p}
\def\n{\mathfrak n}
\def\N{\mathfrak N}
\def\lto{\longrightarrow}
\def\bs{\bigskip}
\def\ms{\medskip}
\newaliascnt{lemma}{theorem}
\newtheorem{lemma}[lemma]{Lemma}
\newaliascnt{conjecture}{theorem}
\newaliascnt{proposition}{theorem}
\newtheorem{proposition}[proposition]{Proposition}
\theoremstyle{definition}
\newaliascnt{definition}{theorem}
\newtheorem{definition}[definition]{Definition}
\newaliascnt{notation}{theorem}
\newaliascnt{example}{theorem}
\newtheorem{example}[example]{Example}
\newaliascnt{examples}{theorem}
\newaliascnt{remark}{theorem}
\newtheorem{remark}[remark]{Remark}
\newaliascnt{question}{theorem}
\newaliascnt{questions}{theorem}
\newaliascnt{problem}{theorem}
\newaliascnt{construction}{theorem}
\newaliascnt{setting}{theorem}
\newtheorem{setting}[setting]{Setting}
\newaliascnt{algorithm}{theorem}
\newaliascnt{observation}{theorem}
\newaliascnt{defprop}{theorem}
\DeclareFontFamily{OT1}{pzc}{}
\DeclareFontShape{OT1}{pzc}{m}{it}{<-> s * [1.100] pzcmi7t}{}
\DeclareMathAlphabet{\mathchanc}{OT1}{pzc}{m}{it}
\def\equationautorefname~#1\null{(#1)\null}
\def\sectionautorefname~#1\null{Section #1\null}
\def\subsectionautorefname~#1\null{\S #1\null}
\def\trdeg{{\rm trdeg}}
\begin{document}

\baselineskip=16pt

\title[Bounds on degrees of Vector Fields]{ 
Bounds on degrees of Vector Fields}
\date\today

\thanks{AMS 2010 {\em Mathematics Subject Classification}.
Primary 14C20, 13A30; Secondary 14M10, 14J17.}

\keywords{Local cohomology, quasi-cyclic modules}

\thanks{The second author was partially supported by CNPq grant (No. 406377/2021-9) and CAPES grant PrInt. The third and fifth authors were partially supported by NSF grants DMS-2201110 and DMS-2201149, respectively.}
\author{Marc Chardin}\address{Institut de Math\'ematiques de Jussieu, CNRS \& Sorbonne Universit\'e, France}\email{mchardin@imj-prg.fr}

\author{S. Hamid Hassanzadeh}\address{Departamento de Matem\'atica, Centro de Tecnologia, Cidade Universit\'{a}ria da Universidade Federal do Rio de Janeiro, 21941-909 Rio de Janeiro, RJ, Brazil}\email{hamid@im.ufrj.br}

\author{Claudia Polini}
\address{Department of Mathematics, University of Notre Dame, Notre Dame, Indiana 46556} \email{cpolini@nd.edu}

\author{Aron Simis}\address{Departamento de Matem\'atica, CCEN, Universidade Federal de Pernambuco, Recife, PE, 50740-560, Brazil}\email{simisaron0@gmail.com}

\author{Bernd Ulrich}
\address{Department of Mathematics, Purdue University, West Lafayette, Indiana 47907} \email{bulrich@purdue.edu}
 \maketitle
\section{Introduction}

\vspace{.15cm}

This paper is concerned with the structure of the module of derivations and its interplay with singularities and vector fields of varieties. 
Modules of derivations are not well understood -- despite great advances on the Zariski-Lipman conjecture (see \cites{Ho, SvS, Flen} for instance),  there is still no complete characterization for when they are free.
The paper focuses on Poincar\'{e}'s problem on the degrees of vector fields.

In 1891, Poincar\'{e} asked the following question that became known as Poincar{\'e}'s problem \cite{P}:  
How can one decide whether a homogeneous differential equation given by a polynomial vector field
 $\mathcal F$
on 
$\mathbb P^2_{\mathbb C}$ 
has a rational solution?  This question has been rephrased as the problem to find {\it upper} bounds for the degree of any curve $\mathcal C$ to which $\mathcal F$ is tangent, possibly in terms of the degree of $\mathcal F$.

According to \cite{EK2003}*{p.57}, `This question is fundamental but difficult, and it has stimulated a lot of research for well over a century' (see \cites{CL, C, CC, S1, dPW, CCF, S2, E, LN, Per, EK, EK2002, EK2003, CE, GM}). It has often been addressed 
in greater generality, for curves and even varieties in ${\mathbb P}_{\mathbb C}^n$,
and invariants other than the degrees of the variety and the vector field 
have been considered, because 
even for plane curves
bounds only involving degrees are not always possible \cites{C,CC,LN}.

In this article, we study the generalized Poincar\'e problem from 
the opposite perspective, by establishing
{\it lower} bounds on the degree of the vector field in terms of invariants of the variety, say $X$. This approach 
has the advantage that all the vector fields on ${\mathbb P}^n_k$ tangent to $X$ can be encoded in a 
single module, 

\centerline{$\Der_k(R)/\m^{-1}\varepsilon\, .$}

\vspace{0.18cm}
\noindent
Here $R$ is the homogeneous coordinate ring of a 
subscheme $X \subset \mathbb P^n_k $, which, for the purpose of this introduction, is assumed to be reduced and irreducible over an 
algebraically closed field of characteristic zero; by $\m$ we denote the homogeneous maximal ideal of $R$, by $\Der_k(R)$ the module of derivations, and by 
$\varepsilon \in \Der_k(R)$ the Euler derivation. If $\depth \, R \geq 2$, then $\m^{-1}\varepsilon =R\varepsilon$
and the module above essentially carries the same information as $\Der_k(R)$. The least degree of a vector field that leaves $X$ invariant
and does not vanish along $X$ is 1 plus the initial degree of the module $\, \Der_k(R)/\m^{-1}\varepsilon$, and our reformulation of Poincar\'{e}'s problem 
becomes:  {\it  Find lower bounds for the initial degree $\, {\rm indeg}(\Der_k(R)/\m^{-1}\varepsilon)$. 
}

In the current paper we address this problem mainly for curves.
We
generalize bounds that were known for plane curves and we obtain new estimates as well. 
Our proofs are algebraic. In order to understand how tight the lower bounds for the initial 
degree of $\Der_k(R)/\m^{-1}\varepsilon$ are,
we also provide upper bounds, which sometimes lead to equalities. Our estimates
use global invariants, such as the genus of the curve $\mathcal C$, the Castenuovo-Mumford regularity,
or the $a$-invariant of the homogeneous
coordinate ring $R$; invariants
that can be considered global as well as local, like the singularity degree of $\mathcal C$, the Tjurina number, or 
the multiplicity of $R$ modulo the Jacobian ideal; 
and local
information, such as the type of the singularities or a new invariant that we call {\it Loewy multiplicity}.

\vspace{.15cm}

For smooth curves we prove that the initial
degree satisfies the inequality ${\rm indeg}(\Der_k(R)/\m^{-1}\varepsilon)
\geq a(R)+1,$ which is an equality if $\mathcal C$ is
arithmetically Gorenstein. In one of our main results, \autoref{curves}, we generalize this
inequality to the case of curves with at most planar singularities. We show that
\begin{equation}\label{introEQ}
\indeg (\Der_k(R)/\m^{-1}\varepsilon )\geq a(R) +1 + | {\rm Sing}(\C) | - {\rm Lmult}(R/ J_R) \, .
\end{equation}
Here $a(R)=-\, \indeg (\omega_R) \, $ denotes the  $a$-invariant of $R$, which is 
equal to ${\rm reg}  \ \mathcal C -3 \, $ if $\mathcal C$ is arithmetically Cohen-Macaulay; 
and ${\rm Lmult}(R/J_R)$ denotes the Loewy multiplicity of $R$ modulo the Jacobian ideal, which is bounded above by
the sum of the local Tjurina numbers of $\mathcal C$ in this case.
If $\mathcal C$ has only ordinary nodes as singularities, then 
$|{\rm Sing}(\C) | - {\rm Lmult}(R/ {\rm Jac}(R))=0$ and we obtain
the inequality $\indeg  (\Der_k(R)/\m^{-1}\varepsilon ) \geq a(R) +1$, which is again an equality whenever $\mathcal C$ is arithmetically Gorenstein.
The case of ordinary nodes had
been treated before with the additional assumption that, first, $\C$ is a plane curve \cite{CL},
then, $\C$ is a complete intersection \cite{CCF}, and, finally, $\C$
is arithmetically Cohen-Macaulay \cites{E, EK2002}. 

The proof of inequality \autoref{introEQ} has two main ingredients.
Inspired by the use of general projections in \cite{E},
we prove more generally in \autoref{GP} that if $A \subset R$ is a finite and birational extension of standard graded domains  
over a perfect field and $A$ is Gorenstein of dimension at least two, then 
$$ \indeg (\Der_k(A)/A  \varepsilon_A) +a(A)-a(R) \geq  \indeg (\Der_k(R)/\m^{-1} \varepsilon_R) \geq \indeg (\Der_k(A)/A  \varepsilon_A) -a(A)+a(R) \, .$$
In addition, for hypersurfaces of arbitrary dimension with only isolated singularities, we are able to bound $\indeg (\Der_k(R)/\m^{-1}\varepsilon)$
from below in terms of the $a$-invariant of $R/J_R$. Applying these two results to 
a curve $\C$ with only planar singularities, we prove inequality \autoref{introEQ} by general projection to a plane curve. The general projection
does not change the singularities of $\C$ and introduces only ordinary nodes as additional singularities, which guarantees that the
difference $| {\rm Sing}(\C) | - {\rm Lmult}(R/J_R)$ is unaltered.

Our other main results generalize, from plane curves to arbitrary curves, earlier work of
du Plessis and Wall \cite{dPW} and of Esteves and Kleiman \cite{EK} that uses the sum of 
local Tjurina numbers. For a curve $\mathcal C \subset \mathbb P^n_k$ of degree
$d$ with homogeneous coordinate ring $R$, we map
a sufficiently general two-dimensional complete intersection $S$ onto $R$, and write $\delta$ for the Castelnuovo-Mumford
regularity of $S$ and $J\subset R$ for the image of the Jacobian ideal of $S$. 
If $\mathcal F$ is a vector field that leaves
$\mathcal C$ invariant and  
$0 \neq I_{\mathcal F} \subset R$ is an ideal defining the singular locus of $\mathcal F$, 
then $\mathcal F$ and $J$ coincide up to a degree shift and this shift is closely related to the degree 
${\rm deg} \, \mathcal F$, which we wish to control. The degree shift is reflected in the difference of multiplicities $e(R/I_{\mathcal F})-e(R/J)$. Thus 
to estimate the degree shift, and hence $\rm{deg} \, \mathcal F$, from below it suffices to establish a lower bound for
$e(R/I_{\mathcal F})$. To do so, we
prove a non-vanishing result for maps between local cohomology 
modules of Koszul cycles 
that yields lower bounds for the regularity
of the saturation $I_{\mathcal F}^{\rm sat}$ and hence for $e(R/I_{\mathcal F})$. The line of argument
just described is inspired by the work of Esteves and Kleiman in \cite{EK}. Thus we obtain in \autoref{dPW}
that
$$\indeg (\Der_k(R)/\m^{-1})\ge \delta -2 -\frac{e(R/J)-\delta}{d-1} \, ,$$
unless $\mathcal C$ is a smooth complete intersection, in which case the initial degree is $a(R)+1$.

This result is the starting point for various estimates in terms of the arithmetic genus $p_a$, the geometric genus $p_g$, and
the sum $\tau$ of the local Tjurina numbers of the curve. \autoref{turina} says that  
$$\indeg (\Der_k(R)/\m^{-1}\varepsilon)\ge \frac{d}{d-1} \, a(R) -\frac{\tau-2}{d-1}$$
if $\mathcal C$ is locally a complete intersection, and
$$\indeg (\Der_k(R)/R\varepsilon)\ge \frac{2 p_a -\tau}{d-1}$$
if in addition $\mathcal C$ is arithmetically Cohen-Macaulay. For plane curves we prove in \autoref{dPW-EK} that
$$\indeg (\Der_k(R)/R\varepsilon )\geq d-\frac{3}{2} - \sqrt{2\tau +2p_g-d^2+3d-\frac{7}{4}} \,.
$$
These bounds are sharp for plane curves of low genus and for other classes of curves, as illustrated in  \autoref{beauty}.

In \autoref{SecUB} we turn to upper bounds for $\indeg (\Der_k(R)/\m^{-1}\varepsilon)$ in order to
understand how sharp the lower bounds in \autoref{SecEK} and \autoref{SecLL} are.
As a special case of \autoref{UB}, for instance, we prove that $a(R)+1$ is an upper bound whenever $\mathcal C$ is arithmetically
Gorenstein. In \autoref{curvein3} we determine the minimal graded free resolution of $\Der_k (R)/R\varepsilon$ as a module over a polynomial ring if 
$\mathcal C \subset \mathbb P_k^3$ is smooth and arithmetically Cohen-Macaulay. From this we obtain the initial degree, the minimal number of generators, and the entire Hilbert series of  $\Der_k (R)/R\varepsilon $. In particular, we see that the upper bound $a(R)+1$ fails dramatically without the assumption of arithmetic Gorensteinness. Curiously, 
the work in  \autoref{SecEK} on 
local cohomology of
Koszul cycles implies another result about the structure of the module of derivations -- namely we prove in \autoref{splitting} that the Euler
derivation cannot generate a free direct summand of $\Der_k(R)$ when $\mathcal C$ is arithmetically Gorenstein.

\vspace{.2cm}

\section{Preliminary results and a translation from geometry to algebra}\label{Section-translation}

\vspace{.1cm}

Let $R$ be a standard graded algebra over a field with homogeneous maximal ideal $\m$. Let $\Omega$ be the module of differentials of $R$ and let $\varepsilon$ denote the {\it Euler derivation}. In this section we prove that the vector fields studied in \cites{CC, CCF, C,E, EK, EK2002, EK2003, S1, S2} correspond to elements in the $R$-module $\Der_k(R)/\m^{-1}\varepsilon$, see \autoref{1.2}. 

We begin by reviewing basic definitions related to vector fields; we basically follow the definitions from the  excellent reference \cite{E}*{p. 4-5}. 

We adopt the following setting:

\begin{setting}\label{sett-vectfield}  Let $n \geq 2$, and $S=k[x_1, \ldots, x_n]$  the homogeneous coordinate ring of $\mathbb P^{n-1}_k$, with maximal homogeneous ideal $\m_S$. Let $I\subset S$ be a saturated homogeneous ideal and $R=S/I$ be the homogeneous coordinate ring of the corresponding projective scheme $X\subset {\mathbb P^{n-1}_k}$. Let $\m$ be the maximal homogeneous ideal of $R$ and $\varepsilon$ the Euler derivation. 
\end{setting}

The Euler sequence 
$$\begin{tikzcd} 0 \arrow{r} &Z \arrow{r} & \Omega_k(S)=\oplus_{i=1}^{n}Sdx_i \cong S^n(-1) \arrow{rr}{[x_1 \ \ldots \ x_n]} &&\m_S  \arrow{r} &0
 \end{tikzcd} $$
 defines the cotangent sheaf $\Omega_{\mathbb P^{n-1}_k} $ as $\widetilde{Z}$. Notice that $Z$ is the first syzygy module in the Koszul complex of $x_1, \ldots, x_n$.

A {\it vector field } on $\mathbb P_k^{n-1}$ of degree $m$ is a homogeneous  map of degree $m-1$
\[\eta: Z\lto S\, .\]
As $\Ext^1_S(\m_S,S)=0$, any such map is the restriction of a map
$$\begin{tikzcd}\xi: \Omega_k(S)\cong S^n(-1)  \arrow{rr}{[a_1 \ \ldots \ a_n]} &&S\, ,
 \end{tikzcd} $$
 where the $a_i$ are forms of degree $m$.


There is a commutative diagram with exact rows 
\begin{equation}\label{ZANDR}\begin{tikzcd}0 \arrow{r} &Z \arrow{r} \arrow{d}{\varphi} & \Omega_k(S)\cong S^n(-1)  \arrow[d, two heads, "\psi"]\arrow{r} &\m_S \arrow[d, two heads, "\varrho"] \arrow{r} &0 \\
0 \arrow{r} &L \arrow{r}  & \Omega_k(R) \arrow{r} &\m \arrow{r} &0 
 \end{tikzcd} \, .
 \end{equation}
We write  $H=\im \varphi$ and notice that this is the image of the second differential in the 
Koszul complex built on the $R$-linear map $\Omega_k(R) \lto R$ corresponding to the Euler derivation (by the universal property). In particular, $L/H$ is the first homology of this Koszul complex and hence it
is annihilated by $\m$. Moreover, $H=L$
  if $I$ is generated by forms whose degrees are not multiples of the characteristic. Indeed, in this case the Euler relations shows that  $\ker \, \psi$ maps onto  $\ker \, \varrho$, hence $\varphi$ is surjective by the Snake Lemma. 

One says that the vector field $\eta$ {\it leaves $X$ invariant} or that $X$ is an {\it integral subscheme} of $\eta$ (if $X$ is a curve we say that  $X$ is a {\it leaf} of $\eta$) if $\eta$ induces a  map $\mu: H \lto R$, necessarily  linear and homogeneous of degree $m-1$, 
$$\begin{tikzcd}
Z \arrow[hookrightarrow]{rr}  \arrow[two heads]{d} \arrow{ddr}{\eta} && S^n(-1)  \arrow{ddl}{\xi}    \\
H \arrow{ddr}{\mu} &&  \\
&S \arrow[two heads]{d} &\\
& \ R \, . & 
\end{tikzcd}  $$

Notice that a map $H \lto R$ corresponds to a unique map $L\lto R$ if $I$ is generated by forms whose degrees are not multiples of the characteristic or if $\depth R \ge 2$.

Summarizing, every vector fields $\eta$ of degree $m-1$ that leaves $X$ invariant induces a unique homogeneous $R$-linear map $\mu: H \lto R$ of degree $m-1$. 
\medskip

\begin{proposition}\label{translation} 
Adopt \autoref{sett-vectfield}. A homogeneous $R$-linear map $\mu:H \lto R$ is induced by a vector field that leaves $X$ invariant if and only if $\mu$ can be extended to a homogeneous 
 $R$-linear map $\nu: \Omega_k(R)\lto R$. 
 \end{proposition}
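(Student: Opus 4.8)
The plan is to make both conditions fully explicit in terms of the coefficients of a lift of $\eta$, and then read off the equivalence. Fix the standard basis $e_1,\dots,e_n$ of $\Omega_k(S)=S^n(-1)$ and write $dx_i:=\psi(e_i)$ for the corresponding generators of $\Omega_k(R)$. Since $x_1,\dots,x_n$ is a regular sequence, $Z$ is generated by the Koszul syzygies $x_ie_j-x_je_i$, and $\varphi$ sends these to the generators $h_{ij}:=x_i\,dx_j-x_j\,dx_i$ of $H$. Every vector field is $\eta=\xi|_Z$ for some $\xi=[a_1\ \cdots\ a_n]\colon S^n(-1)\lto S$ with $\deg a_i=m$; let $\overline{\xi}\colon S^n(-1)\lto R$, $e_i\mapsto\overline{a_i}$, be its reduction. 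By the definition preceding the statement, $\eta$ leaves $X$ invariant and induces $\mu$ exactly when $\overline{\xi}|_Z=\mu\circ\varphi$, i.e. when $\mu(h_{ij})=x_i\overline{a_j}-x_j\overline{a_i}$ for all $i,j$. On the other side, an $R$-linear map $\nu\colon\Omega_k(R)\lto R$ is the same as a tuple $(b_1,\dots,b_n)=(\nu(dx_1),\dots,\nu(dx_n))$ subject to the Jacobian relations $\sum_i\overline{\partial f/\partial x_i}\,b_i=0$ for $f\in I$, and such a $\nu$ extends $\mu$ iff $\nu(h_{ij})=x_ib_j-x_jb_i$ equals $\mu(h_{ij})$ for all $i,j$.

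For the implication $(\Leftarrow)$ I would start from an extension $\nu$, set $b_i=\nu(dx_i)$, lift each $b_i$ to a form $a_i\in S$ of the same degree $m$, and take $\eta=[a_1\ \cdots\ a_n]|_Z$. Then $\overline{\xi}(x_ie_j-x_je_i)=x_ib_j-x_jb_i=\nu(h_{ij})=\mu(h_{ij})=\mu(\varphi(x_ie_j-x_je_i))$; as the $x_ie_j-x_je_i$ generate $Z$, this gives $\overline{\xi}|_Z=\mu\circ\varphi$, so $\eta$ leaves $X$ invariant and induces $\mu$. This direction is formal and uses nothing beyond the Koszul description of $Z$.

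The content is in $(\Rightarrow)$. Here the natural candidate for $\nu$ is $dx_i\mapsto\overline{a_i}$, which is well defined on $\Omega_k(R)$ precisely when the Jacobian expressions $J_f:=\overline{\sum_i(\partial f/\partial x_i)\,a_i}$ vanish for all $f\in I$; granting this, the computation above shows $\nu(h_{ij})=x_i\overline{a_j}-x_j\overline{a_i}=\mu(h_{ij})$, so $\nu|_H=\mu$. To extract $J_f=0$ from invariance I would test $\overline{\xi}$ against a chosen element of $\ker\varphi$: for $f\in I$ and any index $j$ put
\[ g_{jf}:=x_j\Big(\sum_i\tfrac{\partial f}{\partial x_i}\,e_i\Big)-(\deg f)\,f\,e_j\ \in\ S^n(-1). \]
Euler's identity $\sum_i x_i\,\partial f/\partial x_i=(\deg f)f$ gives $[x_1\ \cdots\ x_n](g_{jf})=0$, so $g_{jf}\in Z$; and since $\psi(\sum_i(\partial f/\partial x_i)e_i)=d\overline{f}=0$ and $\overline{f}=0$, one gets $\varphi(g_{jf})=\psi(g_{jf})=0$. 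Invariance $\overline{\xi}|_Z=\mu\circ\varphi$ then forces $\overline{\xi}(g_{jf})=\mu(0)=0$, which unwinds to $x_j\,J_f=0$ in $R$ for every $j$, i.e. $\m\,J_f=0$. The main obstacle is exactly the gap between $\m J_f=0$ and $J_f=0$ --- this is the $H\neq L$ phenomenon recorded before the statement, where $L/H$ is annihilated by $\m$. I would close it using the standing hypothesis that $I$ is saturated: for nonempty $X$ this forces $\depth R\ge1$, so $\m$ contains a nonzerodivisor and $\m J_f=0$ yields $J_f=0$ for all $f\in I$. Hence the candidate $\nu$ is well defined and extends $\mu$, completing the equivalence. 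Note that Euler's identity is used only as a formal polynomial identity, so a possible vanishing of $\deg f$ in positive characteristic causes no trouble.
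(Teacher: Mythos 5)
Your proof is correct and is essentially the paper's argument: the backward direction is the same lift-and-restrict construction, and the forward direction, exactly as in the paper, reduces to the vanishing of the Jacobian obstructions, derives from invariance that $\m$ annihilates them, and then uses $\depth R > 0$ (coming from $I$ being saturated) to conclude that they vanish. The only real difference is presentational: where the paper obtains $\m \cdot U \subset \im \tau$ by a diagram chase on the tensored Euler sequences (via $\coker \tau \hookrightarrow I/\m_S I$), you exhibit the explicit Euler-identity syzygies $g_{jf} \in Z \cap \ker \varphi$ that witness exactly that containment.
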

\begin{proof} 
Given a vector field $\eta: Z \lto S$, the map $\mu: H \lto R$ is induced by  $\eta$ if and only if 
$\mu$ is induced by $\eta \otimes_S R.$
Consider the commutative diagram with exact rows and columns
$$\begin{tikzcd}  
& V\arrow[d, hookrightarrow] \arrow{rr}{\tau} && U\arrow[d, hookrightarrow] \arrow{r} &I/\m_SI \arrow[d, hookrightarrow] &\\
&Z \otimes_SR\arrow{rr}  \arrow[d, two heads]  \arrow{dddr}{\eta \otimes R}& & R^n(-1)  \arrow[two heads]{dd} \arrow{r}  \arrow[dddl, "\xi \otimes R"']&\m_s\otimes_SR \arrow[two heads]{dd} \arrow{r} &0 \hphantom{\ .}\\
&H \arrow[ddr, "\mu" ' near end]\arrow[d, phantom, "\vsup"] &&&&\\
0 \arrow{r} &L \arrow{rr} & & \Omega_k(R) \arrow{r} \arrow{dl}{\nu}&\m \arrow{r} &  0  \ . \\
&&   R &&&
 \end{tikzcd} $$
If the map  $\mu$ is induced by  $\eta$, hence by $\eta\otimes R$,  then $ (\eta \otimes R)(V)=0$ which implies  $(\xi \otimes R)(\im \tau)=0$. By the above diagram, $\coker  \tau \hookrightarrow I/\m_SI$ and therefore $\m \cdot U\subset \im \tau$. Thus $\m \cdot (\xi \otimes R)(U)=0$, which implies that $(\xi \otimes R)(U)=0$ since $\depth \, R >0$. It follows that $\xi \otimes R$ induces a homogeneous $R$-linear map $\Omega_k(R) \lto R$, which gives $\mu$ when restricted to $H$. 
 
 Conversely, let $\nu: \Omega_k(R) \lto R$ be a homogeneous $R$-linear map. It can be lifted to a homogeneous $S$-linear map $\xi: S^n(-1)\lto S$ because $S^n(-1)$ is free. Set $\eta=\xi_{|Z}$. Since $(\xi\otimes R)(U)=0$, we have $(\eta\otimes R)(V)=0$ and so $\eta$ induces a homogeneous $R$-linear map $\mu: H\lto R$, which is also the restriction of $\nu$ to $H$. 
\end{proof}
 
We write $-^*=\Hom_R(-, R)$.  In light of \autoref{translation} we are interested in the image of $\Omega_k(R)^*=\Der_k(R)$ in the module $H^*$. In the next proposition, we identify this image. We use the fact that there is a natural embedding $ \Der_k(R) \hookrightarrow Q \otimes_R \Der_k(R) $ where $Q$ is the total ring of fractions of $R$. For an ideal $\a$ of $R$, we denote  its inverse ideal by  $\a^{-1}:=R :_Q \a$. Notice that if $\depth R \ge 2$ then $\m^{-1}=R$. 

 \begin{example} Let $\C$ be the rational quartic curve given by the parametrization 
\begin{tikzcd}[row sep={0.22em}, column sep={4em}]
 \mathbb{P}_k^1 \arrow[r, dashrightarrow, "(s^4:s^3t:st^3:t^4)"] & \mathbb{P}_k^3 \, .
\end{tikzcd}   
In this case $\m^{-1}$ is $\overline{R}$, the integral closure of $R,$
and $\overline{R}=R[s^2, st, t^2]\subset  Q.$
 \end{example}

\begin{proposition}\label{1.2}  
Adopt \autoref{sett-vectfield}. There are homogeneous exact sequences
\[0\lto \Der_k(R)/\m^{-1}\varepsilon \lto L^* \lto \Ext^2_R(k,R)\, ,
\]
\[0\lto L^* \lto H^* \lto \Ext^1_R(C, R)\, ,
\]
where $C$ is an $R$-module annihilated by $\m$. 

Therefore, there are natural homogeneous embeddings
\[\Der_k(R)/\m^{-1}\varepsilon \hookrightarrow L^{*} \hookrightarrow H^{*} , \] 
where the first embedding is an isomorphism if $\depth R \ge 3$ and the second is an isomorphism if $\depth R\ge 2$ or  the defining ideal of $R$ is generated by forms whose degrees are not multiples
of the characteristic. 

In particular, the vector fields that leave $X$ invariant correspond to the elements in the torsionfree $R$-module $\Der_k(R)/\m^{-1}$. 
\end{proposition}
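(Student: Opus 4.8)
The plan is to derive both exact sequences by applying $(-)^\ast=\Hom_R(-,R)$ to the two short exact sequences already available: the bottom row $0\to L\to\Omega_k(R)\to\m\to0$ of \autoref{ZANDR}, and the inclusion $0\to H\to L\to C\to0$ with $C:=L/H$, which is annihilated by $\m$ as noted before \autoref{translation}. A running hypothesis I would use throughout is that $R$ has positive depth: since $I$ is saturated, $\m$ is not associated to $R$, so it contains a nonzerodivisor.

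First I would handle the second sequence. Dualizing $0\to H\to L\to C\to0$ gives the left-exact complex
\[ 0\to C^\ast\to L^\ast\to H^\ast\to\Ext^1_R(C,R). \]
Here $C^\ast=0$, because any $R$-linear map $C\to R$ lands in $(0:_R\m)=\socle(R)$, which vanishes as $\m$ contains a nonzerodivisor; this produces the asserted $0\to L^\ast\to H^\ast\to\Ext^1_R(C,R)$. For the isomorphism criterion, $C$ is finitely generated and $\m$-torsion, hence a finite direct sum of shifted copies of $k$, so $\Ext^1_R(C,R)=0$ as soon as $\Ext^1_R(k,R)=0$, i.e. whenever $\depth R\ge2$. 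The alternative hypothesis is immediate, since $H=L$ (so $C=0$) when $I$ is generated by forms whose degrees are not multiples of the characteristic, as already observed.

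Next I would treat the first sequence. Dualizing $0\to L\to\Omega_k(R)\xrightarrow{\pi}\m\to0$ and using $\Omega_k(R)^\ast=\Der_k(R)$ gives
\[ 0\to\m^\ast\xrightarrow{\pi^\ast}\Der_k(R)\to L^\ast\to\Ext^1_R(\m,R). \]
The step I expect to require the most care is identifying $\im\pi^\ast$ with $\m^{-1}\varepsilon$. Since $\m$ contains a nonzerodivisor, there is a canonical isomorphism $\m^\ast=\Hom_R(\m,R)\cong\m^{-1}=R:_Q\m$ under which $g\in\m^\ast$ corresponds to the fraction $q$ with $g(x)=qx$. The map $\pi$ sends $dx_i\mapsto x_i$, so it is exactly the homomorphism $\Omega_k(R)\to R$ representing the Euler derivation; hence $\pi^\ast(g)=g\circ\pi$ is the derivation $dx_i\mapsto qx_i$, namely $q\varepsilon$. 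Thus $\im\pi^\ast=\m^{-1}\varepsilon$ and $\Der_k(R)/\m^{-1}\varepsilon$ embeds into $L^\ast$. To match the stated target I would rewrite the cokernel term: from $0\to\m\to R\to k\to0$ and $\Ext^i_R(R,R)=0$ for $i\ge1$ one obtains $\Ext^1_R(\m,R)\cong\Ext^2_R(k,R)$, giving $0\to\Der_k(R)/\m^{-1}\varepsilon\to L^\ast\to\Ext^2_R(k,R)$; this is an isomorphism once $\Ext^2_R(k,R)=0$, i.e. when $\depth R\ge3$.

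Finally, the two sequences combine to the chain $\Der_k(R)/\m^{-1}\varepsilon\hookrightarrow L^\ast\hookrightarrow H^\ast$ with the stated isomorphism criteria. Torsion-freeness comes for free: a dual module $L^\ast$ is torsion-free (a nonzerodivisor killing $f\in L^\ast$ would kill every value $f(x)$, forcing $f=0$), and submodules of torsion-free modules are torsion-free. For the geometric conclusion I would invoke \autoref{translation}: the vector fields leaving $X$ invariant correspond to the maps $H\to R$ that extend to $\Omega_k(R)\to R$, i.e. to the image of $\Der_k(R)=\Omega_k(R)^\ast\to H^\ast$. Because $L^\ast\hookrightarrow H^\ast$ is injective, this image has the same kernel as $\Der_k(R)\to L^\ast$, namely $\m^{-1}\varepsilon$, and is therefore identified with $\Der_k(R)/\m^{-1}\varepsilon$.
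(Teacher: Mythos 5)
Your proposal is correct and follows essentially the same route as the paper: both dualize the Euler sequence $0\to L\to\Omega_k(R)\to\m\to0$ (identifying $\m^*\cong\m^{-1}$ and its image with $\m^{-1}\varepsilon$, then using $\Ext^1_R(\m,R)\cong\Ext^2_R(k,R)$) and the sequence $0\to H\to L\to C\to0$ (using that $C$ is killed by $\m$ and $\depth R>0$ to get $C^*=0$). The paper's proof is merely terser; your added details -- the fraction description of $\m^*$, the socle argument, the decomposition of $C$ into shifted copies of $k$, and the torsion-freeness of dual modules -- are all correct fillings of steps the paper leaves implicit.
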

\begin{proof} Consider the Euler sequence 
$$\begin{tikzcd} 
& & &  R &\\
0 \arrow{r} & L \arrow{r} & \Omega_k(R) \arrow{ru} \arrow{r} & \m  \arrow{u} \arrow{r} & 0\, .
 \end{tikzcd} $$
Dualizing into $R$ shows that  the first row of 
$$\begin{tikzcd} 0\arrow{r} & \m^{-1} \arrow{r} & \Der_k(R) \arrow{r} & L^*\arrow{r} & \Ext_R^1(\m,R) \\
&R\arrow{u} \arrow{ur}  &&
 \end{tikzcd} $$
 is exact and that $1\in R $ maps to $\varepsilon \in  \Der_k(R)$. 
Since $\Ext^1_R(\m,R)\cong \Ext^2_R(k,R)$, we obtain the first asserted exact sequence.

The second exact sequence follows because $C:=L/H =\coker \varphi$ is annihilated by $\m$ and $\depth \, R>0$, 
see page \pageref{ZANDR}. We also recall that $H=L$ if the defining ideal of $R$ is generated by forms whose degrees are not multiples
of the characteristic. 
\end{proof}

The {\it singular locus} of the vector field $\eta$ is the subscheme $ \Sigma=V(I_2(N))\subset \mathbb P^{n-1}_k$, where $N$ is the $2$ by $n$ matrix 
$$N=\begin{bmatrix}
x_1 & \ldots & x_n \\
a_1 & \ldots & a_n
\end{bmatrix}\, ;$$
in fact a point $P\in \mathbb P^{n-1}_k$ does not belong to $\Sigma$ if and only if $Q:=[a_1(P): \ldots: a_n(P)]\in \mathbb P^{n-1}_k$ and there is a unique line passing  through $P$ and $Q$, giving the direction defined by $\eta$ at $P$. We observe that $I_2(N)R$, the ideal defining the subscheme $\Sigma \cap X \subset X$, is the image of the map $\mu: H \lto R$ induced by $\eta$.
One usually requires that $\Sigma \cap X$ does not contain an irreducible component of $X$, in other words, that the ideal $\im \mu=I_2(N)R$ has positive height in $R$. 

\smallskip

We introduce a new invariant that is going to play an important role throughout the paper. 
\begin{definition} Let $R$ be a non-negatively graded ring and $M$ be a finitely generated $R$-module. We define the {\it faithful initial degree} of $M$ over $R$ as
\[\findeg_R M=\inf \{\deg m \ | \ m \in M \ \mbox{homogenous with }  \ann m=0\}\, .\]
\end{definition}

\vspace{.1cm}

Notice that $\findeg M\ge \indeg M$, and equality holds if $M$ is torsionfree and $R$ is a domain.

\vspace{.1cm}

\begin{corollary}   In addition to  \autoref{sett-vectfield}, 
assume that $R$ has no embedded associated primes.
If $m$ is the smallest degree of a vector field on $\mathbb P^{n-1}_k$ that leaves $X$ invariant and whose singular locus
does not contain an irreducible component of $X$, then $$m=1+\findeg( \Der_k(R)/\m^{-1} \varepsilon) \, . $$
\end{corollary}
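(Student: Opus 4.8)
The plan is to translate the stated geometric quantity into the algebraic language established in \autoref{translation} and \autoref{1.2}. Recall that a vector field on $\mathbb P^{n-1}_k$ of degree $m$ is a homogeneous map $\eta : Z \lto S$ of degree $m-1$, and that such an $\eta$ leaves $X$ invariant precisely when it induces a homogeneous $R$-linear map $\mu : H \lto R$ of degree $m-1$. By \autoref{translation}, the maps $\mu$ arising this way are exactly those that extend to a homogeneous map $\nu : \Omega_k(R) \lto R$, i.e. to an element $\nu \in \Der_k(R)$. Restriction to $H$ thus realizes the image of $\Der_k(R) \to H^*$ as the set of all $\mu$ induced by invariant vector fields, and by \autoref{1.2} this image is the copy of $\Der_k(R)/\m^{-1}\varepsilon$ sitting inside $H^*$ via the embeddings $\Der_k(R)/\m^{-1}\varepsilon \hookrightarrow L^* \hookrightarrow H^*$. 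So I would set up a degree-preserving correspondence under which a vector field of degree $m$ leaving $X$ invariant matches a homogeneous element $\bar\nu \in \Der_k(R)/\m^{-1}\varepsilon$ of degree $m-1$, the element $\bar\nu$ corresponding to $\mu \in H^*$ with $\im\mu = I_2(N)R$.

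The heart of the argument is to recast the hypothesis on the singular locus as \emph{faithfulness} of $\bar\nu$. As noted after the definition of the singular locus, the condition that $\Sigma$ does not contain an irreducible component of $X$ is the same as $\htt_R(\im \mu) > 0$. First I would observe that $\Ann_R(\bar\nu) = \Ann_R(\im\mu)$: an injective $R$-linear map preserves the annihilator of an element, so the embedding $\Der_k(R)/\m^{-1}\varepsilon \hookrightarrow H^*$ gives $\Ann_R(\bar\nu)=\Ann_R(\mu)$, and for $\mu \in H^* = \Hom_R(H,R)$ one has $r\mu = 0$ iff $r\cdot\im\mu = 0$, so $\Ann_R(\mu) = \Ann_R(\im\mu)$. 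Next I would use that $R$ has no embedded associated primes, so that every associated prime of $R$ is minimal. A standard prime-avoidance argument shows, for any ideal $\mathfrak{a}$, that $\Ann_R(\mathfrak{a}) = 0$ iff $\mathfrak{a} \not\subseteq P$ for every $P \in \Ass(R)$; under our hypothesis the right-hand side says exactly that $\mathfrak{a}$ lies in no minimal prime, i.e. $\htt_R(\mathfrak{a}) > 0$. Applying this to $\mathfrak{a} = \im\mu$ yields $\Ann_R(\bar\nu)=0 \iff \htt_R(\im\mu) > 0$.

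Combining the two steps, the vector fields of degree $m$ that leave $X$ invariant and whose singular locus omits every irreducible component of $X$ correspond exactly to the faithful homogeneous elements of degree $m-1$ in $\Der_k(R)/\m^{-1}\varepsilon$. Taking infima over degrees, the smallest such $m$ is $1 + \inf\{\deg \bar\nu \mid \bar\nu \text{ homogeneous with } \Ann_R\bar\nu = 0\} = 1 + \findeg(\Der_k(R)/\m^{-1}\varepsilon)$, which is the claim.

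I expect the main obstacle to be the second step, and specifically the role of the no-embedded-primes hypothesis. Faithfulness of $\bar\nu$ always forces $\htt_R(\im\mu) > 0$, but the converse can fail if $\im\mu$ is contained in an embedded prime of positive height; the hypothesis is precisely what rules this out and makes the two conditions coincide. I would therefore be careful to verify that the embedding of \autoref{1.2} genuinely preserves annihilators of elements (immediate from injectivity) and that the correspondence is degree-preserving with the stated shift by one, so that the infimum of degrees on the geometric side is literally $\findeg$ of the module plus one.
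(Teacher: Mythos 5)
Your proposal is correct and follows essentially the same route as the paper: the paper's proof likewise invokes \autoref{translation} and \autoref{1.2} to identify degree-$(m-1)$ invariant vector fields with homogeneous elements of $\Der_k(R)/\m^{-1}\varepsilon$ viewed as maps $H \lto R$, and then runs the chain of equivalences $\htt(\im\mu)>0 \iff \grade(\im\mu)>0 \iff \ann_R\mu=0$, where the no-embedded-primes hypothesis is exactly what makes the first equivalence hold. Your spelling out of the middle step via associated primes (rather than naming it as grade) is the same mathematical content, just in more detail.
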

\begin{proof} \autoref{translation} and \autoref{1.2} show that $m-1$ is the smallest degree of a homogenous element in $ \Der_k(R)/\m^{-1} \varepsilon$ that, when regarded as a homogenous $R$-linear map $H \lto R$, has the property that $\htt (\im \mu ) >0$, equivalently $\grade \im \mu >0$, or yet equivalently  $\ann_R \mu =0$. 
\end{proof}

In the next proposition (and the remark following it) we identify $H^*$ with a well-known fractional ideal: the inverse of the image in $R$ of the Jacobian ideal of a general complete intersection mapping onto $R$. The resulting embedding $\Der_k(R)/\m^{-1}\varepsilon\, \hookrightarrow J^{-1}(2-\delta)$ will be useful to compute initial degrees. 


\begin{setting}\label{setupcurves} In addition to \autoref{sett-vectfield} assume that $X=\C\subset {\mathbb P_k^{n-1}}$ is a reduced equidimensional curve over a perfect field $k$.
Let  $f_1, \ldots, f_{n-2}$ be
forms in $I$ of degrees $\delta_1, \ldots, \delta_{n-2}$
that generate $I$ generically, and let $J$ be the ideal generated by the images in $R$ of the maximal minors of the Jacobian 
matrix of $f_1, \ldots, f_{n-2}$.
Set $\delta=\sum_{j=1}^{n-2}(\delta_j-1)$.
\end{setting}

\begin{remark}{\rm We will see, as a consequence of \autoref{htJ}(a), that the forms $f_1, \ldots, f_{n-2}$ in \autoref{setupcurves} generate $I$ generically if and only if  $\htt J >0$.

If $I$ is a complete intersection, then $f_1, \ldots, f_{n-2}$ can be chosen to be a minimal homogeneous generating sequence of $I,$
in which case $J$ is the full Jacobian ideal of $R$.

If $k$ is infinite and 
$I$ is generated by forms of degrees $\delta_1 \ge\ldots \ge \delta_m$,
then $f_1, \ldots, f_{n-2}$ can be taken to be
$n-2$ general forms of degrees  $\delta_1 \ge\ldots \ge \delta_{n-2}$ in $I$. In this case $f_1, \ldots, f_{n-2}$ also form a regular sequence.
}
\end{remark}

\begin{proposition}\label{3iso}  
Adopt \autoref{setupcurves}. 
\begin{enumerate}[$($a$)$]
\item There exist natural homogeneous $R$-linear map
$$\begin{tikzcd}[row sep={0.3em}] \bigwedge^2 \Omega_k(R) \arrow[two heads]{r} &H\\
 \bigwedge^2 \Omega_k(R) \arrow[two heads]{r} & J(\delta-2)\\
\bigwedge^2 \Omega_k(R) \arrow{r} &\omega_R\, ,
\end{tikzcd}$$
where the first two maps are epimorphisms and all maps are isomorphisms generically. 
\item After factoring out the $R$-torsion of $\bigwedge^2 \Omega_k(R)$ and $H$, or after dualizing into $R$, the first two maps become isomorphisms and the last map becomes an embedding,
\[H/{\rm tor}(H)\cong J(\delta-2) \cong  \wedge^2 \Omega_k(R) /{\rm tor}( \wedge^2 \Omega_k(R) ) \hookrightarrow \omega_R \]
and
\[H^*\cong J^{-1}(2-\delta) \cong  (\wedge^2 \Omega_k(R))^* \hookleftarrow \omega_R^*\, . \]
In particular, \[\Der_k(R)/\m^{-1}\varepsilon\, \hookrightarrow J^{-1}(2-\delta) \, .\]
If $\C$ is smooth and arithmetically Cohen-Macaulay, then the embedding $\omega_R^* \hookrightarrow  (\wedge^2 \Omega_k(R))^*$ is an isomorphism. 
\end{enumerate}
\end{proposition}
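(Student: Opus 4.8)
The plan is to write down the three maps explicitly, check that each is an isomorphism at the minimal primes of $R$ to obtain part (a), and then deduce part (b) formally by passing to quotients modulo torsion and to $R$-duals. Throughout set $d=\dim R=2$; since $\mathcal C$ is reduced and $k$ is perfect, $R$ is generically smooth over $k$, so $\Omega_k(R)$ is locally free of rank $2$ at every minimal prime of $R$, and $\bigwedge^2\Omega_k(R)$, $H$, $J(\delta-2)$, and $\omega_R$ all have rank $1$. The first map is the Euler contraction $d_2\colon\bigwedge^2\Omega_k(R)\to\Omega_k(R)$, $dx_i\wedge dx_j\mapsto x_i\,dx_j-x_j\,dx_i$, whose image is $H$ by definition of $H$ as $\im d_2$, so $\bigwedge^2\Omega_k(R)\twoheadrightarrow H$. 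The second is the Jacobian map $\mu\colon\bigwedge^2\Omega_k(R)\to R(\delta-2)$ sending $dx_i\wedge dx_j$ to the signed maximal minor $M_{ij}$ of the Jacobian matrix of $f_1,\dots,f_{n-2}$ obtained by deleting columns $i$ and $j$; each $M_{ij}$ has degree $\delta$, so $\mu$ is homogeneous with image $J(\delta-2)$. The third is the fundamental class $c\colon\bigwedge^2\Omega_k(R)\to\omega_R$, the canonical homomorphism that is an isomorphism on the smooth locus.

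To see that $\mu$ is well defined I would factor it through the surjection $\bigwedge^2\!\big(R(-1)^n\big)\twoheadrightarrow\bigwedge^2\Omega_k(R)$, whose kernel is generated by the elements $e_i\wedge dg$ with $g\in I$. A Laplace expansion identifies the image of $e_i\wedge dg$ with a signed $(n-1)$-minor $D_i(g)$ of the Jacobian matrix of $f_1,\dots,f_{n-2},g$, so I must check $D_i(g)\in I$. At the generic point of any component of $\mathcal C$ the forms $f_1,\dots,f_{n-2}$ generate $I$, hence $g\equiv\sum_j a_j f_j$ there and $dg\equiv\sum_j a_j\,df_j$ modulo $(f_1,\dots,f_{n-2})$; thus the last row of that Jacobian matrix is a combination of the others at the point, every $(n-1)$-minor vanishes there, and $D_i(g)$ lies in each minimal prime of $I$. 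Since $I$ is saturated and $\mathcal C$ is reduced, $I$ is radical, so $D_i(g)\in I$ and $\mu$ is well defined. The same localization gives the generic isomorphisms needed for (a): at a minimal prime $\p$ the ring $R_\p$ is a field with $\dim\Omega_k(R)_\p=2$, contraction by the nonzero Euler derivation is injective, the Jacobian of $f_1,\dots,f_{n-2}$ has rank $n-2=\htt I$ so some $M_{ij}$ is a unit, and $c$ is an isomorphism by smoothness; hence $d_2$, $\mu$, and $c$ are isomorphisms at $\p$, which proves (a).

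Part (b) is then formal. As $H/{\rm tor}(H)$ and $J(\delta-2)$ are torsion-free of rank $1$, the epimorphisms $\bigwedge^2\Omega_k(R)\twoheadrightarrow H$ and $\bigwedge^2\Omega_k(R)\twoheadrightarrow J(\delta-2)$ annihilate the torsion of the source and become surjections of rank-$1$ torsion-free modules, hence isomorphisms, giving $H/{\rm tor}(H)\cong J(\delta-2)\cong\bigwedge^2\Omega_k(R)/{\rm tor}(\bigwedge^2\Omega_k(R))$. Because $R$ is reduced and equidimensional, $\omega_R$ is torsion-free, so the generically injective $c$ induces an embedding $\bigwedge^2\Omega_k(R)/{\rm tor}\hookrightarrow\omega_R$ with torsion cokernel. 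Applying $(-)^*=\Hom_R(-,R)$, which depends only on a module modulo torsion, converts the isomorphisms into $H^*\cong(J(\delta-2))^*=J^{-1}(2-\delta)\cong(\bigwedge^2\Omega_k(R))^*$, while left-exactness of $\Hom_R(-,R)$ together with $\Hom_R(\coker c,R)=0$ yields $\omega_R^*\hookrightarrow(\bigwedge^2\Omega_k(R))^*$. Combining with the embedding $\Der_k(R)/\m^{-1}\varepsilon\hookrightarrow H^*$ of \autoref{1.2} gives $\Der_k(R)/\m^{-1}\varepsilon\hookrightarrow J^{-1}(2-\delta)$. Finally, if $\mathcal C$ is smooth then $c$ is an isomorphism away from the vertex, so $\coker c$ has finite length; if in addition $\mathcal C$ is arithmetically Cohen-Macaulay then $\depth R=2$, whence $\Hom_R(\coker c,R)=\Ext^1_R(\coker c,R)=0$ and the embedding $\omega_R^*\hookrightarrow(\bigwedge^2\Omega_k(R))^*$ becomes an isomorphism.

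The hard part will be the well-definedness of $\mu$, that is the containment $D_i(g)\in I$ for every $g\in I$: this is the only step that is not purely formal, and it is exactly where the hypotheses that $f_1,\dots,f_{n-2}$ generate $I$ generically and that $\mathcal C$ is reduced (so that $I$ is radical) are indispensable. A secondary point that I expect to require care is the construction of the fundamental class $c$ and its being an isomorphism on the smooth locus over a merely perfect base field; I would either invoke the standard theory of the fundamental class, or circumvent it by identifying $\bigwedge^2\Omega_k(R)$ modulo torsion with a fractional ideal through the Gorenstein complete intersection $A=S/(f_1,\dots,f_{n-2})$, for which $\omega_A\cong A(\delta-2)$ and the minors generate the Jacobian.
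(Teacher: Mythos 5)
Your proposal is correct and takes essentially the same approach as the paper: the same three maps (the Koszul/Euler contraction onto $H$, the map to the maximal minors of the partial Jacobian, and the canonical class), generic isomorphisms verified at the minimal primes via generic smoothness over the perfect field, and part (b) deduced formally by factoring out torsion and dualizing, with the smooth ACM case handled by a depth argument killing the finite-length cokernel. The only cosmetic difference is in the well-definedness of the Jacobian map: you show the Laplace-expanded $(n-1)$-minors $D_i(g)$ lie in the radical ideal $I$ by checking vanishing at the minimal primes, whereas the paper extends $f_1,\dots,f_{n-2}$ to a full generating set, notes that the resulting presentation matrix $\Theta$ of $\Omega_k(R)$ has rank $n-2$, and concludes that all its $(n-1)$-minors vanish identically --- the same underlying fact.
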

\begin{proof} The first map is the second differential onto its image in the 
Koszul complex of the homomorphism $\Omega_k(R) \lto R$ corresponding to the Euler derivation. This map is homogeneous and surjective, and the module $H$ has rank one because the Koszul complex is exact locally on the punctured spectrum of $R$. 

The second map is a direct consequence of the fact that $\Omega_k(R)$ is a module of rank $2$ generated by $n$ elements
and 
$J$ is generated by the maximal minors of the matrix consisting of $n-2$ 
columns of a matrix presenting $\Omega_k(R)$.
Indeed, let  $x_1, \ldots, x_n$ be the images in $R$ of the variables of $S$, 
extend $f_1, \ldots, f_{n-2}$ to a homogeneous generating sequence $f_1, \ldots, f_m$ for $I$,
let $\Theta$ be the image in $R$ of the transpose of the Jacobian matrix of $f_1, \ldots, f_m$,   let $\Theta^{'}$ be the submatrix of $\Theta$ consisting of the first $n-2$ columns  of $\Theta$, and for $1\le i<j\le n$, let  $\Delta_{ij}$ be the  maximal minor of $\Theta^{'}$ with rows $i$ and $j$ deleted.  Notice that $\Theta$ is a homogeneous presentation matrix of $\Omega_k(R)$, that $I_{n-2}(\Theta^{'})=J$, and that $\deg \Delta_{ij}=\delta$.  Since $\Omega_k(R)$ is an $R$-module of rank 2, it follows that $\Theta$ has rank $n-2$.
Now the second  natural map 
$$\begin{tikzcd}\bigwedge^2 \Omega_k(R) \arrow[two heads]{r} & J(\delta-2)
\end{tikzcd}$$
is the homomorphism sending $dx_i \wedge dx_j$ to $(-1)^{i+j} \Delta_{ij}$. This map is well defined because $\Theta$ is a presentation matrix of $\Omega_k(R)$ and has rank $n-2$. The map is obviously homogeneous and surjective. Also notice that $J$ has positive grade in $R$ because the module generated by the columns of
$\Theta'$ has rank $n-2$ as it is generically equal to the syzygy module of $\Omega_k(R)$.

The third map is the canonical class of $R$ over $k$ (see for instance \cites{E, EA, A, L, KW}). This map is homogeneous and it is an isomorphism locally at the regular prime ideals of $R$. 

Since the first two maps are epimorphisms between modules of the same rank, namely one, we see that these maps are also isomorphisms generically. This completes the proof of part (a). Part (b) follows from (a); for the last assertion, we also use \autoref{1.2}. 
\end{proof}

\smallskip

As a first immediate consequence of \autoref{1.2} and \autoref{3iso} we obtain:

\begin{corollary}\label{anticanonicalM}  Adopt \autoref{setupcurves} and assume that $\C$ is smooth and arithmetically Cohen-Macaulay. If $\, \indeg \omega_R^*> a(R)$, then 
$$\Der_k(R)/R \varepsilon \cong \omega_R^*\, .$$
\end{corollary}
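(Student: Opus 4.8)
The goal is to prove \autoref{anticanonicalM}: under \autoref{setupcurves} with $\C$ smooth and arithmetically Cohen-Macaulay, the hypothesis $\indeg \omega_R^* > a(R)$ forces $\Der_k(R)/R\varepsilon \cong \omega_R^*$. My plan is to assemble the two embeddings already produced in the excerpt into a chain and then use the degree hypothesis to collapse it.

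First I would record the relevant maps. Since $\C$ is arithmetically Cohen-Macaulay of dimension one, $\depth R \geq 2$, so by the remark preceding \autoref{1.2} we have $\m^{-1}=R$, giving $\Der_k(R)/\m^{-1}\varepsilon = \Der_k(R)/R\varepsilon$. Next, \autoref{1.2} supplies the natural embedding $\Der_k(R)/R\varepsilon \hookrightarrow L^* \hookrightarrow H^*$, where the second inclusion is an isomorphism because $\depth R \geq 2$. From \autoref{3iso}(b), applied in the smooth arithmetically Cohen-Macaulay case, the embedding $\omega_R^* \hookrightarrow (\wedge^2\Omega_k(R))^* \cong H^*$ is in fact an \emph{isomorphism}. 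Combining these identifications, I obtain a commutative picture in which both $\Der_k(R)/R\varepsilon$ and $\omega_R^*$ sit inside the same module $H^* \cong \omega_R^*$, and the plan is to show the first embedding is surjective, hence an isomorphism.

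The key step is a degree count. Composing the embedding from \autoref{1.2} with the isomorphism $H^* \cong \omega_R^*$ of \autoref{3iso}(b), I get a homogeneous embedding $\Der_k(R)/R\varepsilon \hookrightarrow \omega_R^*$ whose cokernel $T$ is supported on the exact sequence coming from $L^* \to H^*$; more precisely, the obstruction to surjectivity lives in $\Ext^2_R(k,R)$ via the first sequence of \autoref{1.2}, whose terms are concentrated in degree $a(R)$ (since $\Ext^2_R(k,R)$ is dual, up to shift, to the socle-type piece governed by the $a$-invariant when $\depth R = 2$). The hypothesis $\indeg \omega_R^* > a(R)$ then says that $\omega_R^*$ has no elements in degrees $\leq a(R)$, so any element of $\omega_R^*$ not in the image of $\Der_k(R)/R\varepsilon$ would have to map nontrivially into a module concentrated in degree $a(R)$ while itself lying in degree $> a(R)$, a contradiction. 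Thus the cokernel vanishes and the embedding is an isomorphism.

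The main obstacle I anticipate is pinning down precisely in which degree the obstruction module $\Ext^2_R(k,R)$ (equivalently the cokernel of $\Der_k(R)/R\varepsilon \hookrightarrow L^*$) is concentrated, and confirming that it is exactly $a(R)$ rather than a range of degrees. For a one-dimensional ring with $\depth R = 2$ this is a clean statement about the top local cohomology / the socle, but if $\depth R = 1$ one must instead work with $L^* \cong \Der_k(R)/R\varepsilon$ directly and be more careful. Since the arithmetically Cohen-Macaulay hypothesis guarantees $\depth R = 2$ for a curve, I expect this to resolve cleanly, and the degree inequality in the hypothesis is exactly calibrated to kill the obstruction. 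The remaining verification—that the comparison map is the identity on the common ambient module $H^*$—is formal from the naturality of the constructions in \autoref{1.2} and \autoref{3iso}.
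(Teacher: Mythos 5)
Your proposal is correct and follows essentially the same route as the paper: both identify $L^*\cong H^*\cong \omega_R^*$ via \autoref{1.2} and \autoref{3iso}(b), and then kill the map $\omega_R^*\to \Ext^2_R(k,R)$ in the exact sequence of \autoref{1.2} by comparing degrees. The one step you flag as an obstacle is settled in the paper by a short computation---after passing to an infinite field, choose a regular sequence of linear forms $x_1,x_2$ and observe $\Ext^2_R(k,R)\cong \socle(R/(x_1,x_2))(2)$, which is concentrated in degrees $\le a(R/(x_1,x_2))-2=a(R)$---and note that only this upper bound is needed, not concentration in exactly degree $a(R)$ as you worried.
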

\begin{proof} From  \autoref{1.2} and \autoref{3iso}(b) we obtain isomorphisms $L^* \cong H^*\cong \omega_R^*. $ Now again by \autoref{1.2} there is an exact sequence
$$0\lto \Der_k(R)/R\varepsilon \lto \omega_R^* \lto \Ext^2_R(k,R)\, .
$$
Thus the assertion follows  once we have shown that $\Ext^2_R(k,R)$ is concentrated in degrees $\le a(R).$ 

For this we may assume that $k$ is infinite. Since $R$ is Cohen-Macaulay, there exists a regular sequence $x_1, x_2$ consisting of linear forms in $R. $
We have
\[\Ext^2_R(k,R)\cong \Hom_R(k, R/(x_1,x_2))(2) \cong \socle (R/(x_1,x_2)) (2)\, ,\]
and the last module is concentrated in degrees at most $a(R/(x_1,x_2))-2=a(R)$. 
\end{proof}

\smallskip

\begin{corollary}\label{1.-2}  Adopt \autoref{setupcurves}. 
Let $\mu$ be a vector field on ${\mathbb P_k^{n-1}}$ of degree $m$ that leaves $\C$ invariant and whose singular locus
does not contain an irreducible component of $\C$, which means that $\htt \im \mu>0$. 
Then
\[(\im \mu)(m-1) \cong J(\delta-2) \, .\]
\end{corollary}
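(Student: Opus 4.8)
The plan is to identify $\im\mu$, up to the degree shift by $m-1$, with the torsionfree quotient $H/{\rm tor}(H)$, and then to quote the isomorphism $H/{\rm tor}(H)\cong J(\delta-2)$ from \autoref{3iso}(b). Recall that a vector field of degree $m$ that leaves $\C$ invariant is, by the discussion preceding \autoref{translation}, a homogeneous $R$-linear map $\mu\colon H\lto R$ of degree $m-1$ whose image is the ideal $I_2(N)R$ defining $\Sigma\cap\C$. The hypothesis $\htt\,\im\mu>0$ says this ideal has positive height; since $\C$ is reduced, $R$ has no embedded associated primes, so every associated prime of $R$ is minimal and positive height is equivalent to positive grade. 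Hence $\im\mu$ contains a nonzerodivisor, and in particular $\im\mu$ has rank one.

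First I would show that $\ker\mu={\rm tor}(H)$. By \autoref{3iso}(a) the module $H$ has rank one. For the inclusion ${\rm tor}(H)\subseteq\ker\mu$, note that if $h\in{\rm tor}(H)$ is annihilated by a nonzerodivisor $c$ of $R$, then $c\,\mu(h)=\mu(ch)=0$ forces $\mu(h)=0$, because $c$ is a nonzerodivisor on $R$. For the reverse inclusion I would apply rank-additivity to the short exact sequence $0\to\ker\mu\to H\to\im\mu\to 0$: since $\rank H=1$ and $\rank\im\mu=1$, we get $\rank\ker\mu=0$, so the finitely generated module $\ker\mu$ is torsion over the reduced ring $R$, whence $\ker\mu\subseteq{\rm tor}(H)$.

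Consequently $\mu$ induces a graded monomorphism $\overline{\mu}\colon H/{\rm tor}(H)\hookrightarrow R$ of degree $m-1$ with image $\im\mu$. Reading off the degree shift, $\overline{\mu}$ is a degree-preserving isomorphism $(H/{\rm tor}(H))(-(m-1))\xrightarrow{\ \cong\ }\im\mu$, equivalently a graded isomorphism $H/{\rm tor}(H)\cong(\im\mu)(m-1)$. Combining this with the isomorphism $H/{\rm tor}(H)\cong J(\delta-2)$ from \autoref{3iso}(b) then gives $(\im\mu)(m-1)\cong J(\delta-2)$, as asserted.

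The only delicate points are formal: checking that positive height supplies a nonzerodivisor in $\im\mu$ (which rests on $R$ being reduced, hence having no embedded primes) and keeping the degree-shift bookkeeping consistent. I do not expect a substantive obstacle, since the rank-one property of $H$ and the identification $H/{\rm tor}(H)\cong J(\delta-2)$ are already in hand from \autoref{3iso}.
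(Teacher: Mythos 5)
Your proposal is correct and follows essentially the same route as the paper: both identify $\ker\mu$ with ${\rm tor}(H)$ using that $H$ has rank one and $\im\mu$ has positive grade, deduce the graded isomorphism $H/{\rm tor}(H)\cong(\im\mu)(m-1)$, and conclude via \autoref{3iso}(b). The paper states this in one line where you supply the details (grade versus height over the reduced ring $R$, rank additivity, and the two inclusions for $\ker\mu={\rm tor}(H)$), all of which are sound.
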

\begin{proof} The map $\mu$ induces a homogeneous epimorphism of degree $m-1$
$$\begin{tikzcd} H \arrow[two heads]{r}{\mu}  & \im \mu \,. 
 \end{tikzcd} $$
Recall that the $R$-module 
$H$ has rank one.
Since $\grade \im \mu >0$, the vector field $\mu$ induces a homogeneous isomorphism after factoring out the torsion of $H$,
\[ H/{\rm tor}(H)\cong (\im \mu)(m-1)\, .\]
The assertion now follows from \autoref{3iso}(b). 
\end{proof}

\medskip

\section{The invariants}\label{SecIN}

In this section we discuss the invariants that play a role in our estimates. 
\vspace{.1cm}

\noindent
{\it a-invariant.} In many of our bounds on curves, the $a$-invariant replaces Castelnuovo-Mumford regularity if the curve is not arithmetically Cohen-Macaulay. 
The $a$-{\it invariant} of a Noetherian standard graded algebra $R$ over a field is defined as $a(R)=-{\rm indeg}(\omega_R)$. Local duality implies that 
\begin{equation}\label{a-inv-reg}
a(R) \leq {\rm reg} \, R - {\dim}\, R
\end{equation} 
and equality holds if $R$ is Cohen-Macaulay.
\vspace{.1cm}

\noindent
{\it Jacobian ideals.} In this paper, Jacobian ideals will play an important role. To recall
the general definition, let $S=k[x_1, \ldots, x_n]$ be a polynomial ring over a field $k$, 
$W \subset S$ a multiplicative subset, $I \subset W^{-1}S$ an ideal, and $R=(W^{-1}S)/I$. Assume that every minimal prime ideal of $I$ has the same height $g$ and set $D=n-g .$ The {\it Jacobian ideal} of the $k$-algebra $R$ is defined as
$$J_R=J_{R/k}={\rm Fitt}_D(\Omega_k(R))\, .$$
It turns out that
$D=\dim R_{\p}+\trdg_k \kappa(\p)$ for every $\p\in \Spec R$,
where $\kappa(\p)$ denotes the residue field of $\p$, see \autoref{localize}. In particular, $D$ only depends on $k\subset R$, does not change when passing to a nonzero ring of fractions, and coincides with the integers $s$ of \autoref{jak} and $D$ of \autoref{htJ}. Moreover, 
for $V\subset R$ a multiplicative closed subset, one has
$J_{V^{-1}R}=V^{-1}J_{R}\,.$

We will use the following version of the Jacobi criterion.

\begin{theorem}[Jacobi Criterion]\label{jak}
Let $(A,\m, L)$ be a local algebra essentially of finite type over a field $k$, with separable residue field extension $k \subset L\, .$

Then $A$ is regular if and only if 
$\fitt_s\big(\Omega_k(A)\big) =A$ for some $s\leq \dim A +\trdg_k L.$
In this case, the extension $k\subset \Quot(A)$ is separable and $s=\dim A +\trdg_k L.$
\end{theorem}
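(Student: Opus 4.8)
The plan is to reduce the entire statement to a computation of the minimal number of generators of $\Omega_k(A)$ and to compare it with the embedding dimension. Recall first that for a finitely generated module $M$ over the local ring $(A,\m,L)$ one has $\fitt_s(M)=A$ if and only if $M$ is generated by at most $s$ elements, i.e. $s\ge \mu(M):=\dim_L(M\otimes_A L)$; this is immediate from a minimal presentation, since for $i<\mu(M)$ every minor of the required size has entries in $\m$. Thus the first half of the theorem is equivalent to determining $\mu(\Omega_k(A))$ and comparing it with $\dim A+\trdg_k L$.

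The heart of the argument is the second fundamental (conormal) sequence attached to $k\to A\to A/\m=L$,
\[
\m/\m^2 \xrightarrow{\ \delta\ } \Omega_k(A)\otimes_A L \longrightarrow \Omega_k(L)\longrightarrow 0,
\]
which is always right exact. The only place the hypothesis enters is the injectivity of $\delta$: since $A$ is essentially of finite type, $L$ is a finitely generated field extension of $k$, and the assumed separability means $L/k$ is separably generated, hence formally smooth over $k$, which forces $\delta$ to be injective (and the sequence to split). Using moreover that $\dim_L\Omega_k(L)=\trdg_k L$ for a separably generated extension, I obtain
\[
\mu(\Omega_k(A))=\dim_L(\m/\m^2)+\trdg_k L=\edim A+\trdg_k L.
\]
Writing $s_0=\dim A+\trdg_k L$ and invoking the standard inequality $\edim A\ge \dim A$, with equality exactly when $A$ is regular, this gives $\mu(\Omega_k(A))\ge s_0$, with equality if and only if $A$ is regular. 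Since $\fitt_s(\Omega_k(A))=A\iff s\ge \mu(\Omega_k(A))$, the existence of some $s\le s_0$ with $\fitt_s(\Omega_k(A))=A$ is equivalent to $\mu(\Omega_k(A))\le s_0$, i.e. to the regularity of $A$. In the regular case $\mu(\Omega_k(A))=s_0$, so the only such $s$ is $s=s_0=\dim A+\trdg_k L$, which yields the asserted value of $s$.

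It remains to treat separability of $k\subset\Quot(A)$ in the regular case. Then $A$ is a domain, and the dimension formula for algebras essentially of finite type over $k$ gives $\dim A+\trdg_k L=\trdg_k\Quot(A)$. Localizing at the generic point, and using that Kähler differentials commute with localization, $\Omega_k(\Quot(A))=\Omega_k(A)\otimes_A\Quot(A)$ is generated by at most $\mu(\Omega_k(A))=\trdg_k\Quot(A)$ elements, so $\dim_{\Quot(A)}\Omega_k(\Quot(A))\le \trdg_k\Quot(A)$. Since for any field extension the dimension of the differentials is at least the transcendence degree, with equality precisely for separable extensions, equality is forced and $\Quot(A)/k$ is separable. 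The main obstacle is the injectivity of the conormal map $\delta$, which is exactly the content of the separability hypothesis; I would handle it through formal smoothness of separable field extensions, taking care that ``essentially of finite type'' guarantees $L$ and $\Quot(A)$ are finitely generated over $k$, so that ``separable'' and ``separably generated'' coincide and the transcendence-degree identities above are legitimate.
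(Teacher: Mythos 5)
Your proposal is correct, but there is nothing in the paper to compare it against: the paper states this Jacobi criterion as a known result (``We will use the following version of the Jacobi criterion'') and gives no proof, immediately moving on to the Jacobi-like criteria for partial Jacobian ideals in \autoref{htJ}. Your argument is essentially the standard textbook derivation, and every step holds: the identification $\fitt_s(\Omega_k(A))=A \iff s\ge \mu(\Omega_k(A))$ follows from a minimal presentation together with the invariance of Fitting ideals; separability of $L/k$ (which equals separable generation here, since ``essentially of finite type'' makes $L/k$ finitely generated) gives formal smoothness of $L$ over $k$, hence the split-exactness of the conormal sequence $0\to \m/\m^2\to \Omega_k(A)\otimes_A L\to \Omega_k(L)\to 0$ and the count $\mu(\Omega_k(A))=\edim A+\trdg_k L$; comparison with $\dim A+\trdg_k L$ then converts the Fitting-ideal condition into $\edim A=\dim A$, i.e.\ regularity, and pins down $s$. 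The final step, deducing separability of $k\subset \Quot(A)$ from $\dim_{\Quot(A)}\Omega_k(\Quot(A))\le \trdg_k \Quot(A)$ via the dimension formula for domains essentially of finite type over a field and the criterion ``$\dim \Omega = \trdg$ iff separably generated'' (legitimate because $\Quot(A)/k$ is finitely generated), is also sound. In short: a correct, self-contained proof of a statement the paper imports without proof; if you wanted to align it with the literature, the ingredients are exactly Matsumura's Theorems 25.2 and 26.9 together with the dimension formula.
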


\smallskip

In our estimates we will also need to use partial Jacobian
ideals as in \autoref{setupcurves}. 
The next results
give Jacobi-like criteria for such ideals.


For a Noetherian ring $R$ and $i\ge 0$ an integer, $\Spec(R)$ is said to be {\it connected in dimension i} if $i< \dim R$ and $\Spec(R)$ cannot be disconnected by removing a closed subset of dimension $<i$. Assume $d=\dim R>0$, then $\Spec(R)$ is connected in dimension $d-1$ if $R$ is a domain with $d<\infty$ or $R$ is an equidimensional catenary local ring satisfying Serre's condition $S_2$ (for the latter case one uses Hartshorne's Connectedness Lemma \cite{HarCON}).

\begin{lemma}\label{GR} Let $T$ be a Noetherian local ring of dimension $d>0$ and assume that $T$ is analytically irreducible or Cohen-Macaulay or, more generally, $\Spec(\widehat{T})$ is connected in dimension $d-1$. Let $\a \subset I$ be ideals and $K=\a : I$. If $I_{\p}=\a_{\p}$ for some $\p\in V(I)$ and $\sqrt{I}\not=\sqrt{\a}$, then
$$\htt (I+K) \le \mu(\a) +1\, .
$$
\end{lemma}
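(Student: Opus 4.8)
The plan is to reduce modulo $\a$ and exploit that $I\cdot K\subseteq\a$ to cover $\Spec(T/\a)$ by two closed sets whose intersection is cut out by $I+K$; if $\htt(I+K)$ were large this intersection would be too small to connect the two pieces, contradicting a connectedness statement for $\Spec(T/\a)$ obtained from Faltings' connectedness theorem. Write $\mu(\a)$ for the minimal number of generators of $\a$. Since $\htt(I+K)\le\dim T=d$ always, the asserted bound is automatic once $\mu(\a)\ge d-1$; so I may assume $c:=d-1-\mu(\a)\ge 1$ and argue by contradiction, supposing $\htt(I+K)\ge\mu(\a)+2$.

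First I would pass to the completion. Replacing $T$ by $\widehat T$ preserves $d=\dim T$, the minimal number of generators of $\a$, and the colon $K=\a:I$ (flat base change of the finitely generated ideal $I$), and it makes the connectedness hypothesis literal: $\Spec\widehat T$ is connected in dimension $d-1$ by assumption. Lifting $\p$ to a prime of $\widehat T$ above it preserves the relation $I_\p=\a_\p$, and $\sqrt I\neq\sqrt{\a}$ passes to $\widehat T$ because the strict inclusion $V(I)\subsetneq V(\a)$ is preserved under the faithfully flat map $T\to\widehat T$; finally $\htt_{\widehat T}((I+K)\widehat T)\ge\htt_T(I+K)$ by going-down, so the contradiction hypothesis survives. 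Thus I may assume $T$ complete, and Faltings' theorem, applied to the $\mu(\a)$ generators of $\a$, shows that $\Spec(T/\a)$ is connected in dimension $\ge (d-1)-\mu(\a)=c$.

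Next I set $\bar T=T/\a$, $\bar I=I/\a$ and $\bar K=K/\a$. Because $\a I\subseteq\a$ we have $\a\subseteq K$, so $\bar K=\Ann_{\bar T}(\bar I)$ and $V(\bar K)=\Supp_{\bar T}(\bar I)$; because $IK\subseteq\a$ we have $\bar I\bar K=0$, whence $\Spec\bar T=V(\bar I)\cup V(\bar K)$ with $V(\bar I)\cap V(\bar K)=V\big((I+K)/\a\big)$. The two hypotheses guarantee that neither piece is contained in the other. The image $\bar\p$ of $\p$ lies in $V(\bar I)$, but since $I_\p=\a_\p$ gives $\bar I_{\bar\p}=0$, it does not lie in $\Supp(\bar I)=V(\bar K)$; and if $V(\bar K)\subseteq V(\bar I)$ then $\sqrt{\bar I}\subseteq\sqrt{\bar K}$, which together with $\sqrt{\bar I}\cap\sqrt{\bar K}=\sqrt{\bar I\bar K}=\sqrt 0$ forces $\bar I$ nilpotent, i.e. $\sqrt I=\sqrt{\a}$, contrary to hypothesis. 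Hence, writing $W:=V(\bar I)\cap V(\bar K)$, removing $W$ breaks $\Spec\bar T$ into the two nonempty disjoint closed subsets $V(\bar I)\setminus W$ and $V(\bar K)\setminus W$. Finally, from the general inequality $\htt(\mathfrak b)+\dim(T/\mathfrak b)\le\dim T$ I get $\dim W=\dim\big(T/(I+K)\big)\le d-\htt(I+K)\le d-\mu(\a)-2=c-1<c$; this disconnection by a closed subset of dimension $<c$ contradicts connectedness of $\Spec\bar T$ in dimension $\ge c$, and the lemma follows.

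The main obstacle is the verification that the cover $\Spec\bar T=V(\bar I)\cup V(\bar K)$ genuinely disconnects after deleting $W$: this is exactly where both hypotheses become indispensable, with $I_\p=\a_\p$ keeping $V(\bar I)\setminus V(\bar K)$ nonempty and $\sqrt I\neq\sqrt{\a}$ keeping $V(\bar K)\setminus V(\bar I)$ nonempty. It must be combined with the correct dimension bookkeeping and with the edge-case reduction ensuring $c\ge 1$ and (using that the rings arising under the stated hypotheses are equidimensional and catenary, so $\dim\bar T\ge d-\mu(\a)>c$) that ``connected in dimension $\ge c$'' is both meaningful and applicable. Some care is also needed to confirm that completion and Faltings' theorem legitimately furnish the connectedness in dimension $d-1-\mu(\a)$ under each of the stated alternatives (analytically irreducible, Cohen--Macaulay, or $\Spec\widehat T$ connected in dimension $d-1$).
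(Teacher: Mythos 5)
Your proof is correct and takes essentially the same route as the paper's: pass to the completion, apply the connectedness theorem for quotients by $\mu(\a)$ elements to conclude that $\Spec(T/\a)$ is connected in dimension $d-1-\mu(\a)$, use $IK\subseteq \a$ to write $\Spec(T/\a)=V(I)\cup V(K)$, check via the two hypotheses that both pieces are nonempty off $V(I+K)$, and derive the height bound from the resulting dimension count. The only differences are cosmetic: the connectedness result you attribute to Faltings is the one the paper cites as Grothendieck's Connectedness Theorem \cite{FOV}*{3.1.7}, and your extra bookkeeping (behavior of $\a$, $K$, $\p$, and heights under completion, plus the edge case $\mu(\a)\ge d-1$) spells out steps the paper leaves implicit.
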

\begin{proof}
We may pass to the completion of $T$ to assume that $T$ is a complete local ring and $\Spec(T)$ is connected in dimension $d-1$. Set $A=T/\a$ and  $s=\mu(\a)$. By Grothendieck's Connectedness Theorem \cite{FOV}*{3.1.7}, $\Spec(A)$ is connected in dimension $d-1-\mu(\a)=d-1-s$. 

On the other hand, our assumptions on $\a$ and $I$ mean that $V(I)\setminus V(I+K)\not= \emptyset$ and  $V(K)\setminus V(I+K)\not= \emptyset$, or equivalently, $V(IA)\setminus V(IA+KA)\not= \emptyset$ and  $V(KA)\setminus V(IA+KA)\not= \emptyset$. As $\Spec(A)=\Spec(IA) \cup \Spec(KA)$, we see that $\Spec(A) \setminus V(IA+KA)$ is disconnected. This can only happen if $\dim T/(I+K)=\dim A/(IA+KA)\ge d-1-s$. It follows that $\htt (I+K)\le s+1$. 
\end{proof}

\vspace{.1cm}

\begin{theorem}\label{htJ}
Let $(T, \m, L)$ be a regular local ring essentially of finite type over a perfect field $k$. Let $I$ be an ideal of height $g$ and $\a=(f_1, \ldots, f_g)\subset I$. Write $A=T/\a$ and $R=T/I$ and assume $R$ is equidimensional of dimension $\ge 2$. Set $D=\dim R+ \trdeg_k L$ and consider the Jacobian-like ideal $J=\Fitt_D(R\otimes_A \Omega_k(A)) \subset R\, .$
\begin{enumerate}[$($a$)$]
\item $\htt J \ge 1$ if and only if $I_{\p}=\a_{\p}$ for every minimal prime $\p$ of $I$ and $R$ satisfies Serre's condition $R_0$. 
\item If $\htt J \ge 2$ then $I=\a$ is a complete intersection. 
\item $\htt J \ge i$  for some $i\ge 2$ if and only if $I=\a$ is a complete intersection and $R$ satisfies Serre's condition $R_{i-1}$. 
\end{enumerate}
\end{theorem}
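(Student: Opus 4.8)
The plan is to reduce $J$ to an ideal of maximal minors of a Jacobian matrix and then analyze its height prime-by-prime. First I would record the global picture. Since $T$ is regular and $k$ is perfect, the Jacobi criterion (\autoref{jak}) forces $\Omega_k(T)$ to be free of rank $s:=\dim T+\trdeg_k L$. The conormal sequence of $\a\subseteq T$, tensored with $R$ over $A$, presents $R\otimes_A\Omega_k(A)$ as the cokernel of the map $\Theta\colon R^g\to R^s$ given by the Jacobian matrix of $f_1,\dots,f_g$. Because $R$ is equidimensional and $T$ is catenary, $\dim R=\dim T-g$, so $D=\dim R+\trdeg_k L=s-g$ and hence $J=\Fitt_D(R\otimes_A\Omega_k(A))=I_g(\Theta)$, the ideal of maximal minors of $\Theta$. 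I would also note for later that $\htt I_\p=g$ and $\htt_R(\p/I)=\htt_T\p-g$ for every prime $\p\supseteq I$, since every minimal prime of $I$ has height $g$.

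The technical heart is a rank lemma: for a prime $\p\supseteq I$, one has $J\not\subseteq\p$ (equivalently $J_\p=R_\p$) if and only if $f_1,\dots,f_g$ form part of a regular system of parameters of $T_\p$. Indeed $J_\p=R_\p$ means $\Theta\otimes\kappa(\p)$ has rank $g$, and its columns are the images of $df_1,\dots,df_g$ under the conormal map $\delta\colon \m_{T_\p}/\m_{T_\p}^2\to\Omega_k(T_\p)\otimes\kappa(\p)$; since $T_\p$ is regular and $\kappa(\p)/k$ is separable, $\delta$ is injective, so the rank is $g$ exactly when the images of $f_1,\dots,f_g$ are linearly independent in $\m_{T_\p}/\m_{T_\p}^2$. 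In that situation $\a_\p$ is a height-$g$ complete intersection and $T_\p/\a_\p$ is regular. Part (a) is then immediate: at a minimal prime $\p$ of $I$ the ring $T_\p$ has dimension $g$, so $f_1,\dots,f_g$ being part of a regular system of parameters is equivalent to $\a_\p=\m_{T_\p}$, that is, to $\a_\p=I_\p$ together with $R_\p$ being a field; quantifying over all minimal primes gives the stated equivalence with Serre's condition $R_0$.

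For (b) I would first upgrade the rank lemma to the implication $J\not\subseteq\p\Rightarrow I_\p=\a_\p$ for every $\p\supseteq I$: when $J\not\subseteq\p$ the ring $A_\p=T_\p/\a_\p$ is a regular local ring, hence a domain, of dimension $\htt_T\p-g=\dim R_\p$, and $R_\p$ is a quotient of $A_\p$ of the same dimension, which forces the kernel $(I/\a)_\p$ to vanish. Writing $K=\a:I$, this says $V(I+K)\subseteq V(J)$ inside $\Spec T$, whence $\htt_T(I+K)\ge\htt_R J+g\ge g+2$ using the height shift from the first paragraph. On the other hand $I_\p=\a_\p$ at the minimal primes of $I$ by (a), so if $\sqrt\a\neq\sqrt I$ then \autoref{GR} (applicable since $T$ is regular, hence Cohen--Macaulay) gives $\htt_T(I+K)\le\mu(\a)+1\le g+1$, a contradiction. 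Therefore $\sqrt\a=\sqrt I$, so $\a$ is an ideal of height $g$ generated by $g$ elements in the Cohen--Macaulay ring $T$, that is, a complete intersection with $A=T/\a$ unmixed; since $(I/\a)_\p=0$ at every associated (equivalently minimal) prime of $A$ by (a), the submodule $I/\a$ of $A$ has no associated primes and thus vanishes, giving $I=\a$.

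Finally, for (c) with $i\ge 2$, part (b) already yields $I=\a$, so $A=R$ and $J=\Fitt_D(\Omega_k(R))=J_R$ is the full Jacobian ideal. By the Jacobi criterion together with \autoref{localize} (which gives $\dim R_\p+\trdeg_k\kappa(\p)=D$ for every $\p$), one has $J_R\not\subseteq\p$ if and only if $R_\p$ is regular; hence $V(J)$ is precisely the non-regular locus of $R$, and $\htt J\ge i$ is equivalent to $R_\p$ being regular for all $\p$ with $\htt_R\p\le i-1$, that is, to Serre's condition $R_{i-1}$. The converse directions in (a) and (c) are read off the same equivalences. I expect the main obstacle to be the rank lemma, specifically justifying the injectivity of the conormal map $\delta$ and matching the minor-rank condition with being part of a regular system of parameters, together with the bookkeeping in (b) that aligns the height of $V(I+K)$ in $T$ with $\htt_R J$ so that \autoref{GR} can be played against the hypothesis $\htt J\ge 2$.
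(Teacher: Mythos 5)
Your proposal is correct and follows essentially the same route as the paper's proof: the same local analysis showing that $J_\p=R_\p$ forces $A_\p$ to be regular and hence $I_\p=\a_\p$, the same use of \autoref{GR} (via $K=\a:I$ and the bound $\htt(I+K)\ge g+2>\mu(\a)+1$) to conclude $\sqrt{\a}=\sqrt{I}$ in part (b), and the same associated-primes argument to finish with $I=\a$. The only cosmetic difference is that you prove the local regularity criterion directly from the injectivity of the conormal map $\m_{T_\p}/\m_{T_\p}^2\to\Omega_k(T_\p)\otimes\kappa(\p)$ (your ``rank lemma''), where the paper simply invokes the Jacobi criterion \autoref{jak} together with the dimension equality \autoref{localize}.
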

\begin{proof}
We first prove that if $\p$ is a prime ideal in $V(I)$ with residue field $\kappa$, then 
\begin{equation}\label{localize}
\dim R_{\p} +\trdeg_k \kappa =D\, .
\end{equation}

Since $T$ is the localization of a finitely generated $k$-algebra, we can write $T=T'_{\m'}$, where $T'$ is a finitely generated $k$-subalgebra of $T$ and $\m'=\m\cap T'$. Notice $T'$ is a domain. Set $\p'=\p\cap T'$, $I'=I\cap T'$, and $R'=T'/I'$. The minimal primes of $I'$ are contracted from minimal primes of $I$ and hence they all have the same height $g$. Therefore $R'$ is equidimensional. Now
$$\dim R_{\p}+\trdeg_k \kappa=\htt \p'R'+ \dim R'/\p'R'=\dim R'=\htt \m'R'+ \dim R'/\m'R'=\dim R +\trdeg_k L \, ,
$$
as claimed. 

It remains to prove that if $\htt J\ge 1$, then $I_{\p}=\a_{\p}$ for every minimal prime $\p$ of $I$ and if $\htt J\ge 2$, then $I=\a$. The rest follows from \autoref{jak} and \autoref{localize}. 

We first show that if $\p \in V(I)$ and $J_{\p}=R_{\p}$, then $I_{\p}=\a_{\p}$. We wish to apply \autoref{jak} to the ring $A_{\p}$ with $s:=D$. Notice that $$s=D=\dim R_{\p}+\trdeg_k \kappa \le \dim A_{\p} +\trdeg_k \kappa\, $$ 
according to \autoref{localize} and that $\Fitt_s(\Omega(A_{\p}))=A_{\p}$ because $I\subset \p$. Now \autoref{jak} implies that $A_{\p}$ is regular and $\dim A_{\p}=\dim R_{\p}$. 
As $A_{\p}$ is a domain mapping onto $R_{\p}$, we conclude that $\a_{\p}=I_{\p}$ as asserted. 

Thus we have proven that if $\htt J\ge 1$, then $\a_{\p}=I_{\p}$ for every minimal prime $\p$ of $I$. On the other hand, if $\htt J\ge 2$ we conclude that $\a_{\p}=I_{\p}$ for every $\p \in V(I)$ with $\dim T_{\p}\le g+1$. So for $K:=\a : I$, we have $\htt (I+K) \ge g+2> \mu(\a)+1$. Also $\a_{\p}=I_{\p}$ for some $\p\in V(I)$, in fact for every minimal prime $\p$ of $I$. Therefore \autoref{GR} shows that $\sqrt{\a}=\sqrt{I}$. In particular, $\a$ is a complete intersection. Thus every associated primes $\p$ of $\a$ is a minimal prime of $\a$, hence a minimal prime of $I$ because $\sqrt{\a}=\sqrt{I}$. Therefore, $\a_{\p}=I_{\p}$. Since this holds for every associated prime of $\a$, we obtain $\a=I$. 
\end{proof}

\vspace{.1cm}
\noindent
{\it{Tjurina number.}} The estimates for plane curves in \cites {dPW, EK} use the sum of the Tjurina numbers at the singular points. To allow for curves in projective spaces of arbitrary dimension, we replace the sum of the Tjurina numbers by the degree of the singular locus endowed with the scheme structure given by the Jacobian ideal, which is the multiplicity of the homogenous coordinate ring of the curve modulo its Jacobian ideal. If the curve is locally a complete intersection, as is the case for any plane curve, then the sum of the Tjurina numbers and the degree of the singular locus coincide, see \autoref{basicC}. 


Let $A$ be a local ring essentially of finite type over a field $k$. By ${\rm T}^1(A/k)$ we denote the first cotangent cohomology of the $k$-algebra $A$. If $k$ is perfect and
$A$ is reduced, then ${\rm T}^1(A/k)\cong \Ext^1_A(\Omega_k(A), A)\, .$ The module ${\rm T}^1(A/k)$ has finite length whenever $k$ is perfect and $A$ has an 
isolated singularity; this length is called the {\it Tjurina number} of $A$ and denoted by $\tau(A)$. If the residue field extension is trivial, then $\tau(A)$
is the embedding dimension of the formal moduli space of $A$, the parameter space of the versal deformation of the $k$-algebra $\widehat{A}$. The {\it total
Tjurina number} of a reduced curve $\mathcal C \subset \mathbb P^{n-1}_k$ over a perfect field is defined as $\tau(\mathcal C)=\sum\limits_{p\in {\rm Sing}(\C)} \tau(\mathcal O_{\C, p})$.

\begin{lemma} \label{basicL} Let $k$ be a perfect field, $X\subset \mathbb P^{n-1}_k$ be a reduced and equidimensional subscheme, and $Y\subset X$ be a subvariety. Let $R$ be the homogenous coordinate ring of $X$, let $\p\subset R$ be the prime ideal defining $Y$, and write $T=R_{\p}$ and $A=\mathcal O_{X,Y}\, .$
\begin{enumerate}[$($a$)$]
\item $T\cong A(x)$ for every $x\in R_1 \setminus \p\, ;\, $ any such $x$ is transcendental over $A$.
\item $\Omega_k(T)\cong (\Omega_k(A)\otimes_A T) \oplus Tdx\, ,$ where $T \, dx \cong T$.
\item $J_{T/k}=J_{A/k} T$ and ${\rm T}^1(T/k)\cong {\rm T}^1(A/k) \otimes_A  T$.
\end{enumerate}
\end{lemma}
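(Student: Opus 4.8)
The plan is to deduce parts (b) and (c) formally from part (a), whose content is the identification of the full localization $T=R_\p$ with the Nagata-type extension $A(x):=A[x]_{\m_A A[x]}$, where $\m_A$ denotes the maximal ideal of $A$. Throughout I use that $A=\mathcal O_{X,Y}=R_{(\p)}$ is the degree-zero part of the homogeneous localization $U^{-1}R$, where $U$ is the multiplicative set of homogeneous elements of $R$ not in $\p$, and that both $A$ and $T$ are reduced, being localizations of the reduced ring $R$.

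\emph{Part (a).} The ring $U^{-1}R$ is $\mathbb Z$-graded, and $x\in R_1\setminus\p$ is a unit of degree one in it. By homogeneity, any relation $\sum_i a_i x^i=0$ with $a_i\in A$ forces each $a_i x^i=0$, so $a_i=0$; thus $x$ is transcendental over $A$ and $U^{-1}R=A[x,x^{-1}]$ as graded $A$-algebras. The graded prime $\p\,U^{-1}R$ has degree-zero part $\m_A$, hence equals $\m_A A[x,x^{-1}]$. Since inverting $U$ and then the remaining elements outside $\p$ recovers $R_\p$, we obtain $T=R_\p=(U^{-1}R)_{\p U^{-1}R}\cong A[x,x^{-1}]_{\m_A A[x,x^{-1}]}$; and because $x$ is already a unit after localizing $A[x]$ at $\m_A A[x]$, this last ring is $A[x]_{\m_A A[x]}=A(x)$.

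\emph{Parts (b) and (c).} Since $A(x)$ is a localization of the polynomial ring $A[x]$, I would combine the split fundamental sequence $\Omega_k(A[x])\cong(\Omega_k(A)\otimes_A A[x])\oplus A[x]\,dx$ with the fact that Kähler differentials commute with localization, giving $\Omega_k(T)\cong(\Omega_k(A)\otimes_A T)\oplus T\,dx$ with $T\,dx\cong T$ free of rank one; this is (b). For (c), the extension $A\to A(x)$ is faithfully flat with $\dim T=\dim A$ and residue field $L_T=L_A(x)$, so the index defining the Jacobian ideal (as in \autoref{jak}) satisfies $D_T=\dim T+\trdeg_k L_T=D_A+1$. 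Using (b), the free rank-one summand shifts the Fitting index by one and Fitting ideals commute with base change, whence $J_{T/k}=\Fitt_{D_T}(\Omega_k(T))=\Fitt_{D_A}(\Omega_k(A))\,T=J_{A/k}\,T$. For the cotangent cohomology, since $k$ is perfect and $A,T$ are reduced, the isomorphism ${\rm T}^1(-/k)\cong\Ext^1(\Omega_k(-),-)$ recalled before the lemma applies; splitting off the free summand (which contributes nothing to $\Ext^1$) and invoking flat base change for $\Ext$ of the finitely presented module $\Omega_k(A)$ yields ${\rm T}^1(T/k)\cong\Ext^1_A(\Omega_k(A),A)\otimes_A T\cong{\rm T}^1(A/k)\otimes_A T$.

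The main obstacle is part (a): one must pass carefully between the homogeneous localization $U^{-1}R$ and the full localization $R_\p$, and track the prime $\p$ through the identification $U^{-1}R\cong A[x,x^{-1}]$ in order to see that it localizes to $\m_A A[x,x^{-1}]$. Once $T\cong A(x)$ is established, parts (b) and (c) are essentially bookkeeping with standard functorial properties of $\Omega_k$, $\Fitt$, and $\Ext$ under this faithfully flat extension, the only numerical point being the bookkeeping of the shift $D_T=D_A+1$ against the free rank-one summand $T\,dx$.
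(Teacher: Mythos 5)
Your proposal is correct, and it follows the same route as the paper: the paper's proof simply asserts that (a) is well known and that (b) follows from (a) and (c) from (b), which is exactly the chain you implement — the graded-localization identification $T\cong A(x)$, then the split exact sequence for $\Omega_k$ of a polynomial extension plus localization, then Fitting-ideal and $\Ext$ base change along the faithfully flat map $A\to T$ with the index shift $D_T=D_A+1$ absorbing the free summand $T\,dx$. You have merely supplied the standard details the paper leaves implicit.
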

\begin{proof} Part (a) is well known, part (b) is an immediate consequence of (a), and part (c) follows from (b). 
\end{proof}

\begin{proposition}\label{basicP} Let $k$ be a perfect field and $A$ be a local $k$-algebra essentially of finite type with algebraic residue field extension. If $A$ is a reduced complete intersection of dimension one, then $$\tau(A)=\lambda(A/J_A) .$$
\end{proposition}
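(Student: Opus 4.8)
The plan is to reduce the statement to a single length identity for the Jacobian matrix and then to settle that identity by Gorenstein duality together with a Fitting‑ideal computation.

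First I would set up a presentation of the differentials. Write $A=T/\mathfrak a$ with $(T,\m_T)$ a regular local ring essentially of finite type over $k$ and $\mathfrak a=(f_1,\dots ,f_g)$ generated by a regular sequence of length $g=\dim T-1$, so that $\dim A=1$. Since the residue field of $A$ is algebraic over the perfect field $k$ we have $\trdeg=0$, hence $D=\dim A=1$ and $J_A=\Fitt_1(\Omega_k(A))$; moreover $\Omega_k(T)\otimes_T A\cong A^{g+1}$. Because $k$ is perfect and $A$ is reduced, $A$ is generically smooth, so $\Omega_k(A)$ is free of rank one at every minimal prime; this forces the conormal map $\mathfrak a/\mathfrak a^2\to \Omega_k(T)\otimes_T A$ to be injective, as its kernel is a torsion submodule of the free module $\mathfrak a/\mathfrak a^2\cong A^{g}$, hence zero. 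I thus obtain a short exact sequence
\[0\longrightarrow A^{g}\xrightarrow{\ \Theta\ }A^{g+1}\longrightarrow \Omega_k(A)\longrightarrow 0,\]
in which $\Theta$ is the transpose of the Jacobian matrix of $f_1,\dots ,f_g$ and $J_A=I_g(\Theta)$. Since $A$ is reduced it satisfies $R_0$, so $\htt J_A\ge 1$ by \autoref{htJ}(a); that is, $J_A$ is $\m$-primary (the case $J_A=A$ being the smooth one, where both invariants vanish).

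Next I would identify the two quantities with invariants of this matrix. As $k$ is perfect and $A$ is reduced, ${\rm T}^1(A/k)\cong \Ext^1_A(\Omega_k(A),A)$, so $\tau(A)=\lambda\big(\Ext^1_A(\Omega_k(A),A)\big)$. Dualizing the displayed sequence into $A$ and using that $A^{g+1}$ is free gives $\Ext^1_A(\Omega_k(A),A)=\Coker(\Theta^{\mathrm t}\colon A^{g+1}\to A^{g})$; its $0$‑th Fitting ideal is $I_g(\Theta)=J_A$, which is $\m$-primary, so this cokernel has finite length. The proposition therefore reduces to the length identity
\[\lambda\big(\Coker\Theta^{\mathrm t}\big)=\lambda\big(A/I_g(\Theta)\big)\]
for the $g\times(g+1)$ matrix $\Theta^{\mathrm t}$. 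For this I would pass through the torsion of $\Omega_k(A)$: since $A$ is a complete intersection it is Gorenstein of dimension one, so writing $E={\rm tor}(\Omega_k(A))$ and $\bar\Omega=\Omega_k(A)/E$, the module $\bar\Omega$ is torsion‑free of rank one, hence maximal Cohen–Macaulay, whence $\Ext^{\ge 1}_A(\bar\Omega,A)=0$. The long exact $\Ext$‑sequence then yields $\Coker\Theta^{\mathrm t}=\Ext^1_A(\Omega_k(A),A)\cong \Ext^1_A(E,A)$, and local duality for the finite‑length module $E$ over the one‑dimensional Gorenstein ring $A$ identifies this with the Matlis dual $E^{\vee}$; hence $\lambda\big(\Coker\Theta^{\mathrm t}\big)=\lambda(E)$. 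It remains to prove
\[\lambda\big({\rm tor}(\Omega_k(A))\big)=\lambda\big(A/\Fitt_1(\Omega_k(A))\big).\]

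This last equality is the main obstacle, and it is where the boundary shape of the presentation — exactly one more relation than generators — is essential. One cannot argue formally from Fitting ideals, since $\lambda(A/\Fitt_0 M)\ge \lambda(M)$ with strict inequality in general, and one cannot invoke MacRae's invariant over $A$ directly, because finite‑length modules have infinite projective dimension over the singular ring $A$. The route I would take is to pass to the normalization $\widetilde A$, a finite extension that is a product of discrete valuation rings: over a discrete valuation ring the identity is immediate from the elementary divisor theorem (Smith normal form), which simultaneously computes the torsion of the differentials and the colength of the ideal of maximal minors. The delicate point, and the crux of the whole proposition, is to transfer this computation back to $A$ through the conductor and the Kähler different, using that for a complete intersection the Kähler different coincides with the Jacobian ideal $J_A$; making the two length counts match across this transfer is the step that genuinely uses reducedness, the Cohen–Macaulay property, and the complete‑intersection hypothesis.
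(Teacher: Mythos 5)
Your reduction is correct and follows the same skeleton as the paper's proof: you build the presentation $0\to A^{g}\xrightarrow{\Theta}A^{g+1}\to\Omega_k(A)\to 0$ (i.e.\ ${\rm projdim}_A\,\Omega_k(A)\le 1$ and ${\rm rank}_A\,\Omega_k(A)=1$), identify $\tau(A)=\lambda\bigl(\Ext^1_A(\Omega_k(A),A)\bigr)$ via ${\rm T}^1(A/k)\cong \Ext^1_A(\Omega_k(A),A)$, and use Gorenstein duality (through the torsion submodule $E$ and the maximal Cohen--Macaulay quotient $\Omega_k(A)/E$) to get $\lambda\bigl(\Ext^1_A(\Omega_k(A),A)\bigr)=\lambda\bigl({\rm tor}(\Omega_k(A))\bigr)$. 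All of this is sound and matches the paper step for step.

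However, there is a genuine gap at exactly the point you flag as ``the main obstacle'': the length identity
\[
\lambda\bigl({\rm tor}(\Omega_k(A))\bigr)=\lambda\bigl(A/\Fitt_1(\Omega_k(A))\bigr)
\]
is never proved. This identity is not a formality, and it is precisely the content of the result the paper invokes as a black box, namely Storch's Satz (cited as \cite{Storch} in the paper): for a module of projective dimension at most one and rank $r$ over a one-dimensional Cohen--Macaulay local ring, the colength of the $r$-th Fitting ideal equals the length of the torsion submodule. Your proposed route --- normalize, use Smith normal form over the discrete valuation rings of $\widetilde A$, and transfer the count back through the conductor and the K\"ahler different --- is only an outline; you yourself describe the transfer step as ``the delicate point'' and ``the crux of the whole proposition'' without supplying it, and it is indeed where all the hypotheses (reducedness, complete intersection, Gorenstein symmetry of the conductor) would have to be deployed, for instance via the classical relations between the Jacobian ideal, the conductor, and $\lambda(\widetilde A/A)$. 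As written, the proof is therefore incomplete: either carry out that transfer in detail or cite Storch's theorem, which is what the paper does and which collapses the entire argument to a few lines.
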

\begin{proof} Notice that ${\rm projdim}_A \Omega_k(A) \le 1$ and $\rank_A \Omega_k(A)=\dim A=1$. Thus \cite{Storch}*{Satz} implies $\lambda(A/J_A)= \lambda({\rm tor}(\Omega_k(A)))$. Since
$A$ is Gorenstein, local duality gives $\lambda({\rm tor}(\Omega_k(A)))=\lambda(\Ext^1_A(\Omega_k(A),A))$. As $ \Ext^1_A(\Omega_k(A),A)\cong {\rm T}^1(A/k),$ the assertion now follows.
\end{proof}

The next corollary expresses the total Tjurina number of a local complete intersection curve, which is defined in terms of local invariants of the singular points, as
a global invariant, the degree of 
the singular scheme of the curve, which
can be computed without knowing the singularities. It is this global 
invariant that replaces the global Tjurina number in our estimates when the curves need not be a local complete intersection.

\begin{corollary}\label{basicC} Let $k$ be a perfect field and $\C \subset \mathbb P^{n-1}_k$ be a singular reduced curve that is locally a complete intersection. Write $R$ for the homogeneous coordinate ring of $\C$. Then
$$\tau(\C)=e(R/J_R)\, .$$
\end{corollary}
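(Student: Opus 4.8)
The plan is to reduce the global statement $\tau(\C)=e(R/J_R)$ to the local computation already carried out in \autoref{basicP}, using the local-global comparison of \autoref{basicL} and the fact that both sides are supported only on the finitely many singular points. The key observation is that the Jacobian ideal $J_R$ defines the singular scheme of $\C$, so $R/J_R$ is a module of finite length, concentrated at the singular points; hence its multiplicity $e(R/J_R)$ equals its length $\lambda(R/J_R)$, which decomposes as a sum of local lengths over $\Sing(\C)$.

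First I would note that since $\C$ is a reduced curve, $R$ is a one-dimensional ring (projectively; the affine cone has dimension two), and the singular locus is a finite set of closed points. The Jacobian ideal $J_R$ has positive height — indeed $R/J_R$ is Artinian — because $\C$ is reduced and generically smooth. Therefore $e(R/J_R)=\lambda(R/J_R)$, and by additivity of length over the (finitely many) maximal ideals in the support,
\[
\lambda(R/J_R)=\sum_{p\in\Sing(\C)}\lambda\bigl((R/J_R)_{\p_p}\bigr)=\sum_{p\in\Sing(\C)}\lambda\bigl(R_{\p_p}/J_R R_{\p_p}\bigr),
\]
where $\p_p$ is the homogeneous prime of $R$ corresponding to the point $p$.

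Next I would pass from $R_{\p_p}$ to the local ring $\OO_{\C,p}$. By \autoref{basicL}(a) and (c), writing $T=R_{\p_p}$ and $A=\OO_{\C,p}$, we have $T\cong A(x)$ for a transcendental $x$ and $J_{T/k}=J_{A/k}T$, so $T/J_T\cong (A/J_A)(x)=(A/J_A)\otimes_A A(x)$. Since $A\to A(x)=T$ is a faithfully flat extension with the maximal ideal generated by that of $A$ (localization of a polynomial extension in one variable), the length is preserved: $\lambda_T(T/J_T)=\lambda_A(A/J_A)$. Because $\C$ is locally a complete intersection, each $A=\OO_{\C,p}$ is a reduced complete intersection of dimension one with algebraic residue field extension over the perfect field $k$, so \autoref{basicP} applies and gives $\lambda_A(A/J_A)=\tau(A)=\tau(\OO_{\C,p})$.

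Combining these identities yields
\[
e(R/J_R)=\sum_{p\in\Sing(\C)}\lambda_A(A/J_A)=\sum_{p\in\Sing(\C)}\tau(\OO_{\C,p})=\tau(\C),
\]
which is the claim. The only step requiring care — and the one I expect to be the main technical point rather than a serious obstacle — is the length-preservation under the extension $A\to A(x)$ in \autoref{basicL}(a); one must check that this extension is flat local with $\m_A A(x)=\m_T$ so that lengths of finite-length modules are unchanged. This is routine given that $x$ is transcendental and we localize at the prime generated by $\m_A$, but it is the place where the passage from the global ring to the local cotangent data is genuinely used.
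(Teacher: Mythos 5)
Your local reduction is the right one, but the global step contains a genuine error. The ring $R$ is the homogeneous coordinate ring of the \emph{projective} curve $\C$, i.e.\ the affine cone, a two\-/dimensional graded ring; a singular point $p \in \Sing(\C)$ corresponds to a homogeneous prime $\p_p \subset R$ of \emph{height one}, not to a maximal ideal of $R$. Consequently $R/J_R$ is not Artinian: since $\C$ is singular, $V(J_R)$ contains the height-one primes $\p_p$, so $\dim R/J_R = 1$ and $\lambda(R/J_R) = \infty$. Both your claim that $e(R/J_R)=\lambda(R/J_R)$ and the proposed decomposition of this length over ``the maximal ideals in the support'' are therefore false. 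Note also that the support of $R/J_R$ typically contains the vertex $\m$, since the cone is singular there even when $\C$ is smooth; this is exactly why the hypothesis that $\C$ be singular is needed, and your argument never uses it --- applied to a smooth conic it would yield $e(R/J_R)=0$, whereas in fact $R/J_R=k$ in that case.

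The correct global-to-local step --- and this is what the paper does --- is the associativity formula for multiplicity recalled in \autoref{SecIN}: since $\dim R/J_R=1$, one has $e(R/J_R)=\sum_{\p}\lambda_{R_\p}\bigl((R/J_R)_\p\bigr)\cdot e(R/\p)$, the sum running over the primes of maximal dimension in $\Supp(R/J_R)$, namely the height-one primes $\p_p$ with $p\in\Sing(\C)$; the zero-dimensional part of the support at $\m$ contributes nothing. (The factor $e(R/\p_p)$ is the degree of the point $p$ and equals $1$ when $k$ is algebraically closed.) With this replacement, the rest of your argument --- the identification $(R/J_R)_{\p_p}\cong (A/J_A)(x)$ with $A=\OO_{\C,p}$ via \autoref{basicL}, the preservation of length under the flat local extension $A\to A(x)$, and then \autoref{basicP} --- is precisely the paper's proof.
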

\begin{proof} Write $\m$ for the homogenous maximal ideal of $R$.  In this case we have
\begin{eqnarray*}
\tau(\C)&=&  \hspace{ .2 cm}  \sum_{p \in {\rm Sing}(\C)} \lambda_{{\mathcal O_{\C, p}}} ({\mathcal O_{\C, p}}/J_{{\mathcal O_{\C, p}}})  \hspace{.9 cm}  \text{by \autoref{basicP}}\\
&=& \sum_{\p \in V(J_R)\setminus \{\m\}} \lambda_{R_{\p}} (R_{\p}/J_{R_{\p}})  \hspace{1.4 cm}  \text{by \autoref{basicL}(c)}\\
&=& \hspace{ .2 cm} e(R/J_R)  \hspace{3.85 cm}  \text{by the associativity formula for multiplicity.}
\end{eqnarray*}
\end{proof}

\noindent
{\it Loewy multiplicity.} 
Besides the multiplicity of the homogeneous coordinate ring of a curve modulo its Jacobian ideal, we will 
also consider what we call the Loewy multiplicity, which is defined by replacing length by
Loewy length in the associativity formula for multiplicity. 

The {\it Loewy length} of a module $M$ of finite length over a local ring $(A, \mathfrak m)$ is the smallest integer $s\ge 0$ so that
$\m^sM=0$. The Loewy length satisfies the inequality $\ell\ell(M) \leq \lambda(M)$, which is an equality if and only if $M$ and $\m M$ are cyclic if
and only if every submodule of $M$ is cyclic.
We will use a strengthening of this inequality:

\begin{proposition}\label{lmult} Let $A$ be the local ring of a point on a reduced plane curve over a perfect field and write $e=e(A)$. Then 
$$  \ell\ell(A/J_A)\le \lambda(A/J_A)-{e-1\choose 2}\, .
$$
\end{proposition}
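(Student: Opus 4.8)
The plan is to reduce the statement to an elementary computation of the $\mathfrak{m}$-adic Hilbert function of $A/J_A$ in low degrees. Since $A/J_A$ has finite length (the singularity of the reduced curve $\C$ is isolated), I may pass to the completion and assume $\wh{A}=L[[x,y]]/(f)$, where $L$ is the residue field of $A$ — a finite extension of the perfect field $k$, hence separable, so that $\Omega_k(L)=0$ — and $f$ is a power series of order $e=e(A)$ in the maximal ideal $\mathfrak{n}=(x,y)$ of $L[[x,y]]$. With these coordinates the conormal sequence presents $\Omega_k(\wh{A})$ by the column $(f_x,f_y)^{t}$, so $J_A=(f_x,f_y)\wh{A}$ and therefore $B:=A/J_A\cong L[[x,y]]/(f,f_x,f_y)$. (This is precisely where perfectness of $k$ enters: it guarantees that no spurious differentials from $L$ contribute to the Jacobian ideal.)

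The key observation, valid in every characteristic, is that $J_A\subseteq\mathfrak{m}^{e-1}$, where $\mathfrak{m}$ denotes the maximal ideal of $A$: indeed $f\in\mathfrak{n}^{e}$ forces $f_x,f_y\in\mathfrak{n}^{e-1}$. Writing $h_i=\lambda\!\left(\mathfrak{m}_B^{\,i}/\mathfrak{m}_B^{\,i+1}\right)$ for the Hilbert function of the artinian local ring $B$, I would next show that $h_i=i+1$ for $0\le i\le e-2$. For such $i$ one has $J_A\subseteq\mathfrak{m}^{e-1}\subseteq\mathfrak{m}^{i+1}$, so the Jacobian ideal does not yet contribute and $\mathfrak{m}_B^{\,i}/\mathfrak{m}_B^{\,i+1}\cong\mathfrak{m}^{i}/\mathfrak{m}^{i+1}$; moreover $(f)\cap\mathfrak{n}^{i}\subseteq\mathfrak{n}^{e}\subseteq\mathfrak{n}^{i+1}$ gives $\mathfrak{m}^{i}/\mathfrak{m}^{i+1}\cong\mathfrak{n}^{i}/\mathfrak{n}^{i+1}$, a space of $L$-dimension $i+1$.

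It then remains to package this into the asserted inequality. Let $t=\max\{i:h_i\neq0\}$, so that $\ell\ell(B)=t+1$ while $\lambda(B)=\sum_{i=0}^{t}h_i$. Since $B$ is local artinian, $h_i\ge1$ for every $0\le i\le t$ (if $\mathfrak{m}_B^{\,i}=\mathfrak{m}_B^{\,i+1}$ then $\mathfrak{m}_B^{\,i}=0$ by Nakayama). Assuming $e\ge2$, the computation above gives $h_{e-2}=e-1\ge1$, hence $t\ge e-2$, and therefore
\[
\lambda(A/J_A)-\ell\ell(A/J_A)=\sum_{i=0}^{t}(h_i-1)\ \ge\ \sum_{i=0}^{e-2}(h_i-1)=\sum_{i=0}^{e-2}i=\binom{e-1}{2},
\]
which is exactly the claim; the case $e=1$ of a smooth point is trivial, since then $J_A=A$ and $B=0$.

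The argument is essentially complete once the low-degree Hilbert function is pinned down, so the only genuine point requiring care is the identification $J_A=(f_x,f_y)\wh{A}$ together with the containment $J_A\subseteq\mathfrak{m}^{e-1}$ — and this is where the hypotheses are used, perfectness of $k$ to compute $J_A$ from the $x,y$-derivatives alone and reducedness of $\C$ to ensure $B$ has finite length. I expect the inequality $h_i\ge1$ for $i\le t$ and the bookkeeping identity $\sum_{i=0}^{e-2}i=\binom{e-1}{2}$ to be routine; as a consistency check, the bound is sharp precisely for ordinary $e$-fold points, where $h_i-1=0$ for all $i\ge e-1$.
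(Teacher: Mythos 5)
Your proof is correct and follows essentially the same route as the paper's: both rest on the observation that $f\in\n^{e}$ forces $(f_x,f_y)\subseteq \n^{e-1}$, so the Hilbert function of $A/J_A$ agrees with that of the ambient regular local ring in degrees $\le e-2$ (namely $h_i=i+1$), whence $\lambda(A/J_A)-\ell\ell(A/J_A)\ge \sum_{i=0}^{e-2} i=\binom{e-1}{2}$. The only differences are cosmetic: the paper works directly with $A=k[x,y]_{(x,y)}/(f)$, while you pass to the completion to handle non-rational closed points — a legitimate refinement, though strictly one should either compute $J_A=(f_x,f_y)A$ before completing and use $A/J_A\cong \widehat{A}/J_A\widehat{A}$, or invoke the universally finite module of differentials of $\widehat{A}$, since the full Kähler module $\Omega_k(\widehat{A})$ of a complete local ring is not presented by that Jacobian column.
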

\begin{proof} We may assume that $A=S/(f)\, ,$ where $S=k[x,y]_{(x,y)}\, .$ Let $\n$ be the maximal ideal of $S$ and $\m$ be the maximal ideal of $B:=A/J_A\, .$ We have $f\in \n^e$ and so $(\frac{\partial f}{\partial x}, \frac{\partial f}{\partial y})\subset \n^{e-1}\, .$ It follows that $\m^i/\m^{i+1}\cong \n^i/\n^{i+1}\cong k^{i+1}$ for $i\le e-2\, .$ Therefore 
$$\lambda(A/J_A) -\ell\ell(A/J_A)\ge \sum_{i=0}^{e-2} (\lambda(\m^i/\m^{i+1}) -1) \ge {e-1\choose 2}\, .
$$
\end{proof}

Let $M$ be a finitely generated module over a Noetherian ring $R$, where either $R$ is local or else $M$ is graded and $R$ is positively 
graded over an Artinian local ring. Recall that 
$$e(M)=\sum \lambda(M_{\p}) \cdot e(R/\p)\, ,$$ 
where $\p$ ranges over all
prime ideals of maximal dimension in ${\rm Supp}(M)$. Analogously, we define the {\it Loewy multiplicity} of
$M$ as
$${\rm Lmult}(M):=\sum \ell \ell (M_{\p}) \cdot e(R/\p)\, .$$ 
Clearly, ${\rm Lmult}(M) \leq e(M)$ and equality holds if and only if for every $\p$ as above, every $R_{\\p}$-submodule of $M_{\p}$ is cyclic.

\vspace{.2cm}

\noindent
{\it Singularity degree and genus.} If $A$ is an analytically unramified Noetherian local ring, with integral closure $\overline A$, then the $A$-module $\overline A/A$ has finite length if
and only if $A$ is normal locally on the punctured spectrum. In this case, $\sigma(A):=\lambda(\overline A/A)$ is called the {\it singularity degree} of $A$. The singularity 
degree of a reduced curve $\mathcal C \subset \mathbb P^{n-1}_k$ is defined as
$$\sigma(\mathcal C)=\sum_{p\in \Sing(\C)} \sigma(\mathcal O_{\mathcal C, p}) \,.
$$
An argument as in the proof of \autoref{basicC} shows that $\sigma(\mathcal C)=e(\overline R/R)$ if $\C$ is singular, where $R$ is the homogeneous coordinate ring of $\mathcal C$.

The singularity degree of a curve is closely related to its arithmetic and geometric genus. Let $X \subset \mathbb P_k^{n-1}$ be a reduced subscheme over
a field $k$, with homogeneous coordinate ring $R$. Let $\m$ be the homogeneous maximal ideal and $\p _1, \ldots, \p_s$ the minimal primes of $R$, and
write $R_i=R/\p _i$. Each $\overline{R_i}$ are positively graded algebra over a finite field extension $k_i$ of $k$, and $\overline R$ is a positively
graded algebra over the Artinian ring $K:=\times k_i$. A suitable Veronese subring of $\overline R$ is a standard graded algebra over $K$ and is the
homogeneous coordinate ring of the normalization of $X$ embedded into a projective space over $K$.

We describe, in passing, an embedding of the normalization into a projective space over the field $k$, when $k$ is algebraically closed. In this case, $k_i=k$
and we may define the natural projections
${\pi}_i: \overline{R_i} \twoheadrightarrow k$.
We consider the fiber product $\widetilde R := \{ (a_i) \in \times \, \overline{R_i} \ | \ {\pi}_i(a_i)={\pi}_j(a_j) \ \forall  \ i \neq j \}$. One 
has $R \subset \widetilde R \subset \overline R$ and $\m \overline R \subset \widetilde R$. The ring $\widetilde R$ is a positively graded algebra 
over the field $k$, and one sees that a Veronese subring of $\widetilde R$ is the homogeneous coordinate ring of the normalization of $X$ embedded into
 a projective space over $k$. If $X$ is equidimensional, then $\omega_{\widetilde R} \cong \omega_{\overline R}:= \times \omega_{\overline{R_i}}$. Also
 notice that the degree zero component of the canonical module is unchanged by passing to 
a Veronese subring.

Now let $\mathcal C \subset \mathbb P_k^{n-1}$ be a reduced curve over a field, with homogeneous coordinate ring $R$.
The {\it arithmetic genus}  $p_a$ of
$\mathcal C$ is 1 minus the constant term of the Hilbert polynomial of $R$. If $k$ is algebraically closed, the {\it geometric genus}  $p_g$ of
$\mathcal C$ can be defined as $\dim_k [\omega_{\overline R}]_0=\dim_k  [\omega_{\widetilde R}]_0$.
 
 \smallskip
 



The following lemma and proposition are well known, we give a proof for the convenience of the reader.

\begin{lemma}\label{pa}
Let $\C\subset \mathbb P_k^n$ be a reduced arithmetically Cohen-Macaulay curve over a field $k$, with arithmetic genus $p_a$ and homogeneous coordinate ring $R$. Then 
$$p_a= \dim_k [\omega_R]_0\, .
$$
\end{lemma}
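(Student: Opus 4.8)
The plan is to compare the Hilbert function of $R$ with its Hilbert polynomial through local cohomology, and then to convert the relevant local cohomology dimension into a dimension of a graded piece of $\omega_R$ by graded local duality. First I would record the basic structure: since $\C$ is a reduced curve in $\mathbb P_k^n$, its homogeneous coordinate ring $R$ is a standard graded $k$-algebra with $R_0=k$ and $\dim R=\dim \C+1=2$, and the arithmetically Cohen--Macaulay hypothesis means precisely that $R$ is Cohen--Macaulay, so $\depth R=\dim R=2$. Consequently $\HH^i_\m(R)=0$ for every $i\neq 2$; it is exactly this vanishing of $\HH^1_\m(R)$ that will make the formula come out clean.

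Next I would invoke the Grothendieck--Serre formula relating the Hilbert function and the Hilbert polynomial $P_R$ to local cohomology: for every integer $n$,
\[
\dim_k R_n - P_R(n) = \sum_{i\geq 0}(-1)^i \dim_k [\HH^i_\m(R)]_n \, .
\]
Because only $\HH^2_\m(R)$ survives and $(-1)^2=1$, the right-hand side collapses to $\dim_k[\HH^2_\m(R)]_n$. Evaluating at $n=0$ and using $\dim_k R_0=1$ gives
\[
1 - P_R(0) = \dim_k [\HH^2_\m(R)]_0 \, .
\]
Since by definition $p_a = 1 - P_R(0)$, it remains only to identify $\dim_k[\HH^2_\m(R)]_0$ with $\dim_k[\omega_R]_0$.

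For this last identification I would apply graded local duality. As $R$ is Cohen--Macaulay of dimension $2$, the graded canonical module satisfies $\omega_R \cong \grHom_k(\HH^2_\m(R),k)$, with the grading convention $[\omega_R]_j \cong \Hom_k([\HH^2_\m(R)]_{-j},k)$. Taking $j=0$ yields $\dim_k[\omega_R]_0 = \dim_k[\HH^2_\m(R)]_0$, which together with the previous display gives $p_a = \dim_k[\omega_R]_0$, as desired.

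The computation is essentially routine, so the only genuine points of care are conceptual rather than technical. The arithmetically Cohen--Macaulay hypothesis is used precisely to force $\HH^1_\m(R)=0$: without it the Serre formula at $n=0$ would instead read $p_a = \dim_k[\omega_R]_0 - \dim_k[\HH^1_\m(R)]_0$, so the asserted equality would fail by exactly the deficiency $\dim_k[\HH^1_\m(R)]_0$. The main (minor) obstacle is therefore to keep the grading bookkeeping in local duality precise and to confirm that it is the degree-$0$ strand that is self-paired under the duality, so that the two dimensions agree on the nose.
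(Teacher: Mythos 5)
Your proof is correct and follows essentially the same route as the paper: the paper's proof is exactly the chain $p_a = h(0)-p(0) = \dim_k[\HH^2_{\m}(R)]_0 = \dim_k[\omega_R]_0$, citing the Grothendieck--Serre formula for the second equality and graded local duality for the third. Your additional care with the vanishing $\HH^1_{\m}(R)=0$ (from Cohen--Macaulayness) and the grading convention in the duality is exactly what makes those two steps legitimate, so nothing is missing.
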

\begin{proof}
Let $\m$ be the homogeneous maximal ideal of $R,$ and let $h$ and $p$ denote the Hilbert function and the Hilbert polynomial of $R$, respectively. One has
$$p_a=h(0)-p(0)=\dim_k [H^2_{\m}(R)]_0=\dim_k [\omega_R]_0\, ,
$$
where the second equality follows from the Grothendieck-Serre formula and the third equality is a consequence of local duality. \end{proof}

\begin{proposition}\label{genus2} Let $k$ be an algebraically closed field  and  $\C\subset \mathbb P_k^{n-1}$ be a reduced curve with $s$ irreducible components, arithmetic genus $p_a$, geometric genus $p_g, $ and singularity degree $\sigma$. One has 
 $$ p_a-p_g=\sigma -s+1\, .
 $$
  \end{proposition}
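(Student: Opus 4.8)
The plan is to compare the constant terms of the Hilbert polynomials of $R$ and of its normalization $\overline R$, reading off the four invariants $p_a,p_g,\sigma,s$ from these constant terms. Writing $P_R(t)=e(R)\,t+c(R)$ and $P_{\overline R}(t)=e(\overline R)\,t+c(\overline R)$ (both linear, since $\dim R=\dim \overline R=2$), the definition of arithmetic genus gives $c(R)=1-p_a$. I would extract $p_g$ and $s$ from $c(\overline R)$, extract $\sigma$ from the difference $c(\overline R)-c(R)$, and then equate the two expressions for $c(\overline R)$.

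For the first computation, recall that $\overline R=\times_i \overline{R_i}$ is a product of $s$ normal graded domains of dimension two, hence a Cohen--Macaulay ring, with $\dim_k[\overline R]_0=s$ because $k$ is algebraically closed (so each $[\overline{R_i}]_0=k$). Viewing $\overline R$ as a maximal Cohen--Macaulay graded $R$-module, the Grothendieck--Serre formula at degree zero reads $\dim_k[\overline R]_0-P_{\overline R}(0)=\dim_k[H^2_{\m}(\overline R)]_0$, since $H^0_{\m}(\overline R)=H^1_{\m}(\overline R)=0$. Graded local duality identifies $\dim_k[H^2_{\m}(\overline R)]_0$ with $\dim_k[\omega_{\overline R}]_0=p_g$, the latter equality being the definition of geometric genus. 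Hence $c(\overline R)=P_{\overline R}(0)=s-p_g$.

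For the second computation, I use the short exact sequence $0\to R\to \overline R\to \overline R/R\to 0$ and the additivity of Hilbert polynomials. The module $\overline R/R$ is supported on the non-normal locus of $\Spec R$; when $\C$ is singular this locus is one-dimensional and the Hilbert polynomial of $\overline R/R$ is the constant $e(\overline R/R)=\sigma$ (as observed earlier), while when $\C$ is smooth $\overline R/R$ has finite length and the constant is $0=\sigma$. In either case $P_{\overline R}(t)-P_R(t)=\sigma$, so $c(\overline R)=c(R)+\sigma=(1-p_a)+\sigma$. Combining this with $c(\overline R)=s-p_g$ gives $(1-p_a)+\sigma=s-p_g$, which rearranges to $p_a-p_g=\sigma-s+1$.

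The routine steps are the additivity of Hilbert polynomials and the degree-zero Grothendieck--Serre and local duality bookkeeping; the point requiring the most care is the behavior of $\overline R/R$. I must verify that its Hilbert polynomial is exactly the constant $\sigma$: this rests on the fact that $\overline R/R$ is genuinely one-dimensional precisely when $\C$ is singular, so that its multiplicity (computed only from the one-dimensional primes, namely the cones over the singular points) equals $\sigma=\sum_{p\in \Sing(\C)}\sigma(\mathcal O_{\C,p})$, with the smooth case absorbed since then both sides vanish. Once this dichotomy is handled uniformly, the two expressions for $c(\overline R)$ close the argument.
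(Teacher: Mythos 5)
Your proof is correct and follows the same skeleton as the paper's: both arguments compare constant terms of Hilbert polynomials along the exact sequence $0 \to R \to \overline{R} \to \overline{R}/R \to 0$, identifying these constants as $1-p_a$, $s-p_g$, and $\sigma$, respectively, and then invoke additivity. The one genuine difference lies in how the middle term $s-p_g$ is obtained: the paper first passes to Veronese subrings so that $R$ and $\overline{R}$ become standard graded, and then applies \autoref{pa} to each component $\overline{R_i}$, whereas you apply the Grothendieck--Serre formula and graded local duality directly to $\overline{R}$ viewed as a maximal Cohen--Macaulay graded module over the standard graded ring $R$ (using $H^0_{\m}(\overline{R})=H^1_{\m}(\overline{R})=0$ and $\dim_k[\overline{R}]_0=s$). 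This is legitimate---both tools hold for arbitrary finitely generated graded $R$-modules, and local cohomology supported at $\m$ agrees with that supported at $\m\overline{R}$---and it neatly sidesteps the fact that $\overline{R}$ need not be standard graded, which is precisely the issue the paper's Veronese reduction is there to address; the price is that you must know local duality in this module-theoretic form, while the paper only uses it for the standard graded rings $\overline{R_i}$ through \autoref{pa}. Your handling of the dichotomy for $\overline{R}/R$ (constant Hilbert polynomial $\sigma$ when $\C$ is singular, finite length when $\C$ is smooth) matches the paper's earlier observation that $\sigma(\C)=e(\overline{R}/R)$ and is complete.
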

 \begin{proof} Let $R$ be the homogeneous coordinate ring of $\C$ and $\overline{R}$ its integral closure.  We may assume that $R$ and $\overline{R}$ are standard graded after passing to Veronese subrings; this does not change the local rings of $\C$, the constant term of the Hilbert polynomial of $R$, which is $1-p_a$, and the degree zero component of $\omega_{\overline{R}}\, .$ 
So $p_g=\dim_k [\omega_{\overline{R}}]_0\, .$
 
 We compare the constant terms of the Hilbert  polynomials of the graded $R$-modules in the  exact sequence 
$$0\lto R\lto \overline{R} \lto \overline{R}/R \lto 0\, .
$$
The constant term of the Hilbert polynomial of $R$ is $1-p_a$. One has $\overline{R}=\times_{i=1}^{s} \overline{R_i}$, where $\overline{R_i}$ are standard graded Cohen-Macaulay algebras over $k$. Applying \autoref{pa} we see that the constant term of the Hilbert polynomial of $\overline{R}$ is 
$$\sum_{i=1}^{s} (1-\dim_k[\omega_{\overline{R_i}}]_0)=s-\dim_k[\omega_{\overline{R}}]_0=s-p_g\, .$$ Finally, the constant term of the Hilbert polynomial of $\overline{R}/R$ is 
$\sigma$. Now the additivity of the Hilbert polynomial in short exact sequences implies that 
 $$s-p_g=1-p_a+\sigma,$$
 as claimed. 
  \end{proof}

\medskip

\section{Lower bounds for hypersurfaces and curves with at most planar singularities}\label{SecLL}

One of the main results of this section is an estimate for the degree of a vector field in terms of the $a$-invariant of $R$,
the number of singular points, and the Loewy multiplicity modulo the Jacobian ideal, see \autoref{curves}.
It says that if $\mathcal C$ has only plane singularities, then

\vspace{.26 cm}
\centerline{
$
\findeg (\Der_k(R)/\m^{-1}\varepsilon )\geq a(R) +1 + | {\rm Sing}(\C) | - {\rm Lmult}(R/ {\rm Jac}(R)) \, .
$}
\vspace{.26cm}

\noindent
 If $\mathcal C$ has only ordinary nodes as singularities, then 
$|{\rm Sing}(\C) | - {\rm Lmult}(R/ {\rm Jac}(R))=0$ and we obtain
the inequality $\indeg (\Der_k(R)/\m^{-1}\varepsilon )\geq a(R) +1$, which
we prove to be an equality when $\mathcal C$ is arithmetically Gorenstein, see \autoref{Equality}.
The case of ordinary nodes  had
been treated before with the additional assumption that, first, $\C$ is a plane curve \cite{CL},
then, $\C$ is a complete intersection \cite{CCF}, and, finally, $\C$
is arithmetically Cohen-Macaulay \cites{E, EK2002}.

\begin{proposition}\label{syz} 
Let $S=k[x_1,\ldots,x_n]$ be a polynomial ring in $n$ variables over a field and let $\m_S$ denote its maximal homogeneous ideal.   
 Let $f$ be a homogeneous polynomial of degree $d$ and assume that $d$ is not a multiple of the characteristic. Denote the partial derivatives of $f$ by $f_1, \ldots, f_n$ and 
 let $\B$, $\Z$, and $\H$ be the modules of first boundaries, cycles, and homology of the Koszul complex of $f_1, \ldots, f_n$. Write $R=S/(f)$. 
\begin{enumerate}[$($a$)$]
 \item There are natural  epimorphisms of homogeneous $S$-modules
$$\begin{tikzcd}
\Z (d) \arrow[two heads]{r}
  &  \Der_k(R)/R\varepsilon  \arrow[two heads]{r} & (\Z/\m_S \B)(d) \end{tikzcd};$$
in particular,
$$ k \otimes_S \Z(d)\cong k \otimes_R  \Der_k(R)/R\varepsilon$$
 \item $$\mu(\Der_k(R)/R\varepsilon) =\beta^2_S\left(\frac{S}{(f_1, \ldots, f_n)}\right)\le (d-1)^n$$
\item $$\indeg (\Der_k(R)/R\varepsilon)  = \indeg (\Z)-d=\min \{  d-2, \, \indeg (\H)-d\} .$$
\end{enumerate}
\end{proposition}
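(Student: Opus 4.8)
The plan is to realize $\Der_k(R)$ as a module of syzygies and then to transport syzygies between $R$ and $S$ by means of the Euler relation, which is available precisely because $d$ is a unit in $k$.

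\emph{Part (a).} Since $f$ is a nonzerodivisor, a $k$-derivation $\sum a_i\partial_{x_i}$ of $S$ descends to $R$ exactly when $\sum a_if_i\in(f)$, and two lifts of a derivation of $R$ differ by an element of $f\Der_k(S)$; reducing the coefficients modulo $f$ this identifies $\Der_k(R)$ with $\ker\big(R^n\xrightarrow{(\bar f_1,\dots,\bar f_n)}R\big)$, the syzygies of the partials over $R$, with $\varepsilon$ corresponding to $(\bar x_1,\dots,\bar x_n)$ (a syzygy precisely by the Euler formula $\sum x_if_i=d\,f$). Reduction modulo $f$ carries a Koszul cycle in $\Z$ to such a syzygy; since the basis vectors $\epsilon_i$ of $K_1$ sit in degree $d-1$ while $\partial_{x_i}$ has degree $-1$, this map drops degrees by $d$, so after twisting it is a natural homogeneous map $\Z(d)\to\Der_k(R)/R\varepsilon$. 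For surjectivity I lift a syzygy over $R$ to $(b_i)$, write $\sum b_if_i=cf$, and use $f=d^{-1}\sum x_if_i$ to replace $(b_i)$ by $(b_i-d^{-1}cx_i)\in\Z$, which has the same class modulo $R\varepsilon$.

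\emph{Kernel and the second epimorphism.} The crux is to compute the kernel $\mathcal K$ of $\Z\to\Der_k(R)/R\varepsilon$ and show $\mathcal K\subseteq\m_S\B$. An element of $\mathcal K$ satisfies $a_i=rx_i+fc_i$ (its reduction lies in $R\varepsilon$) together with $\sum a_if_i=0$; the latter forces $r=-d^{-1}\sum_jc_jf_j$, and substituting $f=d^{-1}\sum_lx_lf_l$ yields $a_i=d^{-1}\sum_l(x_lc_i-x_ic_l)f_l$. The decisive step is to antisymmetrize, writing the cycle $a=\sum_ia_i\epsilon_i$ as $a=(2d)^{-1}\sum_{i,l}(x_lc_i-x_ic_l)(f_l\epsilon_i-f_i\epsilon_l)$, which displays $a$ as an $\m_S$-linear combination of the Koszul boundaries $f_l\epsilon_i-f_i\epsilon_l\in\B$, the coefficients $x_lc_i-x_ic_l$ lying in $\m_S$. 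Thus $\mathcal K\subseteq\m_S\B$ and $\Der_k(R)/R\varepsilon=\Z(d)/\mathcal K(d)$ surjects onto $(\Z/\m_S\B)(d)$. I expect this antisymmetrization to be the main obstacle. The ``in particular'' is then formal: $k\otimes_S(\Z/\m_S\B)=\Z/\m_S\Z=k\otimes_S\Z$, so the composite of the two epimorphisms is an isomorphism after applying $-\otimes k$, forcing both to be isomorphisms there.

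\emph{Parts (b) and (c).} Applying $-\otimes k$ to the isomorphism of (a) gives $\mu(\Der_k(R)/R\varepsilon)=\dim_k(k\otimes_S\Z)=\mu_S(\Z)$; granting that the $f_i$ minimally generate their ideal (automatic unless they are $k$-linearly dependent), $\mu_S(\Z)$ equals $\beta^2_S(S/(f_1,\dots,f_n))$, and the bound $\le(d-1)^n$ I would obtain by comparing this Betti number with the length $(d-1)^n$ of the Milnor algebra $S/(f_1,\dots,f_n)$, finite when the partials form a system of parameters; this estimate is the most delicate point of (b). For (c), the same isomorphism and the fact that the initial degree is detected on $-\otimes k$ give $\indeg(\Der_k(R)/R\varepsilon)=\indeg\Z-d$. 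Finally the exact sequence $0\to\B\to\Z\to\H\to0$ gives $\indeg\Z=\min\{\indeg\B,\indeg\H\}$, and since $\B$ is generated by the boundaries $f_l\epsilon_i-f_i\epsilon_l$ in degree $2(d-1)$ one has $\indeg\B=2d-2$ (the degenerate case of at most one nonzero partial being checked directly); subtracting $d$ yields $\indeg\Z-d=\min\{d-2,\indeg\H-d\}$.
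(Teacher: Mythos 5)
Your proofs of part (a), part (c), and the equality in part (b) follow the paper's own argument almost step for step: you identify $\Der_k(R)$ with the syzygies of $\bar f_1,\dots,\bar f_n$ over $R$ (the paper gets this by dualizing the presentation $R(-d)\to R^n(-1)\to\Omega_k(R)\to 0$), you use the Euler relation exactly as the paper does to prove surjectivity of $\Z(d)\to\Der_k(R)/R\varepsilon$, you reach the same identity $a_i=d^{-1}\sum_l(x_lc_i-x_ic_l)f_l$ for elements of the kernel, and you deduce the equality in (b) and part (c) from the isomorphism $k\otimes_S\Z(d)\cong k\otimes_R(\Der_k(R)/R\varepsilon)$ in the same way. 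One slip in (a): your antisymmetrization $a=(2d)^{-1}\sum_{i,l}(x_lc_i-x_ic_l)(f_l\epsilon_i-f_i\epsilon_l)$ divides by $2$, which is not permitted when ${\rm char}\,k=2$ (the hypothesis only excludes ${\rm char}\,k\mid d$, so characteristic $2$ with $d$ odd is allowed). The repair is what the paper does: the coefficient matrix $(x_lc_i-x_ic_l)$ is alternating, so one may write $a=d^{-1}\sum_{i<l}(x_lc_i-x_ic_l)(f_l\epsilon_i-f_i\epsilon_l)$, with no factor of $2$.

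The genuine gap is the inequality $\beta^2_S\bigl(S/(f_1,\dots,f_n)\bigr)\le(d-1)^n$ in part (b). Your plan is to bound this Betti number by the length of the Milnor algebra $S/(f_1,\dots,f_n)$, but that mechanism is invalid: Betti numbers of Artinian algebras are not bounded by their lengths. Already for $(f_1,\dots,f_n)=\m_S$, which is the Milnor algebra of a smooth quadric, one has $\beta^2_S(S/\m_S)=\binom{n}{2}$ while the length is $1=(d-1)^n$ (incidentally, this shows the stated inequality itself cannot hold for $d=2$ and $n\ge 3$, so any proof must rest on hypotheses carried by the result the paper cites). Furthermore, the comparison you invoke is available only when $f_1,\dots,f_n$ form a system of parameters, i.e.\ when the hypersurface $\Proj(R)$ is smooth; if it is singular, the Milnor algebra has positive dimension and infinite length, and your argument says nothing at all. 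The paper does not prove this inequality from scratch either: it quotes it from \cite{DaHS}*{3.10}. So, as written, the inequality in (b) remains unproved in your proposal, and the specific route you sketch cannot be made to work; everything else is correct modulo the characteristic-$2$ fix.
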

\begin{proof} 
Dualizing the exact sequence
$$\begin{tikzcd} R(-d) \arrow{rr}{[f_1,\ldots,f_n]^t} &&R^n(-1) \arrow{r} &\Omega_k(R)\arrow{r}&0
 \end{tikzcd} $$
into $R$, we obtain
\begin{equation}\label{hamid1}\begin{tikzcd} 0\arrow{r}&\Der_k(R) \arrow{r} &\oplus_{i=1}^n R \, \partial/\partial x_i  \arrow{rr}{[f_1,\ldots,f_n]} && R(d)
 \end{tikzcd} .\end{equation}
There is a homogeneous map $$ \varphi: \Z(d) \lto \Der_k(R)$$ induced by the diagram
\begin{equation}\label{Hamid}\begin{tikzcd} 
0\arrow{r}&\Der_k(R) \arrow{r} &\oplus_{i=1}^n R \, \partial/\partial x_i  \arrow{rr}{[f_1,\ldots,f_n]} && R(d)\\
0\arrow{r}& \Z(d) \arrow{r} \arrow[u, "\varphi"]& \oplus_{i=1}^n Se_i\arrow{rr}{[f_1,\ldots,f_n]}\arrow[u, twoheadrightarrow]  &&S (d) \arrow[u, twoheadrightarrow]
 \end{tikzcd} \, 
 \end{equation}
 where the homogeneous basis elements $e_i$ have degree $-1$. 
 
We claim that the composition $\psi$ 
$$\begin{tikzcd}
\Z(d) \arrow{r}
  &\Der_k(R)  \arrow[two heads]{r} & \Der_k(R)/R\varepsilon 
    \end{tikzcd}
$$
 is surjective. Let $g=\sum b_i\, \partial/\partial x_i\in \Der_k(R)$. According to \autoref{hamid1}, $\sum b_if_i=cf$ for some $c\in S$. Using the Euler relation $f=\frac{1}{d} (x_1f_1+\ldots+x_nf_n)$, we obtain $\sum (b_i-\frac{c}{d}x_i)f_i=0$. Therefore $\sum (b_i-\frac{c}{d}x_i) e_i \in \Z(d)$  and 
 $\varphi (\sum (b_i-\frac{c}{d}x_i)e_i) =g -\frac{c}{d} \varepsilon.$

To show part (a) it suffices to show  that $\ker \, \psi \subset \m_S \B$. The diagram \autoref{Hamid} shows that \begin{equation}\label{hamid2}\ker \psi= (fS^n+ S\sum_{i=1}^n x_ie_i) \cap \Z(d)\, .\end{equation}
 
Let
$z=\sum_{i=1}^n\alpha_i e_i \in \ker \, \psi.$ According to \autoref{hamid2} there exists $a_1, \ldots, a_n, b$ in $S$ so that 
\begin{equation}\label{hamid3}
z=\sum_{i=1}^n a_i f e_i + b \sum_{i=1}^n x_i e_i \, .
\end{equation}
Since $z$ is a syzygy of $f_1, \ldots, f_n$ we obtain 
$$0 =\sum_{i=1}^n \alpha_i f_i = \sum_{i=1}^n a_i f f_i +  b \sum_{i=1}^n  x_i f_i =  \sum_{i=1}^n a_i f f_i +  d b  f\, .
$$
Dividing by $f$ it follows that
$$b=-\frac{1}{d}\sum_{i=1}^n a_i  f_i \, .
$$
Substituting the above expression of $b$ into \autoref{hamid3} and using the Euler relation we see that
$$\alpha_i=\sum_{j=1}^n \frac{1}{d} (a_ix_j-a_jx_i) f_j \, .
$$
Since the $n \times n$ matrix $(a_ix_j-a_jx_i)$ is alternating and has entries in $\m_S$ it follows that $z\in \m_S \B (d)$. 

The equality in (b) and the first equality in (c) are direct consequences of the isomorphism in (a). The inequality in (b) follows from \cite{DaHS}*{3.10}. For the second equality in (c), notice that  $\indeg (\Z)-d\le \indeg(\B)-d =d-2$ and $\indeg (\Z)\le \indeg(\H)$ and that either $\indeg (\Z)=\indeg(\B)$ or else  $\indeg (\Z)=\indeg(\H)$.
\end{proof}

The next theorem is a generalization of \cite{EK2003}*{Theorem 2.5} from plane curves to hypersurfaces.

\begin{theorem}[The hypersurface case]\label{hyper} Let $k$ be a perfect field and $X\subset {\mathbb P_k^{n-1}}$ be a reduced hypersurface of degree $d$. Let $R$ be the homogenous coordinate ring of $X$ and $J_R$ the Jacobian ideal. Assume $n\ge 3$ and that $d$ is not a multiple of the characteristic. If $X$ has at most isolated singularities, then 
$$\indeg (\Der_k(R)/R\varepsilon)  =  \begin{cases} \min \{  d-2, (n-1)(d-2) -a(R/J_R)-2\}  \\ 
d-2 & \mbox{if } X \mbox{ is smooth } \, .\end{cases}
$$
\end{theorem}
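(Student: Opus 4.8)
The plan is to reduce everything to \autoref{syz}(c), which already expresses $\indeg(\Der_k(R)/R\varepsilon)$ entirely in terms of the Koszul homology of the partial derivatives, and then to identify $\indeg(\H)$ with the $a$-invariant of $R/J_R$ via a self-duality of that complex. First I would record that, since $d$ is not a multiple of the characteristic, the Euler relation $df=\sum_i x_if_i$ forces $f\in(f_1,\dots,f_n)$, so that $J_R=(f_1,\dots,f_n)R$ and
\[ R/J_R\cong S/(f_1,\dots,f_n), \]
which is the zeroth homology of the Koszul complex whose first homology is $\H$. Because $X$ has at most isolated singularities, the singular locus is finite, i.e. $\dim S/(f_1,\dots,f_n)\le 1$. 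If $X$ is smooth this scheme is empty, $f_1,\dots,f_n$ is a regular sequence, the Koszul complex is acyclic, and $\H=0$; \autoref{syz}(c) then gives $\indeg(\Der_k(R)/R\varepsilon)=\min\{d-2,+\infty\}=d-2$, settling the smooth case.

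For the singular case $B:=R/J_R\cong S/(f_1,\dots,f_n)$ has dimension one, so $f_1,\dots,f_n$ has grade $n-1$ and $\H=H_1$ is the only higher Koszul homology. The heart of the argument is the identity
\[ \indeg(\H)=n(d-2)-a(R/J_R), \]
which with \autoref{syz}(c) yields the stated formula after the rearrangement $n(d-2)-d=(n-1)(d-2)-2$. To prove it I would extend $k$ to an infinite field (harmless for $\indeg$ and $a$) and choose, by general $k$-linear combinations, forms $g_1,\dots,g_{n-1}$ in the $k$-span of $f_1,\dots,f_n$ forming a regular sequence, completing them to $g_1,\dots,g_n$ through a matrix in $\GL_n(k)$ with $(g_1,\dots,g_n)=(f_1,\dots,f_n)$. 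As all the $f_i$ and $g_i$ have degree $d-1$, the induced map of Koszul complexes is a graded isomorphism, so $\H\cong H_1(g_1,\dots,g_n)$. Writing $C=S/(g_1,\dots,g_{n-1})$, a one-dimensional graded complete intersection with $\omega_C\cong C(a(C))$ and $a(C)=(n-1)(d-1)-n$, and $B=C/g_nC$, the homology sequence for adjoining $g_n$ to a regular sequence gives a graded isomorphism $\H\cong(0:_Cg_n)(-(d-1))$, while duality in the Gorenstein ring $C$ gives $\omega_B\cong\Hom_C(B,\omega_C)=(0:_Cg_n)(a(C))$. Comparing initial degrees yields $\indeg(\H)=\indeg(0:_Cg_n)+(d-1)$ and $a(B)=a(C)-\indeg(0:_Cg_n)$, and eliminating $\indeg(0:_Cg_n)$ produces exactly $\indeg(\H)=a(C)+(d-1)-a(R/J_R)=n(d-2)-a(R/J_R)$.

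Finally I would combine the two cases: \autoref{syz}(c) reads $\indeg(\Der_k(R)/R\varepsilon)=\min\{d-2,\indeg(\H)-d\}$, and substituting the displayed value of $\indeg(\H)$ in the singular case gives $\min\{d-2,(n-1)(d-2)-a(R/J_R)-2\}$. The main obstacle is the grading bookkeeping in the central identity: one must verify both the shift $-(d-1)$ in $\H\cong(0:_Cg_n)(-(d-1))$ coming from the extra Koszul generator and the shift $a(C)$ in $\omega_B\cong(0:_Cg_n)(a(C))$ coming from the canonical module of $C$, and check that the two conspire to the clean value $n(d-2)$. A secondary point to justify is that general combinations of $f_1,\dots,f_n$ of the common degree $d-1$ do form a regular sequence of length $n-1$ (using $\htt(f_1,\dots,f_n)=n-1$ and $k$ infinite), and that replacing $B$ by the complete intersection $C$ does not disturb $a(R/J_R)$, since $\omega_B$ sees only the one-dimensional part of $B$.
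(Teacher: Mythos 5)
Your proposal is correct and follows essentially the same route as the paper: both reduce to \autoref{syz}(c), arrange (by a generic linear operation, harmless since the Koszul complex of the partials is $\GL_n(k)$-equivariant) that $n-1$ of the partial derivatives form a regular sequence, identify $\H$ with the shifted colon module $(0:_C g_n)(-(d-1))=\bigl((g_1,\ldots,g_{n-1}):\J\bigr)/(g_1,\ldots,g_{n-1})(-(d-1))$, and then recognize this as a shift of $\omega_{R/J_R}$ by linkage/Gorenstein duality over the complete intersection $C$. The only cosmetic difference is that the paper quotes the linkage isomorphism directly (after a linear change of variables), while you derive it from $\omega_B\cong\Hom_C(B,\omega_C)$; the grading bookkeeping you flag checks out and reproduces the paper's shift $n-(n-1)(d-1)=1-(n-1)(d-2)$.
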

\begin{proof}
We may assume that $k$ is infinite. Let $S=k[x_1,\ldots,x_n]$ be a polynomial ring in $n$ variables, and write $R=S/(f)$ with $f$ a form of degree $d$. Denote the partial derivatives of $f$ by $f_1, \ldots, f_n$ and set $\J=(f_1, \ldots, f_n)$. Notice that $J_R$ is the image of $\J$  in $R$. Since $d$ is a unit in $k$, the Euler relation shows that $f\in \J$. Therefore $\htt \J\ge n-1$, and after a linear change of variables we can assume that $f_1, \ldots,  f_{n-1}$ form a regular sequence.  

Let $\H$ be the first homology module of the Koszul complex of $f_1, \ldots, f_n$.  If $X$ is smooth, then $\H=0$ and the claim follows from \autoref{syz}(c). Otherwise  $\htt \J= n-1$ and  $$\H(d-1)=\frac{(f_1, \ldots, f_{n-1}):_S \J}{(f_1, \ldots, f_{n-1})},$$ as $f_1, \ldots,  f_{n-1}$ form a regular sequence. Now\begin{eqnarray*} \frac{(f_1, \ldots,  f_{n-1}):_S \J}{(f_1, \ldots,  f_{n-1})} \cong &\omega_{S/\J} (n-(n-1)(d-1))\\
  =&\omega_{R/J_R} (1-(n-1)(d-2))
\end{eqnarray*}
and the theorem follows again by \autoref{syz}(c)  because $\indeg \omega_{R/J_R}=-a(R/J_R)$. 
 \end{proof}

The connection between the degree of vector fields and the degree of syzygies of Jacobian ideals, which is used in the previous proof, was already observed in \cite{E}*{Remark 9}. 
The use 
of the conductor and the integral closure in the proof of \autoref{PC} below 
was inspired by \cite{EU}*{proof of Corollary 5.1}.



\begin{lemma}\label{productcm1} Let $R$ be a standard graded Noetherian algebra over a field and $\, \mathfrak{b}_1,\ldots,\mathfrak{b}_t$ homogeneous ideals 
of $R$ so that the rings $R/\mathfrak b_i$ have dimension $1.$ Then $$ a(R/{\mathfrak b}_1\cdot \ldots \cdot {\mathfrak b}_t)\leq \sum_{i=1}^t a(R/{\mathfrak b}_i)+2t-2.$$ 
\end{lemma}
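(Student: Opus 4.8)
The plan is to induct on $t$, the case $t=1$ being the trivial equality. For the step I put $\mathfrak{b}=\mathfrak{b}_1\cdots\mathfrak{b}_{t-1}$ and $\mathfrak{c}=\mathfrak{b}_t$; as $V(\mathfrak{b})=\bigcup_{i<t}V(\mathfrak{b}_i)$ still has dimension one, it is enough to prove the two-factor inequality
\[
a(R/\mathfrak{b}\mathfrak{c})\le a(R/\mathfrak{b})+a(R/\mathfrak{c})+2
\qquad(\dim R/\mathfrak{b}=\dim R/\mathfrak{c}=1),
\]
and to add it to the inductive bound $a(R/\mathfrak{b})\le\sum_{i<t}a(R/\mathfrak{b}_i)+2(t-1)-2$. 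Since $V(\mathfrak{b}\mathfrak{c})=V(\mathfrak{b})\cup V(\mathfrak{c})$, all three rings $R/\mathfrak{b}\mathfrak{c}$, $R/\mathfrak{b}$, $R/\mathfrak{c}$ have dimension one, so each $a$-invariant is the top nonvanishing degree of $H^1_{\m}(-)$, which I abbreviate $a_1(-)$; I will use repeatedly that $H^2_{\m}$ vanishes on modules of dimension $\le 1$.

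Next I reduce to $R/\mathfrak{b}$ Cohen--Macaulay. Passing from $\mathfrak{b}$ to its saturation $\mathfrak{b}^{\sat}$ leaves $a(R/\mathfrak{b})$ unchanged (the two rings differ by a finite-length module, invisible to $H^1_{\m}$) and makes $R/\mathfrak{b}^{\sat}$ one-dimensional Cohen--Macaulay; it also leaves $a(R/\mathfrak{b}\mathfrak{c})$ unchanged, because $\mathfrak{b}$ and $\mathfrak{b}^{\sat}$ agree after localizing at every homogeneous prime other than $\m$, so $\mathfrak{b}\mathfrak{c}$ and $\mathfrak{b}^{\sat}\mathfrak{c}$ differ by a finite-length module. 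Hence I may assume $\mathfrak{b}=\mathfrak{b}^{\sat}$. The long exact sequence in $H^1_{\m}$ attached to
\[
0\longrightarrow \mathfrak{b}/\mathfrak{b}\mathfrak{c}\longrightarrow R/\mathfrak{b}\mathfrak{c}\longrightarrow R/\mathfrak{b}\longrightarrow 0
\]
(together with $H^2_{\m}(\mathfrak{b}/\mathfrak{b}\mathfrak{c})=0$) gives $a_1(R/\mathfrak{b}\mathfrak{c})\le\max\{a_1(\mathfrak{b}/\mathfrak{b}\mathfrak{c}),\,a(R/\mathfrak{b})\}$. As $a(R/\mathfrak{c})\ge -1$, the second term is dominated by the target, so the whole problem reduces to bounding $a_1(\mathfrak{b}/\mathfrak{b}\mathfrak{c})$.

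To do this I use $\mathfrak{b}/\mathfrak{b}\mathfrak{c}=\mathfrak{b}\otimes_R R/\mathfrak{c}$. If $\mathfrak{b}$ is generated by homogeneous elements of degrees $e_1,\dots,e_r$, tensoring a free cover of $\mathfrak{b}$ with $R/\mathfrak{c}$ yields a homogeneous surjection $\bigoplus_j (R/\mathfrak{c})(-e_j)\twoheadrightarrow \mathfrak{b}/\mathfrak{b}\mathfrak{c}$, and right exactness of $H^1_{\m}$ on modules of dimension $\le 1$ gives $a_1(\mathfrak{b}/\mathfrak{b}\mathfrak{c})\le a(R/\mathfrak{c})+\max_j e_j$. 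The crux is the bound $\max_j e_j\le a(R/\mathfrak{b})+2$. I would obtain it by writing $R=P/I_0$ for a standard graded polynomial ring $P$ and taking the preimage $\widetilde{\mathfrak{b}}\subseteq P$ of $\mathfrak{b}$: then $P/\widetilde{\mathfrak{b}}=R/\mathfrak{b}$ is one-dimensional Cohen--Macaulay and $\widetilde{\mathfrak{b}}$ is saturated, so
\[
\reg_P\widetilde{\mathfrak{b}}=\reg_P(P/\widetilde{\mathfrak{b}})+1=\bigl(a(R/\mathfrak{b})+1\bigr)+1=a(R/\mathfrak{b})+2,
\]
and the minimal generators of $\widetilde{\mathfrak{b}}$, which map onto a homogeneous generating set of $\mathfrak{b}$, sit in degrees $\le \reg_P\widetilde{\mathfrak{b}}$. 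Combining the displays completes the two-factor case.

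The main obstacle is exactly this generator-degree estimate, and it is the place where the Cohen--Macaulay reduction is indispensable: only for saturated $\mathfrak{b}$ is $\reg_P\widetilde{\mathfrak{b}}$ as small as $a(R/\mathfrak{b})+2$, and this is what yields the sharp increment $+2$ per factor (for a hypersurface $\mathfrak{b}=(f)$ one recovers $\deg f=a(R/\mathfrak{b})+2$, the source of the equality cases). Two small points remain to be checked in passing: that $a(R/\mathfrak{c})\ge -1$, which follows because the $a$-invariant of a one-dimensional standard graded Cohen--Macaulay ring equals the degree of its $h$-polynomial minus one; and the right-exactness assertions for $H^1_{\m}$, which rest solely on the vanishing of $H^2_{\m}$ in dimension one.
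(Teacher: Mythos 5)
Your proof is correct, but it takes a genuinely different route from the paper's. The paper first passes to a standard graded polynomial ring $S$ surjecting onto $R$ (using that a graded surjection between rings of equal dimension can only decrease the $a$-invariant), replaces each $\mathfrak{b}_i$ by its unmixed part --- the same finite-length reduction you perform with saturations --- and then concludes in two lines by quoting Sidman's subadditivity theorem, $\reg(\mathfrak{b}_1\cdots\mathfrak{b}_t)\le\sum_i\reg(\mathfrak{b}_i)$ for ideals defining schemes of dimension at most one, together with the inequality $a(R/\mathfrak{a})\le\reg(\mathfrak{a})-2$. You instead induct on $t$ and prove the two-factor inequality by hand: the short exact sequence $0\to\mathfrak{b}/\mathfrak{b}\mathfrak{c}\to R/\mathfrak{b}\mathfrak{c}\to R/\mathfrak{b}\to 0$, right-exactness of $H^1_{\m}$ on modules of dimension at most one, and the observation that a saturated ideal with one-dimensional Cohen--Macaulay quotient is generated in degrees at most $a(R/\mathfrak{b})+2$ (this last step, via $\reg_P\widetilde{\mathfrak{b}}=\reg_P(P/\widetilde{\mathfrak{b}})+1$, is the only place the polynomial cover enters for you). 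In effect you re-prove inline, at the level of top local cohomology only, precisely the special case of Sidman's theorem that the paper black-boxes. What the paper's route buys is brevity and a bound phrased in terms of full regularity; what yours buys is self-containedness --- no external regularity-of-products result --- an argument that runs over $R$ itself rather than over the polynomial ring, and a transparent explanation of the increment of $2$ per factor (visible in your hypersurface example $\deg f = a+2$). The steps you flag do check out: $a(R/\mathfrak{c})\ge -1$ because $a(R/\mathfrak{c})=a(R/\mathfrak{c}^{\sat})$ and the saturation is one-dimensional Cohen--Macaulay, and the saturation reduction is legitimate because $\mathfrak{b}\mathfrak{c}$ and $\mathfrak{b}^{\sat}\mathfrak{c}$ agree at every prime other than $\m$, so their quotients have isomorphic $H^1_{\m}$.
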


\begin{proof} Let $S$ be a standard graded polynomial ring over the ground field of $R,$
mapping homogeneously onto $R,$ and let 
$\widetilde{\mathfrak b}_i$ denote the preimage of $\mathfrak b_i$ in $S$. 
Since
$\, S/ \tilde{ \mathfrak b}_1 \cdot \ldots \cdot \tilde{\mathfrak b_t} \longrightarrow 
R/{\mathfrak b}_1\cdot \ldots \cdot {\mathfrak b}_t \,$ is a surjection of rings having the same dimension, $a( R/{\mathfrak b}_1\cdot \ldots \cdot {\mathfrak b}_t)\le a(S/ \tilde{ \mathfrak b}_1 \cdot \ldots \cdot \tilde{\mathfrak b_t})$. Hence it suffices to prove our assertion for the case that $R$ is a polynomial ring, which we now assume.

If $\mathfrak b^{\text{unm}}$ denotes the unmixed part of a homogeneous ideal $\mathfrak b$ 
of the polynomial ring $R$ such that  $R/\mathfrak b$ has dimension $1$, then $R/\mathfrak b^{\text{unm}}$ is Cohen-Macaulay and
$$
a(R/\mathfrak b )=a(R/\mathfrak b^{\text{unm}} )=\reg (R/\mathfrak b^{\text{unm}})-1=\reg (\mathfrak b^{\text{unm}})-2 \leq \reg (\mathfrak b )-2.
$$
According to the first equality, we can further assume that every ${\mathfrak b}_i$ is unmixed.

In that case, the inequality $\reg  ({\mathfrak b}_1 \cdot \ldots \cdot {\mathfrak b_t} )\leq \sum _{i=1}^t\reg( \mathfrak b_i)=2t+\sum _{i=1}^t a( R/\mathfrak b_i)$ proved in  \cite{Sidman}*{Theorem 1.8}  together with the 
inequality $a  (R/{\mathfrak b}_1 \cdot \ldots \cdot {\mathfrak b_t} )\leq \reg  ({\mathfrak b}_1 \cdot \ldots \cdot {\mathfrak b_t} )-2$ completes the proof.
\end{proof}


\begin{proposition}[The plane curve case]\label{PC} Let $k$ be an algebraically closed field and $\C\subset {\mathbb P_k^2}$ be a reduced curve of degree $d$. Let $R$ be the homogenous coordinate ring of $\C$ and $J_R$ be the Jacobian ideal. Assume that $d$ is not a multiple of the characteristic. One has
\begin{eqnarray*}
\indeg (\Der_k(R)/R\varepsilon)  &=& \begin{cases} \min\{d-2, 2d-6-a(R/J_R)\}  \\ 
d-2  \hspace{9.1cm}  \mbox{if } \C \mbox{ is smooth } \end{cases}\\
&\ge& \begin{cases} d-3 +|\Sing(\C)|-\Lmult(R/J_R) \\ 
d-2 +|\Sing(\C)|-\Lmult(R/J_R) \hspace{4.3cm} \mbox{if } \C \mbox{ is irreducible \,} \\
d-2 \hspace{9cm}  \mbox{if } \C \mbox{ is smooth } \end{cases}\\
&\ge& \begin{cases} d-3 +|\Sing(\C)|+\sum\limits_{p\in {\rm Sing}(\C)} {e(\mathcal O_{\C, p})-1 \choose 2}-\tau(\C) \\ 
d-2 +|\Sing(\C)|+\sum\limits_{p\in {\rm Sing}(\C)} {e(\mathcal O_{\C, p})-1 \choose 2}-\tau(\C) \hspace{2.2cm}  \mbox{if } \C \mbox{ is irreducible } \\
d-2 \hspace{9cm}  \mbox{if } \C \mbox{ is smooth } \, .\end{cases}\\
\end{eqnarray*}

\end{proposition}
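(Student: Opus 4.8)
The plan is to handle the three lines of the statement in turn, deriving the last two from the first together with the local estimates already in place. First, the equality. Since $\C\subset\mathbb P^2_k$ is a reduced plane curve, it is a hypersurface whose singular locus is finite, hence $\C$ has at most isolated singularities and \autoref{hyper} applies with $n=3$. Specializing that formula, $(n-1)(d-2)-a(R/J_R)-2=2(d-2)-a(R/J_R)-2=2d-6-a(R/J_R)$, which is exactly the asserted value $\min\{d-2,\,2d-6-a(R/J_R)\}$, equal to $d-2$ when $\C$ is smooth. So the first line requires no further work.

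Next I would reduce the two displayed inequalities to a single bound on $a(R/J_R)$. Because $R/J_R$ is one–dimensional and supported on the singular points, the definition of Loewy multiplicity together with \autoref{basicL}(c) gives $\Lmult(R/J_R)=\sum_{p\in\Sing(\C)}\ell\ell(\mathcal O_{\C,p}/J_{\mathcal O_{\C,p}})$, and each local Loewy length is at least $1$; hence $\Lmult(R/J_R)\ge|\Sing(\C)|$, so the branch $d-2$ of the minimum already dominates $d-3+|\Sing(\C)|-\Lmult(R/J_R)$ (and, in the irreducible case, $d-2+|\Sing(\C)|-\Lmult(R/J_R)$). Thus the first inequality is equivalent to
$$a(R/J_R)\le d-3-|\Sing(\C)|+\Lmult(R/J_R),$$
sharpened to $d-4$ on the right when $\C$ is irreducible. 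The third displayed inequality then follows formally: by \autoref{basicP} one has $\tau(\C)=\sum_p\lambda(\mathcal O_{\C,p}/J_{\mathcal O_{\C,p}})$, while \autoref{lmult} gives $\ell\ell(\mathcal O_{\C,p}/J)\le\lambda(\mathcal O_{\C,p}/J)-\binom{e(\mathcal O_{\C,p})-1}{2}$ at each point, so summation yields $\Lmult(R/J_R)\le\tau(\C)-\sum_p\binom{e(\mathcal O_{\C,p})-1}{2}$ and hence $|\Sing(\C)|-\Lmult(R/J_R)\ge|\Sing(\C)|+\sum_p\binom{e(\mathcal O_{\C,p})-1}{2}-\tau(\C)$, turning the second line into the third.

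The heart of the matter is therefore the upper bound on $a(R/J_R)$, and this is where I would bring in linkage and the conductor. Writing $\J=(f_1,f_2,f_3)$ with $\deg f_i=d-1$ and choosing two general $k$–linear combinations $g_1,g_2$, the ideal $\mathfrak a=(g_1,g_2)$ is a height–two complete intersection contained in $\J$; since $S/\mathfrak a$ is Gorenstein with $a(S/\mathfrak a)=2d-5$, duality over the complete intersection gives $\omega_{R/J_R}\cong\big((\mathfrak a:\J)/\mathfrak a\big)(2d-5)$, so that
$$a(R/J_R)=2d-5-\indeg\big((\mathfrak a:\J)/\mathfrak a\big).$$
The required bound thus becomes a lower bound on the initial degree of the linked ideal, namely $\indeg\big((\mathfrak a:\J)/\mathfrak a\big)\ge (d-2)-\sum_p\big(\ell\ell(\mathcal O_{\C,p}/J)-1\big)$. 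I would prove this by identifying $\mathfrak a:\J$ with a twist of the adjoint (conductor) ideal of $\C$ and analyzing it one singular point at a time: at $p$ the local Loewy length $\ell\ell(\mathcal O_{\C,p}/J)$ is precisely the least power of the maximal ideal lying in the local Jacobian ideal, and this governs how deeply the conductor sits, contributing the local correction $\ell\ell(\mathcal O_{\C,p}/J)-1$ to the initial degree.

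I expect this last initial–degree estimate to be the main obstacle. First, passing from the local Loewy lengths to a global control of $\indeg(\mathfrak a:\J)$ requires organizing the finitely many singular points inside a single plane curve, for which the exact sequence $0\to R\to\overline R\to\overline R/R\to0$ and the finite length of $\overline R/R$ are the natural tools. Second, the improvement by one unit in the irreducible case must be extracted from the integral closure: when $\C$ is irreducible $R$ is a domain and $[\overline R]_0=k$ is one–dimensional, which forces one extra vanishing in the low degrees of the conductor and yields the sharper bound $a(R/J_R)\le d-4-|\Sing(\C)|+\Lmult(R/J_R)$. In the reducible case $[\overline R]_0$ has dimension equal to the number of components, which is exactly what accounts for the loss of this unit and hence for the two separate lines in the statement.
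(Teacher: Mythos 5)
Your treatment of the equality (via \autoref{hyper} with $n=3$), your reduction of the three displayed lines to the single bound $a(R/J_R)\le d-3-|\Sing(\C)|+\Lmult(R/J_R)$ (with $d-4$ in place of $d-3$ when $\C$ is irreducible), and your derivation of the third line from the second via \autoref{basicC} and \autoref{lmult} all agree with the paper's argument and are fine. Your linkage reformulation is also correct as far as it goes: for a general complete intersection $\mathfrak a=(g_1,g_2)\subset\J$ of forms of degree $d-1$ one has $\omega_{R/J_R}\cong\bigl((\mathfrak a:\J)/\mathfrak a\bigr)(2d-5)$, so your target inequality $\indeg\bigl((\mathfrak a:\J)/\mathfrak a\bigr)\ge d-2-\sum_p\bigl(\ell\ell(\mathcal O_{\C,p}/J_{\mathcal O_{\C,p}})-1\bigr)$ is indeed equivalent to the needed bound on $a(R/J_R)$.

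However, the proof of that target inequality --- which you yourself flag as the main obstacle --- is missing, and the route you sketch for it cannot work. The identification of $\mathfrak a:\J$ with a twist of the adjoint/conductor ideal of $\C$ is false: $V(\mathfrak a)$ is the intersection of two general polar curves, a scheme of degree $(d-1)^2$, and $V(\mathfrak a:\J)$ is the scheme residual to the singular scheme $V(\J)$ inside it, so its support consists mostly of the $(d-1)^2-\tau$ residual intersection points of the two polars, which are generically disjoint from $\Sing(\C)$; the conductor, by contrast, is supported exactly on the singular points. More fundamentally, the genuinely hard step is the passage from the local data (the Loewy lengths $s_i$ at the singular points) to a global lower bound on the initial degree, equivalently an upper bound on the $a$-invariant, of a homogeneous ideal; your ``one singular point at a time'' analysis supplies no mechanism for this. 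The paper's proof supplies exactly that mechanism: from $\f\subset\sqrt{J}$ it obtains the containment $J^{\rm unm}\supset\f\cdot\p_1^{s_1-1}\cdots\p_t^{s_t-1}$, then applies \autoref{productcm1} (which rests on Sidman's theorem on regularity of products) to get $a(R/J)\le a(R/\f)+\sum_i(s_i-1)$, and finally computes $a(R/\f)=d-3-\indeg(\overline R/R)$ by dualizing $0\to\f\to R\to R/\f\to 0$ into $\omega_R\cong R(d-3)$; the dichotomy $\indeg(\overline R/R)\ge 0$ for $R$ reduced versus $\indeg(\overline R/R)\ge 1$ for $R$ a domain over an algebraically closed field is what produces the extra $+1$ in the irreducible case. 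Some regularity-of-products input of this kind (or a comparable global argument) would have to be added before your proof can be completed.
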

\begin{proof} The equality is a special case of \autoref{hyper}, and the second inequality is a consequence of the bound $\Lmult(R/J_R)\le \tau(\C)-\sum\limits_{p\in {\rm Sing}(\C)} {e(\mathcal O_{\C, p})-1 \choose 2},$ which follows from \autoref{basicC} and \autoref{lmult}. 

To prove the first inequality, we estimate $a(R/J_R)$ when $\C$ is singular. Set $J=J_R.$
Let $\p_1, \ldots, \p_t$ be the minimal primes of $J$, which are necessarily of height one; they correspond to the singular points of $\C$ and therefore $t=|\Sing(\C)|$.  Write $s_i=\ell\ell(R_{\p_i}/J_{\p_i})$ for the Loewy length of  $R_{\p_i}/J_{\p_i}$. Let $\f=R:_R \overline{R}$ denote the conductor of $R$ and notice that $\f \subset \sqrt{J}$.  One has 
\[J^{\rm unm}\supset \cap \ \p_i^{s_i} \supset \sqrt{J} \cdot \p_1^{s_1-1} \cdots \ \p_t^{s_t-1}\supset \f \cdot \p_1^{s_1-1} \cdots \ \p_t^{s_t-1}\, .\]
It follows that
\begin{eqnarray*}
a(R/J)=a(R/J^{\rm unm})& \le& a(R/  \f \cdot \p_1^{s_1-1} \cdots \ \p_t^{s_t-1})\\
&\le&  a(R/ \f) + \sum_{i=1}^{t}(s_i-1) \quad \mbox{by \autoref{productcm1} } \\
&=& a(R/\f)+\Lmult(R/J) -|\Sing( \C)|\, .
\end{eqnarray*}

To estimate $a(R/\f)$ we dualize the short exact sequence 
$$0\to \f\to R\to R/\f\to 0$$ 
into $\omega_R \cong R(d-3)$.  As $\text{Hom}_R(\f,R)=\overline{R}$,  we obtain $\omega_{R/\f}=(\overline{R}/R)(d-3)$. 
It follows that $a(R/\f)=-\indeg (\overline{R}/R)+d-3.$ Therefore
$$a(R/J) \leq -\indeg (\overline{R}/R)+d-3 +\Lmult(R/J) -|\Sing( \C)|\, .$$

Finally,  we have $\indeg( \overline{R}/R) \ge 0$ because $R$ is reduced, and $\indeg( \overline{R}/R) \ge 1$ if $R$ is a domain since $k$ is algebraically closed.
\end{proof}

In order to deal with subschemes that are not necessarily arithmetically Gorenstein, we introduce the following notion:

\begin{definition}\label{1.4} Let $R$ be a standard graded algebra over a field. We say that $R$ has the {\it generalized Cayley-Bacharach property} if $\findeg \omega_R=\indeg \omega_R$. 
\end{definition}


Examples of rings with the generalized Cayley-Bacharach property are domains, Gorenstein rings, and more generally level rings. If the ground field is algebraically closed and $R$ is reduced and one-dimensional, then $R$ has the generalized Cayley-Bacharach property if and only if the corresponding set of points in projective space has the Cayley-Bacharach property in the usual sense (see \cite{GKR}). 

\ms

The next theorem is inspired by work of Esteves \cite{E}*{Theorem 17}. We do not require arithmetic Cohen-Macaulayness as in \cite{E} and our proof is short and 
elementary. We will say that a ring extension $A \subset R$ is {\it birational} if $R$ is a torsionfree $A$-module and the induced map  $\Quot(A) \lto \Quot(R)$ is 
an isomorphism. The following fact, which is a special case of  \cite{UV}*{Proposition 5.2}, will be used frequently.

\begin{lemma}\label{UV} Let $k$ be an infinite perfect field and let $R$ be a reduced and equidimensional $k$-algebra of dimension $D$ generated by $y_1, \ldots, y_n.$ If 
$A$ is the $k$-subalgebra generated by $D+1$ general $k$-linear combinations of $y_1, \ldots, y_n,$ then $A \subset R$ is a finite and birational extension and the induced map  $\Quot(A) \lto \Quot(A)\otimes_A R$ is an isomorphism.
\end{lemma}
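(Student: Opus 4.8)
The plan is to combine a general Noether normalization (which yields finiteness) with a simultaneous primitive element argument (which yields birationality), the two ingredients being responsible for the two conclusions. The role of the $(D+1)$-st generator is precisely to upgrade a finite branched cover to a generic isomorphism.

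First I would choose $z_1,\dots,z_D$ to be $D$ general $k$-linear combinations of $y_1,\dots,y_n$ and set $P=k[z_1,\dots,z_D]$. Since $k$ is infinite and $R$ is equidimensional of dimension $D$, the general-position form of Noether normalization shows that $z_1,\dots,z_D$ are algebraically independent and that $R$ is module-finite over the polynomial ring $P$. If $\mathfrak{q}_1,\dots,\mathfrak{q}_s$ are the minimal primes of $R$, then each satisfies $\dim R/\mathfrak{q}_i=D=\dim P$, so along the finite inclusion $P\hookrightarrow R$ each $\mathfrak{q}_i$ contracts to $(0)$; hence every nonzero element of $P$ is a nonzerodivisor on $R$. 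Writing $W=P\setminus\{0\}$ and $K_i=\Quot(R/\mathfrak{q}_i)$, flatness of $W^{-1}(-)$ over $P$ together with $\bigcap_i\mathfrak{q}_i=0$ then gives $W^{-1}R=\Quot(P)\otimes_P R\cong\prod_{i=1}^s K_i=\Quot(R)$. Because $k$ is perfect, a general such choice makes $z_1,\dots,z_D$ a separating transcendence basis of each function field $K_i$, so each extension $\Quot(P)\subseteq K_i$ is finite and separable. Since $P\subseteq A\subseteq R$, finiteness of $R$ over $A$ is then immediate, which is the first assertion.

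It remains to arrange birationality by choosing the last generator. Let $z_0$ be a general $k$-linear combination of $y_1,\dots,y_n$, let $\theta_i\in K_i$ be its image under $R\to K_i$, and set $A=k[z_0,\dots,z_D]=P[z_0]$, so that $W^{-1}A=\Quot(P)[(\theta_1,\dots,\theta_s)]\subseteq\prod_i K_i$. The goal is to show this one element generates all of $\prod_i K_i$ over $\Quot(P)$. By the primitive element theorem each separable extension $\Quot(P)\subseteq K_i$ is simple, and for a general $z_0$ one has simultaneously that $\theta_i$ generates $K_i$ over $\Quot(P)$ and that the minimal polynomials of the $\theta_i$ over $\Quot(P)$ are pairwise distinct; the Chinese Remainder Theorem then forces $\Quot(P)[(\theta_i)]=\prod_i K_i$. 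Consequently $W^{-1}A=W^{-1}R=\prod_i K_i$, a reduced Artinian ring, which therefore equals its own total ring of fractions; as $W$ consists of nonzerodivisors of $A$, this yields $\Quot(A)=W^{-1}A=W^{-1}R$. Tensoring gives $\Quot(A)\otimes_A R=W^{-1}A\otimes_A R=W^{-1}R=\Quot(A)$, and this isomorphism is induced by the natural map, which is exactly the claimed birationality; finally, $R$ is torsionfree over $A$ because a nonzerodivisor of $A$ has nonzero image in every $K_i$ and hence acts invertibly on $R\hookrightarrow\prod_i K_i$.

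The main obstacle is the genericity step of the last paragraph: exhibiting a single linear combination $z_0$ that is simultaneously a primitive element for every component and that separates the components via distinct minimal polynomials. This is where both the infiniteness of $k$ (to intersect the finitely many dense open conditions) and its perfectness (to ensure the separability on which both the primitive element theorem and the separating transcendence basis rely) are essential. This general-position reasoning is precisely what is packaged in \cite{UV}*{Proposition 5.2}.
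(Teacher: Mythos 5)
Your proposal cannot coincide with the paper's own argument for a simple reason: the paper does not prove this lemma at all --- it is stated as a special case of \cite{UV}*{Proposition 5.2} and used as a black box. What you have done is reconstruct a proof of the cited fact, and your reconstruction is sound: general Noether normalization $P=k[z_1,\dots,z_D]\subset R$ gives finiteness; equidimensionality forces each minimal prime $\q_i$ to contract to $(0)$ in $P$, so that $W^{-1}R\cong\prod_i K_i=\Quot(R)$ by exactness of localization and CRT in the Artinian ring $W^{-1}R$; perfectness of $k$ makes each $K_i$ finite separable over $\Quot(P)$; and the identifications $\Quot(A)=W^{-1}A=W^{-1}R$ together with torsionfreeness of $R$ over $A$ deliver exactly the paper's definition of a birational extension and the isomorphism $\Quot(A)\cong\Quot(A)\otimes_AR$. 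The one step you assert rather than prove is the crux, and rather than folding it back into the citation you can finish it in three lines: the two conditions you need (each $\theta_i$ primitive, minimal polynomials pairwise distinct) are together equivalent to saying that the image of $z_0$ generates the $\Quot(P)$-algebra $E=\prod_iK_i$, which is a finite product of finite separable field extensions; an element $\theta$ of such an algebra of rank $m$ generates it if and only if the $m$ distinct $\Quot(P)$-embeddings $\sigma_1,\dots,\sigma_m\colon E\to\overline{\Quot(P)}$ take pairwise distinct values at $\theta$. Since the images of $y_1,\dots,y_n$ generate $E$ as a $\Quot(P)$-algebra, for each pair $a\neq b$ the bad locus $\{c\in k^n : \sigma_a(\sum_l c_l y_l)=\sigma_b(\sum_l c_l y_l)\}$ is a proper linear subspace, and a finite union of proper subspaces cannot contain $k^n$ when $k$ is infinite. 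With that observation made explicit, your argument is a complete, self-contained proof; it is in this sense more informative than the paper's treatment, since it exhibits exactly where infiniteness of $k$ (to meet finitely many open conditions) and perfectness (separability of the $K_i$ over $\Quot(P)$) enter, whereas the reference \cite{UV} proves a more general statement about generic projections.
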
 


\vspace{.02cm}

\begin{theorem}\label{GP} Let $k$ be perfect field and let $A\subset R$ be a finite and birational homogeneous extension of standard graded $k$-algebras. Assume that $R$ is reduced and equidimensional of dimension at least two, with maximal homogenous ideal $\m$, and that $A$ is Gorenstein. 

One has
\[ \findeg (\Der_k(R)/\m^{-1} \varepsilon_R) \ge \indeg ( \Der_k(A)/A \varepsilon_A) -a(A)+a(R)\]
and 
\[\findeg (\Der_k(A)/A \varepsilon_A) \ge \indeg (\Der_k(R)/\m^{-1}\varepsilon_R) -a(A)+a(R)\, .
\]

If in addition $R$ has the generalized Cayley-Bacharach property, then
\[ \findeg (\Der_k(R)/\m^{-1} \varepsilon_R ) \ge \findeg (\Der_k(A)/A\varepsilon_A) -a(A)+a(R)\]
\[ \indeg (\Der_k(R)/\m^{-1} \varepsilon_R) \ge \indeg (\Der_k(A)/A\varepsilon_A) -a(A)+a(R)\]
and 
\[\findeg (\Der_k(A)/A\varepsilon_A) \ge \findeg ( \Der_k(R)/\m^{-1}\varepsilon_R) -a(A)+a(R)
\]
\[\indeg (\Der_k(A)/A\varepsilon_A) \ge \indeg ( \Der_k(R)/\m^{-1}\varepsilon_R) -a(A)+a(R)\, .
\]
\end{theorem}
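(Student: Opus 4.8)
The plan is to relate the derivation modules of $A$ and $R$ through the birational extension, using $\m^{-1}\varepsilon_R$ and the duality between derivations and the canonical module that was already set up in \autoref{3iso}. The core idea is that both $\Der_k(R)/\m^{-1}\varepsilon_R$ and $\Der_k(A)/A\varepsilon_A$ embed into duals of (wedge squares of) differentials, and that a finite birational map induces an isomorphism after tensoring with the total quotient ring $Q$, so the two modules agree generically and differ only by a degree shift controlled by the $a$-invariants. Since $A$ is Gorenstein, $\omega_A \cong A(a(A))$, which is what makes the duality statements concrete and lets the shift $-a(A)+a(R)$ appear.

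\emph{First} I would make the identification precise. Because $A\subset R$ is finite and birational, we have $Q(A)=Q(R)=:Q$ and $Q\otimes_A \Der_k(A)=Q\otimes_R\Der_k(R)$, so the natural maps $\Der_k(A)\hookrightarrow Q\otimes \Der_k(A)$ and $\Der_k(R)\hookrightarrow Q\otimes\Der_k(R)$ land in the same ambient module. Modding out the respective Euler derivations, both $\Der_k(A)/A\varepsilon_A$ and $\Der_k(R)/\m^{-1}\varepsilon_R$ become submodules of a common $Q$-vector space; the point is to compare their graded pieces. Using the isomorphisms of \autoref{3iso}(b), namely $H^*\cong J^{-1}(2-\delta)$ and the embedding $\Der_k(R)/\m^{-1}\varepsilon_R\hookrightarrow J^{-1}(2-\delta)$ — and the analogous statement over the Gorenstein ring $A$, where $\omega_A\cong A(a(A))$ forces the torsionfree module $\Der_k(A)/A\varepsilon_A$ to be a twist of an ideal of $A$ — I would translate the whole problem into comparing two fractional ideals inside $Q$ with a known degree offset of $a(R)-a(A)$.

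\emph{Second}, I would convert this identification into the four inequalities. A homogeneous element $\theta$ of degree $e$ in $\Der_k(A)/A\varepsilon_A$ whose annihilator is zero (i.e.\ witnessing $\findeg$) gives, via the embedding into $Q$ and the shift, a homogeneous element of $\Der_k(R)/\m^{-1}\varepsilon_R$ of degree $e+(a(R)-a(A))$; because a nonzerodivisor over $A$ remains a nonzerodivisor over the birational (hence generically equal) extension $R$, faithfulness is preserved, and one gets the $\findeg$ inequality in one direction, and by symmetry of the argument (swapping the roles via the other twist) the reverse. The $\indeg$ statements under the generalized Cayley-Bacharach hypothesis follow because that property ($\findeg\omega_R=\indeg\omega_R$) is exactly what upgrades an initial-degree witness into a faithful-degree witness: it guarantees that the minimal-degree element of the relevant dual module can be chosen to have zero annihilator, letting me pass freely between $\indeg$ and $\findeg$ on the $R$-side and then invoke the already-proved $\findeg$ inequalities.

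\emph{The main obstacle} I anticipate is controlling the degree shift precisely and making the faithfulness/annihilator bookkeeping rigorous across the extension. It is easy to assert that the two derivation modules ``agree up to the twist $a(R)-a(A)$,'' but pinning down that the comparison isomorphism over $Q$ restricts to a degree-exact matching of the fractional ideals — so that an element of degree $e$ really corresponds to one of degree $e+a(R)-a(A)$ and not merely up to an inequality — requires care with how $\omega_A\cong A(a(A))$ and the canonical class of \autoref{3iso} interact, and with the fact that $\m^{-1}$ on the $R$-side versus $A$ (where $\depth A\ge 2$ gives $\m_A^{-1}=A$) are treated asymmetrically. I would also need to verify that passing an annihilator-zero element between the two modules does not accidentally lose faithfulness because of the torsion introduced by $L/H$, which is why the hypotheses that $R$ is reduced, equidimensional, of dimension $\ge 2$, and has positive depth are all being used.
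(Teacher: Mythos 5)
There is a genuine gap, and it is exactly the one you flag as ``the main obstacle'': your proof has no mechanism for transferring an element of one derivation module to the other with a controlled degree shift, and the mechanism you posit does not exist. You assert that the generic identification over $Q$ restricts to a degree-exact matching sending an element of degree $e$ in $\Der_k(A)/A\varepsilon_A$ to an element of degree $e+a(R)-a(A)$ in $\Der_k(R)/\m^{-1}\varepsilon_R$. Since $a(A)\ge a(R)$, such a degree-\emph{lowering} correspondence would prove inequalities strictly stronger than the theorem (with the correction term $+a(A)-a(R)$ instead of $-a(A)+a(R)$), and these stronger statements are false in general: a derivation of $A$ need not extend to a derivation of $R$ at all, only to a derivation with values in $Q\otimes_A R$, so the two modules do not correspond up to a twist --- they merely contain a common ``core'' up to multiplication by a fixed ideal. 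The missing tool is the conductor $\f=A:_A R$. The paper's proof works inside the finitely generated graded module $\Der_k(A,R)/\m^{-1}\varepsilon_A$ (derivations of $A$ with values in $R$), not inside $Q\otimes\Der$: dualizing $R\otimes_A\Omega_k(A)\to\Omega_k(R)\to\Omega_A(R)\to 0$ gives an embedding $\varphi:\Der_k(R)/\m^{-1}\varepsilon_R\hookrightarrow \Der_k(A,R)/\m^{-1}\varepsilon_A$ whose cokernel is killed by $\f$ (because $\f$ annihilates $\Omega_A(R)$: for $a\in\f$, $r\in R$ one has $a\,d(r)=d(ar)-r\,d(a)=0$), while the inclusion $\Der_k(A)\subset\Der_k(A,R)$ gives an embedding $\psi:\Der_k(A)/A\varepsilon_A\hookrightarrow \Der_k(A,R)/\m^{-1}\varepsilon_A$ whose cokernel is also killed by $\f$ (since $\f\cdot\Der_k(A,R)\subset\Der_k(A)$ by definition of the conductor). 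Hence $\f\cdot\im\varphi\subset\im\psi$ and $\f\cdot\im\psi\subset\im\varphi$, and every passage between the two modules costs multiplication by a conductor element. This is where Gorensteinness of $A$ enters, and in a way your sketch never uses: $\f\cong\Hom_A(R,\omega_A)(-a(A))\cong\omega_R(-a(A))$, so $\indeg\f=a(A)-a(R)$, and $\findeg\f=a(A)-a(R)$ precisely when $R$ has the generalized Cayley--Bacharach property. All six inequalities then follow from degree bookkeeping on these multiplication maps, using torsionfreeness of $\Der_k(R)/\m^{-1}\varepsilon_R$ and $\Der_k(A)/A\varepsilon_A$ (\autoref{1.2}) to keep track of faithfulness and nonvanishing.

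Two secondary problems: your appeal to \autoref{3iso} is out of scope, since that proposition is proved in the curve setting of \autoref{setupcurves}, while \autoref{GP} allows any dimension $\ge 2$; and in any case \autoref{3iso} compares $\Der_k(R)/\m^{-1}\varepsilon$ with a fractional ideal $J^{-1}(2-\delta)$ over $R$ --- it says nothing about how the $A$-side and $R$-side modules sit relative to each other, which is the whole content of the theorem. Likewise, the claim that $\omega_A\cong A(a(A))$ ``forces'' $\Der_k(A)/A\varepsilon_A$ to be a twist of an ideal of $A$ is not substantiated and is not needed; what Gorensteinness actually buys is the identification of the conductor with $\omega_R(-a(A))$ above.
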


\bs
Notice that the inverse of the maximal homogeneous ideal of $A$ is equal to $A$ since $\depth  A\ge 2$. 

\begin{proof} We consider the conductor $$\f=A:_{A}R\cong \Hom_A(R,A)\cong \Hom_A(R,\omega_A)(-a(A))\cong \omega_R (-a(A))\, .$$
Notice that $\indeg \f=a(A)-a(R)$. In addition,  $\f$ annihilates the $R$-module $\Omega_A(R)$; indeed, if $d: R \lto \Omega_A(R)$ denotes the universal $A$-derivation of $R$, then for any $a\in \f$ and $r\in R$, 
\[a\, d (r)=d(ar)-r\, d(a)=0\]
since both $ar$ and $a$ are in $A$. 

In the exact sequence 
\[ R \otimes_A \Omega_k(A) \stackrel{\alpha}{\lto} \Omega_k(R) \lto \Omega_A(R) \lto 0\, ,
\]
$\ker \alpha$ and $\Omega_A(R)$ are $R$-torsion modules because the extension $A \subset R$ is birational. 
Thus this sequence induces an exact sequence 
$$
\begin{tikzcd}[row sep={0.9em}]  
 0 \arrow{r} &  \Hom_R(\Omega_A(R), R) \arrow{r} &  \Hom_R(\Omega_k(R), R)  \arrow{r} \arrow[d, phantom, "\vcong"]   &  \Hom_A(\Omega_k(A), R) \arrow{r} \arrow[d, phantom, "\vcong"]  &  \Ext^1_R(\Omega_A(R), R)\\
&0\ar[equal]{u}& \Der_k(R)\arrow{r}{\beta} & \Der_k(A,R) &
\end{tikzcd}
$$
Observe that $\beta(\varepsilon_R)=\varepsilon_A$ and that $\Ext^1_R(\Omega_A(R), R)$ is annihilated by $\f$. 
Thus we obtain an embedding 
$$\begin{tikzcd}
   \Der_k(R)/\m^{-1}\varepsilon_R \arrow[hookrightarrow]{r}{\varphi}
  & \Der_k(A,R)/\m^{-1}\varepsilon_A \end{tikzcd}$$
whose cokernel is annihilated by $\f$. 

On the other hand,  the obvious inclusion of $\Der_k(A) \subset \Der_k(A,R)$ induces an $A$-linear map
\[\psi: \Der_k(A)/A\varepsilon_A \lto \ \Der_k(A,R)/\m^{-1}\varepsilon_A\, .
\]
Since the extension $A\subset R$ is birational, this map is generically injective and hence it is injective because $ \Der_k(A)/A\varepsilon_A$ is torsionfree as an $A$-module according to \autoref{1.2}.

Now we have embeddings
\begin{equation}\label{Dsed}
\begin{tikzcd}
   \Der_k(R)/\m^{-1}\varepsilon_R \arrow[hookrightarrow]{r}{\varphi}
  & \Der_k(A,R)/\m^{-1}\varepsilon_A \\
& \Der_k(A)/A\varepsilon_A \arrow[hookrightarrow]{u}{\psi} \, ,
\end{tikzcd}
\end{equation}
where the cokernels of both $\varphi$ and $\psi$ are annihilated by $\f$. Thus we obtain containments
\begin{equation}\label{incl1} \f  \cdot \im  \varphi \subset \im  \psi
\end{equation}
\begin{equation}\label{incl2}  \f \cdot \im  \psi \subset \im \varphi\, .
\end{equation}

The inclusion \autoref{incl1} shows that
\begin{align*} & \indeg \f +\findeg  ( \Der_k(R)/\m^{-1}\varepsilon_R) \ge \indeg  (\Der_k(A)/A \varepsilon_A)\\
& \findeg \f +\indeg (\Der_k(R)/\m^{-1}\varepsilon_R)  \ge \indeg ( \Der_k(A)/A \varepsilon_A)\\
& \findeg \f +\findeg  (\Der_k(R)/\m^{-1}\varepsilon_R)  \ge \findeg  ( \Der_k(A)/A \varepsilon_A) \ ;
\end{align*}
in the second inequality we use the fact that $ \Der_k(R)/\m^{-1}\varepsilon_R$ is torsionfree as an $R$-module (see \autoref{1.2}). 
The inclusion \autoref{incl2} implies the same inequalities with the roles of $ \Der_k(R)/\m^{-1}\varepsilon_R$ and $\Der_k(A)/A \varepsilon_A$ reversed. 

Finally recall that $\indeg \f=a(A)-a(R)$ and that $\findeg \f=a(A)-a(R)$ if $R$ has the generalized Cayley-Bacharach property. 
\end{proof}

\begin{proposition}\label{locirr} In addition to \autoref{sett-vectfield} assume that $R$ is reduced. Let $\p$ be a minimal prime ideal of $R$ so that $R_{\p}$ is a field and write $R'=R/\p$ and $\m'=\m/\p$. Then $\findeg (\Der_k (R)/\m^{-1}\varepsilon_R)\ge \findeg (\Der_k(R')/\m'^{-1} \varepsilon_{R'})\, .$
    \end{proposition}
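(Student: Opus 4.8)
The plan is to build a degree-preserving comparison map between the two modules and to track faithful elements through it. The first and central step is to construct a natural homogeneous \emph{restriction} homomorphism $\rho\colon \Der_k(R)\to\Der_k(R')$ of degree zero, and I expect this to be the main point of the argument. The key observation is that every $k$-derivation of $R$ is automatically tangent to the component $\p$. Indeed, since $\p$ is a minimal prime of the reduced ring $R$, the localization $R_\p$ is a field equal to $\Quot(R')$, and $R\to R_\p$ factors as $R\twoheadrightarrow R'\hookrightarrow\Quot(R')$, so $\ker(R\to R_\p)=\p$. Extending $\delta\in\Der_k(R)$ to $\delta_\p\in\Der_k(R_\p)$ and evaluating on $f\in\p$ gives $\delta(f)/1=\delta_\p(f/1)=\delta_\p(0)=0$ in $R_\p$, whence $\delta(f)\in\ker(R\to R_\p)=\p$. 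Thus $\delta(\p)\subseteq\p$, so $\delta$ descends to a derivation $\rho(\delta)\in\Der_k(R')$ defined by $\rho(\delta)(\bar r)=\overline{\delta(r)}$. One then checks directly that $\rho$ is $R$-linear, homogeneous of degree zero, and satisfies $\rho(\varepsilon_R)=\varepsilon_{R'}$.

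Next I would verify that $\rho$ descends modulo the Euler submodules, that is, $\rho(\m^{-1}\varepsilon_R)\subseteq\m'^{-1}\varepsilon_{R'}$; this is the point where the fractional-ideal bookkeeping enters. Let $\pi\colon Q\to R_\p=\Quot(R')$ denote the projection onto the component of $\p$, a ring homomorphism restricting to $R\to R'$ on $R$. For $q\in\m^{-1}$ one computes $\rho(q\varepsilon_R)=\pi(q)\,\varepsilon_{R'}$, and the condition $\m q\subseteq R$ projects to $\m'\pi(q)\subseteq R'$, so that $\pi(q)\in\m'^{-1}$. This produces the desired homogeneous degree-zero map $\bar\rho\colon \Der_k(R)/\m^{-1}\varepsilon_R\to\Der_k(R')/\m'^{-1}\varepsilon_{R'}$.

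Finally I would show that $\bar\rho$ carries faithful elements to nonzero elements, by localizing at $\p$. We may assume the left-hand side is finite and choose a homogeneous $\eta$ with $\ann\eta=0$ and $\deg\eta=\findeg(\Der_k(R)/\m^{-1}\varepsilon_R)=:t$. By \autoref{1.2} the module $\Der_k(R)/\m^{-1}\varepsilon_R$ is torsionfree over the reduced ring $R$; since $R_{\p_i}$ is a field at each minimal prime $\p_i$, a faithful element has nonzero image in every such localization, in particular a nonzero image in $(\Der_k(R)/\m^{-1}\varepsilon_R)_\p$. Because localizing at $\p$ turns $R\to R'$ into the identity map of the field $\Quot(R')$, the map $\bar\rho$ localizes to an isomorphism $\bar\rho_\p$, and therefore $\bar\rho(\eta)\neq0$. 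Thus $\Der_k(R')/\m'^{-1}\varepsilon_{R'}$ has a nonzero homogeneous element of degree $t$, giving $\indeg(\Der_k(R')/\m'^{-1}\varepsilon_{R'})\le t$. As $R'$ is a domain and this module is torsionfree (again \autoref{1.2}), its faithful initial degree equals its initial degree, and the asserted inequality $\findeg(\Der_k(R)/\m^{-1}\varepsilon_R)\ge\findeg(\Der_k(R')/\m'^{-1}\varepsilon_{R'})$ follows.
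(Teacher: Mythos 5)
Your proof is correct and takes essentially the same route as the paper's: both construct the natural restriction map $\Der_k(R)\to\Der_k(R')$, verify that it carries $\m^{-1}\varepsilon_R$ into $\m'^{-1}\varepsilon_{R'}$ via the projection $\Quot(R)\to\Quot(R')$, and then use that the induced map becomes an isomorphism after localizing at $\p$ to conclude that faithful elements have faithful images. The only cosmetic differences are that you prove the descent of derivations to $R/\p$ directly (the paper cites the literature for this) and that you finish via ``nonzero image plus torsionfreeness over the domain $R'$'' rather than tracking annihilators directly, which amounts to the same thing.
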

\begin{proof} Since $R$ is reduced, every derivation in $\Der_k(R)$ induces a derivation in $\Der_k(R')$, see for instance \cite{ChLee}*{page 614}. This gives a natural map $\Der_k(R) \lto \Der_k(R')$.  The projection $R \twoheadrightarrow R'$ induces a map $\varphi: \Quot(R) \lto \Quot(R')$, which is surjective since $R$ is reduced. It follows that $\varphi(\m^{-1}) \subset \m'^{-1}$. Combining these facts we obtain a natural map 
$$\psi: \Der_k (R)/\m^{-1}\varepsilon_R \lto \Der_k(R')/\m'^{-1} \varepsilon_{R'}\, .$$
 Notice that $\psi_{\p}$ is an isomorphism since $R$ is reduced.  It follows that if $\nu \in  \Der_k (R)/\m^{-1}\varepsilon_R $ with $\ann_R \nu=0$, then $\ann_R'\psi(\nu)=0$, which proves the lemma.  \end{proof}

In some items of the next theorem we will assume that the curve $\C$ is locally irreducible. By this we mean that the local ring at every point of $\C$ is a domain, or equivalently, that $\C$ is the disjoint union of its irreducible components. 

\begin{theorem}[The case of curves with plane singularities]\label{curves} Let $k$ be an algebraically closed field and $\C\subset {\mathbb P_k^{n-1}}$ be a reduced curve of degree $d$. Let $R$ be the homogeneous coordinate ring of $\C$, with maximal homogeneous ideal $\m$, and let $J_R$ be the Jacobian ideal. Assume $d$ is not a multiple of the characteristic. If $\C$ has at most plane singularities, then 
\begin{eqnarray*}
\findeg (\Der_k(R)/\m^{-1}\varepsilon)  &\ge& \begin{cases} a(R)+|\Sing(\C)|-\Lmult(R/J_R) \\ 
a(R)+1 +|\Sing(\C)|-\Lmult(R/J_R)\hspace{1cm} \mbox{if } \C \mbox{ is locally irreducible } 
\end{cases}\\
&\ge& \begin{cases} a(R) +|\Sing(\C)|+\sum\limits_{p\in {\rm Sing}(\C)} {e(\mathcal O_{\C, p})-1 \choose 2}-\tau(\C) \\ 
a(R)+1 +|\Sing(\C)|+\sum\limits_{p\in {\rm Sing}(\C)} {e(\mathcal O_{\C, p})-1 \choose 2}-\tau(\C) \hspace{.2cm} \mbox{if } \C \mbox{ is locally irreducible. } 
\end{cases}\\
\end{eqnarray*}
\end{theorem}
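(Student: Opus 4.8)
The plan is to reduce to the plane-curve estimate of \autoref{PC} by a general projection and to transport the result back to $R$ via \autoref{GP}. As $k$ is infinite I would invoke \autoref{UV} with $D=\dim R=2$: three general $k$-linear forms in $R_1$ generate a subalgebra $A\subset R$ that is a finite and birational extension, and $A=k[z_0,z_1,z_2]/(g)$ is the homogeneous coordinate ring of the image $\C'\subset\mathbb P^2_k$ of $\C$ under the associated general projection. Being a plane curve, $A$ is Gorenstein with $a(A)=\deg g-3=d-3$, and since $\dim R\ge2$ the hypotheses of \autoref{GP} hold. The geometric input I would record is that a general projection is an isomorphism near the singular points of $\C$ and creates only ordinary nodes as new singularities; since a node $B$ satisfies $\ell\ell(B/J_B)=\lambda(B/J_B)=1$, each node contributes $0$ to $|\Sing|-\Lmult$, whence
\[
|\Sing(\C')|-\Lmult(A/J_A)=|\Sing(\C)|-\Lmult(R/J_R).
\]

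For the first bound I would insert the generic estimate of \autoref{PC}, namely $\indeg(\Der_k(A)/A\varepsilon_A)\ge d-3+|\Sing(\C')|-\Lmult(A/J_A)$, into the first (unconditional) inequality of \autoref{GP},
\[
\findeg(\Der_k(R)/\m^{-1}\varepsilon_R)\ge\indeg(\Der_k(A)/A\varepsilon_A)-a(A)+a(R).
\]
The shift $a(A)=d-3$ cancels, and the identity above for $|\Sing|-\Lmult$ gives exactly $\findeg(\Der_k(R)/\m^{-1}\varepsilon)\ge a(R)+|\Sing(\C)|-\Lmult(R/J_R)$.

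The extra $+1$ in the locally irreducible case is the main obstacle: a general projection of a reducible curve yields a reducible plane curve $\C'$, so $A$ is not a domain, $\indeg(\overline A/A)=0$, and \autoref{PC} then supplies only the weaker bound $d-3$ rather than the $d-2$ needed to gain the $+1$. To circumvent this I would reduce to a single irreducible component before projecting. Writing $\C=\C_1\sqcup\cdots\sqcup\C_s$ for the disjoint union of irreducible components (the meaning of locally irreducible), with coordinate rings $R_i=R/\p_i$ and maximal ideals $\m_i$, disjointness makes the local invariants additive, $|\Sing(\C)|=\sum_i|\Sing(\C_i)|$ and $\Lmult(R/J_R)=\sum_i\Lmult(R_i/J_{R_i})$; moreover the exact sequence $0\to R\to\bigoplus_i R_i\to F\to0$ with $F$ of finite length yields $H^2_\m(R)\cong\bigoplus_i H^2_\m(R_i)$, so that $a(R)=\max_i a(R_i)$. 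I would then fix an index $i$ with $a(R_i)=a(R)$.

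Running the projection argument of the first two paragraphs for the irreducible curve $\C_i$ — whose projection $\C_i'$ is again irreducible — the irreducible case of \autoref{PC} gives $\indeg(\Der_k(A_i)/A_i\varepsilon_{A_i})\ge\deg\C_i-2+|\Sing(\C_i')|-\Lmult(A_i/J_{A_i})$, and \autoref{GP} together with $a(A_i)=\deg\C_i-3$ produces $\findeg(\Der_k(R_i)/\m_i^{-1}\varepsilon_{R_i})\ge a(R_i)+1+|\Sing(\C_i)|-\Lmult(R_i/J_{R_i})$. Combining this with \autoref{locirr}, the choice $a(R_i)=a(R)$, and the inequality $|\Sing(\C_i)|-\Lmult(R_i/J_{R_i})\ge\sum_j\big(|\Sing(\C_j)|-\Lmult(R_j/J_{R_j})\big)=|\Sing(\C)|-\Lmult(R/J_R)$ — valid since each summand is $\le0$ — yields $\findeg(\Der_k(R)/\m^{-1}\varepsilon)\ge a(R)+1+|\Sing(\C)|-\Lmult(R/J_R)$. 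The second chain of inequalities is then immediate from the first: \autoref{basicP} and \autoref{basicC} identify the local lengths with the local Tjurina numbers, and \autoref{lmult} gives $\Lmult(R/J_R)\le\tau(\C)-\sum_{p}\binom{e(\OO_{\C,p})-1}{2}$, which I substitute into both cases. The one point needing care beyond the standard geometry of general projections is the characteristic hypothesis: \autoref{PC} requires the degree of the plane curve in play to be a unit, so in the reduction to $\C_i$ one should check that $\deg\C_i$ is not a multiple of the characteristic, which is automatic in characteristic zero.
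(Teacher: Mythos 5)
Your proposal follows the paper's proof essentially verbatim: the same projection to a plane curve via \autoref{UV}, \autoref{GP} and \autoref{PC}, the same bookkeeping (planar singular points of $\C$ survive the projection with the same local rings, new singularities are ordinary nodes contributing $0$) giving $|\Sing(\D)|-\Lmult(A/J_A)=|\Sing(\C)|-\Lmult(R/J_R)$, and the same reduction of the locally irreducible case to an irreducible component of maximal $a$-invariant via \autoref{locirr} and additivity over the disjoint components. The positive-characteristic caveat you flag (the degree of the chosen component could be a multiple of the characteristic even when $d$ is not) is equally present and unaddressed in the paper's own reduction step, so it is not a gap relative to the paper's argument.
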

\begin{proof} It suffices to prove the first inequality. We may assume that $n\ge 3$. Let $x,y,z$ be general linear forms in $R$ and write $A=k[x,y,z]\subset R$. Notice $A\subset R$ is  a finite and birational homogeneous extension of standard graded $k$-algebras by \autoref{UV}, and $A$ is the homogeneous coordinate ring of a plane curve $\D$. Write $J_A$ for the Jacobian ideal of $A$. 
By \autoref{GP} and \autoref{PC} one has
\[ \findeg (\Der_k(R)/\m^{-1} \varepsilon_R) \ge \indeg (\Der_k(A)/A\varepsilon_A) -a(A)+a(R)\]
and
\[\indeg (\Der_k(A)/A\varepsilon_A)\ge \begin{cases} e(A)-3 +|\Sing(\D)|-\Lmult(A/J_A) \\ 
e(A)-2 +|\Sing(\D)|-\Lmult(A/J_A)& \mbox{if } \D \mbox{ is irreducible\,.}
\end{cases}\\
 \] 
 Since $\D$ is a plane curve we have $a(A)=e(A)-3$. Thus we obtain
 \[ \findeg (\Der_k(R)/\m^{-1} \varepsilon_R) \ge \begin{cases} a(R) +|\Sing(\D)|-\Lmult(A/J_A) \\ 
a(R)+1 +|\Sing(\D)|-\Lmult(A/J_A)& \mbox{if } \D \mbox{ is irreducible\,.}
\end{cases}\\
 \]
 
 Next we show that $|\Sing(\D)|-\Lmult(A/J_A)=|\Sing(\C)|-\Lmult(R/J_R)$.  Let $\p_1, \ldots, \p_t$ be the distinct minimal prime ideals of $J_R$ having height one. As $\edim R_{\p_i}=2$, it follows that $R_{\p_i}=A_{\p_i \cap A}$, see \cite{UV}*{Proposition 5.2} for instance. In particular, $\ell\ell((R/J_R)_{\p_i})=\ell\ell((A/J_A)_{\p_i\cap A})$. The ring $A$ may acquire additional prime ideals $\q_1, \ldots,\q_s$ of height one where it is not regular, but they all correspond to ordinary nodes of $\D$, see \cite{H}*{Chapter IV, Proposition 3.5 and Theorem 3.10}, in other words $\ell\ell((A/J_A)_{\q_i})=1$. It follows that 
 \[\Lmult(A/J_A)-\Lmult(R/J_R)=s=|\Sing(\D)|-|\Sing(\C)|\, ,\] as required. This completes the proof of the first inequality if the assumption of $\C$ being local irreducible is replaced by $\C$ being irreducible. 

It remains to reduce the locally irreducible case to the irreducible case. Thus assume that $\C$ is locally irreducible and let $\wp_1, \ldots, \wp_r$ be the minimal prime ideals of $R$. Consider the exact sequence of $R$-modules
 \[ 
0\lto R \stackrel{\iota}{\lto} \times_{i=1}^{r} (R/\wp_i) \lto N \lto 0\,.
\]
Since $\C$ is locally irreducible, the map $\iota$ is an isomorphism locally on the punctured spectrum of $R,$ so $N$ is a module of finite length. It follows that $\omega_R \cong \times_{i=1}^{r} \omega_{R/\wp_i}$ and therefore $$a(R)=\max \{ a(R/\wp_i) \, |  \, 1\le i \le r\}\, . $$
Now let $\wp$ be a minimal prime of $R$ such that $a(R)=a(R/\wp)$, write $R'=R/\wp$, and let $\mathcal C'$ be the corresponding irreducible curve. We obtain
 \begin{eqnarray*}
   \findeg (\Der_k (R)/\m^{-1}\varepsilon_R)&\ge& \findeg (\Der_k(R')/\m'^{-1} \varepsilon_{R'})  \hspace{3cm} \text{by \autoref{locirr}}  \\
   &\ge& a(R')+1 +|\Sing(\C')|-\Lmult(R'/J_{R'}) \qquad \text{since $\C'$ is irreducible}  \\
   &=& a(R)+1 +|\Sing(\C')|-\Lmult(R'/J_{R'})\\
   &\ge& a(R)+1 +|\Sing(\C)|-\Lmult(R/J_{R})\, , 
 \end{eqnarray*}
 where the last inequality holds because $\C$ is the disjoint union of its irreducible components.
\end{proof}

\begin{remark} {\rm If in addition to the assumption of \autoref{curves}, the ring $R$ satisfies the generalized Cayley-Bacharach property, then according to \autoref{GP} 
 \begin{eqnarray*}\indeg (\Der_k(R)/\m^{-1}\varepsilon)     &\ge&  a(R)+|\Sing(\C)|-\Lmult(R/J_R)   \\ &\ge& a(R) +|\Sing(\C)|+\sum\limits_{p\in {\rm Sing}(\C)} {e(\mathcal O_{\C, p})-1 \choose 2}-\tau(\C) \, . \end{eqnarray*}}
\end{remark}

The next result was first proved for plane curves in \cite{CL}, then for complete intersection curves in \cite{CCF}, and finally for arithmetically Cohen-Macaulay curves in \cite{E}*{Theorem 1}.

\begin{corollary}[The case of curves with ordinary nodes]\label{curvesnodes}Let $k$ be an algebraically closed field and $\C\subset {\mathbb P_k^{n-1}}$ be a reduced curve of degree $d$. Let $R$ be the homogenous coordinate ring of $\C$ with maximal homogeneous ideal $\m$. Assume $d$ is not a multiple of the characteristic. If $\C$ has at most ordinary nodes as singularities, then 
$$\findeg (\Der_k(R)/\m^{-1}\varepsilon)  \ge \begin{cases} a(R) \\ 
a(R)+1 & \mbox{if } \C \mbox{ is locally irreducible} \, .\end{cases} $$
\end{corollary}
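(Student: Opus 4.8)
The plan is to obtain this as an immediate specialization of \autoref{curves}. An ordinary node is in particular a planar singularity, so $\C$ has at most plane singularities and the first inequality of \autoref{curves} applies, giving
\[
\findeg (\Der_k(R)/\m^{-1}\varepsilon)\ \ge\ a(R)+|\Sing(\C)|-\Lmult(R/J_R),
\]
with an additional $+1$ when $\C$ is locally irreducible. Thus the whole statement reduces to showing that the correction term $|\Sing(\C)|-\Lmult(R/J_R)$ vanishes when every singularity is an ordinary node; the smooth case is then the empty-sum case, in which $\Sing(\C)=\emptyset$ and the bound reads $\findeg\ge a(R)$ (respectively $a(R)+1$, since a smooth curve is locally irreducible).

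Next I would compute the contribution of a single node. Let $p\in\Sing(\C)$ be an ordinary node and set $A=\mathcal O_{\C,p}$. Since $A/J_A$ has finite length, its length and Loewy length are computed after completion, and the completion of $A$ at an ordinary node is $k[[x,y]]/(xy)$. There the defining equation $f=xy$ has partial derivatives $f_x=y$, $f_y=x$, so the Jacobian ideal is the maximal ideal and the quotient is $k$. Hence $\ell\ell(A/J_A)=1$. (Equivalently, a node is a one-dimensional complete intersection with $e(A)=2$ and $\tau(A)=\lambda(A/J_A)=1$ by \autoref{basicP}, so $\binom{e(A)-1}{2}=0$, which matches the second inequality of \autoref{curves}.)

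I would then globalize exactly as in the proof of \autoref{basicC}. The primes of maximal dimension in $\Supp(R/J_R)$ are the height-one primes $\p$ corresponding to the nodes. By \autoref{basicL}(c) one has $R_\p\cong A(x)$ and $J_{R_\p}=J_A R_\p$, so $(R/J_R)_\p\cong (A/J_A)(x)$; as adjoining a transcendental element preserves the Loewy length of a finite-length module, $\ell\ell((R/J_R)_\p)=\ell\ell(A/J_A)=1$. Moreover each node is a reduced $k$-rational point, so $e(R/\p)=1$ because $k$ is algebraically closed. Feeding these into the defining formula for $\Lmult$ yields
\[
\Lmult(R/J_R)=\sum_{p\in\Sing(\C)}\ell\ell(\mathcal O_{\C,p}/J)\cdot e(R/\p)=\sum_{p\in\Sing(\C)}1=|\Sing(\C)|,
\]
so the correction term is zero and \autoref{curves} gives precisely the asserted inequality.

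The argument carries essentially no obstacle beyond this local identification; the only point that needs care is matching the local Loewy-length computation at a node to the associativity-type formula defining $\Lmult$, for which \autoref{basicL}(c), together with the invariance of Loewy length under the transcendental extension $A\subset A(x)$, suffices.
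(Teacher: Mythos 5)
Your proposal is correct and takes essentially the same route as the paper: the paper's own proof is a one-line application of \autoref{curves} together with the assertion that $\Lmult(R/J_R)=|\Sing(\C)|$ when $\C$ has only ordinary nodes as singularities. Your local computation at a node (completion $k\llbracket x,y\rrbracket/(xy)$, Jacobian ideal equal to the maximal ideal, Loewy length $1$), combined with the globalization via \autoref{basicL}(c) and the fact that $e(R/\p)=1$ over an algebraically closed field, merely supplies the verification of that assertion, which the paper leaves to the reader.
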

\begin{proof} The assertion follows from \autoref{curves}, because $\Lmult(R/J_R)=|\Sing(\C)|$ if (and only if) $\C$ has only ordinary nodes as singularities.  
\end{proof}

\begin{corollary}\label{Smooth} Let $k$ be a perfect field and $\C\subset {\mathbb P_k^{n-1}}$ be a curve of degree $d$. Let $R$ be the homogenous coordinate ring of $\C$ with maximal homogeneous ideal $\m$. Assume $d$ is not a multiple of the characteristic. If $\C$ is smooth, then 
$$\findeg (\Der_k(R)/\m^{-1}\varepsilon)  \ge   a(R)+1\, . $$
\end{corollary}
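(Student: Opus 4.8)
The plan is to reduce to the algebraically closed case by base change to an algebraic closure $\ell$ of $k$ and then invoke \autoref{curvesnodes}. Write $R_\ell = R\otimes_k\ell$, let $\m_\ell=\m\otimes_k\ell$ be its maximal homogeneous ideal, and let $\C_\ell\subset\mathbb P^{n-1}_\ell$ be the corresponding curve of degree $d$. Since $k$ is perfect, smoothness of $\C$ over $k$ is equivalent to regularity and is preserved by the base change $k\subset\ell$; hence $\C_\ell$ is smooth, its local rings are regular and therefore domains, so $\C_\ell$ is locally irreducible and has no singular points. Thus \autoref{curvesnodes} applies over $\ell$ and yields $\findeg_{R_\ell}(\Der_\ell(R_\ell)/\m_\ell^{-1}\varepsilon)\ge a(R_\ell)+1$.

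First I would verify that the module base-changes as expected, namely $(\Der_k(R)/\m^{-1}\varepsilon)\otimes_k\ell\cong\Der_\ell(R_\ell)/\m_\ell^{-1}\varepsilon$. As $\ell$ is flat over $k$, this rests on three compatibilities: $\Omega_k(R)\otimes_k\ell\cong\Omega_\ell(R_\ell)$ (differentials commute with base change); $\Der_k(R)\otimes_k\ell\cong\Der_\ell(R_\ell)$ (from the previous one and flatness, since $\Der_k(R)=\Hom_R(\Omega_k(R),R)$ with $\Omega_k(R)$ finitely presented); and $\m^{-1}\otimes_k\ell\cong\m_\ell^{-1}$. For the last, I would use that $\depth R\ge 1$, so the natural map $\Hom_R(\m,R)\to\m^{-1}$ is an isomorphism, and that $\Hom$ commutes with the flat base change; since $\m_\ell$ is the maximal homogeneous ideal of $R_\ell$ and $\depth R_\ell\ge 1$, this gives $\m^{-1}\otimes_k\ell\cong\Hom_{R_\ell}(\m_\ell,R_\ell)\cong\m_\ell^{-1}$. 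Tensoring the presentation $\m^{-1}\xrightarrow{\cdot\varepsilon}\Der_k(R)\to\Der_k(R)/\m^{-1}\varepsilon\to 0$ with $\ell$, and using that the Euler derivation is preserved, then gives the desired isomorphism.

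Next I would record two invariance facts. Since local cohomology commutes with the flat base change $k\subset\ell$, one has $[H^2_{\m_\ell}(R_\ell)]_j=[H^2_\m(R)]_j\otimes_k\ell$ for every $j$, so the largest degree in which the top local cohomology is nonzero is unchanged and $a(R_\ell)=a(R)$. For the faithful initial degree, the point is that flatness preserves faithfulness: if $m$ is a homogeneous element of $\Der_k(R)/\m^{-1}\varepsilon$ with $\ann_R m=0$, then $\ann_{R_\ell}(m\otimes 1)=\ann_R(m)\otimes_k\ell=0$, so $m\otimes 1$ is a faithful homogeneous element of the same degree in the base-changed module. Hence every degree realized by a faithful element over $R$ is also realized over $R_\ell$, which gives $\findeg_R(\Der_k(R)/\m^{-1}\varepsilon)\ge\findeg_{R_\ell}(\Der_\ell(R_\ell)/\m_\ell^{-1}\varepsilon)$. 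Chaining the three ingredients gives $\findeg_R(\Der_k(R)/\m^{-1}\varepsilon)\ge a(R_\ell)+1=a(R)+1$, as required.

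The main obstacle I anticipate is bookkeeping rather than conceptual: cleanly verifying the base-change compatibility $\m^{-1}\otimes_k\ell\cong\m_\ell^{-1}$ through the identification $\m^{-1}\cong\Hom_R(\m,R)$, and making sure that smoothness over the perfect field $k$ genuinely descends the locally irreducible hypothesis needed for \autoref{curvesnodes}. Both points are exactly where the perfectness of $k$ is used, since without it smoothness over $k$ need not survive base change to $\ell$.
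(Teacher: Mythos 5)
Your proof is correct and follows essentially the route the paper intends: the corollary is stated immediately after \autoref{curvesnodes} without a separate proof, the point being that a smooth curve has no singularities and is locally irreducible (regular local rings are domains), so \autoref{curvesnodes} applies with $|\Sing(\C)|-\Lmult(R/J_R)=0$. Your flat base-change bookkeeping to $\bar{k}$ --- the compatibilities for $\Omega$, $\Der$, $\m^{-1}\cong\Hom_R(\m,R)$ (valid since $\depth R\ge 1$), the invariance of $a(R)$ via local cohomology, and the preservation of faithful elements --- is exactly the bridge needed from the algebraically closed hypothesis of \autoref{curvesnodes} to the perfect-field hypothesis here, and each step is sound.
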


\medskip

\begin{corollary}\label{SmoothGOR} Let $k$ be a perfect field and $\C\subset {\mathbb P_k^{n-1}}$ be a curve of degree $d$. Let $R$ be the homogenous coordinate ring of $\C$ with maximal homogeneous ideal $\m$. Assume $d$ is not a multiple of the characteristic. If $\C$ is smooth and arithemetically Gorenstein, then 
$$\Der_k(R)/R\varepsilon  \cong  \m(-a(R))\, . $$
In particular, $\indeg (\Der_k(R)/R\varepsilon)=\findeg (\Der_k(R)/R\varepsilon)  =  a(R)+1\, . $
\end{corollary}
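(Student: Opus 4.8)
The plan is to identify $\Der_k(R)/R\varepsilon$ explicitly as the kernel of the left-hand map in the first exact sequence of \autoref{1.2}, after pinning down all three terms using Gorensteinness. Since $R$ is arithmetically Gorenstein it is Cohen--Macaulay of dimension two, so $\depth R=2$; in particular $\m^{-1}=R$ and $\Der_k(R)/\m^{-1}\varepsilon=\Der_k(R)/R\varepsilon$, so it suffices to treat the latter. Adopting \autoref{setupcurves} (legitimate since a smooth curve is reduced and equidimensional over the perfect field $k$), the smoothness together with arithmetic Cohen--Macaulayness lets me invoke \autoref{3iso}(b) to get $H^*\cong\omega_R^*$, and $\depth R\ge2$ makes the embedding $L^*\hookrightarrow H^*$ of \autoref{1.2} an isomorphism, so $L^*\cong\omega_R^*$. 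Gorensteinness gives $\omega_R\cong R(a(R))$ and hence $\omega_R^*\cong R(-a(R))$, so $L^*\cong R(-a(R))$.

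Next I would compute the term $\Ext^2_R(k,R)$. Following the socle computation in the proof of \autoref{anticanonicalM} (one may assume $k$ infinite and cut by a linear regular sequence $x_1,x_2$), one has $\Ext^2_R(k,R)\cong\socle(R/(x_1,x_2))(2)$. Because $R$ is Gorenstein, $R/(x_1,x_2)$ is Artinian Gorenstein, so its socle is one-dimensional and concentrated in the single top degree $a(R/(x_1,x_2))=a(R)+2$; after the shift this gives $\Ext^2_R(k,R)\cong k(-a(R))$. Substituting the identifications into the exact sequence $0\to\Der_k(R)/R\varepsilon\to L^*\to\Ext^2_R(k,R)$ exhibits $\Der_k(R)/R\varepsilon$ as $\ker\bigl(R(-a(R))\xrightarrow{\pi}k(-a(R))\bigr)$ for some degree-preserving map $\pi$.

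The crux of the argument is determining $\pi$. The space of degree-zero homomorphisms $R(-a(R))\to k(-a(R))$ is one-dimensional, so $\pi$ is either zero or a nonzero scalar multiple of the natural surjection $R(-a(R))\to(R/\m)(-a(R))=k(-a(R))$, whose kernel is exactly $\m(-a(R))$. To rule out $\pi=0$ I would use the previously established lower bound: if $\pi$ were zero then $\Der_k(R)/R\varepsilon\cong R(-a(R))$ would be free of rank one, its generator being a homogeneous element of degree $a(R)$ with zero annihilator, so $\findeg(\Der_k(R)/R\varepsilon)=a(R)$, contradicting the inequality $\findeg(\Der_k(R)/\m^{-1}\varepsilon)\ge a(R)+1$ of \autoref{Smooth}. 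Hence $\pi$ is the natural surjection and $\Der_k(R)/R\varepsilon\cong\ker\pi=\m(-a(R))$. I expect this non-vanishing step, which trades on the earlier bound to distinguish $\m(-a(R))$ from the free module $R(-a(R))$, to be the only genuinely delicate point; it is exactly the boundary case $\indeg\omega_R^*=a(R)$ left untreated by \autoref{anticanonicalM}.

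For the final ``in particular'' clause I would read off the invariants of $\m(-a(R))$: one has $\indeg\m(-a(R))=a(R)+1$ since $\m$ is generated in degree one, and $\findeg\m(-a(R))=a(R)+1$ as well, because after extending to an infinite ground field (which preserves $a(R)$, the module structure, and both invariants, as annihilators commute with flat base change) a general linear form is a non-zerodivisor lying in $\m$ and so furnishes a degree-$(a(R)+1)$ element of $\m(-a(R))$ with zero annihilator. Combined with $\indeg\le\findeg$ this yields $\indeg=\findeg=a(R)+1$.
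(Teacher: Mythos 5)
Your core argument is correct and follows essentially the same route as the paper's proof: both rest on the exact sequence $0\to \Der_k(R)/R\varepsilon \to L^* \to \Ext^2_R(k,R)$ of \autoref{1.2} (with $\m^{-1}=R$ since $\depth R = 2$), the identification $L^*\cong H^*\cong \omega_R^*\cong R(-a(R))$ via \autoref{3iso}(b) and Gorensteinness, the degree bound on $\Ext^2_R(k,R)$ from the proof of \autoref{anticanonicalM}, and the lower bound of \autoref{Smooth}. The only difference is the endgame: the paper truncates, concluding $\Der_k(R)/R\varepsilon \cong (\omega_R^*)_{\ge a(R)+1}=\m(-a(R))$ directly, whereas you compute $\Ext^2_R(k,R)\cong k(-a(R))$ exactly (using that the Artinian reduction of $R$ is Gorenstein) and then rule out the zero connecting map; both endgames are valid and of equal depth, so this is a cosmetic rather than a structural difference.

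The one step that does not hold up as written is your justification of $\findeg (\m(-a(R)))=a(R)+1$ by passing to an infinite field. Flat base change gives only one inequality: a faithful element of $M$ stays faithful in $M\otimes_k k'$, so the faithful initial degree can only drop under field extension. Your general linear form is produced over $k'$, and such an element need not descend to a faithful element over the (possibly finite) perfect field $k$; the claim that field extension ``preserves both invariants'' is false in general. For instance, for $R=\mathbb{F}_2[x,y]/(xy(x+y))$ every nonzero linear form lies in a minimal prime, so $\findeg_R \m =2$ (the element $x^2+xy+y^2$ is faithful), yet over $\mathbb{F}_4$ a general linear form is faithful and the faithful initial degree drops to $1$. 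The repair is immediate and is in fact the first line of the paper's proof: $R$ is a domain --- smoothness of $\C$ gives $R_1$, Cohen--Macaulayness gives $S_2$, hence $R$ is normal, and a graded normal ring with $R_0=k$ a field has no nontrivial idempotents --- so $\m(-a(R))$ is torsion-free over a domain, every nonzero homogeneous element is faithful, and $\indeg = \findeg = a(R)+1$ follows with no base change at all.
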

\begin{proof} First notice that $R$ is a domain, hence $\indeg (\Der_k(R)/R\varepsilon)=\findeg (\Der_k(R)/R\varepsilon)$. As in the proof of \autoref{anticanonicalM} we have an exact sequence
$$0\lto \Der_k(R)/R\varepsilon \lto \omega_R^* \lto \Ext^2_R(k,R)\, ,
$$ 
where $\Ext^2_R(k,R)$ is concentrated in degrees $\le a(R).$  Since  $\indeg (\Der_k(R)/R\varepsilon)\ge  a(R)+1$ by \autoref{Smooth}, we conclude that 
$$\Der_k(R)/R\varepsilon  \cong  (\omega_R^*)_{\ge a(R)+1}\, . $$
Now the assertion follows because, $\omega_R^*\cong R(-a(R)).$ 
\end{proof}

\bs

\section{Lower bounds in terms of algebraic and geometric genus}\label{SecEK}

The main results of this section are the estimates on the degrees of vector fields of \autoref{dPW} and \autoref{turina}. 
Our estimates will require  \autoref{regularity} below, a remarkable lower bound for the
Castelnuovo-Mumford regularity of $R/(\im \mu)^{\rm sat} \, $ that was proved in \cite{EK2002}*{4.5}. As in \cite{EK2002}, 
we deduce this bound from the nonvanishing of a map between cohomology modules. Our proof of
the nonvanishing, \autoref{Th4.3}, uses general properties of the Koszul complex and of regular differential forms,
and is different from the proofs of the corresponding results \cite{EK2002}*{2.1 and 2.2}. In \autoref{secEuler} we will apply
 \autoref{Th4.3} to obtain structural information about the module $\Der_k(R)$ and the natural map
$\, \Der_k(R)/\m^{-1}\varepsilon \lto L^* ,$ see  \autoref{splitting} and \autoref{scrolls}.

\begin{lemma}\label{preparation} Let $k$ be a field, let $d$ and $n$ be integers with $1 <d<n$, let $x_1, \ldots, x_n$ be variables over $k$, and consider the standard graded polynomial rings $A=k[x_1, \ldots, x_d]\subset D=k[x_1, \ldots, x_n]$ with homogeneous maximal ideals $\n$ and  $\N$,  respectively. By $B_{\bullet}(A)$ and $B_{\bullet}(D)$ we denote the boundaries in the Koszul complexes $K_{\bullet}(A)=K_{\bullet}(x_1, \ldots, x_d; A)$ and $K_{\bullet}(D)=K_{\bullet}(x_1, \ldots, x_n; D) .$ 
\begin{enumerate}[$($a$)$]
\item There exists a homogeneous $A$-linear map $\delta$ fitting in the commutative diagram
$$
\begin{tikzcd} H^d_{\N}(B_{d-1}(D))\arrow[rightarrow]{r}{\alpha} & H^d_{\n}(B_{d-1}(D))  \\  
   H^{d-1}_{\n}(B_{d-2}(A))  \arrow[dashed]{u}{\delta} \arrow{r}{\gamma}  & H^d_{\n}(B_{d-1}(A)) \arrow{u}{\beta}\, .
\end{tikzcd}
$$
Here $\alpha$ is the natural map arising from the fact that $\n\subset \N$, $\beta$ is induced by the morphism of complexes  $K_{\bullet}(A) \lto K_{\bullet}(D)$, and $\gamma$ is the connecting homomorphism in the long exact sequence associated to the exact sequence $0\to B_{d-1}(A) \to K_{d-1}(A)\to B_{d-2}(A) \to 0 .$ 
\item The map $\gamma$ as in item (a) is an isomorphism in degree  zero. 
\end{enumerate}
\end{lemma}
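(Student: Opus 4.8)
The plan is to handle the two parts by different means: part (b) by a direct computation of graded local cohomology, and part (a) by naturality of connecting homomorphisms, the latter being where the real work lies. I would first record the structural facts. Since $x_1,\dots,x_d$ is a regular sequence in $A$, the complex $K_\bullet(A)$ has homology only in degree $0$; hence $B_{d-1}(A)=\ker\big(K_{d-1}(A)\to K_{d-2}(A)\big)$ is the image of the injection $K_d(A)\hookrightarrow K_{d-1}(A)$, so $B_{d-1}(A)\cong K_d(A)\cong A(-d)$ is free of rank one, while $K_{d-1}(A)\cong A(-(d-1))^{d}$. Over the regular ring $D$ the complex $K_\bullet(D)$ resolves $k$, so $B_{d-1}(D)$ and $B_{d-2}(D)$ are syzygies of $k$, of depth $d$ and $d-1$; thus $H^{i}_{\N}(B_{d-1}(D))=0$ for $i<d$ and $H^{i}_{\N}(B_{d-2}(D))=0$ for $i<d-1$. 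Finally, for the polynomial ring $A$ in $d$ variables, $H^{d}_{\n}(A)$ is nonzero precisely in degrees $\le -d$, with one-dimensional socle in degree $-d$ (equivalently $a(A)=-d$, by graded local duality).

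For part (b) I apply $H^{\bullet}_{\n}$ to $0\to B_{d-1}(A)\to K_{d-1}(A)\to B_{d-2}(A)\to 0$. Because $K_{d-1}(A)$ is free and $\dim A=d$, one has $H^{d-1}_{\n}(K_{d-1}(A))=0$, so $\gamma$ is injective, and $H^{d}_{\n}(K_{d-1}(A))\cong H^{d}_{\n}(A)(-(d-1))^{d}$ has vanishing degree-$0$ part since $-(d-1)>-d$. Meanwhile $H^{d}_{\n}(B_{d-1}(A))\cong H^{d}_{\n}(A)(-d)$ has one-dimensional degree-$0$ part, the shifted socle. The long exact sequence in degree $0$ then reads $0\to [H^{d-1}_{\n}(B_{d-2}(A))]_0\xrightarrow{\ \gamma\ } k\to 0$, so $\gamma$ is an isomorphism in degree $0$, proving (b).

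For part (a) I would build $\delta$ from the chain map $\theta\colon K_\bullet(A)\to K_\bullet(D)$, $e_i\mapsto e_i$ for $1\le i\le d$. This restricts to a morphism of the two short exact sequences defining $B_{d-2}$, so functoriality of the connecting map gives $\beta\circ\gamma=\gamma_D\circ\iota$, where $\gamma_D\colon H^{d-1}_{\n}(B_{d-2}(D))\to H^{d}_{\n}(B_{d-1}(D))$ is the $\n$-connecting map of the $D$-sequence and $\iota$ is induced by $B_{d-2}(A)\to B_{d-2}(D)$. Since connecting maps commute with the change-of-support transformation $H^{\bullet}_{\N}\to H^{\bullet}_{\n}$, we also have $\alpha\circ\gamma_D^{\N}=\gamma_D\circ\alpha''$, where $\gamma_D^{\N}$ is the $\N$-connecting map and $\alpha''\colon H^{d-1}_{\N}(B_{d-2}(D))\to H^{d-1}_{\n}(B_{d-2}(D))$ is the natural map. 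Hence it suffices to produce an $A$-linear lift $\tilde\delta$ of $\iota$ along $\alpha''$ and set $\delta=\gamma_D^{\N}\circ\tilde\delta$, for then $\alpha\delta=\gamma_D\alpha''\tilde\delta=\gamma_D\iota=\beta\gamma$.

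The existence of this lift is the main obstacle. The injectivity half is clean: since $x_1,\dots,x_{d-1}$ is a regular sequence on $B_{d-2}(D)$ — a dimension shift identifies $\Tor^{D}_{i}(B_{d-2}(D),D/(x_1,\dots,x_{d-1}))$ for $i\ge 1$ with $\Tor^{D}_{i+d-1}(k,D/(x_1,\dots,x_{d-1}))$, which vanishes — one gets $\depth_{\n}B_{d-2}(D)=d-1$, and the Grothendieck spectral sequence $H^{p}_{\N}(H^{q}_{\n}(B_{d-2}(D)))\Rightarrow H^{p+q}_{\N}(B_{d-2}(D))$ then shows $\alpha''$ is injective with image the $\N$-torsion submodule $\Gamma_{\N}(H^{d-1}_{\n}(B_{d-2}(D)))$. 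What remains, and is delicate, is to check that $\im\iota$ lands in this $\N$-torsion submodule; here I would use the freeness $B_{d-1}(A)\cong A(-d)$ with explicit generator $w=d_d(e_1\wedge\cdots\wedge e_d)$ and the flat base change $B_{d-2}(A)\otimes_A D\cong B_{d-2}\big(K_\bullet(x_1,\dots,x_d;D)\big)$ to pin down $\iota$. As a more robust alternative that sidesteps the factorization question, one can construct $\delta$ directly on stable Koszul (\v{C}ech) cochains: lift an $\n$-cocycle valued in $B_{d-2}(A)$, push it forward by $\theta$, and apply the Koszul differential of $K_\bullet(D)$ to obtain an $\N$-cocycle valued in $B_{d-1}(D)$, the only nontrivial points then being independence of the choices and commutativity with $\alpha$ and with $\beta\gamma$.
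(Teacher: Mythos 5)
Your part (b) is correct and is essentially the paper's own argument: apply $H^\bullet_{\n}$ to $0\to B_{d-1}(A)\to K_{d-1}(A)\to B_{d-2}(A)\to 0$, use $H^{d-1}_{\n}(K_{d-1}(A))=0$ for injectivity, and compare the degree-zero pieces of $H^d_{\n}(B_{d-1}(A))\cong H^d_{\n}(A)(-d)$ and $H^d_{\n}(K_{d-1}(A))$. In part (a), your structural reductions are also sound: the identity $\beta\gamma=\gamma_D\iota$ (functoriality of connecting maps along $K_\bullet(A)\to K_\bullet(D)$) is exactly the paper's first step, the compatibility $\alpha\,\gamma_D^{\N}=\gamma_D\,\alpha''$ is standard, and your spectral-sequence identification of $\alpha''$ as the inclusion of $\Gamma_{\N}\bigl(H^{d-1}_{\n}(B_{d-2}(D))\bigr)$ is correct and is a legitimate substitute for the paper's localization argument proving injectivity of $\alpha$.

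However, there is a genuine gap: the step you flag as ``delicate'' --- that $\im\iota$ lands in the $\N$-torsion of $H^{d-1}_{\n}(B_{d-2}(D))$ --- is the entire mathematical content of the lemma, and neither of your two proposed strategies supplies it. $A$-linearity of $\iota$ only gives that $\im\iota$ is killed by $\n$, not by $x_{d+1},\dots,x_n$; and your flat-base-change route cannot work as stated, since $H^{d-1}_{\n}(B_{d-2}(A))\otimes_A D\cong k\otimes_A D=D/\n D$ is \emph{not} $\N$-torsion, so factoring $\iota$ through the base change says nothing about $\N$-torsion --- one needs actual information about $B_{d-2}(D)$. The fact that rescues the argument is that the whole target is $\N$-torsion: dimension-shifting along $0\to B_i(D)\to K_i(D)\to B_{i-1}(D)\to 0$, using $H^j_{\n}(K_i(D))=0$ for $j\neq d$, gives
$$H^{d-1}_{\n}(B_{d-2}(D))\cong H^{d-2}_{\n}(B_{d-3}(D))\cong\cdots\cong H^1_{\n}(B_0(D))=H^1_{\n}(\N)\cong H^0_{\n}(k)=k$$
as graded $D$-modules, after which the lift through $\alpha''$ exists trivially. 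This computation is precisely the heart of the paper's proof, which packages it as the equality $\im\varepsilon={\rm soc}_D\bigl(H^d_{\n}(B_{d-1}(D))\bigr)=\im\alpha$ (using also $H^d_{\N}(B_{d-1}(D))\cong k$, the injectivity of $\alpha$, and the socle argument via the nonzerodivisor $x_{d+1}$ on $H^d_{\n}(K_{d-1}(A))\otimes_A D$). Your ``robust alternative'' via stable Koszul/\v{C}ech cochains does not sidestep the issue either: verifying that the constructed cochain is an $\N$-cocycle independent of choices is the same question in different clothing. Without this computation, your proof of (a) is incomplete.
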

\begin{proof}
To prove part (a) we first show that the map $\alpha$ is injective. Write $B=B_{d-1}(D)$ and $\N_i=(x_1, \ldots, x_i)D$, and notice that $H^d_{\n}(B)=H^d_{\N_d}(B)$. By \cite{B}*{8.1.2}, for $n \geq i > d$ there are natural exact sequences 
$$H^{d-1}_{\N_{i-1}}(B_{x_i}) \lto H^d_{\N_i}(B) \lto H^d_{\N_{i-1}}(B) \, .$$
As $B_{x_i}\cong \oplus \, D_{x_i}$ and ${\rm grade} ( \N_{i-1} D_{x_i} )\ge i-1 \ge d$ it follows that $H^{d-1}_{\N_{i-1}}(B_{x_i}) =0$, hence $H^d_{\N_i}(B) \hookrightarrow H^d_{\N_{i-1}}(B)$ for $n \geq i > d$. This shows that $\alpha$ is injective. 


The morphism of complexes $K_{\bullet}(A) \lto K_{\bullet}(D)$ and the naturality of the long exact sequence of local cohomology gives a commutative diagram 
$$
\begin{tikzcd} H^{d-1}_{\n}(B_{d-2}(D))\arrow{r}{\varepsilon} & H^d_{\n}(B_{d-1}(D))  \\  
   H^{d-1}_{\n}(B_{d-2}(A)) \arrow{u} \arrow{r}{\gamma}  & H^d_{\n}(B_{d-1}(A)) \arrow{u}\, .
\end{tikzcd}
$$
Since $\alpha$ is an isomorphism onto its image, the existence of $\delta$ will follow once we have shown that ${\rm im } \ \varepsilon \subset {\rm im } \, \alpha .$ In fact, we are now going to prove that 
$${\rm im } \ \varepsilon ={\rm soc}_D (H^d_{\n}(B_{d-1}(D)))=  {\rm im } \, \alpha\, .$$


The acyclicity of $K_{\bullet}(A)$ and $K_{\bullet}(D)$ imply that for $0\le i\le d-2$, 
$$ \ \  \ \  \  \, H^{i+1}_{\n}(B_{i}(A))\cong H^0(k)=k\qquad  \mbox{  as graded $A$-modules, and}$$
$$H^{i+1}_{\n}(B_{i}(D))\cong H^0(k)=k \qquad   \mbox{  as graded $D$-modules}\, .$$
In particular, we have homogeneous $D$-isomorphisms
$$H^{i+1}_{\N}(B_{i}(D))\cong k \quad \mbox{for } \ 0\le i\le n-2\, . $$

The long exact sequence of local cohomology gives an exact sequence of graded $D$-modules
$$
\begin{tikzcd}[row sep={0.7em}]  H^{d-1}_{\n}(K_{d-1}(D)) \arrow{r} \arrow[d, phantom, "\veq"] &H^{d-1}_{\n}(B_{d-2}(D))\arrow{r}{\varepsilon}\arrow[d, phantom, "\vcong"]  & H^{d}_{\n}(B_{d-1}(D)) \arrow{r}&H^{d}_{\n}(K_{d-1}(D)) \arrow[d, phantom, "\vcong"]  \\
0 & k& &H^{d}_{\n}(K_{d-1}(A)) \otimes_A D \ .
\end{tikzcd}
$$
As $x_{d+1}$ is a non zerodivisor on the $D$-module $H^{d}_{\n}(K_{d-1}(A)) \otimes_A D$, this module has trivial socle. It follows that 
$${\rm soc}_D (H^d_{\n}(B_{d-1}(D)))={\rm im } \ \varepsilon  \cong k\, .$$

On the other hand,
$$k \cong H^{d}_{\N}(B_{d-1}(D))\stackrel{\alpha}\hookrightarrow H^d_{\n}(B_{d-1}(D))\, .$$
This shows that 
$$0\neq {\rm im } \, \alpha \subset {\rm soc}_D (H^d_{\n}(B_{d-1}(D)))\, .$$
This inclusion is an equality since the socle is one-dimensional.  It follows that ${\rm im } \  \varepsilon ={\rm im } \ \alpha$.

\smallskip

We prove part (b). As before, the long exact sequence of local cohomology gives an exact sequence of graded $A$-modules
$$
\begin{tikzcd}[row sep={0.7em}] H^{d-1}_{\n}(K_{d-1}(A)) \arrow{r} \arrow[d, phantom, "\veq"]  &H^{d-1}_{\n}(B_{d-2}(A))\arrow{r}{\gamma} \arrow[d, phantom, "\vcong"] & H^{d}_{\n}(B_{d-1}(A)) \arrow[d, phantom, "\vcong"]  \\
0 & k& H^{d}_{\n}(K_{d}(A)) \ .
\end{tikzcd}
$$
Since $H^{d}_{\n}(K_{d}(A))\cong H^d_{\n}(A(-d))\cong k[x_1^{-1}, \ldots, x_d^{-1}]$, we see that $\gamma$ is an isomorphism in degree zero. 
\end{proof}

Let $k$ be a field and $R$ be a standard graded Noetherian $k$-algebra with homogeneous maximal ideal $\m$. Set $\Omega:=\Omega_k(R)$. Let $K_{\bullet}=K_{\bullet}(R)$ be the Koszul complex of the functional $\Omega\lto R$ corresponding to the Euler derivation of $R$ over $k$. Write $Z_{\bullet}=Z_{\bullet}(R)$ for the cycles of $K_{\bullet}.$ If $R$ is regular, then this notation is consistent with the one introduced in \autoref{preparation}. As $Z_{\bullet}$ is a graded commutative $R$-algebra, there is a natural homomorphism 
$$\begin{tikzcd}\bigwedge^{\bullet} Z_1 \lto Z_{\bullet}\,.
\end{tikzcd}$$

Moreover, the complex $K_{\bullet}$ is acyclic if $R$ is regular. It is also acyclic if $k$ has characteristic zero, since the differential of the de Rham complex produces a $k$-linear contracting homotopy in positive internal degree. 
If $T$ is a flat $R$-algebra with $\m T=T$, then 
$K_{\bullet} \otimes_RT$  is split-exact and the map $(\wedge^{\bullet} Z_1) \otimes_RT\lto Z_{\bullet} \otimes_RT$
is an isomorphism. In particular, the kernel and cokernel of $\wedge^{\bullet} Z_1 \lto Z_{\bullet}$ have dimension zero.
 
Any morphism of positively graded Noetherian $k$-algebras $S\lto R$ induces homomorphisms of differential graded algebras $K_{\bullet}(S) \lto K_{\bullet}(R)$ and then $Z_{\bullet}(S)  \lto Z_{\bullet}(R)$.
\smallskip


\begin{theorem}\label{Th4.3} Let $k$ be a field, let $R$ and $S$ be standard graded $k$-algebras with homogeneous maximal ideals $\m$ and $\m_S$, respectively, and assume that
the multiplicity of $R$ is not  a multiple of the characteristic of $k$. If $S \to R$ is a homogeneous homomorphism that is module finite and $d:=\dim R \geq 2$, then the induced maps
$$H^d_{\m_S}(Z_{d-1}(S))_0 \lto   H^d_{\m}(Z_{d-1}(R))_0 \ \ \ \ \ \   \hbox{and} \ \ \  \ \ \ H^d_{\m_S}(\wedge^{d-1}Z_{1}(S))_0  \lto H^d_{\m}(\wedge^{d-1} Z_{1}(R))_0$$
are nonzero.
\end{theorem}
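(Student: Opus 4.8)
The plan is to make three routine reductions and then concentrate all the work on the case of a surjection from a polynomial ring.

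First I would pass to an infinite field by the faithfully flat base change $k\to k'$ (for instance $k'=k(t)$): the formation of $\Omega_k$, of the Koszul cycles $Z_{\bullet}$ and $\wedge^{\bullet}Z_1$, and of local cohomology all commute with such a base change, the multiplicity and the characteristic are unchanged, and nonvanishing descends. Next I would dispose of one of the two maps: the natural comparison $\wedge^{d-1}Z_1\to Z_{d-1}$ has kernel and cokernel of dimension zero (as recorded just before the theorem), so since $d\ge 2$ the induced map $H^d_{\m}(\wedge^{d-1}Z_1)\to H^d_{\m}(Z_{d-1})$ is an isomorphism, and likewise over $S$; by naturality it therefore suffices to prove nonvanishing for the cycles $Z_{d-1}$. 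Finally, using a homogeneous Noether normalization together with the fact that the maps of the theorem are functorial under composition of module-finite homomorphisms, I would reduce first to the case that $S$ is a polynomial ring and then to the situation of \autoref{preparation}: a polynomial subring $A=k[x_1,\dots,x_d]\subset D=k[x_1,\dots,x_n]$ with $D\twoheadrightarrow R$, where $\bar x_1,\dots,\bar x_d$ is a homogeneous system of parameters of $R$ and $A\hookrightarrow R$ is the corresponding Noether normalization.

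I would then use \autoref{preparation} to peel off everything except the surjective case. After applying $H^d$, the map to be shown nonzero factors as $H^d_{\n}(B_{d-1}(A))_0\xrightarrow{\beta}H^d_{\n}(B_{d-1}(D))_0\xrightarrow{\rho}H^d_{\m}(Z_{d-1}(R))_0$, coming from $B_{d-1}(A)\to B_{d-1}(D)\to Z_{d-1}(R)$ (and $H^d_{\m}=H^d_{\n}$ on the finite $R$-module target). By part (b) of \autoref{preparation} the connecting map $\gamma$ is an isomorphism in degree zero, and the commutativity established in part (a) shows that $\beta$ carries the resulting nonzero class onto the one-dimensional socle $\operatorname{im}\alpha=\operatorname{soc}_D H^d_{\n}(B_{d-1}(D))_0$. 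Hence the whole composite is nonzero as soon as $\rho$ does not kill this socle element. Since $\operatorname{im}\alpha$ is the image of $H^d_{\N}(B_{d-1}(D))_0$ and the natural comparison $H^d_{\N}\to H^d_{\n}$ is an isomorphism on the $R$-module $Z_{d-1}(R)$, this is exactly the assertion that the map $H^d_{\N}(B_{d-1}(D))_0\to H^d_{\m}(Z_{d-1}(R))_0$ induced by the surjection $D\twoheadrightarrow R$ is nonzero. So I am reduced to the surjective case, in which the source is one-dimensional.

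The heart of the argument --- and the step I expect to be the main obstacle --- is this surjective case, which I would treat through regular differential forms. The short exact sequence $0\to Z_d(R)\to\wedge^d\Omega_k(R)\to B_{d-1}(R)\to 0$ together with the finite length of the Koszul homology $H_{d-1}$ exhibits $H^d_{\m}(Z_{d-1}(R))$ as a quotient of $H^d_{\m}(\wedge^d\Omega_k(R))$. Composing with the fundamental class $c_R\colon\wedge^d\Omega_k(R)\to\omega_R$ and with duality, I would produce a nonzero $k$-linear residue functional on the relevant degree-zero cohomology. The plan is then to trace the explicit Koszul/\v{C}ech representative of the socle generator through $D\twoheadrightarrow R$ and to compute its residue: it should equal, up to sign, the trace of $1$ along the generic fiber of the finite map $A\hookrightarrow R$, that is, the generic rank of $R$ over $A$, which is the multiplicity $e(R)$. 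The hypothesis that $e(R)$ is not a multiple of the characteristic then guarantees that this residue is a nonzero element of $k$, so the socle element survives and the map is nonzero.

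The main obstacle is precisely this last residue computation: setting up the fundamental class and residue for $R$ (which is where regular differential forms, rather than Kähler differentials, are indispensable), checking that the socle generator maps to a class on which the residue is defined and nonzero, and identifying the resulting scalar with the multiplicity so that the arithmetic hypothesis on the characteristic can be invoked. Everything before it is formal manipulation of Koszul complexes, functoriality, and \autoref{preparation}; the non-triviality of the theorem is concentrated in showing that this single residue equals $e(R)$ up to a unit.
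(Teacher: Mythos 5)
Your opening reductions (faithfully flat base change, and trading $\wedge^{d-1}Z_1$ for $Z_{d-1}$ via the zero-dimensionality of the kernel and cokernel of $\wedge^{d-1}Z_1 \to Z_{d-1}$) match the paper, and your instinct that everything comes down to a trace computation in which $e(R)$ being prime to the characteristic is what saves the day is also the paper's. But the middle of your plan contains a genuine gap: the reduction to a surjection $D\twoheadrightarrow R$ from a polynomial ring is not available. Functoriality only permits precomposition --- you may replace $S$ by any ring $P$ mapping module-finitely to $S$, since the theorem's map for $P\to R$ factors through the one for $S\to R$ --- but every composite $P\to S\to R$ has image inside the image of $S\to R$, so when $S\to R$ is not surjective no choice of $P$ yields a surjection onto $R$. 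Nor does proving the surjective case suffice: writing $S\to R$ as $S\twoheadrightarrow S'\hookrightarrow R$ expresses the theorem's map as a composite of the map for a surjection with the map for a finite extension, and nonvanishing of the two factors does not give nonvanishing of the composite. The paper never makes this reduction: after normalizing, it keeps the non-surjective finite map $D\to R$, and the actual role of \autoref{preparation} is to trade the class coming from $D$ for one coming from the small Noether normalization $A=k[x_1,\ldots,x_d]\hookrightarrow R$ (via $g\circ\alpha\circ\delta=f\circ\gamma$ with $\gamma$ an isomorphism in degree zero), so that the decisive computation concerns the finite extension $A\subset R$, which exists whether or not anything surjects onto $R$.

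The second gap is that you never reduce to the case that $R$ is a domain, and without that your closing step cannot be run. The theorem allows $R$ to be non-reduced and non-equidimensional; for such $R$ the fundamental class $\wedge^d\Omega_k(R)\to\omega_R$ and your residue functional need not exist, $\Omega_k(R)$ need not have rank $d$, so $H^d_\m(Z_d(R))$ need not vanish and a functional on $H^d_\m(\wedge^d\Omega_k(R))_0$ has no reason to descend to the quotient $H^d_\m(Z_{d-1}(R))_0$; moreover the generic fiber of $A\hookrightarrow R$ can have inseparable components even when $e(R)$ is prime to the characteristic, which breaks the complementary-module theory underlying any trace-of-differential-forms argument, so "trace of $1$ equals $e(R)\neq 0$" by itself does not rescue the computation. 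The paper's fix is one move you are missing: by the associativity formula some minimal prime $\p$ with $\dim R/\p=d$ has $e(R/\p)$ prime to the characteristic; composing with $H^d_\m(Z_{d-1}(R))\to H^d_\m(Z_{d-1}(R/\p))$ one may replace $R$ by the domain $R/\p$ (and $k$ by $\overline{k}$, which is perfect as well as infinite, something your base change to $k(t)$ does not give). Then $L=\Quot(R)$ is separable of degree $e$ over $K=\Quot(A)$, and the paper avoids any explicit \v{C}ech/residue computation: the image of $\wedge^d\Omega_k(R)$ in $L\,dx_1\wedge\cdots\wedge dx_d$ lies in the complementary module $\mathfrak{C}_A(R)$, and the square built from $\frac{1}{e}{\rm Tr}_{L/K}$ makes the composite $K_d(A)\to K_d(R)\to \mathfrak{C}_A(R)\,dx_1\wedge\cdots\wedge dx_d\to A\,dx_1\wedge\cdots\wedge dx_d$ equal to the identity, so applying $H^d_\n$ gives the nonvanishing at once. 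This structural identity is exactly the step you flag as the main obstacle and leave unexecuted; as proposed, your argument is incomplete precisely there, and the two missing reductions mean it could not be completed in the generality the theorem requires.
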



\begin{proof} Since $d\geq 2 $ and the natural maps $\wedge^{d-1} Z_1(S) \lto Z_{d-1}(S)$ and $\wedge^{d-1} Z_1(R) \lto Z_{d-1}(R)$ have zero-dimensional
kernels and cokernels, it follows that these maps become isomorphisms after applying $H^d_{\m_S}$ and $H^d_{\m}$, respectively. Thus it suffices to prove
that the first map in the statement of the theorem is not zero.

To show that this map is not zero, we may pass to the algebraic closure of $k$ to assume that $k$ is infinite and perfect. Our assumption on the multiplicity of $R$ and the associativity formula for multiplicities imply that, for some prime ideal $\p$ of $R$ with $\dim R/\p=d,$ the multiplicity $e$ of $R/\p$ is not a multiple of the characteristic of $k$. 
Write $n=\dim S$ and let $x_1, \ldots, x_n$ be general linear forms in $S$. 
We consider the polynomial subrings $A=k[x_1, \ldots, x_d]\subset D=k[x_1, \ldots, x_n]$  of $S$, and we denote their maximal ideals by $\n$ and $\N$, respectively. 
Notice that $D$ is a Noether normalization of $S$ and  $A$ is a Noether normalization of $R$ and of $R/\p$. Moreover, ${\rm rank}_A R/\p =e$ is a unit in $k$.

Since $D \subset S$ and $A \subset R$ are integral extensions, it follows that $H^i_{\m_S} \simeq H^i_{\N \, S} \simeq H^i_{\N}$ and $H^i_{\m} \simeq H^i_{\n \, R} \simeq H^i_{\n}$. Thus it remains 
to show that the map $H^d_{\N}(Z_{d-1}(S)) \lto   H^d_{\n}(Z_{d-1}(R))$ is nonzero in degree zero. Composing this map with the natural homomorphisms 
$H^d_{\N}(Z_{d-1}(D)) \lto   H^d_{\N}(Z_{d-1}(S))$ from the right and $H^d_{\n}(Z_{d-1}(R)) \lto H^d_{\n}(Z_{d-1}(R/\p))$ from the left yields a homomorphism
$H^d_{\N}(Z_{d-1}(D)) \lto   H^d_{\n}(Z_{d-1}(R/\p))$, and it suffices to prove that this last map is nonzero in degree zero. Replacing $R$ by $R/\p$ we may now assume that $R$ is a domain, 
with Noether normalization $A$.  We need to prove that 
$$H^d_{\N}(Z_{d-1}(D)) \lto   H^d_{\n}(Z_{d-1}(R))$$
is nonzero in degree zero.

Recall that the complexes $K_{\bullet}(D)$ and $K_{\bullet}(A)$ are acyclic and that $d \geq 2$. We use \autoref{preparation} and 
the natural maps $D \to R$ and $ A \to R$ to obtain a commutative diagram
$$
\begin{tikzcd} H^d_{\N}(Z_{d-1}(D))\arrow{r}{\alpha} & H^d_{\n}(Z_{d-1}(D)) \arrow{r}{g} & H^d_{\n}(Z_{d-1}(R))  \\  
   H^{d-1}_{\n}(B_{d-2}(A))  \arrow{u} \arrow{r}{\gamma}  & H^d_{\n}(Z_{d-1}(A))  \  \ ,   \arrow{u} \arrow{ur}{f}  &\ 
\end{tikzcd} 
$$
where $\gamma$ is an isomorphism in degree zero.
We need to prove that the composition $g \circ \alpha$ is not zero in degree zero. As $\gamma$ is an isomorphism in degree zero, this will 
follow once we have shown that $f$ is nonzero in degree zero.



The morphism of complexes $K_{\bullet}(A) \lto K_{\bullet}(R)$ induces a commutative diagram with exact rows
$$
\begin{tikzcd} 0 \arrow{r} & Z_{d+1}(R)\arrow{r}&  K_d(R)\arrow{r}& Z_{d-1}(R) \arrow{r} & H_{d-1}(K_{\bullet}(R))  \arrow{r} & 0\\  
&   K_d(A) \arrow{u} \arrow{r}{\cong}& Z_{d-1}(A) \arrow{u} \ \ . & & 
\end{tikzcd}
$$
The bottom map is an isomorphism because  $K_{\bullet}(A) $ is acyclic and has length $d$. Recall that the complex $K_{\bullet}(R) $ is exact locally on the punctured spectrum.   The module $ \Omega_k(R)$ has rank $d$ 
because $K \subset L$ is a separable algebraic field extension. 
Thus $Z_d(R)$ has rank zero and therefore its dimension is $<d$. In addition, the module $H_{d-1}(K_{\bullet}(R))$ has finite length, hence dimension $<d-1$. 
Thus we obtain an induced commutative diagram
$$
\begin{tikzcd} H^d_{\n}(K_d(R))\arrow{r}{\cong}& H^d_{\n}(Z_{d-1}(R)) \\  
   H^d_{\n}(K_d(A)) \arrow{u}{h} \arrow{r}{\cong}& H^d_{\n} (Z_{d-1}(A)) \arrow{u}{f}\,.
\end{tikzcd}
$$
It remains to show that $h$ is nonzero in degree zero. 

Let $K \subset L$ be the extension
of quotient fields of $A$ and $R$. This field extension has degree $e$ and is separable since $e$ is a unit in $k$. Since $A\subset R$ is a separable Noether normalization, we can consider the complementary module $$\mathfrak C_A(R)=\{ z\in L \ | \ {\rm Tr}_{L/K} (zR)\subset A \}\, ,$$ which is a finitely generated graded $R$-module.
The image of the natural map $$\wedge^d \Omega_k(R) \lto \wedge^d \Omega_k(L)=L \ dx_1 \wedge \ldots \wedge dx_d \cong L$$
is contained in $\mathfrak C_A(R)$, see for instance \cite{K}*{Theorem 9.7}. Hence we obtain a homogeneous $R$-linear map $$\gc_R: \wedge^d \Omega_k(R) \lto \mathfrak C_A(R) \ dx_1 \wedge \ldots \wedge dx_d\, .$$ Likewise we have $$\gc_A: \wedge^d \Omega_k(A)=A\ dx_1 \wedge \ldots \wedge dx_d \lto \mathfrak C_A(A) \ dx_1 \wedge \ldots \wedge dx_d=A\ dx_1 \wedge \ldots \wedge dx_d\, ,$$ 
which is the identity map. Notice that ${\rm Tr}_{L/K}(\mathfrak C_A(R))\subset A$ by definition of the complementary module. 

Now we have a diagram of homogenous $A$-linear maps
$$
\begin{tikzcd} K_d(R)=\wedge^d \Omega_k(R) \arrow{r}{\gc_R}&  \gC_A(R) \ dx_1 \wedge \ldots \wedge dx_d \arrow{d}{\frac{1}{e}\cdot {\rm Tr}_{L/K}\ dx_1 \wedge \ldots \wedge dx_d}\\  
  K_d(A)=\wedge^d \Omega_k(A) \arrow{u} \arrow{r}{\gc_A}& A\ dx_1 \wedge \ldots \wedge dx_d  \ .
\end{tikzcd}
$$
This diagram commutes, as can be seen by following the element $dx_1 \wedge \ldots \wedge dx_d \in \wedge^d \Omega_k(A)$ and using the fact that $\frac{1}{e}\cdot {\rm Tr}_{L/K}(1)=1$. 
Applying the functor $H^d_{\n}$ to this diagram, the left vertical map becomes $h$, and it suffices to prove that $H^d_{\n}(\gc_A)$ is nonzero in degree zero. However, this map is the identity map and $H^d_{\n}(A(-d))=k[x_1^{-1}, \ldots, x_d^{-1}]$, which is the field $k$ in degree zero. 
\end{proof}

In the remainder of this section we use \autoref{Th4.3} to estimate the degree of the singular locus of vector fields. These estimates in turn will lead to bounds on the degree of the vector fields themselves. 

\begin{corollary}\label{regularity}
Adopt \autoref{setupcurves}
and assume that
the degree of $\C$ is not  a multiple of the characteristic of $k$.
Let $\eta$ be a vector field on ${\mathbb P_k^{n-1}}$ of degree $m$ leaving $\C$ invariant whose singular locus
does not contain an irreducible component of $\C$. This vector field induces  a homogeneous $R$-linear map $\mu: H\to R$ of degree $m-1$ such that $\htt \im \mu>0$. Let $L=Z_1(R)$ be as in \autoref{ZANDR}.
\begin{enumerate}[$($a$)$]
\item The natural maps 
$$H^1_{\m_S}(S/\im \eta) \longrightarrow H^1_{\m}(R/\im \mu) \quad  \mbox{and}  \quad H^1_{\m}(R/\im \mu) \longrightarrow H^2_{\m}(\im \mu)$$
are both nonzero in degree $m-1 .$
\item
$\dim(R/\im \mu)=1$, $\, \reg R/(\im \mu)^{\rm sat}\ge m ,$ 
and $\, e(R/\im \mu)\ge m+1$. 
\item If $\, [H^2_{\m}(L)\large]_0 \cong k$, then $m \geq a(R) +2 .$
\end{enumerate}
\end{corollary}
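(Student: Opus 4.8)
The plan is to deduce all three parts from \autoref{Th4.3}, which I would apply with $d=\dim R=2$ to the module-finite surjection $S\twoheadrightarrow R$; its hypothesis is met because the multiplicity of $R$ equals $\deg\C$, which is not a multiple of $\operatorname{char}k$. The idea is to transport the nonvanishing class it produces through the compatible pair $\eta,\mu$. For part (a), write $H=\im\varphi$ and $L=Z_1(R)$ for the Koszul boundaries and cycles of the Euler functional, so $L/H$ has finite length and $H^2_{\m}(H)\cong H^2_{\m}(L)$; on the polynomial ring $Z:=Z_1(S)=B_1(S)$ since $K_\bullet(S)$ is acyclic. The map $S\to R$ carries $Z=B_1(S)$ into $H=B_1(R)$, and $\mu$ and $\eta$ are compatible in that the composite $Z\to H\xrightarrow{\mu}R$ agrees with $Z\xrightarrow{\eta}S\to R$. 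Passing to images and applying local cohomology gives a commutative square
\[
\begin{tikzcd}
H^2_{\m_S}(Z) \arrow[r] \arrow[d, "\eta_*"'] & H^2_{\m}(H) \arrow[d, "\mu_*"] \\
H^2_{\m_S}(\im\eta) \arrow[r, "g"] & H^2_{\m}(\im\mu)
\end{tikzcd}
\]
whose top arrow is, after the identification $H^2_{\m}(H)\cong H^2_{\m}(L)$, the map of \autoref{Th4.3} and hence nonzero in degree $0$; I fix $0\neq\zeta\in[H^2_{\m_S}(Z)]_0$ with nonzero image $c\in[H^2_{\m}(H)]_0$. Since $\ker\mu=\operatorname{tor}(H)$ has dimension $\le1$, one has $H^2_{\m}(\operatorname{tor}H)=0$, so $\mu_*$ is injective; as $\mu$ has degree $m-1$, the class $\xi:=\mu_*(c)=g(\eta_*\zeta)$ is nonzero in degree $m-1$, so $g$ is nonzero in degree $m-1$. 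The morphism between the two defining sequences $0\to\im\eta\to S\to S/\im\eta\to0$ and $0\to\im\mu\to R\to R/\im\mu\to0$ then yields a commutative square of connecting maps
\[
\begin{tikzcd}
H^1_{\m_S}(S/\im\eta) \arrow[r, "(1)"] \arrow[d, "\partial_S"'] & H^1_{\m}(R/\im\mu) \arrow[d, "\partial_R"] \\
H^2_{\m_S}(\im\eta) \arrow[r, "g"] & H^2_{\m}(\im\mu)
\end{tikzcd}
\]
where $\partial_S$ is an isomorphism because $\depth S=n\ge3$ kills $H^1_{\m_S}(S)$ and $H^2_{\m_S}(S)$. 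Choosing $w$ with $\partial_S(w)=\eta_*\zeta$, commutativity gives $\partial_R((1)(w))=\xi\neq0$ in degree $m-1$, so both natural maps $(1)$ and $\partial_R=(2)$ are nonzero in degree $m-1$, proving (a).

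For part (b), $\htt\im\mu>0$ and $\dim R=2$ give $\dim R/\im\mu\le1$, while (2) forces $[H^1_{\m}(R/\im\mu)]_{m-1}\neq0$, whence $\dim R/\im\mu=1$. I set $A=R/(\im\mu)^{\mathrm{sat}}$, which is Cohen--Macaulay of dimension $1$ with $H^1_{\m}(A)\cong H^1_{\m}(R/\im\mu)$; thus $a(A)\ge m-1$ and $\reg A=a(A)+1\ge m$. Finally $e(R/\im\mu)=e(A)$, and after passing to an infinite field a general linear form $\ell$ is a nonzerodivisor on $A$ with $e(A)=\lambda(A/\ell A)$. Since $A/\ell A$ is standard graded artinian it is nonzero in every degree from $0$ through its top degree $a(A)+1=\reg A\ge m$, so $e(R/\im\mu)=\lambda(A/\ell A)\ge m+1$.

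For part (c), the vector field extends $\mu$ to a faithful homomorphism $\bar\mu\colon L\to R$ of degree $m-1$ with $\ker\bar\mu=\operatorname{tor}(L)$ of dimension $\le1$ and $\dim R/\im\bar\mu=1$. Using graded local duality over $S$ (with $\omega_S=S(-n)$), write $K_L:=\Ext^{n-2}_S(L,\omega_S)\cong[H^2_{\m}(L)]^\vee$ and $\omega_R=\Ext^{n-2}_S(R,\omega_S)$. Dualizing the factorization $L\twoheadrightarrow\im\bar\mu\hookrightarrow R(m-1)$ through $\Ext^{n-2}_S(-,\omega_S)$, the first map induces an isomorphism on $\Ext^{n-2}_S$ (its kernel $\operatorname{tor}(L)$ has dimension $\le1$) and the second an injection (its cokernel has dimension $\le1$), so $\bar\mu^*\colon\omega_R(1-m)\to K_L$ is injective; by naturality, $H^2_{\m}(\bar\mu)=(\bar\mu^*)^\vee$. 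Part (a) shows $c$ maps into $\im\partial_R=\ker\big(H^2_{\m}(\im\mu)\to H^2_{\m}(R)\big)$, hence $H^2_{\m}(\bar\mu)(c)=0$. Because $[H^2_{\m}(L)]_0\cong k$, the class $c$ is a basis of $[(K_L)_0]^\vee$, so the vanishing $0=H^2_{\m}(\bar\mu)(c)=c\circ\bar\mu^*$ forces $\bar\mu^*$ to kill $[\omega_R]_{1-m}=[\omega_R(1-m)]_0$; injectivity of $\bar\mu^*$ then gives $[\omega_R]_{1-m}=0$, i.e. $1-m<\indeg\omega_R=-a(R)$, which is exactly $m\ge a(R)+2$.

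The main obstacle is the duality step in part (c). The delicate point is to recognize that the dual map $\bar\mu^*\colon\omega_R(1-m)\to K_L$ is injective but \emph{not} an isomorphism: its cokernel is controlled by $\Ext^{n-1}_S$ of the one-dimensional quotient $R/\im\mu$, and an early temptation to declare it an isomorphism would collide with $H^2_{\m}(\bar\mu)(c)=0$. The hypothesis $[H^2_{\m}(L)]_0\cong k$ is precisely what converts that single vanishing, supplied by (a), into the vanishing of the whole graded piece $[\omega_R]_{1-m}$; with a higher-dimensional $[H^2_{\m}(L)]_0$ the class $c$ could detect only a proper subspace and the bound would fail. A secondary technical point, underlying (a), is verifying that the two squares commute and that $\mu_*$ is injective, which rests on the finite length of $L/H$ and the dimension bound on $\operatorname{tor}(H)$.
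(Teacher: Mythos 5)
Your proposal is correct. Parts (a) and (b) are essentially the paper's own proof: the paper assembles your two commutative squares into one larger diagram, establishes exactly the same three facts (the connecting map over $S$ is bijective because $\depth S\ge 3$; $\mu_*$ is bijective because $\dim\ker\mu\le 1$; $H^2_{\m}(H)\to H^2_{\m}(L)$ is bijective because $L/H$ has finite length), invokes \autoref{Th4.3} for the degree-zero nonvanishing, and finishes (b) with the same Cohen--Macaulay Hilbert-function count for $R/(\im\mu)^{\rm sat}$. Part (c) is where you genuinely diverge. The paper never leaves local cohomology: from the two bijections it gets $[H^2_{\m}(\im\mu)]_{m-1}\cong k$, so nonvanishing of the connecting map $\gamma$ in degree $m-1$ makes it surjective there, and the long exact sequence together with $H^2_{\m}(R/\im\mu)=0$ kills $[H^2_{\m}(R)]_{m-1}$. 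You instead extend $\mu$ to $\bar\mu\colon L\to R$, dualize into $\omega_S$, and use injectivity of $\bar\mu^*\colon\omega_R(1-m)\to K_L$ plus the one-dimensionality hypothesis to kill $[\omega_R]_{1-m}$; by functorial graded local duality this is the paper's argument turned around, since injectivity of $\bar\mu^*$ is dual to surjectivity of $H^2_{\m}(\bar\mu)$, which comes from the same vanishing $H^2_{\m}$ of the one-dimensional cokernel. Your route needs one input the paper's does not: the extension of $\mu$ from $H$ to $L$ is \emph{not} automatic for an arbitrary homogeneous map $H\to R$ (the embedding $L^*\hookrightarrow H^*$ of \autoref{1.2} is generally proper, as its cokernel sits inside $\Ext^1_R(L/H,R)$); it exists here only because $\mu$ is induced by a vector field, which is precisely \autoref{translation}, and you should cite it rather than assert it. What the dual formulation buys is a statement directly about $\omega_R$, making the final numerical step transparent. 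One small point common to both proofs: the last implication --- that $[\omega_R]_{1-m}=0$, equivalently $[H^2_{\m}(R)]_{m-1}=0$, forces $m-1>a(R)$ --- is not a tautology; it rests on the standard fact that multiplication by a general linear form is surjective on the top local cohomology $H^2_{\m}(R)$ (equivalently, injective on $\omega_R$, since $\omega_R$ has no associated primes of dimension $<2$), so that the vanishing propagates to all higher degrees. The paper dispatches this with a bare ``Therefore'' and you hide it in an ``i.e.''; in a final write-up it deserves a sentence.
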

\begin{proof} 
Let $Z=Z_1(S)$ be as in \autoref{ZANDR}.
According to \autoref{Th4.3} the natural map $H^2_{\m_S}(Z) \rightarrow H^2_{\m}(L)$ is not zero in degree zero.

There is a commutative diagram 
$$
\begin{tikzcd} Z \arrow{d}\arrow[twoheadrightarrow]{r}{\eta} & \im \eta \arrow{d} \\  
   H\arrow[twoheadrightarrow]{r}{\mu}  \arrow{d}& \im \mu \\
   L&
\end{tikzcd} 
$$
where the horizontal maps are homogeneous of degree $m-1$. We also have a commutative diagram with exact rows
$$
\begin{tikzcd} 0\arrow{r} &\im \eta \arrow{r} \arrow{d}& S \arrow{r} \arrow{d}&S/\im \eta \arrow{r} \arrow{d}& 0\\  
  0\arrow{r} &\im \mu \arrow{r} & R \arrow{r}&R/\im \mu \arrow{r}& 0
\end{tikzcd} 
$$
Together, these diagrams induce a commutative diagram
$$
\begin{tikzcd} &&H^2_{\m_S}(Z)\arrow{d}{g}\arrow{dl}\\
H^1_{\m_S}(S/\im \eta) \arrow{r}{\alpha} \arrow{d}&H^2_{\m_S}(\im \eta) \arrow{d}& H^2_{\m}(H) \arrow{dl}{\beta} \arrow{d}{h}\\  
H^1_{\m}(R/\im \mu) \arrow{r}{\gamma}&H^2_{\mathfrak m}(\im \mu)& H^2_{\m}(L)\\  
\end{tikzcd} 
$$

In this diagram, the two diagonal maps are homogenous of degree $m-1$,  the map $\alpha$ is bijective because ${\rm depth}\,  S \geq 3$,  the map $\beta$ is
bijective since ${\rm dim}(\ker \mu) \leq 1$ due to the assumption that $\grade \im \mu>0$, and $h$ is bijective as $L/H$ is a module of finite length (see page \pageref{ZANDR}). 
As $h\circ g$ is not zero in degree zero, the same holds for $g$. Now the diagram readily implies part (a).

From (a) we obtain, in particular, that $ \Large[ H_{\m}^1(R/\im \mu) \Large] _{m-1}\not=0\, .$ Now the assertions about dimension and regularity
in part (b) follow immediately. As to the claim about the multiplicity, $\, e(R/\im \mu)= e(R/(\im \mu)^{\rm sat})$ and the ring $R/(\im \mu)^{\rm sat}$ is Cohen-Macaulay,
hence its multiplicity is bounded below by its regularity plus 1.

In the setting of (c), the diagram shows that $\large[ H^2_{\mathfrak m}(\im \mu)\large]_{m-1} \cong k$. Thus, since $\large[ \gamma \, \large]_{m-1} \neq 0$,
this map is surjective, and then the long exact sequence of local cohomology implies that $\large[H^2_{\m}(R) \, \large]_{m-1} =0.$ Therefore $\large[H^2_{\m}(R) \, \large]_{j} =0$ for all $j \ge m-1,$ showing that  $m-1 > a(R)$ as asserted.
\end{proof}

The multiplicity estimate in \autoref{regularity}(b) can be improved substantially if the curve
$\mathcal C$ is arithmetically Gorenstein:

\begin{proposition}\label{BM} We use the hypotheses and notation of  $\autoref{regularity},$ and write $a$ for the $a$-invariant of $R$ and $p_g$ for the geometric genus of $\mathcal C.$ If $R$ is Gorenstein, then 
$$\, e(R/\im \mu)\ge \dim_k (R_{m-\delta +a+1}) + \delta -a -1 - p_g.$$
\end{proposition}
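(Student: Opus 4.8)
The plan is to compute $e(R/\im\mu)$ through local cohomology, distill it into a single graded identity, and let the genus enter only at the very end. First I would set $B=R/(\im\mu)^{\sat}$; by \autoref{regularity}(b) the ring $B$ is one-dimensional Cohen--Macaulay with $e(B)=e(R/\im\mu)$, and the Grothendieck--Serre formula gives $e(B)=\dim_k B_j+\dim_k[H^1_{\m}(R/\im\mu)]_j$ for every $j$, since $H^1_{\m}(B)\cong H^1_{\m}(R/\im\mu)$. As $R$ is Gorenstein it is Cohen--Macaulay, so $H^1_{\m}(R)=0$, and the sequence $0\to\im\mu\to R\to R/\im\mu\to0$ yields a short exact sequence $0\to H^1_{\m}(R/\im\mu)\to H^2_{\m}(\im\mu)\to H^2_{\m}(R)\to0$.

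Next I would evaluate at $j=t_0:=m-\delta+a+1$. Using $\omega_R\cong R(a)$, local duality, and \autoref{1.-2} (which gives $(\im\mu)(m-1)\cong J(\delta-2)$, hence $\Hom_R(\im\mu,R)\cong J^{-1}(m+1-\delta)$), one computes $\dim_k[H^2_{\m}(R)]_{t_0}=\dim_k R_{\delta-m-1}$ and $\dim_k[H^2_{\m}(\im\mu)]_{t_0}=\dim_k[J^{-1}]_0$. Together with $\dim_k B_{t_0}=\dim_k R_{t_0}-\dim_k[(\im\mu)^{\sat}]_{t_0}$ this produces the exact identity
\[ e(R/\im\mu)=\dim_k R_{t_0}+\dim_k[J^{-1}]_0-\dim_k[(\im\mu)^{\sat}]_{t_0}-\dim_k R_{\delta-m-1}. \]
After checking that $t_0$ lies in the range where $\dim_k R_{\delta-m-1}=0$ (e.g. $m\ge\delta$), the asserted bound is equivalent to $\dim_k[J^{-1}]_0-\dim_k[(\im\mu)^{\sat}]_{t_0}\ge\delta-a-1-p_g$.

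To close this I would bring in the normalization. The canonical class of \autoref{3iso} factors as $\bigwedge^2\Omega_k(R)\twoheadrightarrow J(\delta-2)\hookrightarrow\omega_{\overline R}\hookrightarrow\omega_R$, because the image of the algebraic two-forms consists of differentials regular on the normalization. Twisting $\im\mu\cong J(\delta-m-1)$ then embeds $(\im\mu)^{\sat}\hookrightarrow\omega_{\overline R}(1-m)$, so $\dim_k[(\im\mu)^{\sat}]_{t_0}\le\dim_k[\omega_{\overline R}]_{a-\delta+2}$; dually, applying $\Hom_R(-,\omega_R)$ to $J(\delta-2)\hookrightarrow\omega_{\overline R}$ gives $\overline R(\delta-a-2)\subseteq J^{-1}$, whence $\dim_k[J^{-1}]_0\ge\dim_k\overline R_{\delta-2-a}$. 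Serre duality on the smooth normalization $\overline{\mathcal C}$ of geometric genus $p_g$ reads $\dim_k\overline R_{s}-\dim_k[\omega_{\overline R}]_{-s}=ds+1-p_g$; taking $s=\delta-2-a$ and using $(d-1)(\delta-2-a)\ge0$ turns the two estimates above into $\dim_k[J^{-1}]_0-\dim_k[(\im\mu)^{\sat}]_{t_0}\ge d(\delta-2-a)+1-p_g\ge\delta-a-1-p_g$, as required.

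The main obstacle is precisely this last genus step. The Serre-duality computation delivers the clean inequality only when $\delta-2-a\ge0$; in the complementary range $a\ge\delta-1$ the crude embedding $\dim_k[(\im\mu)^{\sat}]_{t_0}\le\dim_k[\omega_{\overline R}]_{a-\delta+2}$ overcounts the defect $\dim_k R_{t_0}-\dim_k B_{t_0}$, and one must instead bound the number of conditions that the singular divisor of $\eta$ imposes on forms of degree $t_0$ directly, i.e. prove a sharper Riemann--Roch estimate for the scheme cut out by $(\im\mu)^{\sat}$ on $\mathcal C$ rather than on its normalization. I would also expect to spend some care on the reduction $m\ge\delta$ and, when $k$ is only perfect, on descending to the algebraically closed (and, for the locally irreducible case, irreducible) situation in which $p_g=\dim_k[\omega_{\overline R}]_0$ is the available invariant.
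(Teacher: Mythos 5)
Your opening identity is correct: the Grothendieck--Serre formula for the one-dimensional Cohen--Macaulay ring $B=R/(\im\mu)^{\sat}$, graded local duality over the Gorenstein ring $R$, and \autoref{1.-2} do combine to give $e(R/\im\mu)=\dim_k R_{t_0}+\dim_k[J^{-1}]_0-\dim_k[(\im\mu)^{\sat}]_{t_0}-\dim_k R_{\delta-m-1}$, and your factorization $J(\delta-2)\hookrightarrow\omega_{\overline R}\hookrightarrow\omega_R$ is sound (it is a dual form of what the paper uses, namely $J\subset J_R\subset \f$ together with $\f\cong\omega_{\overline R}(-a)$). Note also that your worry about the range $a\ge\delta-1$ is vacuous: $a\le\delta-2$ always holds, since $S/(f_1,\ldots,f_{n-2})$ surjects onto $R$ in the same dimension. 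The fatal issue is the one you mention only in passing: the reduction to $\dim_k R_{\delta-m-1}=0$, i.e.\ to $m\ge\delta$. Nothing in \autoref{regularity} bounds $m$ from below -- lower bounds on $m$ are exactly what this circle of results is trying to prove -- and $m<\delta$ genuinely occurs for Gorenstein $R$. Take $\C\subset\mathbb P^2_k$ (char $k=0$) to be the union of $d\ge3$ distinct lines through $(0:0:1)$, so $R=k[x,y,z]/(f)$ with $f\in k[x,y]$, a hypersurface and hence Gorenstein, and let $\eta$ be the vector field $\partial/\partial z$, of degree $m=0$; it leaves $\C$ invariant and $\im\mu=(x,y)R$ has height one. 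Here $\delta=d-1$, $a=d-3$, $t_0=-1$, $e(R/\im\mu)=1$, and the proposition holds (it reads $1\ge 1-p_g$), but $\dim_k R_{\delta-m-1}=\dim_k R_{d-2}=\binom{d}{2}$ is far from zero. After inserting your bounds $\dim_k[J^{-1}]_0\ge\dim_k\overline R_{\delta-2-a}$, $\dim_k[(\im\mu)^{\sat}]_{t_0}\le\dim_k[\omega_{\overline R}]_{a-\delta+2}$ and Riemann--Roch, what would remain to be shown is $(d-1)(\delta-2-a)+s'-1\ge\dim_k R_{\delta-m-1}$, where $s'$ is the number of components; for any plane curve the term $\delta-2-a$ vanishes, so in the example the left side is $d-1$ while the right side is $\binom{d}{2}$. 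So in the range $m<\delta$ your argument proves something strictly weaker than the statement, and no refinement of the genus step can repair it.

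The paper's proof avoids this entirely, and this is the one idea your proposal is missing: instead of evaluating an exact identity at the single degree $t_0$, it exploits the monotonicity of the Hilbert function of $B$. Since $B$ is a one-dimensional Cohen--Macaulay standard graded algebra, its Hilbert function increases strictly up to $\reg B$ and equals $e(B)$ thereafter; combining $t_0\le m-1$ (from $a\le\delta-2$) with $\reg B\ge m$ from \autoref{regularity}(b) yields $e(R/\im\mu)\ge\dim_k B_{t_0}+(\reg B-t_0)\ge\dim_k B_{t_0}+\delta-a-1$ with no restriction whatsoever on $m$. The genus then enters only through the upper bound $\dim_k[(\im\mu)^{\sat}]_{t_0}=\dim_k[J^{\sat}]_a\le\dim_k\f_a=\dim_k[\omega_{\overline R}]_0=p_g$, using $(\im\mu)^{\sat}\cong J^{\sat}(\delta-m-1)$, $J^{\sat}\subset\f$, and $\f\cong\omega_{\overline R}(-a)$. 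In homological terms, the quantity $\dim_k R_{\delta-m-1}=\dim_k[H^2_\m(R)]_{t_0}$ that you must subtract is compensated in the paper by the strict Hilbert-function growth between degrees $t_0$ and $\reg B$, an input that your single-degree identity discards; without importing that monotonicity argument (or a substitute for it), the proof cannot be completed along the lines you propose.
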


\begin{proof} We first observe that the natural map 
$S/(f_1, \ldots, f_{n-2}) \longrightarrow R$ 
is  a surjection of rings having the same dimension. Thus
$$a=a(R) \leq a(S/(f_1, \ldots, f_{n-2}))=-n+ \sum_{j=1}^{n-2}\delta_j = \delta -2.$$
This inequality and the regularity estimate in \autoref{regularity}(b) give
$$ m-\delta +a+1 \leq m \leq \reg R/(\im \mu)^{\rm sat}.$$

Again by \autoref{regularity}(b), the standard graded algebra $R/(\im \mu)^{\rm sat}$ is  one-dimensional and therefore Cohen-Macaulay. Thus its Hilbert function increases strictly up to degree $\reg R/(\im \mu)^{\rm sat}$ and is equal to the multiplicity afterward. As $e((R/(\im \mu)^{\rm sat})=e(R/\im \mu),$ it follows that
$$e(R/\im \mu) \geq\dim_k ((R/(\im \mu)^{\rm sat})_{m-\delta +a+1}) + \delta - a -1.$$

Write $t=m- \delta +1.$ It remains to prove that 
$$\dim_k((\im \mu)^{\rm sat})_{t +a} \leq p_g.$$

Indeed, \autoref{1.-2} shows that $\im \mu \cong J(-t),$ and since $R$ is Cohen-Macaulay of dimension $\ge 2$, this isomorphism induces an isomorphism
$$(\im \mu)^{\rm sat} \cong J^{\rm sat}(-t).$$

On the other hand, $J$ is contained in $J_R,$ the Jacobian ideal of 
$R.$ 
Let $\overline{R}$ denote the integral closure of $R,$ and 
$\f:=R:_R \overline{R}$ the conductor. As is classically 
known, see e.g. \cite{N}, one has $J_R \subset \f.$ 
Thus,
as $\f$ is unmixed,
$$
J^{\rm sat} \subset \f.
$$

In turn, since $R$ is Gorenstein,
$$
\f \cong 
\Hom_R(\overline{R}, R) \cong 
\Hom_R(\overline{R}, \omega_R(-a)) \cong 
\omega_{\overline{R}}(-a) .
$$

Combining these facts we conclude that
$$
\dim_k((\im \mu)^{\sat})_{t+a} = 
\dim_k(J^{\sat})_{a}\leq
\dim_k \f_a = 
\dim_k(\omega_{\overline{R}})_0 =p_g,
$$
as required
\end{proof}

\smallskip

The main application in this section, \autoref{dPW}, generalizes results of du Plessis and Wall and  of Esteves and Kleiman \cites {dPW, EK} for the case of plane curves. In this paper, it is an easy consequence of \autoref{1.-2} 
and \autoref{regularity}. Our proof was inspired by an argument in \cite{EK}*{proof of Proposition 5.2}.

\begin{lemma}\label{HF} Let $R$ be an equidimensional Noetherian standard graded algebra over a field, with $\depth\, R>0$, let $\a$ and $\b$ be homogeneous ideals of height one, and assume that $\a \cong \b(-n)$ for some $n\in \mathbb Z$. Then 
\[e(R/\a)=e(R/\b)+n \cdot e(R)\,.\]
\end{lemma}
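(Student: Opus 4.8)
The plan is to compare Hilbert series. Writing $\HS_M(t)=\sum_{j}\dim_k[M]_j\,t^j$ for the Hilbert series of a finitely generated graded $R$-module $M$, the shift isomorphism $\a\cong\b(-n)$ gives $\HS_{\a}(t)=t^n\,\HS_{\b}(t)$. Feeding this into the Hilbert series of the two short exact sequences $0\to\a\to R\to R/\a\to 0$ and $0\to\b\to R\to R/\b\to 0$, which yield $\HS_{\a}(t)=\HS_R(t)-\HS_{R/\a}(t)$ and likewise for $\b$, I obtain the single identity
\[
\HS_{R/\a}(t)-t^n\,\HS_{R/\b}(t)=(1-t^n)\,\HS_R(t).
\]

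Next I would pass to rational-function form. Since $R$ is equidimensional and catenary (being standard graded over a field), $\htt\a=\htt\b=1$ forces $\dim R/\a=\dim R/\b=d-1$, where $d=\dim R$: a minimal prime $\p$ of $\a$ of height one satisfies $\dim R/\p=\dim R-\htt\p=d-1$, and every minimal prime of $\a$ has height at least one, so the quotient has dimension exactly $d-1$. Hence one may write $\HS_R(t)=P_R(t)/(1-t)^{d}$ with $P_R(1)=e(R)$, and $\HS_{R/\a}(t)=P_{\a}(t)/(1-t)^{d-1}$, $\HS_{R/\b}(t)=P_{\b}(t)/(1-t)^{d-1}$ with $P_{\a}(1)=e(R/\a)$ and $P_{\b}(1)=e(R/\b)$, for suitable $P_R,P_{\a},P_{\b}\in\mathbb Z[t,t^{-1}]$. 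Substituting into the displayed identity and clearing the common factor $(1-t)^{d-1}$ gives
\[
P_{\a}(t)-t^n P_{\b}(t)=\frac{1-t^n}{1-t}\,P_R(t).
\]
Evaluating at $t=1$, where $(1-t^n)/(1-t)$ takes the value $n$ for every integer $n$, produces $e(R/\a)-e(R/\b)=n\,e(R)$, which is the claim.

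The argument is essentially bookkeeping once the Hilbert-series identity is in place, so the only genuine points to check are that the quotients really have dimension $d-1$ (for which the equidimensionality hypothesis, together with catenariness which is automatic here, is exactly what is needed and where the hypotheses enter the dimension count) and that the evaluation $\bigl((1-t^n)/(1-t)\bigr)\big|_{t=1}=n$ is valid for negative $n$ as well, where it is the Laurent-polynomial identity confirmed by differentiating numerator and denominator. I do not anticipate a serious obstacle; the main care is to ensure that the common power of $(1-t)$ is tracked correctly, so that the three multiplicities appear precisely as the values $P_{\bullet}(1)$.
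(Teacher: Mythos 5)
Your proof is correct, and it takes a genuinely different route from the paper's. You run a single, uniform Hilbert-series computation: the shift isomorphism gives $\HS_{\a}(t)=t^{n}\HS_{\b}(t)$, the two short exact sequences convert this into $\HS_{R/\a}(t)-t^{n}\HS_{R/\b}(t)=(1-t^{n})\HS_{R}(t)$, and after writing each series in rational form (using equidimensionality plus catenarity to see that both quotients have dimension exactly $\dim R-1$) you evaluate at $t=1$, where $(1-t^{n})/(1-t)$ equals $n$ for every integer $n$. The paper instead reduces to $n=1$ by symmetry and induction, passes to an infinite field to choose a linear nonzerodivisor $x$ (this is precisely where the hypothesis $\depth R>0$ is used), observes that $R/\a$ and $R/x\b$ have the same Hilbert function, and then computes $e(R/x\b)=e(R/\b)+e(R)$ from the exact sequence $0\to\b/x\b\to R/x\b\to R/\b\to 0$ together with the associativity formula, using that $\b$ lies in no minimal prime. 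What your approach buys: it handles all $n$ at once with no induction, needs no field extension or generic linear form, and in fact never uses $\depth R>0$, so it establishes the lemma under weaker hypotheses (equidimensionality is still essential, exactly where you invoke it, to guarantee $\dim R/\a=\dim R/\b=\dim R-1$ so that the powers of $(1-t)$ match). What the paper's approach buys: it stays at the level of modules and multiplicities, reusing the associativity formula that is a running tool elsewhere in the paper, and it avoids any bookkeeping with rational functions and Laurent polynomials. Both arguments are complete; yours is the more elementary and slightly more general one.
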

\begin{proof} By symmetry we may assume that $n\ge 0$, and by induction one reduces to the case $n=1$. We may further suppose that the ground field is infinite, and hence there exists a linear form $x\in R$ that is non 
zerodivisor. Thus $\a\cong\b(-1)$ and $x\b$ have the same Hilbert function, and so do $R/\a$ and $R/x\b$, which gives
$e(R/\a)=e(R/x\b).$
So it suffices to show that $e(R/x\b)=e(R/\b)+e(R)$. 

Consider the exact sequence 
\[0\lto b/x\b \lto R/x\b \lto R/\b\lto 0\, .
\]
The three $R$-modules in this sequence have the same dimension, because $\b$ is in no minimal prime ideal of $R$ and therefore $\ann_R \b \subset \sqrt{0}$. Thus, 
\[e(R/x\b)=e(R/\b)+e(\b/x\b)\,.\]
On the other hand, $e(\b/x\b)=e(\b)$ since the linear form $x$ is a non zerodivisor on $\b$, and $e(\b)=e(R)$ by the associativity formula because $\b$ is in no minimal prime ideal of $R$. 
\end{proof}

The estimates in the next theorem use the multiplicity of $R/J$, where $J$ is a partial Jacobian ideal as defined in \autoref{setupcurves}. 

\begin{theorem}\label{dPW} In addition to  \autoref{setupcurves},  assume that the degree d of $\C$ is not a multiple of the characteristic. One has
$$ \findeg (\Der_k(R)/\m^{-1}\varepsilon)\ge  \begin{cases} a(R)+1 & \mbox{if } \C \mbox{ is a smooth complete intersection} \\ 
\delta-2 -\frac{e(R/J)-\delta}{d-1}& \mbox{otherwise} \, .\end{cases}
$$
\end{theorem}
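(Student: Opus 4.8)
\textbf{The plan is to} reduce the two cases to the lower bound on $e(R/\im\mu)$ furnished by \autoref{regularity}(b). The key identity linking the degree of a vector field to the multiplicity of its singular locus is \autoref{1.-2}: if $\mu$ is a vector field of degree $m$ leaving $\C$ invariant with $\htt\im\mu>0$, then $(\im\mu)(m-1)\cong J(\delta-2)$, i.e.\ $\im\mu\cong J(\delta-2-(m-1))=J(\delta-m-1)$. First I would apply \autoref{HF} with $\a=\im\mu$, $\b=J$, and $n=m+1-\delta$, both ideals having height one in $R$, to obtain
\[
e(R/\im\mu)=e(R/J)+(m+1-\delta)\,e(R).
\]
Since $e(R)=\deg\C=d$, this reads $e(R/\im\mu)=e(R/J)+(m+1-\delta)d$.

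\textbf{Next}, in the generic (non smooth complete intersection) case, I would combine this with the estimate $e(R/\im\mu)\ge m+1$ from \autoref{regularity}(b), which applies precisely because the hypotheses of \autoref{setupcurves} together with $d$ not a multiple of the characteristic put us in the setting of that corollary. This gives
\[
e(R/J)+(m+1-\delta)d\ge m+1,
\]
which rearranges to $e(R/J)-\delta\ge (m+1)(1-d)+\delta d-\delta=(m+1-\delta)(1-d)$, hence $(m+1-\delta)(d-1)\le \delta-e(R/J)$ and therefore
\[
m\ge \delta-1-\frac{e(R/J)-\delta}{d-1}.
\]
Since the faithful initial degree equals $m-1$ for the smallest such vector field (via \autoref{translation} and the observation that $\htt\im\mu>0$ is equivalent to $\ann_R\mu=0$), I would read off $\findeg(\Der_k(R)/\m^{-1}\varepsilon)=m-1\ge \delta-2-\frac{e(R/J)-\delta}{d-1}$, which is the claimed bound.

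\textbf{The smooth complete intersection case} must be handled separately because there \autoref{regularity}(b) is no longer the governing estimate: when $\C$ is smooth and arithmetically Gorenstein (a smooth complete intersection is Gorenstein), the sharper structural result applies. Here I would invoke \autoref{SmoothGOR}, or more directly \autoref{3iso}(b) together with the Gorenstein identification $\omega_R\cong R(-a(R))$, to conclude that $\Der_k(R)/R\varepsilon\cong\m(-a(R))$ and hence $\findeg(\Der_k(R)/R\varepsilon)=a(R)+1$. Note that for a smooth complete intersection $\m^{-1}=R$ since $\depth R\ge 2$, so the two formulations coincide.

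\textbf{The main obstacle} I anticipate is the bookkeeping around which case actually forces the smooth-complete-intersection exception, and in particular verifying that whenever $\C$ is \emph{not} a smooth complete intersection the lower bound $e(R/\im\mu)\ge m+1$ from \autoref{regularity}(b) is genuinely available and not superseded. The delicate point is that \autoref{regularity} requires the existence of a vector field of degree $m$ with $\htt\im\mu>0$ attaining the faithful initial degree; I would need to argue that the infimum defining $\findeg$ is realized by such a $\mu$ (using the Corollary following \autoref{1.2}, which identifies $1+\findeg(\Der_k(R)/\m^{-1}\varepsilon)$ with the smallest degree of exactly such a vector field under the no-embedded-primes hypothesis, here guaranteed since $R$ is a reduced equidimensional curve ring). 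The remaining arithmetic rearrangement is routine once the multiplicity identity and the bound $e(R/\im\mu)\ge m+1$ are in hand.
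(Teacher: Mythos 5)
Your skeleton is exactly the paper's: identify $\findeg(\Der_k(R)/\m^{-1}\varepsilon)$ with $m-1$ for a vector field $\mu$ of minimal degree $m$ (via the corollary following \autoref{1.2}), use \autoref{1.-2} to get $\im\mu\cong J(\delta-m-1)$, combine the bound $e(R/\im\mu)\ge m+1$ of \autoref{regularity}(b) with the multiplicity identity of \autoref{HF}, and rearrange; the smooth complete intersection case is settled by \autoref{Smooth} (you invoke \autoref{SmoothGOR}, which also works and even gives equality there).

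There is, however, one genuine gap, and it sits exactly where the hypothesis that $\C$ is \emph{not} a smooth complete intersection must do its work. \autoref{HF} requires \emph{both} ideals to have height one. For $\im\mu$ this follows from \autoref{regularity}(b), but for $J$ you merely assert it: \autoref{setupcurves} only guarantees $\htt J\ge 1$, and the height of $J$ cannot be read off from the isomorphism $\im\mu\cong J(\delta-m-1)$, since isomorphic ideals need not have equal height (in $k[x,y]$ one has $(x^2,xy)\cong(x,y)(-1)$, of heights $1$ and $2$). The missing ingredient is \autoref{htJ}(c): $\htt J\ge 2$ holds if and only if $I=(f_1,\ldots,f_{n-2})$ is a complete intersection and $R$ satisfies $R_1$, i.e., if and only if $\C$ is a smooth complete intersection; hence $\htt J=1$ precisely in the second case of the theorem. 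This also corrects your stated reason for the case split: it is not that \autoref{regularity}(b) is ``superseded'' for smooth complete intersections (it holds there as well); it is that \autoref{HF} becomes inapplicable because $\htt J=2$. Finally, a small slip in the algebra: your intermediate inequality $(m+1-\delta)(d-1)\le\delta-e(R/J)$ has the direction reversed (it should be $(m+1-\delta)(d-1)\ge\delta-e(R/J)$), although the displayed lower bound for $m$ that you then write down is the correct consequence of the preceding line.
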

\begin{proof}   If $\C$ is a smooth complete intersection
the assertion follows from \autoref{Smooth}.   Otherwise $\htt J=1\, $  by \autoref{htJ} (c). 

Let $\mu$ and $m$ be as in \autoref{1.-2} and assume that $m$ is minimal. Recall that
\[\im \mu \cong J(\delta-m-1)\]
by that corollary. We use \autoref{regularity}(b),  which says that $\htt \im \mu =1$ and $$e( R/\im \mu)\ge m+1\, .$$
Now combining \autoref{HF} with the two displayed formulas,  we obtain
\begin{equation}\label{impa}
m+1\le e(R/\im\mu)=e(R/J)+(m+1-\delta)e(R)\, ,
\end{equation}
as required.  
\end{proof}

Part (a) of the next corollary is essentially 
\cite{dPW}*{Theorem 3.2} and  \cite{EK}*{Corollary 6.4}. The estimate of part (b) is often sharper for plane curves of small genus.

\begin{corollary}\label{dPW-EK} Let $k$ be a perfect field and $\C\subset {\mathbb P_k^2}$ be a reduced curve of degree $d$ that is not smooth. Assume that d is not a multiple of the characteristic. Let $\tau$ and $p_g$ denote the total Tjurina number and the geometric genus of $\C$ and let $R$ be the homogeneous coordinate ring of $\C$.
One has
\begin{enumerate}[$($a$)$]\setlength\itemsep{2mm}
\item $\findeg (\Der_k(R)/R\varepsilon)\ge d-2 -\frac{\tau}{d-1} \, ;$
\item $\findeg(\Der_k(R)/R\varepsilon) \ge  d-\frac{3}{2} -\sqrt{2 \tau(\C)+ 2 p_g -d^2 +3d -\frac{7}{4}}\, . $
\end{enumerate}
\end{corollary}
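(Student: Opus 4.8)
The plan is to deduce both parts from \autoref{dPW}, which for a non-smooth plane curve gives
\[
\findeg(\Der_k(R)/R\varepsilon)\ge \delta-2-\frac{e(R/J)-\delta}{d-1}.
\]
Since $\C\subset\mathbb P^2_k$ is a plane curve of degree $d$, here $n=3$ and $I=(f)$ is principal of degree $d$, so $\delta=d-1$ and $J=J_R$ is the full Jacobian ideal. Thus the bound reads
\[
\findeg(\Der_k(R)/R\varepsilon)\ge (d-1)-2-\frac{e(R/J_R)-(d-1)}{d-1}
= d-2-\frac{e(R/J_R)}{d-1}+\frac{d-1}{d-1}.
\]
The main linking fact I would invoke is that a plane curve is automatically a local complete intersection, so \autoref{basicC} gives $\tau=\tau(\C)=e(R/J_R)$. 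For part (a) I substitute this identity directly: the bound becomes $d-2-\frac{\tau-(d-1)}{d-1}=d-1-\frac{\tau}{d-1}$, and I would then check the arithmetic carefully to land on $d-2-\frac{\tau}{d-1}$ as stated. (The constant is the routine but delicate step: I need to track whether the ``$+1$'' from the $\frac{-\delta}{d-1}=\frac{-(d-1)}{d-1}=-1$ term is absorbed, and confirm that the clean form is indeed $d-2-\tau/(d-1)$; this is exactly where I expect a small discrepancy to require care.)

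For part (b), the approach is to feed more information into \autoref{dPW} than just $e(R/J)=\tau$, namely the stronger multiplicity estimate of \autoref{BM}, which for Gorenstein $R$ (and a plane curve $R=S/(f)$ is a hypersurface, hence Gorenstein) bounds $e(R/\im\mu)$ from below in terms of $\dim_k R_{m-\delta+a+1}$, $a$, $\delta$, and $p_g$. The key idea is that $R$ being a polynomial-ring-mod-one-form means its Hilbert function is explicit: $\dim_k R_j=\binom{j+2}{2}-\binom{j+2-d}{2}$, and in the relevant range $j=m-\delta+a+1$ this is a quadratic expression in $m$. I would combine the chain of inequalities from \autoref{BM}, $e(R/\im\mu)\ge \dim_k(R_{m-\delta+a+1})+\delta-a-1-p_g$, with the complementary inequality $m+1\le e(R/\im\mu)=e(R/J)+(m+1-\delta)e(R)$ coming from \autoref{1.-2} and \autoref{HF} (as already assembled in the proof of \autoref{dPW}, see equation \autoref{impa}), using $e(R)=d$, $a=a(R)=d-3$, and $\delta=d-1$.

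Putting these together yields a quadratic inequality in $m$. Writing $m=\findeg(\Der_k(R)/R\varepsilon)$ at the minimal value, the plan is to expand $\dim_k R_{m-\delta+a+1}$ as a binomial coefficient, substitute the numerical values of $a$, $\delta$, $e(R)$, and $\tau=e(R/J_R)$, and obtain an inequality of the shape $Am^2+Bm+C\le 0$ (or the reverse); solving this quadratic in $m$ and taking the appropriate root produces the bound $d-\frac32-\sqrt{2\tau+2p_g-d^2+3d-\frac74}$. I expect the principal obstacle to be purely computational: correctly expanding the binomial coefficients and collecting terms so that the discriminant simplifies exactly to $2\tau+2p_g-d^2+3d-\frac74$, and verifying that the relevant index range ($m-\delta+a+1\ge 0$ and below the degree where the Hilbert function stabilizes) is respected so that the binomial formula for $\dim_k R_j$ is valid; I would handle boundary cases where this range assumption fails separately, likely noting they are covered by the weaker bound in part (a) or by direct inspection.
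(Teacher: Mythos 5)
Your strategy coincides with the paper's own proof. For part (a) the paper literally says it ``follows from \autoref{dPW}'', i.e., exactly your specialization $n=3$, $\delta=d-1$, $J=J_R$, combined with $\tau=e(R/J_R)$ from \autoref{basicC}; and your worry about the constant is unfounded, because the slip is yours: $\delta-2=d-3$ (not $d-2$), while $-\frac{\tau-(d-1)}{d-1}=-\frac{\tau}{d-1}+1$, so the bound is $(d-3)+1-\frac{\tau}{d-1}=d-2-\frac{\tau}{d-1}$, exactly as stated. For part (b) the paper does precisely what you outline: it replaces the estimate $e(R/\im \mu)\ge m+1$ of \autoref{regularity}(b) in \autoref{impa} by the estimate of \autoref{BM} (legitimate since $R$ is a hypersurface ring, hence Gorenstein), uses $\delta=d-1$, $a=d-3$, $e(R)=d$, $m-\delta+a+1=m-1$, and solves the quadratic $m^2-(2d-1)m-2\tau-2p_g+2d^2-4d+2\le 0$, whose smallest root is $d-\frac12-\sqrt{2\tau+2p_g-d^2+3d-\frac74}$.

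There is, however, one genuine gap in part (b), at the step you dismiss as a boundary case. To write $\dim_k R_{m-1}=\binom{m+1}{2}$ one needs $m-1\le d-1$, and this is not obtained by inspection: the paper proves $m-1\le d-2$ by invoking \autoref{UB}(d), i.e., the upper bound $\findeg(\Der_k(R)/R\varepsilon)\le a(R)+1$ for (nearly) Gorenstein arithmetically Cohen--Macaulay curves --- a substantive result proved later in the paper, not a routine check. Your proposed fallbacks do not close the remaining case $m-1\ge d$: in that range the inequality of (b) is automatic only if one already knows that the radicand $2\tau+2p_g-d^2+3d-\frac74$ is non-negative, but in the paper's argument this non-negativity is itself a byproduct of the quadratic having a real root, which is available only in the good range, and part (a) supplies nothing of the sort. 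To salvage your route one must prove the non-negativity independently; this can be done, e.g., via \autoref{genus2}: $2p_g=(d^2-3d+2)-2\sigma+2s-2$, so the radicand equals $2(\tau-\sigma)+2s-\frac74$, and $\tau\ge\sigma$ follows from the classical containment $J_R\subset \f$ of the Jacobian ideal in the conductor together with the Gorenstein equality $\lambda(\mathcal O_{\C,p}/\f_p)=\lambda(\overline{\mathcal O_{\C,p}}/\mathcal O_{\C,p})$, giving radicand $\ge\frac14$ since $s\ge 1$. Either this argument or the appeal to \autoref{UB}(d) must actually be supplied. Finally, a bookkeeping point: $m$ is the degree of the vector field, so $\findeg(\Der_k(R)/R\varepsilon)=m-1$; your identification $m=\findeg(\Der_k(R)/R\varepsilon)$ would shift the conclusion to $d-\frac12-\sqrt{\cdots}$, which is false in general (for a rational nodal curve the radicand is $\frac14$ and $\findeg=d-2$, not $\ge d-1$).
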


\begin{proof} Part (a) follows from \autoref{dPW}. We prove part (b). Let $m$ be the minimal degree of a vector field leaving $\mathcal C$ invariant and recall that 
$\findeg(\Der_k(R)/R\varepsilon)=m-1.$
We start from the inequalities \autoref{impa}, but replace the multiplicity estimate of 
\autoref{regularity}(b) by the one of \autoref{BM} to obtain
$$
 \dim_k (R_{m-\delta +a+1}) + \delta -a -1 - p_g \le e(R/\im\mu)=e(R/J_R)+(m+1-\delta)\cdot e(R)\, .
$$
Notice that $\delta=d-1$, $a=d-3$, and $m-\delta +a+1=m-1 \le d-2$, where the last inequality will be proved in \autoref{UB}(d). 
Thus $\dim_k (R_{m-\delta +a+1})= \dim_k (S_{m-1})= \binom{m+1}{2},$
and we obtain $$m^2-(2d-1)m -2e(R/J_R)-2p_g+2d^2-4d+2\le 0\, .$$
Since there exists a vector field of degree $m$ whose singular locus does not contain an irreducible component of $\mathcal C$, the polynomial in $m$ on the left-hand side has a real root and the smallest real root is $$d-\frac{1}{2}-\sqrt{2 e(R/J_R)+ 2 p_g -d^2 +3d -\frac{7}{4} }\, .$$ 
\end{proof}

In addition to the assumptions of \autoref{dPW-EK} suppose that $\C$ is a rational curve, that is $p_g=0$. If moreover $\C$ has only ordinary nodes as singularities, then the lower bound in part (a) of the corollary gives $\frac{a(R)+1}{2}$, whereas the bound in (b) gives $a(R)+1$, which is the exact value for $\indeg (\Der_k(R)/R\varepsilon)$ proved in \autoref{curvesnodes}. For rational curves in general, the bound in (b) is better than the bound in (a) if and only if $\tau < {d-1 \choose 2} +\alpha-\frac{1}{2}\sqrt{3\alpha^2+\alpha}\, ,$ where $\alpha=(d-1)^2$. For a rational curve with only ordinary nodes and ordinary cusps as singularities, this inequality holds if and only if the number of cusps is less than $\alpha-\frac{1}{2}\sqrt{3\alpha^2+\alpha} . $

It will follow from \autoref{UB}(e) below that if $\C$ is a smooth complete intersection, then the equality  $\findeg (\Der_k(R)/R\varepsilon)=  a(R)+1 $ holds in \autoref{dPW}. Therefore we are not going to consider this case in the remainder of this section.

\begin{theorem}\label{linkage} Let $k$ be an algebraically closed field of characteristic zero and $X\subset {\mathbb P_k^{n-1}}$ be an equidimensional subscheme of dimension $s$, where $1\le s\le 3$. Assume that $X$ is locally a complete intersection and has only isolated singularities. If $X$ is defined scheme theoretically  by an ideal $I$ generated by forms of degrees $\le t$, let  $Z$ be a complete intersection of dimension $s$ defined by general forms of degree $t$ in $I,$   and let $Y$ be the link of $X$ with respect to $Z$. 
\begin{enumerate}[$(a)$]
\item $Y$ and $X\cap Y$ are nonsingular;
\item $\Sing(Z)$ is the disjoint union of $\Sing(X)$ and $X \cap Y$; if $p\in \Sing(X)$, then $\mathcal O_{Z,p} \cong \mathcal O_{X,p}$ , and if $p\in X\cap Y$, then $\widehat{\mathcal O_{Z,p} }\cong k \llbracket  x_1, \ldots, x_{s+1}\rrbracket /(x_1x_2)$. 
\end{enumerate}
\end{theorem}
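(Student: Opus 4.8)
The plan is to realize $Z$ as a general member of the linear system $\Lambda=|I_t|$ of forms of degree $t$ in $I$ and to control its singularities through the residual scheme $Y$. First I would record the reductions. Since $X$ is locally a complete intersection it is Cohen--Macaulay, and since $\Sing(X)$ is finite and $s\ge 1$ it is generically smooth, hence reduced; write $c=n-1-s=\operatorname{ht} I$ for its codimension. Because $I$ is generated in degrees $\le t$, multiplying generators by powers of a locally invertible linear form shows that the degree-$t$ piece $I_t$ generates $I$ at every point; in particular $\Lambda$ has base scheme exactly $X$ and $V(I_t)=X$. Hence $c$ general forms $f_1,\dots,f_c\in I_t$ form a regular sequence, so $Z=V(f_1,\dots,f_c)$ is a complete intersection of dimension $s$ containing $X$, and the link $Y=V(\mathfrak{c}:I)$ with $\mathfrak{c}=(f_1,\dots,f_c)$ satisfies $Z=X\cup Y$ and $\mathfrak{c}=I\cap(\mathfrak{c}:I)$ by standard liaison theory. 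I would first dispose of $\Sing(X)$: since $X$ is lci, each $I_p$ is minimally generated by $c$ elements and $I_t\to I_p/\mathfrak{m}_pI_p$ is onto, so for general $f_j$ the images span and $\mathfrak{c}_p=I_p$; thus $p\notin Y$ and $\mathcal O_{Z,p}\cong\mathcal O_{X,p}$, and $Y$ (so also $X\cap Y$) avoids the finite set $\Sing(X)$.

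Next I would analyze the crossing locus on $X_{\mathrm{reg}}=X\setminus\Sing(X)$. There the twisted conormal sheaf $E:=(I/I^2)(t)$ is locally free of rank $c$, and $f_j\mapsto f_j\bmod I^2$ defines a bundle map $\Phi\colon\mathcal O_{X_{\mathrm{reg}}}^{\,c}\to E$ whose local matrix, in a frame $g_1,\dots,g_c$ of $I_p$, is the coefficient matrix $A=(a_{ji})$ with $f_j=\sum_i a_{ji}g_i$. A differential computation (using $g_i(p)=0$) shows that $Z$ is smooth at $p\in X_{\mathrm{reg}}$ iff $\det A(p)\ne 0$, that $\mathfrak{c}_p=I_p$ exactly when $\det A(p)\ne 0$, and hence that $X\cap Y\cap X_{\mathrm{reg}}$ is precisely the degeneracy locus $\{\operatorname{rank}\Phi\le c-1\}$. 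Since $E$ is globally generated by $I_t$, a Bertini-type theorem for degeneracy loci (valid in characteristic zero via generic smoothness) applies to the general map $\Phi$: the locus $D_{c-1}=\{\operatorname{rank}\Phi\le c-1\}$ is smooth of the expected codimension $1$, i.e.\ of dimension $s-1$, away from $D_{c-2}=\{\operatorname{rank}\Phi\le c-2\}$, whose expected codimension is $(c-(c-2))^2=4$. This is where $s\le 3$ enters decisively: $4>s$ forces $D_{c-2}=\varnothing$ for general $f_j$, so $X\cap Y$ is smooth of dimension $s-1$ and $\Phi$ has rank exactly $c-1$ along it.

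Then I would extract the nodal normal form at $p\in X\cap Y$ from $\operatorname{rank}A(p)=c-1$. After renumbering, $f_1,\dots,f_{c-1}$ have independent differentials at $p$, so $W:=V(f_1,\dots,f_{c-1})$ is smooth of dimension $s+1$ near $p$ and contains both $X$ and $Y$; thus $Z=V_W(f_c|_W)$. As $X$ is a smooth divisor in $W$ with local equation $x_1$ and $X\subseteq Z$, one writes $f_c|_W=x_1h$, whence $Y=V_W(h)$ locally. Smoothness of $X\cap Y=V_X(h|_X)$ of dimension $s-1$ means $h|_X$ has nonzero differential at $p$, i.e.\ $dh|_p$ and $dx_1|_p$ are independent in the cotangent space of $W$; hence $x_1,h$ extend to a regular system of parameters, $Y$ is smooth at $p$ and meets $X$ transversally, and $\widehat{\mathcal O_{Z,p}}\cong k\llbracket x_1,\dots,x_{s+1}\rrbracket/(x_1x_2)$. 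Combined with Bertini on the base-point-free system $\Lambda$ over $\mathbb P_k^{n-1}\setminus X$ (giving $Y\setminus X$ smooth of dimension $s$), this shows $Y$ is everywhere nonsingular, proving (a). Assembling the loci --- $Z=X$ near $X_{\mathrm{reg}}\setminus Y$ and near $\Sing(X)$, $Z=Y$ smooth near $Y\setminus X$, and $Z$ nodal along $X\cap Y$ --- yields $\Sing(Z)=\Sing(X)\sqcup(X\cap Y)$ with the stated local descriptions, proving (b); the union is disjoint because $Y$ avoids $\Sing(X)$.

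The main obstacle I anticipate is the control of the degeneracy loci of $\Phi$: one must justify that a general map built from the possibly special linear system $I_t$, rather than from all of $H^0(E)$, still has degeneracy loci of expected dimension and smoothness, which rests on $E$ being globally generated by $I_t$ together with generic smoothness in characteristic zero; it is precisely this step that pins down the range $1\le s\le 3$ through the emptiness of $D_{c-2}$. A secondary technical point is to check that the scheme-theoretic identifications ($X\cap Y=V(\det\Phi)$ and $h|_X$ a local equation of $X\cap Y$) are compatible, so that smoothness of the degeneracy locus genuinely delivers the transversality $dh|_p\wedge dx_1|_p\ne 0$ needed for the node.
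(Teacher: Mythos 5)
Your proof is correct, but it takes a genuinely different route from the paper's, most visibly in part (a). The paper proves nonsingularity of $Y$ and $X\cap Y$ by \emph{generic linkage}: it introduces indeterminate coefficients $u_{ij}$, forms the generic link $\widetilde{K}=\widetilde{\mathfrak{a}}:_{\widetilde{S}}I$ over $\widetilde{S}=S\otimes_k k[u_{ij}]$, and quotes a theorem of Huneke--Ulrich to the effect that the generic fibers satisfy Serre's conditions $R_s$ and $R_{s-1}$; it then descends from the generic link to a general one by a semicontinuity argument with Fitting ideals of modules of differentials (the Jacobi criterion), clearing denominators by a nonzero polynomial $h\in k[u_{ij}]$. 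You instead work directly on $X_{\mathrm{reg}}$ and identify $X\cap Y$ with the first degeneracy locus of a general bundle map $\mathcal{O}^c\to (I/I^2)(t)$ built from the generating space $I_t$; the Bertini-type theorem for degeneracy loci (valid in characteristic zero via the fibration-plus-generic-smoothness argument you indicate) gives smoothness of $D_{c-1}$ in codimension one and emptiness of $D_{c-2}$, whose expected codimension $4$ exceeds $s\le 3$. This makes the role of the hypothesis $s\le 3$ completely transparent, whereas in the paper it is hidden inside the cited Huneke--Ulrich result; on the other hand, the paper's route yields regularity of $S/K$ and $S/(I+K)$ on the punctured spectrum in one stroke, rather than assembling the smoothness of $Y$ from two separate loci ($Y\setminus X$ by classical Bertini off the base locus, and points of $X\cap Y$ by your transversality argument). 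For part (b) the two arguments converge on the same normal form $\mathfrak{a}'=(x_1,\ldots,x_{g-1},x_gx_{g+1})$, but the key splitting (that $g-1$ generators of $\mathfrak{a}'$ are part of a minimal generating set of $I'$) is obtained in the paper from linkage duality, $I'/\mathfrak{a}'\cong\omega_{S'/K'}\cong S'/K'$ being cyclic, while you get it from the rank-$(c-1)$ condition on the coefficient matrix and then read off the node from the factorization $f_c|_W=x_1h$ together with the transversality supplied by smoothness of the degeneracy locus. The two technical points you flag are indeed the right ones to check, and both go through: global generation of $(I/I^2)(t)$ by the sections coming from $I_t$ is exactly what the fibration argument needs, and the scheme-theoretic identification of $X\cap Y$ with $V(\det\Phi)$ on $X_{\mathrm{reg}}$ follows from the standard linkage formula $(\mathfrak{c}_p:I_p)=\mathfrak{c}_p+(\det A)$ for complete intersections $\mathfrak{c}_p\subset I_p$ of the same height.
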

\begin{proof} Let $S=k[x_1, \ldots, x_n]$ be the homogenous coordinate ring of ${\mathbb P_k^{n-1}},$ let $\a$ be the saturated ideal  of $Z$, and $K=\a:I$ be the saturated ideal of $Y.$ Replacing $I$ by $I_{\ge t},$ we may assume that $I$ is generated by forms $f_1, \ldots, f_m$ of degree $t.$ We write $g=\htt I=n-s-1.$ We may assume that the ideals $I$ and 
$\a$ are equal locally at every minimal prime of $I$. Therefore $\a=I\cap K,$ the ideal $K$ is unmixed of height $g,$ and all associated primes $\not=\m_s$ of $I+K$ have height $g+1,$ as can be seen from the exact sequence
$$ 0\lto S/\a \lto S/I \oplus S/K \lto S/(I+K)\lto 0\, .
$$

We now prove (a). The ideal $\a$ is generated by $g$ $k$-linear combinations $\sum \lambda_{ij} f_j$ with $\underline{\lambda}=(\lambda_{ij})$ a general point in $ \mathbb A^{gm}_k.$

We consider the polynomial rings $U=k[\{u_{ij}\,  |\,  1\le i\le g, 1\le j \le m\}]$ and  $\widetilde{S}=S\otimes_k U,$ and the $\widetilde{S}$-ideals $\widetilde{\a}=(\sum u_{ij} f_j \, |\,  1\le i\le g)$ and $\widetilde{K}=\widetilde{\a}:_{\widetilde{S} } I.$ There are natural maps 
$$\psi_1: U \lto  \widetilde{T}:=\widetilde{S}/\widetilde{K} \quad \mbox{and} \quad \psi_2: U \lto  \widetilde{P}:=\widetilde{S}/(I\widetilde{S}+\widetilde{K})\, .$$
 According to \cite{HU90}*{2.4(b)},  the generic fiber of $\psi_1$ satisfies Serre's condition $R_s$ and the generic fiber of $\psi_2$ satisfies $R_{s-1}.$ 
 
Let $Q$ be the quotient field of $U,$ and write $T_Q=\widetilde{T} \otimes_U Q$ and $P_L=\widetilde{P} \otimes_U Q.$ The rings $T_Q$ and $P_Q$ are standard graded $Q$-algebras of dimension at most $s+1$ and $s,$ respectively. Since these rings satisfy $R_s$ and $R_{s-1},$ respectively, they are regular locally on the punctured spectrum. Thus by \autoref{jak} there exits an integer $\ell$ such that
$$(x_1, \ldots, x_n)^{\ell}\, T_Q \subset \Fitt_{s+1}(\Omega_Q(T_Q)) \quad \mbox{and} \quad (x_1, \ldots, x_n)^{\ell}\, P_Q \subset \Fitt_{s}(\Omega_Q(P_Q)) \, .$$
Hence for some nonzero polynomial $h\in U,$
\begin{equation}\label{eqh}h\, (x_1, \ldots, x_n)^{\ell}\, \widetilde{T} \subset \Fitt_{s+1}(\Omega_U(\widetilde{T})) \quad \mbox{and} \quad h\, (x_1, \ldots, x_n)^{\ell}\, \widetilde{P}\subset \Fitt_{s}(\Omega_U(\widetilde{P}))\, .
\end{equation}

For  a point  $\underline{\lambda}=(\lambda_{ij})\in \mathbb A^{gm}_k,$ we write $k(\underline{\lambda})=U/(u_{ij}-\lambda_{ij}),$ $S_{\lambda}=\widetilde{S}\otimes_Uk_{\lambda},$
$T_{\underline{\lambda}}=\widetilde{T} \otimes_U k(\underline{\lambda}),$ and $P_{\underline{\lambda}}=\widetilde{P} \otimes_U k(\underline{\lambda})\, .$
It follows from  \autoref{eqh} that whenever $h(\underline{\lambda})\not=0,$ then 
$$(x_1, \ldots, x_n)^{\ell}\, T_{\underline{\lambda}} \subset \Fitt_{s+1}(\Omega_U(\widetilde{T})) \otimes_U k(\underline{\lambda}) =\Fitt_{s+1}(\Omega_k(T_{\underline{\lambda}})) $$
$$(x_1, \ldots, x_n)^{\ell}\, P_{\underline{\lambda}}\subset \Fitt_{s}(\Omega_U(\widetilde{P})) \otimes_U k(\underline{\lambda}) =\Fitt_{s+1}(\Omega_k(P_{\underline{\lambda}}))\, .
 $$
 We conclude that locally on the punctured spectrum, $\Omega_k(T_{\underline{\lambda}})$ is generated by $s+1$ elements and $\Omega_k(P_{\underline{\lambda}})$ is generated by $s$ elements. 
 
On the other hand, we may assume that $\a=\widetilde{\mathfrak a}S_{\lambda}.$ Hence there is a natural epimorphism of $S$-algebras 
$$
\begin{tikzcd}
  T_{\lambda} =S_{\lambda}/\widetilde{K}S_{\lambda} \arrow[r, twoheadrightarrow]    & S_{\lambda}/(\widetilde{\mathfrak a}S_{\lambda}: I)=S/K\, ,
   \end{tikzcd}
   $$
and likewise $P_{\lambda}\twoheadrightarrow S/(I+K).$ Thus locally on the punctured spectrum, the modules 
$\Omega_k(S/K)$ and $\Omega_k(S/(I+K))$ too are generated by $s+1$ and $s$ elements, respectively. As $S/K$ and $S/(I+K)$ are equidimensional  $k$-algebra
of dimension $s+1$ and $s$, respectively, it follows that both rings
are regular on the punctured spectrum, see \autoref{jak}.

For the proof of part (b), recall that $Z=X \cup Y$. Let $p\in Z$. If $p\notin Y$, then  $\mathcal O_{Z,p} \cong \mathcal O_{X,p}.$ If $p\notin X$, then $\mathcal O_{Z,p} \cong \mathcal O_{Y,p},$ which is regular by part (a). If $p\in X \cap Y, $ then $\mathcal O_{Z,p}$ has at least two distinct minimal primes, hence cannot be regular. This shows that $\Sing(Z)=\Sing(X) \cup (X\cap Y).$

By the general choice of $\a,$ we have $I_{\p}=\a_{\p}$ for the finitely many prime ideals $\p$ corresponding to the singular points of $X$ (here we also use the fact that $\a$ is also general in $I_{\p},$ see \cite{PTUV}*{2.5(a)}). So $\p\not\supset \a:I=K.$ Thus for every $p\in \Sing(X),$ $p\notin Y$ and so  $\mathcal O_{Z,p} \cong \mathcal O_{X,p}.$ Moreover, $\Sing(X)$ and $X\cap Y$ are disjoint. 

It remains to prove the claim about  $\mathcal O_{Z,p}$ for $p\in X\cap Y.$ By part (a) and since $p\notin \Sing(X),$ the rings  $\mathcal O_{X,p},  \mathcal O_{Y,p},  \mathcal O_{X\cap Y,p}$ are regular. Write $S'= \mathcal O_{\mathbb P^{n-1},p},$ and let $I'$, $K'$, $\a'$ be the defining ideals in $S'$ of  $\mathcal O_{X,p},  \mathcal O_{Y,p},  \mathcal O_{Z,p}.$ It suffices to find a regular system of parameters $x_1, \ldots, x_{n-1}$ of $S'$ such that  $\a'=(x_1, \ldots, x_{g-1}, x_gx_{g+1}).$ 

Recall that the ideals $I'$ and $K'$ have height $g$ and are geometrically linked by $\a'.$ Since $S'/K'$ is Gorenstein, we have $I'/\a'\cong \omega_{S'/K'}\cong S'/K'$ is cyclic, so $g-1$ generators of $\a'$ are part of a minimal generating set of $I'.$ Call these elements $x_1, \ldots, x_{g-1}.$ Since $S'/I'$ is regular, these elements are part of a regular system of parameters of $S'$ and $I'=(x_1, \ldots, x_{g})$ with $x_1, \ldots, x_{g}$ part of a regular system of parameters. Notice $\a'=(x_1, \ldots, x_{g-1},yx_g)$ for some $y\in S'.$ Now $K'=\a':I'=(x_1, \ldots, x_{g-1},y),$ so $I'+K'=(x_1, \ldots, x_{g},y).$ Since this ideal has height $g+1$ and $S'/K'+I'$ is regular, $x_1, \ldots, x_{g},y$ form a part of a regular system of parameters of $S'$, as claimed.
\end{proof}

\begin{remark}\label{differentdeg} Following the approach of  \cite{CU}*{4.4} one sees that \autoref{linkage} still  holds when $Z$ is not necessarily defined by general forms of the same degree.
\end{remark}

In the next theorem we assume that the curve $\C$ is not a smooth complete intersection because otherwise we know from \autoref{SmoothGOR} that $\findeg (\Der_k(R)/\m^{-1}\varepsilon)= a(R)+1\,.$
\begin{theorem}\label{turina} Let $k$ be an algebraically closed field of characteristic zero and $\C\subset {\mathbb P_k^{n-1}}$ be a reduced curve of degree $d$ that is locally a complete intersection. Assume that $\C$ is not a smooth complete intersection. Let $\tau$ and $p_a$ denote the total Tjurina number and the arithmetic genus of $\C$ and let $R$ be the homogeneous coordinate ring of $\C$ with maximal homogeneous ideal $\m$. 
\begin{enumerate}[$(a)$]
\item If $R$ is a domain or, more generally, $R$ has the generalized Cayley-Bacharach property, then
$$\findeg (\Der_k(R)/\m^{-1}\varepsilon)\ge \frac{d}{d-1} \, a(R) -\frac{\tau-2}{d-1}\, .$$
\item If $R$ is Cohen-Macaulay, then 
$$\findeg (\Der_k(R)/R\varepsilon)\ge \frac{2 p_a -\tau}{d-1}\, .
$$
\end{enumerate}
\end{theorem}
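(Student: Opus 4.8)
The plan is to derive both parts from the fundamental estimate \autoref{dPW}. Since $\C$ is assumed not to be a smooth complete intersection, \autoref{htJ}(c) gives $\htt J=1$, and \autoref{dPW} reads
$$\findeg (\Der_k(R)/\m^{-1}\varepsilon)\ge \delta-2-\frac{e(R/J)-\delta}{d-1}.$$
Everything then comes down to expressing the multiplicity $e(R/J)$ of the partial Jacobian ideal in terms of the curve invariants $\tau$ and $p_a$, and the whole argument is really a translation of $e(R/J)$ into genus data.

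I would first interpret $J$ geometrically. Choosing $f_1,\dots,f_{n-2}$ as in \autoref{setupcurves} to be general forms, the complete intersection $Z=V(f_1,\dots,f_{n-2})\subset S$ is a reduced curve containing $\C$, and $J$ is the image in $R$ of the full Jacobian ideal $J_Z$. Here I invoke the linkage package \autoref{linkage} (with \autoref{differentdeg} to allow unequal degrees): writing $\a$ for the ideal of $Z$, $I$ for that of $\C$, and $K=\a:I$ for the residual curve $Y$, one has $Z=\C\cup Y$ with $Y$ smooth, $\Sing(Z)=\Sing(\C)\sqcup(\C\cap Y)\subset\C$, with $\mathcal O_{Z,p}\cong\mathcal O_{\C,p}$ at the points of $\Sing(\C)$ and an ordinary node at each point of $\C\cap Y$. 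Since the entire singular locus of $Z$ lies on $\C$, a local computation — length $\tau(\mathcal O_{\C,p})$ at a singularity of $\C$ by \autoref{basicP}, and length $1$ at each node — yields $e(R/J)=\tau+|\C\cap Y|$.

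The main obstacle is the genus bookkeeping that computes $|\C\cap Y|$. From the mapping-cone sequence $0\to S/\a\to S/I\oplus S/K\to S/(I+K)\to 0$, comparing constant terms of Hilbert polynomials gives $|\C\cap Y|=p_a(Z)-p_a-p_a(Y)+1$. The arithmetically Gorenstein curve $Z$ satisfies $a(Z)=\delta-2$ and $p_a(Z)=1+\tfrac12\deg(Z)(\delta-2)$, while linkage duality $K/\a\cong\omega_R(2-\delta)$, applied to $0\to K/\a\to S/\a\to S/K\to 0$, lets me read off $p_a(Y)$ from $P_{\omega_R}(t)=dt+(p_a-1)$ (Riemann–Roch for the dualizing sheaf, valid without Cohen–Macaulayness). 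Combining these gives the clean identity $|\C\cap Y|=d(\delta-2)-2p_a+2$. Substituting into $e(R/J)=\tau+|\C\cap Y|$ and using $d(\delta-2)-\delta+2=(d-1)(\delta-2)$, the bound from \autoref{dPW} collapses to exactly $\findeg(\Der_k(R)/R\varepsilon)\ge\frac{2p_a-\tau}{d-1}$; here Cohen–Macaulayness is used only to identify $\m^{-1}\varepsilon=R\varepsilon$. This settles part (b).

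For part (a) the same identity reduces the claim to the single inequality $d\,a(R)\le 2p_a-2$, which I would extract from the generalized Cayley–Bacharach hypothesis. Since $\findeg\omega_R=\indeg\omega_R=-a(R)$, there is a faithful homogeneous element $w\in[\omega_R]_{-a(R)}$; because $R$ is reduced and equidimensional, $w$ generates $\omega_R$ at every minimal prime, so $R(a(R))\cong Rw\hookrightarrow\omega_R$ has finite-length cokernel $C$. Comparing the equal-leading-term Hilbert polynomials $P_{\omega_R}(t)=dt+(p_a-1)$ and $P_{R(a(R))}(t)=dt+d\,a(R)+1-p_a$ gives $\lambda(C)=2p_a-2-d\,a(R)\ge 0$. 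Feeding $2p_a-2\ge d\,a(R)$, hence $|\C\cap Y|\le d(\delta-2-a(R))$, back into \autoref{dPW} produces $\findeg(\Der_k(R)/\m^{-1}\varepsilon)\ge\frac{d}{d-1}a(R)-\frac{\tau-2}{d-1}$. I expect the shift conventions in the linkage-duality genus computation to be the most error-prone point, which the internal consistency of part (b) provides a useful check on.
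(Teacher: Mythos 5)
Your proposal is correct, and its skeleton is the paper's own: \autoref{dPW} combined with \autoref{linkage} (plus \autoref{differentdeg}), the identity $e(R/J)=\tau+\deg(\C\cap Y)$, and the linkage duality $KR\cong\omega_R(2-\delta)$. Where you genuinely diverge is in how $\deg(\C\cap Y)=e(R/KR)$ is handled. For part (b) the paper computes it under the Cohen--Macaulay hypothesis by a Hilbert series manipulation: writing $H_R(t)=q(t)/(1-t)^2$, it obtains $H_{R/KR}=H_R-t^{\delta-2}H_{\omega_R}$, reads off $e(R/KR)=\delta e_0-2e_1$, and finishes with $p_a=e_1-e_0+1$. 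You instead compute the same quantity from the Hilbert polynomials of the two linkage sequences together with $P_{\omega_R}(t)=dt+p_a-1$, which rests on Serre duality for the scheme $\C$ (automatic for a reduced curve of pure dimension one) rather than on arithmetic Cohen--Macaulayness; this makes your identity $\deg(\C\cap Y)=d(\delta-2)-2p_a+2$ unconditional, so your part (b) is in fact slightly more general, with the Cohen--Macaulay hypothesis used only to see $\m^{-1}\varepsilon=R\varepsilon$. (Your identity agrees with the paper's formula, since $\delta e_0 -2e_1 = d(\delta-2)-2p_a+2$.) For part (a) the paper never computes $e(R/KR)$ exactly: it bounds $e(R/KR)\le d(\delta-2-a(R))$ directly, using that the Cayley--Bacharach hypothesis supplies an $R$-regular element of degree $\delta-2-a(R)$ inside $KR$ and that $\htt KR=1$. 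You keep the exact identity and instead convert the Cayley--Bacharach hypothesis into the inequality $d\,a(R)\le 2p_a-2$; given your identity the two formulations are equivalent, just organized differently, and deriving (a) from the identity underlying (b) is a clean way to see why the two bounds in the theorem are really the same estimate.

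One small imprecision in your part (a): from the fact that a faithful element $w\in[\omega_R]_{-a(R)}$ generates $\omega_R$ at every minimal prime, you may only conclude that the cokernel $C$ of $R(a(R))\cong Rw\hookrightarrow\omega_R$ has dimension at most one, not that it has finite length; so the equation $\lambda(C)=2p_a-2-d\,a(R)$ is not justified as stated. This does not damage the argument: since $P_{\omega_R}$ and $P_{R(a(R))}$ have the same leading coefficient $d$, the Hilbert polynomial $P_C$ is a constant, and the Hilbert polynomial of a module of dimension at most one is a non-negative constant (equal to $e(C)$ if $\dim C=1$ and to zero if $C$ has finite length). Hence $2p_a-2-d\,a(R)=P_C\ge 0$ still holds, which is all you need.
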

\begin{proof} We deduce the theorem from \autoref{dPW}. In \autoref{setupcurves} we choose elements $f_1, \ldots, f_{n-2}$ that satisfy the conclusion of \autoref{linkage} with 
$X=\C$ and $Z=V(f_1, \ldots, f_{n-2}).$
For the proof of parts (a) and (b) we are going to estimate and compute, respectively, $e(R/J)$. We write $\a=(f_1, \ldots, f_{n-2})$ and $A=S/\a$. By \autoref{linkage}, $\Sing(Z)=\Sing(\C) \cup (\C\cap Y)$ and for every $p\in \Sing(Z)$ either $\mathcal O_{Z,p}\cong \mathcal O_{\C,p} $ or else $p\in \C\cap Y$ and $\widehat{\mathcal O_{Z,p} }\cong k \llbracket  x_1, x_2 \rrbracket /(x_1x_2). $ In the latter case the Jacobian ideal $J_{\mathcal O_{Z,p}}$ of $\mathcal O_{Z,p}$ is the maximal ideal and therefore $\mathcal O_{\C,p}/J_{\mathcal O_{Z,p}}\mathcal O_{\C,p}\cong k.$ It follows that 
\begin{equation}\label{multTur}e(R/J)=\tau+\deg (\C\cap Y).\end{equation}
We are now going to estimate and compute, respectively, the degree of $\C \cap Y.$ We write $K=\a : I$ for the saturated ideal defining the link $Y.$ The subscheme $\C \cap Y$ is defined by the ideal $I+K$, and $S/(I+K)\cong R/KR, $ so $\deg (\C\cap Y)=e(R/KR).$ On the other hand, $\omega_R\cong (KR) (\delta-2).$ 

We now prove part (a). Since $R$ has the generalized Cayley-Bacharach property, we have $\findeg \omega_R=\indeg \omega_R=-a(R).$ Therefore $KR$ contains a homogeneous  $R$-regular element of degree $\delta-2-a(R).$ Since moreover $\htt KR =1$, it follows that
\begin{equation}\label{eqA}\deg (\C\cap Y)=e(R/KR)\le d(\delta-2-a(R))\, .\end{equation}
Now the assertion follows by combining \autoref{eqA}, \autoref{multTur}, and \autoref{dPW}. 

To prove part (b) we write the Hilbert series of $R$ as $H_R(t)=\frac{q(t)}{(1-t)^2}.$ Since $R$ is Cohen-Macaulay, the Hilbert series of $\omega_R$ is $H_{\omega_R}(t)= \frac{t^2q(t^{-1})}{(1-t)^2}.$ Therefore
$$H_{R/KR}=H_R-t^{\delta-2}H_{\omega_R}=\frac{Q(t)}{(1-t)^2}\, ,$$ where $Q(t)=q(t)-t^{\delta}q(t^{-1}).$ Since $\dim R/KR=1,$ we have 
\begin{equation}\label{eqB}e(R/KR)=-Q'(1)=\delta q(1)-2q'(1)=\delta e_0-2e_1\, ,\end{equation}
where $e_0=d$ and $e_1$ is the first Hilbert coefficient of $R.$ On the other hand $p_a=e_1-e_0+1.$ Now the conclusion follows from \autoref{eqB}, \autoref{multTur}, the equality $\deg (\C\cap Y)=e(R/KR),$ and \autoref{dPW}. 
\end{proof}

To illustrate the above bounds we are going to present a family of curves $\mathcal C \subset \mathbb P^{n-1}_k$ for which the inequality in Theorem \ref{linkage}(b)
is an equality, see \autoref{beauty} and in particular part (c). We will use the following lemma:

\begin{lemma}\label{genus}Let $k$ be a perfect field and $A$ be a Noetherian positively graded $k$-algebra generated by $n$ homogeneous elements of degrees 
$\delta_1, \ldots, \delta_n$ none of which is a multiple of the characteristic. Assume $A$ is a reduced complete intersection of dimension 1. 
Write $\m$ for the maximal homogeneous ideal, $a$ for the $a$-invariant, 
$J_A$ for the Jacobian ideal, and $\f$ for the conductor of $A$.

One has $J_A \cong \m (-a).$ If in addition $k$ is algebraically closed and $A$ is a domain, then $$\tau(A)= \frac{a+1}{{\rm{gcd}}(\delta_1, \ldots, \delta_n)}= 2  \, \lambda(A/\f)=2\sigma(A)\, .$$
\end{lemma}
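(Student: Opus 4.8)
The plan is to prove $J_A\cong\m(-a)$ first and then read the numerical identities off from it. Write $A=P/I$ with $P=k[y_1,\dots,y_n]$ graded by $\deg y_i=\delta_i$ and $I=(g_1,\dots,g_{n-1})$ generated by a regular sequence, $\deg g_j=e_j$; adjunction for a graded complete intersection gives $a=a(A)=\sum_j e_j-\sum_i\delta_i$. Let $\Theta=(\partial g_j/\partial y_i)$ be the $n\times(n-1)$ Jacobian matrix over $A$, so that $\Omega_k(A)=\coker\Theta$, $\Der_k(A)=\ker\Theta^{*}$, and $J_A=I_{n-1}(\Theta)=(\Delta_1,\dots,\Delta_n)$, where $\Delta_i$ is the minor obtained by deleting row $i$ and $\deg\Delta_i=a+\delta_i$. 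Since $A$ is reduced over a perfect field it is generically smooth, so $\Omega_k(A)$ has rank one, $\Theta$ has rank $n-1$, and $\ker\Theta^{*}$ is torsionfree of rank one. First I would note that the weighted Euler relations $\sum_i(\partial g_j/\partial y_i)\,\delta_i y_i=e_j g_j=0$ in $A$ place $v=(\delta_1y_1,\dots,\delta_ny_n)$ in $\ker\Theta^{*}$, while Laplace expansion places the vector $w=((-1)^i\Delta_i)_i$ of signed minors in $\ker\Theta^{*}$ as well. As $\ker\Theta^{*}$ has rank one, $w=\lambda v$ for a homogeneous $\lambda$ of degree $a$ in the total ring of fractions; because the $\delta_i$ are units this yields $J_A=\lambda\,\m$, and multiplication by $\lambda$ (a nonzerodivisor since $\htt J_A>0$) is the desired isomorphism $\m(-a)\xrightarrow{\ \sim\ }J_A$.

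For the numerical part assume $k=\overline k$ and $A$ a domain. Its normalization $\overline A$ is a one-dimensional normal graded $k$-domain, hence a polynomial ring $k[u]$ with $\deg u=g:=\gcd(\delta_1,\dots,\delta_n)$, and each $A_m\hookrightarrow(\overline A)_m$ is at most one-dimensional; thus $A=k[u^{d_1},\dots,u^{d_n}]$ with $d_i=\delta_i/g$ is the monomial curve of the numerical semigroup $S=\langle d_1,\dots,d_n\rangle$. The singular locus $V(J_A)$ is homogeneous and zero-dimensional, so it reduces to the vertex $\m$; applying Storch's theorem (\autoref{basicP}) to $A_\m$ therefore gives $\tau(A)=\lambda(A/J_A)$. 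Using $J_A\cong\m(-a)$ one gets $H_{A/J_A}(z)=(1-z^a)H_A(z)+z^a$, and since $A_m=k$ for all large $m$ divisible by $g$ and $A_m=0$ otherwise, $\lim_{z\to1}(1-z)H_A(z)=1/g$. Evaluating at $z=1$ then yields $\lambda(A/J_A)=a/g+1=F+1$, where $F$ is the Frobenius number of $S$ and $a=gF$; this is the asserted first equality.

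Finally, because $A$ is a complete intersection it is Gorenstein, so $S$ is symmetric; symmetry gives $F+1=2\gamma$ with $\gamma=\#(\mathbb Z_{\ge0}\setminus S)=\sigma(A)$ the number of gaps, which proves $\tau(A)=2\sigma(A)$. For the conductor I would apply $\Hom_A(-,\omega_A)$ with $\omega_A\cong A(a)$ to $0\to A\to\overline A\to\overline A/A\to0$: since $\overline A$ is maximal Cohen-Macaulay and $\overline A/A$ has finite length, the long exact sequence collapses to $A/\f\cong(\overline A/A)^{\vee}(-a)$, whence $\lambda(A/\f)=\lambda(\overline A/A)=\sigma(A)$ and $2\lambda(A/\f)=2\sigma(A)$. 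I expect the main obstacle to be the middle paragraph: extracting $\lambda(A/J_A)$ in closed form and matching it with $2\sigma$ forces one to track the non-standard grading through the gcd $g$ and to invoke the Gorenstein symmetry of $S$ — equivalently, the quasi-homogeneity that makes the Tjurina and Milnor numbers coincide — which is exactly the point where the interplay of $a$, the normalized multiplicity $1/g$, and the conductor must be controlled.
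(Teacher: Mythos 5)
Your proof is correct and follows essentially the same route as the paper's: the identical proportionality argument between the weighted Euler vector and the vector of signed maximal minors inside the rank-one kernel of the transposed Jacobian yields $J_A\cong\m(-a)$, and the numerical part likewise realizes $A$ as a numerical semigroup ring inside $\overline A=k[u]$, invokes \autoref{basicP} to get $\tau(A)=\lambda(A/J_A)$, and uses Gorensteinness for the factor $2$. The only deviations are bookkeeping: you keep the original grading and extract $\lambda(A/J_A)$ from a Hilbert-series limit, where the paper regrades so that $\gcd(\delta_1,\dots,\delta_n)=1$ and counts lengths directly; and you obtain the symmetry via Kunz's symmetric-semigroup criterion together with an $\omega_A$-duality computation of $\lambda(A/\f)$, where the paper instead identifies $\f=\overline A\,t^{\,a+1}$ and quotes the classical Gorenstein conductor equalities $\lambda(\overline A/\f)=2\lambda(A/\f)=2\lambda(\overline A/A)$.

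One caveat: what you (and, after regrading, the paper itself) actually prove is $\tau(A)=a/g+1$ with $g=\gcd(\delta_1,\dots,\delta_n)$, which agrees with the displayed formula $(a+1)/g$ only when $g=1$; for instance $k[x,y]/(x^3-y^2)$ with $\deg x=4$, $\deg y=6$ has $a=2$, $g=2$, $\tau=2=a/g+1$, while $(a+1)/g=3/2$. So your closing claim that $a/g+1$ ``is the asserted first equality'' is literally inaccurate, but the discrepancy lies in the statement of the lemma (which is harmless in the paper's applications, where $g=1$), not in your argument.
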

\begin{proof} Write $A \cong S/(f_1, \dots, f_{n-1}),$ where $S=k[x_1, \ldots, x_n]$ is a positively 
graded polynomial ring with ${\rm deg} \,  x_i =\delta_i$ and
$f_i$ are homogeneous polynomials of degree $d_i$. Write $y_i$ for the image of $x_i$ in $A$, $\Theta$ for
the Jacobian matrix of $f_1, \ldots, f_{n-1}$ with entries in $A$, and $\Delta_1, \ldots, \Delta_n$ for the signed
maximal minors of $\Theta$. Since the image of the matrix $\Theta$ has rank $n-1$ over $A$ and since
both vectors 
$$\begin{bmatrix}
           y_{1} \\
           \vdots \\
           y_{n}
         \end{bmatrix} \quad \text{and} \quad   \begin{bmatrix}
           \Delta_{1} \\
           \vdots \\
           \Delta_{n}
         \end{bmatrix}
$$

\noindent
are in the kernel of $\Theta$ and their images have rank 1, it follows that these vectors are proportional,
by multiplication with a quotient of two homogeneous non-zerodivisors in $A$.
On the other hand, $\m$ is generated in degrees $\delta_i$ and $J_A$ is generated
in degrees $(\sum d_j -\sum \delta_j) + \delta_i  = a+\delta_i. $ It follows that  $J_A \cong \m(-a).$

If $k$ is algebraically closed and $A$ is a domain, then the integral closure $\overline A$ is a
graded polynomial ring $k[t],$ where $t$ has degree ${\rm{gcd}}(\delta_1, \ldots, \delta_n).$ After 
regrading we may assume that this degree is 1. We may also assume that $A \neq \overline A.$ The ring $A$ is a monomial subalgebra 
of $\overline A=k[t],$ and computing local cohomology with support in $\m$ one sees that
$t^a$ is the highest degree monomial in $\overline A \setminus A .$ Thus $\overline A \, t^{a+1}=\f,$
and it follows that 
$$a+1= \lambda(\overline A/\f) =2 \, \lambda(A/\f)=2\sigma(A),$$
 where the last two equalities hold
because $A$ is Gorenstein.

On the other hand, the isomorphism $J_A \cong \m(-a)$ reads as $J_A=\m \, t^a.$ We also recall \autoref{basicP}, which applies 
since $A$ is a complete intersection. Thus we obtain
$$ \tau(A)=\lambda(A/J_A)=\lambda(A/\m \, t^a)= \lambda(A/\m) +\lambda(\m/\m \, t^a)=a+1,$$
as required.
 \end{proof}

\begin{proposition} \label{beauty} Let $k$ be an algebraically closed field of characteristic zero and let $r$ be a positive integer.  Let  $\C\subset \mathbb P_k^{n-1}$ be the curve defined by the ideal $I$ of $S=k[x_1, \ldots, x_n]$ generated by the maximal minors of the matrix
\begin{equation}\label{scrollB}
\begin{bmatrix}
   x_{1} & x_{2} & \dots & \dots & x_{n-2} & x_{n-1} \\
 x_{2}^r &  \dots &  \dots & x_{n-2}^r & x_{n-1}^r & x_1^r+x_{n}^r
    \end{bmatrix}\, .
\end{equation}

\begin{enumerate}[$($a$)$]
\item The curve $\mathcal C$ has degree $$d=r^{n-2}+r^{n-3}+\ldots+1\, ,$$ arithmetic genus $$p_a= \frac{r}{2}((n-2)r^{n-2}-r^{n-3}-\ldots-1)=\frac{1}{2}((n-2)\, r^{n-1}-d+1)\, ,$$
geometric genus
$$p_g= \frac{r}{2}( r^{n-2}-1)\, ,$$
 total Turina number 
$$\tau=r^2((n-3)r^{n-3}-r^{n-4}-\ldots-1)=(n-3)\, r^{n-1}-d+r+1\, ,$$
and singularity degree
$$\sigma=\frac{\tau}{2}=\frac{1}{2}((n-3)\, r^{n-1}-d+r+1)\, ;$$
\item If $r>1$, the set $\{ (1: 0:  \ldots : 0 : \rho_i) \ | \ \rho_i^r=-1 \}$ is the singular locus of $\C$ and for every singular point $p$ of  $\C,$
$$\widehat{\mathcal O_{\C, p}} \cong k\llbracket  \, t^{r^{n-3}}, t^{r^{n-3}+r^{n-4}}, \ldots, t^{r^{n-3}+\, \ldots\, +1} \, \rrbracket\,; $$
\item  Let $R=S/I$ be the homogeneous coordinate ring of $\C$ and let $y_i$ denote the images of $x_i$ in $R.$ The element 
$$\sum_{i=2}^{n-1} (r^{n-2}+\ldots+r^{n-i})\, y_iy_n^{r-1} \frac{\partial}{\partial x_i} \, +d \, (y_1^r+y_n^r)\frac{\partial}{\partial x_n} \in \,  \bigoplus_{i=1}^{n} \, R \frac{\partial}{\partial x_i} 
$$
 gives a minimal generator of $\, \Der_k(R)/R\varepsilon$. In particular
 $$ \indeg (\Der_k(R)/R\varepsilon)= \findeg (\Der_k(R)/R\varepsilon)=r-1= \frac{2 p_a -\tau}{d-1}\, .$$
\end{enumerate}
\end{proposition}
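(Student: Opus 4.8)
The plan is to treat $R$ as a determinantal ring, read off its global numerics from a resolution, identify the singularities analytically, and then exhibit the derivation by hand. Since $I=I_2(M)$ is the ideal of maximal minors of the $2\times(n-1)$ matrix $M$ in \autoref{scrollB} and these minors have the maximal possible codimension $n-2$, the Eagon--Northcott complex is a minimal free resolution of $R=S/I$; in particular $R$ is Cohen--Macaulay of dimension $2$, so $\depth R=2$ and $\m^{-1}=R$, whence $\Der_k(R)/\m^{-1}\varepsilon=\Der_k(R)/R\varepsilon$. The matrix is graded with degree $1$ entries in the first row and degree $r$ entries in the second, so every minor has degree $r+1$ and the graded twists in the Eagon--Northcott complex are completely determined; from the resulting $K$-polynomial I read the Hilbert polynomial of $R$, extracting the degree $d$ from its leading term and, via \autoref{pa}, the arithmetic genus $p_a=\dim_k[\omega_R]_0$ from its constant term. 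Irreducibility of $\C$ (so that $R$ is a domain, forcing $\findeg=\indeg$ on torsionfree modules and $s=1$ in \autoref{genus2}) follows because the rank-one locus of $M$ is swept out by a single irreducible family over the open set where the first row is nonzero.

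For part (b) I first locate the singular points. Where the first row of $M$ is nonzero the two rows are proportional with a factor $\mu$ satisfying $x_{i+1}^r=\mu x_i$ for $1\le i\le n-2$ and $x_1^r+x_n^r=\mu x_{n-1}$; if $\mu\neq0$ one solves these, and the Jacobi criterion \autoref{jak} shows $\C$ is smooth there, whereas $\mu=0$ forces $x_2=\cdots=x_{n-1}=0$ together with $x_n^r=-x_1^r$, i.e. exactly the $r$ points $(1:0:\cdots:0:\rho)$ with $\rho^r=-1$. Working in the chart $x_1=1$ at such a point, the branch is unibranch and I compute the orders $\nu_i=\mathrm{ord}(x_i)$ in the uniformizer from the recursion $r\,\nu_{i+1}=\nu(\mu)+\nu_i$ with $\nu(\mu)=r\nu_2$; the minimal integral solution is $\nu_i=r^{n-3}+r^{n-4}+\cdots+r^{n-1-i}$. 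Since $x_n$ is a unit whose deviation $x_n-\rho$ has strictly larger order, these $\nu_i$ generate the value semigroup, giving $\widehat{\mathcal O_{\C,p}}\cong k[[t^{\nu_2},\ldots,t^{\nu_{n-1}}]]$, the monomial ring asserted in (b).

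The local invariants now follow from \autoref{genus}. In the chart $x_1=1$ the column-one minors give the $n-3$ binomials $x_{j+1}^r=x_2^r\,x_j$ for $2\le j\le n-2$; these are triangular (the $j$-th is monic in the new variable $x_{j+1}$), hence form a regular sequence presenting $\mathcal O_{\C,p}$ as a one-dimensional complete intersection on $x_2,\ldots,x_{n-1}$. As $\gcd(\nu_2,\ldots,\nu_{n-1})=1$ (the consecutive generators $\nu_{n-2},\nu_{n-1}$ differ by $1$), \autoref{genus} gives $\tau(\mathcal O_{\C,p})=a_{\mathrm{loc}}+1=2\,\sigma(\mathcal O_{\C,p})$, where the $a$-invariant of the graded complete intersection is (sum of the $n-3$ relation degrees) $-$ (sum of the $n-2$ generator degrees) $=(n-3)\,r\nu_2-\nu_{n-1}$. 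Summing over the $r$ singular points yields the stated $\tau$ and $\sigma=\tau/2$, and feeding $p_a$, $\sigma$, $s=1$ into \autoref{genus2} gives $p_g=p_a-\sigma$; a short computation checks that these closed forms coincide with the expressions in (a). In particular $\C$ is reduced and locally a complete intersection, but not a smooth complete intersection.

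Finally, for part (c) I verify directly that the displayed operator $D$ preserves $I$. Assigning weights $c_i=r^{n-2}+\cdots+r^{n-i}$ to $x_i$ (with $c_1=0$) makes every minor of columns $1,\ldots,n-2$ weighted homogeneous, so $y_n^{r-1}\sum_i c_i y_i\,\partial/\partial x_i$ scales each such minor; replacing the $\partial/\partial x_n$ entry by $d(y_1^r+y_n^r)$ exactly absorbs the inhomogeneous entry $x_1^r+x_n^r$, and a one-line computation then gives $D(\Delta_{ij})=(\text{scalar})\,y_n^{r-1}\,\Delta_{ij}\in I$ for every minor. Thus $\bar D\in\Der_k(R)$ has degree $r-1$; its $\partial/\partial x_1$-coefficient is $0$ while that of any $f\varepsilon$ is $f y_1\neq0$, so $\bar D$ is a nonzero element of the torsionfree module $\Der_k(R)/R\varepsilon$ (\autoref{1.2}), hence a minimal generator in degree $r-1$ and, $R$ being a domain, faithful; therefore $\indeg=\findeg\le r-1$. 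The matching lower bound $\findeg(\Der_k(R)/R\varepsilon)\ge\frac{2p_a-\tau}{d-1}$ is \autoref{turina}(b), and the identity $2p_a-\tau=r(r^{n-2}-1)=(r-1)(d-1)$ turns it into $\ge r-1$, giving equality. I expect the main obstacle to be part (b): proving smoothness off the $r$ points and that the value semigroup is generated exactly by the $\nu_i$, so that the triangular binomials genuinely present $\mathcal O_{\C,p}$ as a complete intersection --- everything downstream, the local-complete-intersection hypothesis of \autoref{turina} and the agreement of $\tau,\sigma$ with the formulas in (a), rests on this analytic identification.
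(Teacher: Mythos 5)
Your overall architecture coincides with the paper's: parts (a) and (c) are reduced to the analytic identification in (b), with $\tau$ and $\sigma$ coming from \autoref{genus}, $p_g$ from \autoref{genus2}, and the lower bound in (c) from \autoref{turina}(b) together with the identity $2p_a-\tau=(r-1)(d-1)$. Your weighted-homogeneity verification that the displayed operator lies in $\Der_k(R)$ is correct (and cleaner than the paper's terse check), and computing $d$ and $p_a$ from the graded Eagon--Northcott resolution is a legitimate alternative to the paper's route, which cites \cite{HTU} for the degree and computes $p_a$ by passing to an auxiliary rational curve with the same Hilbert function. But the proposal has a genuine gap exactly where you flag it: part (b) is asserted rather than proved, and everything else rests on it.

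Two concrete problems. First, your singular-locus analysis only treats points where the first row of \autoref{scrollB} is nonzero; the point $(0:\cdots:0:1)$, where that row vanishes identically, does lie on $\C$ (there every column except the last vanishes, so the matrix has rank one), so your case analysis is incomplete --- this point happens to be smooth, but nothing in your argument shows it --- and smoothness at the points with $\mu\neq 0$ is likewise only asserted, whereas the paper spends most of its proof of (b) on a careful prime-ideal argument for exactly this. Second, and more seriously, your identification of $\widehat{\mathcal O_{\C,p}}$ is circular: computing ``orders $\nu_i$ in the uniformizer'' presupposes that $\mathcal O_{\C,p}$ is unibranch, i.e.\ that its completion is a domain carrying a valuation, which is precisely the point at issue; and ``these $\nu_i$ generate the value semigroup'' is the desired conclusion, not an argument. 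The paper closes this hole as follows: the unit factorization $1+z_n^r-z_{n-1}z_2^r=(z_n-\rho)v-z_2^rz_{n-1}$ shows the maximal ideal of $\mathcal O_{\C,p}$ is generated by $z_2,\ldots,z_{n-1}$, so the Cohen structure theorem gives a surjection $\varphi\colon B:=k\llbracket z_2,\ldots,z_{n-1}\rrbracket/(z_i^r-z_2^rz_{i-1}:3\le i\le n-1) \twoheadrightarrow \widehat{\mathcal O_{\C,p}}$; the binomials die under $z_i\mapsto t^{\nu_i}$, giving a second surjection $\psi\colon B\twoheadrightarrow C:=k\llbracket t^{\nu_2},\ldots,t^{\nu_{n-1}}\rrbracket$; since $B/z_2B$ is Artinian of length $r^{n-3}$, the ring $B$ is a one-dimensional complete intersection with $e(B)\le r^{n-3}=e(C)$, and a surjection of one-dimensional Cohen--Macaulay rings with $e(B)\le e(C)$ is forced to be an isomorphism; hence $B\cong C$ is a domain and $\varphi$ is then an isomorphism as well. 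Some argument of this kind (equivalently, a proof that the binomial ideal is prime with the stated semigroup) is indispensable: without it you have neither the local complete intersection hypothesis needed to invoke \autoref{turina}(b), nor the values of $\tau$, $\sigma$, $p_a$, nor the fact that $R$ is a domain --- which the paper deduces from Cohen--Macaulayness plus ``locally a domain by (b),'' not from your sketched ``single irreducible family'' claim.
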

\begin{proof} The formula for the degree of $\C$ follows by applying \cite{HTU} to a regular sequence of $n$ forms of degree $r$. 

We begin by proving part (b). Observe that $y_1, y_n$ is a regular sequence on $R.$ To show the claim about the singular locus, let $\p\in V(J_R)$ with $\dim R/\p=1.$ 
We claim that 
\begin{equation}\label{claimex}
(y_2, \ldots, y_{n-1}, y_1^r+y_n^r)\subset \p.
\end{equation}

To this end we first prove that $y_1\not\in \p.$ Suppose $y_1\in \p.$ Modulo $y_1R,$ the $(n-2) \times (n-2)$ 
subblock of the Jacobian matrix over $R$ that uses the partial derivatives with respect to $x_1, \ldots, x_{n-2}$ and 
the minors of  \autoref{scrollB} involving the last column
is an upper triangular matrix with $y_n^r$ along the diagonal. Hence $y_n\in \p.$ Now we see from \autoref{scrollB} that $(y_2, \ldots, y_{n-1})$ is also in $\p.$ Thus $(y_1, \ldots, y_n)\in \p$, which is impossible because $\dim R/\p=1.$

Next we show that $y_n\not\in \p.$ One easily sees that 
$$ R_{y_1}/(y_n) \cong k[x_1, x_1^{-1}, x_{n-1}]/(x_{n-1}^d-x_1^d),
$$
which is reduced because the characteristic is zero. Now suppose that $y_n \in \p.$ Since both ideals $(y_n)\subset \p$ have height one and $y_n R_{\p}$ is radical, it follows that $\p R_{\p}=(y_n)R_{\p}.$ So $R_{\p}$ is a DVR, which is impossible since $\p\in V(J_R).$

Now, the $(n-2) \times (n-2)$ subblock of the Jacobian matrix over $R$ that uses the partial derivatives with respect to $x_3, \ldots, x_n$ and 
the minors of  \autoref{scrollB} involving the first column turns out to be a lower triangular matrix with diagonal entries $ry_1y_i^{r-1}$, where 
$3 \leq i \leq n$. It follows that $y_1y_3 \cdots y_n \in \p$. Hence, as both $y_1$ and $y_n$ are not in $\p$, we obtain that $y_i\in \p$ for some $i$ with $3 \leq i \leq n-1$. Reducing modulo the ideal $(y_i)$, one sees that the maximal minors of the matrix

$$\left(
  \begin{matrix}
  y_1 & y_{2} & \dots & \dots &y_i  \\
 y_{2}^r &  \dots & \dots & y_i^r & 0 
  \end{matrix}\
  \left| \ 
  \begin{matrix}
  0 & y_{i+1} & \ldots & \dots & y_{n-1} \\
 y_{i+1}^r &  \dots & \dots & y_{n-1}^r & y_1^r+y_{n}^r
  \end{matrix}
  \right.
\right)
$$

\vskip .1in

\noindent
are in $\p$. Therefore $(y_2, \ldots, y_{n-1})\in \p$ and $ y_1(y_1^r+y_{n}^r)\in \p$. As $y_1 \not\in \p$, it follows that $$(y_2, \ldots, y_{n-1}, y_1^r+y_n^r)\subset \p\, ,$$ as asserted.

Recall that $y_1 \not\in \p$. Claim \autoref{claimex} gives the containment  $\Sing(\C)\subset \{ (1: 0: \ldots : 0 : \rho_i) \ | \ \rho_i^r=-1 \}$. To prove equality and the remaining assertion of part (b) it suffices to show that for every point  $p=(1: 0: \ldots : 0 : \rho)$ with $\rho^r=-1$ one has $\widehat{\mathcal O_{\C, p}} \cong k\llbracket  \, t^{r^{n-3}}, t^{r^{n-3}+r^{n-4}}, \ldots, t^{r^{n-3}+\, \ldots\, +1} \, \rrbracket\,.$ Writing $z_i=\frac{x_i}{x_1}$ for $2\le i \le n$ we obtain $\mathcal O_{\C, p}=k[z_2, \ldots, z_n]_{(z_2, \ldots, z_{n-1}, z_n-\rho)}/H$, where $H$ is generated by the maximal minors of the matrix
$$\left(
  \begin{matrix}
  1 & z_2 & \ldots & \dots & z_{n-1} \\
z_2^r &  \dots & \dots & z_{n-1}^r & z_n^r+1
  \end{matrix}
\right)\, .
$$
The ideal $H$ contains the element $z_n^r+1-z_2^rz_{n-1}=(z_n-\rho)v-z_2^rz_{n-1},$ where $v$ is a unit, which shows that the maximal ideal of $\mathcal O_{\C, p}$ is generated by the images of $z_2, \ldots, z_{n-1}$. Now the Cohen structure theorem gives the natural surjection 
$$
\begin{tikzcd}  \varphi: B:=k\llbracket z_2, \ldots, z_{n-1}
\, \rrbracket/ K \arrow[twoheadrightarrow]{r}  & \widehat{\mathcal O_{\C, p}}\, ,
 \end{tikzcd} 
$$
where $K$ is the ideal generated by the maximal minors of the matrix
$$\left(
  \begin{matrix}
  1 & z_2 & \ldots & \dots & z_{n-2} \\
z_2^r &  \dots & \dots & z_{n-3}^r & z_{n-1}^r
  \end{matrix}
\right)\, .
$$

On the other hand, there is a natural surjection 
$$
\begin{tikzcd} \psi: B\arrow[twoheadrightarrow]{r}  &  C:=k\llbracket  \, t^{r^{n-3}}, t^{r^{n-3}+r^{n-4}}, \ldots, t^{r^{n-3}+\, \ldots\, +1} \, \rrbracket \, . 
\end{tikzcd} 
$$
The ideal $K$ is generated by the $n-3$ elements $z_i^r-z_2^rz_{i-1}$ for $3\le i\le{n-1}$. It follows that $z_2$ is a non zerodivisor on $B$ and that $B/z_2 B$ has multiplicity $r^{n-3}$. Therefore $e(B)\le r^{n-3}=e(C)$. As $B$ and $C$ are Cohen-Macaulay rings of the same dimension, $\psi$ is an isomorphism. In particular, $B$ is a domain, which then shows that $\varphi$ is an isomorphism. This completes the proof of part (b). 

\vskip .1in

We now prove part (a). According to part (b) the total Tjurina number  and the singularity degree of $\C$ are 
 $$\tau=r \cdot \tau(B)\quad \mbox{and} \quad \sigma(\C)=r \cdot \sigma(B)\, ,$$ where $B$ is the ring defined in the proof of part (b). Write $A=k[z_2, \ldots, z_{n-1}]/K$
 where $K$ is the ideal generated by the maximal minors of the matrix
$$\left(
  \begin{matrix}
  1 & z_2 & \ldots & \dots & z_{n-2} \\
z_2^r &  \dots & \dots & z_{n-3}^r & z_{n-1}^r
  \end{matrix}
\right)\, .
$$
Giving the variables $z_i$ degree $\deg z_i:=r^{n-3}+\ldots+r^{n-i-1}\, ,$ the ideal $K$ is generated by the homogenous regular sequence $z_i^r-z_2^rz_{i-1},$ where $3\le i\le n-1.$  In particular 
$$a(A)=\sum_{i=3}^{n-1} r\cdot \deg z_i -\sum_{i=2}^{n-2}\deg z_i=(n-3)r^{n-2}-r^{n-3}-\ldots-1\, .
$$
 To compute $\tau(A)$ and $\sigma(A)$ we apply \autoref{genus}. Since $\gcd(\deg z_2, \ldots, \deg z_{n-1})=1$, it follows that $$\tau(A)=2 \cdot \sigma(A)=a(A)+1=r((n-3)r^{n-3}-r^{n-4}-\ldots-1)\, .$$ Since $B=\widehat{A}\, ,$ the asserted equality for the total Tjurina number and the singularity degree of $\C$ now follow. 

To compute the arithmetic genus of $\C,$ we pass to a rational curve  $\C'\subset \mathbb P_k^{n-1}$ with homogenous coordinate ring $R',$ so that $R$ and $R'$  have the same Hilbert function. We take $\C'$ to be the curve defined by the maximal minors of the matrix
\begin{equation}\label{rationalC}\left(
  \begin{matrix}
  x_1 & x_{2} & \dots & \dots & x_{n-1} \\
 x_{2}^r &  \dots & \dots & x_{n-1}^r & x_{n}^r
  \end{matrix}
  \right)\, .
\end{equation}
Clearly $R$ and $R'$ have the same Hilbert function.

We claim that $\C'$ is parametrized by the map $F: \mathbb P^1 \lto  \mathbb P_k^{n-1}$, where 
$$F=(s^{r^{n-2}+\ldots +r+1}\, :\,  t^{r^{n-2}}s^{r^{n-3}+\ldots +r+1} \, :\,  \ldots \ldots \, : \, t^{r^{n-2}+\ldots+r}s \, :\,  t^{r^{n-2}+\ldots +r+1})\, .
$$
Let $ C:=k[    s^{r^{n-2}+\ldots +r+1}\, , \ldots, \,  t^{r^{n-2}+\ldots+r}s ,  t^{r^{n-2}+\ldots +r+1} ] $ be the homogenous coordinate ring of the image of $F$. Since $\im F$ is a monomial curve, it is covered by two affine charts obtained by setting $t=1$ or $s=1$, respectively. If we set $t=1$, then the affine coordinate ring is the polynomial ring $k[s],$ which shows that this chart is smooth and $F$ is birational onto its image (for the latter see also \cite{KPU16}*{4.6(3)}). The other affine chart has at most one singular point, namely $(1,0,\ldots, 0)$. Since the map $F$ is birational onto its image, it follows that $\deg \im F=r^{n-2}+\ldots +r+1=d\, .$ As the multiplicity of $R'$ is also $d$, the  natural surjection 
$$
\begin{tikzcd} \phi: R'\arrow[twoheadrightarrow]{r}  & C\,  
\end{tikzcd} 
$$
shows that $\phi$ is an isomorphism. Thus $\C'$ is a rational curve with at most one singular point, namely, $p=(1,0, \ldots,  0)\, .$ 

Now 
\begin{equation}\label{argenus}p_a(\C)=p_a(\C')=p_a(\C')-p_g(\C')=\sigma(\mathcal C')=\sigma(\mathcal O_{\mathcal C', p})\, ,
\end{equation}
where the first equality obtains because $R$ and $R'$ have the same Hilbert function, the second equality holds because $\C'$ is rational, and the third equality follows from \autoref{genus2} since $\C'$ is irreducible.

To compute $\sigma(\mathcal O_{\mathcal C', p})$, we let $A$ be the coordinate ring of the affine chart obtained by setting $s=1$ and we write
$z_i=\frac{x_i}{x_1}$ for $2\le i \le n$. Notice that $A:=k[z_2, \ldots, z_n]/H$, where $H$ is generated by the maximal minors of the matrix
$$\left(
  \begin{matrix}
  1 & z_2 & \ldots & \dots & z_{n-1} \\
z_2^r &  \dots & \dots & z_{n-1}^r & z_n^r
  \end{matrix}
\right)\, .
$$
Giving the variables $z_i$ degree $\deg z_i=t^{r^{n-2}+\ldots+r^{n-i}}\, ,$ the ideal $H$ is generated by the homogenous regular sequence $z_i^r-z_2^rz_{i-1},$ where $3\le i\le n.$  In particular 
$$a(A)=\sum_{i=3}^{n} r\cdot \deg z_i -\sum_{i=2}^{n}\deg z_i=(n-2)r^{n-1}-d\, .
$$
Thus by \autoref{genus}, $\sigma(A)=\frac{a(A)+1}{2}=\frac{1}{2}((n-2)\, r^{n-1}-d+1)\,.$ On the other hand $\mathcal O_{\C', p}=A_{(z_2, \ldots, z_n)A}\, .$  Now \autoref{argenus} gives the asserted equality for $p_a(\C).$

The assertion about $p_g$ follows from \autoref{genus2} and the formulas for $p_a$ and $\sigma\, .$

\vskip .1in 
We now prove part (c). To see that the vector
$$\zeta=\sum_{i=2}^{n-1} (r^{n-2}+\ldots+r^{n-i})\, y_iy_n^{r-1} \frac{\partial}{\partial x_i} \, +d \, (y_1^r+y_n^r)\frac{\partial}{\partial x_n} \in \,  \bigoplus_{i=1}^{n} \, R \frac{\partial}{\partial x_i} 
$$
belongs to $\Der_k(R)$ one has to check that $\zeta$ is in the null space of the Jacobian matrix over the ring $R$. To show that $\zeta$ annihilates the row corresponding to the $ij$ minor of \autoref{scrollB} one uses that the same minor is zero in $R$. Also notice that $\zeta$ is not a multiple of the Euler derivation, hence its image in $\Der_k(R)/R\varepsilon$ is non zero. This element is homogenous of degree $r-1\, , $ and therefore $\indeg (\Der_k(R)/R\varepsilon)\le r-1\,.$

On the other hand, $R$ is a domain because $R$ is Cohen-Macaulay and locally a domain by part (b). Therefore $\indeg (\Der_k(R)/R\varepsilon)=\findeg (\Der_k(R)/R\varepsilon)$. 
According to \autoref{turina}(b) 
$$\findeg (\Der_k(R)/R\varepsilon)\ge \frac{2 p_a -\tau}{d-1}=r-1\, ,$$ where the last equality holds by part (a). Thus $\indeg (\Der_k(R)/R\varepsilon)=\findeg (\Der_k(R)/R\varepsilon)=r-1\, $ and the image of $\zeta$ is a minimal generator of $\Der_k(R)/R\varepsilon$.
\end{proof}

\medskip
\section{Upper bounds}\label{SecUB}

Recall that a Cohen-Macaulay  positively graded algebra $R$ over a field
 is called {\it nearly Gorenstein} if the
homogenous maximal ideal $\m$ of $R$ is contained in the trace of $\omega_R$,  the image of the  natural map $\omega_R^* \otimes \omega_R \lto R$. Clearly every  Gorenstein ring is nearly Gorenstein. 

\begin{theorem}\label{UB} Let $k$ be a perfect field and $\C\subset \mathbb P_k^{n-1}$ be a reduced curve of degree $d$ that is arithmetically Cohen-Macaulay. Let $R$ be the homogenous coordinate ring of $\C$ with maximal homogeneous ideal $\m$. One has
\begin{enumerate}[$(a)$]
\item $\indeg (\Der_k(R)/R\varepsilon) \le \max\{ \indeg \, \omega_R^*, \, a(R)+1\};$
\item $\findeg (\Der_k(R)/R\varepsilon) \le \max\{ \findeg \, \omega_R^*, \, a(R)+1\}; $
\item if $\C$ is smooth and  $d$ is not a multiple of the characteristic, then
$$\indeg (\Der_k(R)/R\varepsilon) =\findeg (\Der_k(R)/R\varepsilon) = \max\{ \indeg \, \omega_R^*, \, a(R)+1\}\, ; $$
\item if $R$ is nearly Gorenstein or, more generally,  the trace of $\omega_R$ is not contained in $\m^2$,  then 
$$\indeg (\Der_k(R)/R\varepsilon) \le a(R)+1 \, ;$$ 
\item if $R$ is  Gorenstein, then 
$$\findeg (\Der_k(R)/R\varepsilon) \le a(R)+1 \, .$$ 
\end{enumerate}

\end{theorem}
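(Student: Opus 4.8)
The plan is to reduce all five parts to a single exact sequence together with the torsion-freeness of the anticanonical module. First I would record the basic diagram for an arithmetically Cohen--Macaulay curve, where $\depth R=2$ and hence $\m^{-1}=R$. By \autoref{1.2} the embedding $L^*\hookrightarrow H^*$ is an isomorphism and there is an exact sequence
$$0\lto \Der_k(R)/R\varepsilon \lto L^* \lto \Ext^2_R(k,R),$$
while \autoref{3iso}(b) supplies a homogeneous embedding $\omega_R^*\hookrightarrow H^*\cong L^*$. As in the proof of \autoref{anticanonicalM}, the module $\Ext^2_R(k,R)$ is concentrated in degrees $\le a(R)$; therefore $[\Der_k(R)/R\varepsilon]_j=[L^*]_j\supseteq [\omega_R^*]_j$ for every $j>a(R)$. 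The second ingredient is that $\omega_R^*$, being a dual module over the reduced ring $R$, is torsionfree; after passing to an infinite (perfect) field it carries a linear nonzerodivisor $x$, and multiplication by $x$ embeds $[\omega_R^*]_j$ into $[\omega_R^*]_{j+1}$. Hence $\omega_R^*$ has no gaps: $[\omega_R^*]_j\ne 0$ for all $j\ge \indeg\omega_R^*$.

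With this in hand, part (a) is immediate: setting $c=\max\{\indeg\omega_R^*,\, a(R)+1\}$ we have $c>a(R)$ and $c\ge \indeg\omega_R^*$, so $[\omega_R^*]_c\ne 0$ and thus $[\Der_k(R)/R\varepsilon]_c=[L^*]_c\supseteq[\omega_R^*]_c\ne 0$, giving $\indeg(\Der_k(R)/R\varepsilon)\le c$. For part (b) I would run the same argument with faithful elements: assuming $\findeg\omega_R^*<\infty$ (otherwise the bound is vacuous), choose a homogeneous faithful element $w\in[\omega_R^*]_{\findeg\omega_R^*}$ and multiply by a suitable power of $x$ to produce a faithful homogeneous element of $\omega_R^*$ in degree $c'=\max\{\findeg\omega_R^*,\,a(R)+1\}$; faithfulness is preserved because $x$ is a nonzerodivisor. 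Since $c'>a(R)$, this element lies in $[L^*]_{c'}=[\Der_k(R)/R\varepsilon]_{c'}$ and is still faithful there (the annihilator is unchanged along the embedding $\omega_R^*\hookrightarrow L^*$), so $\findeg(\Der_k(R)/R\varepsilon)\le c'$.

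The remaining parts are corollaries. For part (c), a smooth arithmetically Cohen--Macaulay curve is connected, so $R$ is a domain and $\indeg=\findeg$ throughout; moreover \autoref{3iso}(b) gives $L^*\cong\omega_R^*$, whence $\Der_k(R)/R\varepsilon\hookrightarrow\omega_R^*$ forces $\indeg(\Der_k(R)/R\varepsilon)\ge\indeg\omega_R^*$, and \autoref{Smooth} gives $\indeg(\Der_k(R)/R\varepsilon)\ge a(R)+1$. Combining these two lower bounds with part (a) yields the asserted equalities. For part (d) it suffices by part (a) to show $\indeg\omega_R^*\le a(R)+1$; the hypothesis $\tr(\omega_R)\not\subseteq\m^2$ produces a homogeneous $\phi\in\omega_R^*$ whose image is not contained in $\m^2$, that is $\phi(w)\ne 0$ with $\deg\phi(w)\le 1$ for some homogeneous $w$ of degree $\ge\indeg\omega_R=-a(R)$, so $\deg\phi\le a(R)+1$ and $\indeg\omega_R^*\le a(R)+1$; a nearly Gorenstein ring satisfies $\m\subseteq\tr(\omega_R)$ and hence this hypothesis. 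For part (e), $R$ Gorenstein gives $\omega_R\cong R(a(R))$ and $\omega_R^*\cong R(-a(R))$, so the unit $1$ furnishes a faithful element in degree $a(R)$ and $\findeg\omega_R^*=a(R)$; part (b) then gives $\findeg(\Der_k(R)/R\varepsilon)\le a(R)+1$.

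The main obstacle is conceptual rather than computational: the key is to recognize that $\Der_k(R)/R\varepsilon$ agrees with the torsionfree anticanonical module $\omega_R^*$ in all degrees above $a(R)$, which turns each bound into a question about the single degree $a(R)+1$ (or $\indeg\omega_R^*$) and lets the no-gap property of torsionfree modules finish the job. Some care is then needed with the reduction to an infinite field—used both for the linear nonzerodivisor and for the socle description of $\Ext^2_R(k,R)$—and with verifying that faithfulness is inherited along the embedding $\omega_R^*\hookrightarrow L^*$, but these are routine once the framework is in place.
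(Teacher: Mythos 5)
Your proposal is correct and follows essentially the same route as the paper's proof: the exact sequence from \autoref{1.2}, the bound $\Ext^2_R(k,R)$ concentrated in degrees $\le a(R)$, the embedding $\omega_R^*\hookrightarrow H^*\cong L^*$ from \autoref{3iso}(b), a no-gap/degree-climbing argument (the paper runs it on $H^*$ via $\depth H^*>0$, you run it on $\omega_R^*$ via torsionfreeness and a linear nonzerodivisor), and the same deductions for parts (c)--(e), with your (c) merely replacing the paper's case split through \autoref{anticanonicalM} by combining the two lower bounds $\indeg\omega_R^*$ and $a(R)+1$ with part (a).
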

\begin{proof}  We prove parts (a), (b), and (c). Since $\depth R \ge 2$, \autoref{1.2} gives an exact sequence 
\[0\lto \Der_k(R)/R \varepsilon \lto H^*\lto \Ext^2(k,R)\, .\]
This sequence shows, in particular, that $\indeg ( \Der_k(R)/R \varepsilon)  \ge \indeg H^*$. Recall from the proof of \autoref{anticanonicalM} that $\Ext^2_R(k,R)$ is concentrated in degrees at most $a(R). $  Since $\depth H^{*} >0,$ we conclude that $$\indeg ( \Der_k(R)/R \varepsilon) \le \max\{ \indeg H^*, \, a(R)+1\}\, , $$ and likewise for the faithful initial degree.

Now parts (a) and (b) follow because 
\[H^* \hookleftarrow \omega_R^*
\]
by \autoref{3iso}(b). 

For part (c) we first notice that $\indeg (\Der_k(R)/R\varepsilon) =\findeg (\Der_k(R)/R\varepsilon)$ because $R$ is a domain and $\Der_k R/R \varepsilon$ is torsionfree by \autoref{1.2}. Furthermore $\indeg (\Der_k(R)/R\varepsilon)\ge a(R)+1$ by  \autoref{Smooth}. Thus we are done if $\max\{ \findeg \, \omega_R^*, \, a(R)+1\}=a(R)+1$. Otherwise, $\Der_k(R)/R\varepsilon \cong  \omega_R^*$ by \autoref{anticanonicalM}, and the assertion follows again.

(d) Assume that  the trace of $\omega_R$ is not contained in $\m^2$, and let $x\not=0$ be a linear form in  the trace of $\omega_R$. There exist homogenous non-zero elements $\varphi_i \in \omega_R^*$ and $w_i\in \omega_R$ such that
$x=\sum \varphi_i (w_i)$
and $\varphi_i(w_i)$ are linear forms. As $\deg w_i\ge \indeg \omega_R=-a(R)$, it follows that $\deg \varphi_i \le a(R)+1$. Thus $\indeg \omega_R^* \le a(R)+1$. Now the assertion follows from (a). 

(e) Since $R$ is Gorenstein, we have $\omega_R^*\cong R(-a(R))$. Thus $\findeg \omega_R^* =a(R)$ and the assertion follows part (b). 
\end{proof}

\begin{corollary}\label{Equality} Let $k$ be an algebraically closed field and $\C\subset \mathbb P_k^{n-1}$ be an irreducible curve of degree $d$. Let $R$ be the homogenous coordinate ring of $\C$.  Assume $d$ is not a multiple of the characteristic.  If $\C$  is arithmetically nearly Gorenstein and has at most ordinary nodes as singularities, then
$$\findeg (\Der_k(R)/R\varepsilon) = a(R)+1 \, .$$ 
\end{corollary}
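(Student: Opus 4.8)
The plan is to trap $\findeg(\Der_k(R)/R\varepsilon)$ between $a(R)+1$ from above and $a(R)+1$ from below. First I would record the standing reductions. Since $\C$ is irreducible over the algebraically closed field $k$, the ring $R$ is a two-dimensional domain; and since $R$ is nearly Gorenstein it is in particular Cohen--Macaulay, so $\depth R=2$ and therefore $\m^{-1}=R$, which identifies $\Der_k(R)/\m^{-1}\varepsilon$ with $\Der_k(R)/R\varepsilon$. As this module is torsionfree by \autoref{1.2} and $R$ is a domain, its faithful initial degree equals its initial degree, so it is enough to control $\indeg$. The upper bound is then immediate from \autoref{UB}(d): $R$ is arithmetically Cohen--Macaulay and nearly Gorenstein, whence $\indeg(\Der_k(R)/R\varepsilon)\le a(R)+1$.

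For the lower bound I would descend to a plane curve, mirroring the proof of \autoref{curves}. Choosing general linear forms $x,y,z\in R_1$ and setting $A=k[x,y,z]\subseteq R$, \autoref{UV} makes $A\subseteq R$ a finite birational homogeneous extension, with $A$ the homogeneous coordinate ring of the general projection $\D\subset\mathbb P^2_k$ of $\C$. The decisive point is that a general projection of an \emph{irreducible} curve is again irreducible and acquires only ordinary nodes as new singularities; hence $\D$ is an irreducible plane curve of the same degree $d$ (still coprime to the characteristic) with at most ordinary nodes, and I may invoke the \emph{irreducible} case of \autoref{PC} to get $\indeg(\Der_k(A)/A\varepsilon)\ge a(A)+1+|\Sing(\D)|-\Lmult(A/J_A)$, using $a(A)=e(A)-3$. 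Feeding this into \autoref{GP} and using the identity $|\Sing(\D)|-\Lmult(A/J_A)=|\Sing(\C)|-\Lmult(R/J_R)$ established in the proof of \autoref{curves}, I obtain $\findeg(\Der_k(R)/\m^{-1}\varepsilon)\ge a(R)+1+|\Sing(\C)|-\Lmult(R/J_R)$. Because $\C$ has only ordinary nodes, each localization of $R/J_R$ at a singular point has Loewy length one and the point has degree one, so $\Lmult(R/J_R)=|\Sing(\C)|$ and the lower bound collapses to $a(R)+1$.

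The step I expect to be the real obstacle is precisely this lower bound. The clean statement of \autoref{curvesnodes} only yields $\ge a(R)$ for a nodal curve that is irreducible but not locally irreducible --- and an irreducible nodal curve is never locally irreducible, since the local ring at a node is not a domain --- so the extra $+1$ is not available off the shelf and must instead be recovered through the general projection by appealing to the irreducible (rather than the locally irreducible) case inside \autoref{PC}. Once that is in place, combining it with the upper bound gives $a(R)+1\le\findeg(\Der_k(R)/R\varepsilon)\le a(R)+1$, which is the asserted equality.
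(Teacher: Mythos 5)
Your proof is correct, and in substance it is the paper's proof: the upper bound comes, exactly as in the paper, from \autoref{UB}(d) together with the remark that $\findeg=\indeg$ for a torsionfree module over a domain, while your lower bound argument reproduces, step for step, the irreducible case inside the paper's proof of \autoref{curves} (general projection via \autoref{UV}, the irreducible case of \autoref{PC}, the first inequality of \autoref{GP}, and the identity $|\Sing(\D)|-\Lmult(A/J_A)=|\Sing(\C)|-\Lmult(R/J_R)$).

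However, the ``obstacle'' motivating your detour is not real, and the claim you make there is false. In this paper ``locally irreducible'' means that the Zariski local ring at every point of $\C$ is a domain, equivalently that $\C$ is the disjoint union of its irreducible components. An irreducible reduced curve is an integral scheme, so each of its local rings is a localization of a domain, hence a domain --- including the local ring at an ordinary node. What fails to be a domain at a node is the \emph{completed} local ring $k\llbracket x,y\rrbracket/(xy)$; you have conflated Zariski-local irreducibility with analytic irreducibility (unibranchedness). Thus an irreducible curve is automatically locally irreducible in the paper's sense, and a curve that is ``irreducible but not locally irreducible'' does not exist. Indeed, the proof of \autoref{curves} explicitly reduces the locally irreducible case to the irreducible case, which only makes sense under this reading; under your reading, the locally irreducible alternative in \autoref{curvesnodes} would be vacuous for every singular curve. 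Consequently \autoref{curvesnodes} does hand you $\findeg(\Der_k(R)/\m^{-1}\varepsilon)\ge a(R)+1$ off the shelf, and the paper's proof consists of just the two citations \autoref{curvesnodes} and \autoref{UB}. Your longer route is valid, but it re-proves a result you could simply have quoted.
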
 
\begin{proof} This follows from \autoref{curvesnodes} and \autoref{UB}. 
\end{proof}

\begin{theorem} Let $k$ be a perfect field and $\C \subset \mathbb P_k^{n-1}$ be a reduced curve of degree $d$. Let $R$ be the homogenous coordinate ring of $\C$ with maximal homogeneous ideal $\m$. Then
\begin{enumerate}[$(a)$]
\item $\indeg (\Der_k(R)/\m^{-1} \varepsilon) \le 2d -5-a(R)\, ;$
\item
$\findeg (\Der_k(R)/R \varepsilon) \le d-2$  if  ${\rm char} \, k =0$  and  $\mathcal C$  is smooth and arithmetically 
Cohen-Macaulay.
\end{enumerate}
\end{theorem}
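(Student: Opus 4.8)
The plan is to treat the two parts by different routes: part (a) via a general projection to a plane curve together with \autoref{GP}, and part (b) via the exact value computed in \autoref{UB}(c). In both cases the engine is a clean upper bound for the faithful initial degree of a \emph{plane} curve.

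First I would record, for a reduced plane curve $A=k[x,y,z]/(g)$ of degree $d$, the estimate $\findeg(\Der_k(A)/A\varepsilon_A)\le d-2$. Since $A$ is Cohen--Macaulay of dimension two we have $\m^{-1}=A$, and \autoref{1.2} gives an embedding $\Der_k(A)/A\varepsilon_A\hookrightarrow H^{*}$ whose cokernel embeds into $\Ext^2_A(k,A)$, a module concentrated in degrees $\le a(A)=d-3$. By \autoref{3iso}(b) (here $n=3$, $\delta=d-1$, $J=J_A$) we have $H^{*}\cong J_A^{-1}(3-d)$, so the degree $(d-2)$ component of $H^{*}$ is $(J_A^{-1})_1\supseteq A_1$. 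Choosing coordinates so that the linear form $x$ is a nonzerodivisor on the reduced ring $A$ (no component of the curve is the line $\{x=0\}$), the element $x\in (J_A^{-1})_1\subseteq Q(A)$ has zero annihilator; as $d-2>a(A)$ it already lies in $\Der_k(A)/A\varepsilon_A$, and it is a faithful element of degree $d-2$, proving the claim. Now, for a reduced curve $\C\subset\mathbb P^{n-1}_k$, let $A=k[x,y,z]\subseteq R$ be generated by three general linear forms. By \autoref{UV} the extension $A\subseteq R$ is finite and birational, and $A$ is the coordinate ring of the birational plane projection $\D$ of $\C$, a hypersurface ring, hence Gorenstein with $a(A)=d-3$. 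The second (unconditional) inequality of \autoref{GP} together with the plane-curve bound then gives
\[
\indeg(\Der_k(R)/\m^{-1}\varepsilon)\le \findeg(\Der_k(A)/A\varepsilon_A)+a(A)-a(R)\le (d-2)+(d-3)-a(R),
\]
which is exactly part (a).

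For part (b), since $k$ has characteristic zero the degree $d$ is not a multiple of the characteristic, so \autoref{UB}(c) applies and yields
\[
\findeg(\Der_k(R)/R\varepsilon)=\max\{\indeg\omega_R^{*},\ a(R)+1\}.
\]
It then suffices to bound both terms by $d-2$. For the term $a(R)+1$ I would pass to a general Artinian reduction $B=R/(\ell_1,\ell_2)$: here $\dim_k B=e(R)=d$, the top nonzero degree of $B$ equals $a(B)=a(R)+2$, and because $B$ is standard graded every $B_i$ with $0\le i\le a(R)+2$ is nonzero, so $d=\dim_k B\ge a(R)+3$, giving $a(R)\le d-3$. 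For the term $\indeg\omega_R^{*}$, note that a smooth arithmetically Cohen--Macaulay curve is connected, hence irreducible, so $R$ is a normal domain and $\omega_R^{*}$ (being a reflexive module of depth two) coincides with the anticanonical module $\bigoplus_m H^0(\C,\omega_{\C}^{-1}(m))$; thus $\indeg\omega_R^{*}=\min\{m:H^0(\omega_{\C}^{-1}(m))\ne 0\}$. When $\D=\C$ is a plane curve, $\omega_{\C}^{-1}((d-2)H)\cong\OO_{\C}(1)$ has sections, so $\indeg\omega_R^{*}\le d-2$; when $\C$ is nondegenerate in $\mathbb P^{n-1}_k$ with $n\ge 4$, Castelnuovo's genus bound gives $g\le (d^2-2d+3)/4$, whence $\deg(\omega_{\C}^{-1}((d-2)H))=(d-2)d-(2g-2)\ge 2g-1$ and Riemann--Roch produces a nonzero section, again yielding $\indeg\omega_R^{*}\le d-2$.

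The hard part will be the bound $\indeg\omega_R^{*}\le d-2$ in part (b): the inequality $a(R)+1\le d-2$ and all of part (a) are essentially formal once the plane-curve estimate is available, but controlling the initial degree of the anticanonical module requires genuine input. Concretely, one must either supply the genus estimate above, or, through the algebraic isomorphism $\omega_R^{*}\cong J^{-1}(2-\delta)$ of \autoref{3iso}(b), exhibit a choice of defining equations $f_1,\dots,f_{n-2}$ with $\delta=\sum_j(\delta_j-1)\le d$ (which forces $\indeg\omega_R^{*}\le \delta-2\le d-2$). Showing that such an economical choice always exists for an arithmetically Cohen--Macaulay curve — equivalently, that the divisor $(d-2)H-K$ is effective — is where the real work lies, and I would try to make this step purely algebraic so as to match the rest of the paper.
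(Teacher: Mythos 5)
Your proposal is correct. For part (a) you follow essentially the paper's own route: three general linear forms give a finite birational extension $A\subset R$ with $A$ the coordinate ring of a reduced plane curve of degree $d$ (\autoref{UV}), the Gorenstein bound $\findeg(\Der_k(A)/A\varepsilon_A)\le a(A)+1=d-2$ holds, and the unconditional second inequality of \autoref{GP} transfers it to $R$, giving $2d-5-a(R)$. The only difference is cosmetic: you re-derive the plane-curve bound from \autoref{1.2} and \autoref{3iso}(b) (a linear nonzerodivisor in $[J_A^{-1}]_1$ is a faithful element of $[H^*]_{d-2}$, and it lifts to $\Der_k(A)/A\varepsilon_A$ because the cokernel of the embedding into $H^*\cong J_A^{-1}(3-d)$ is concentrated in degrees $\le a(A)=d-3$), whereas the paper simply quotes \autoref{UB}(d)/(e) for the hypersurface ring $A$.

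Part (b) is where you take a genuinely different route. The paper stays with the projection: in characteristic zero $R$ is the integral closure of $A$, so by Seidenberg's theorem every derivation of $A$ extends to $R$; this upgrades the diagram \autoref{Dsed} in the proof of \autoref{GP} to an embedding $\Der_k(A)/A\varepsilon_A\hookrightarrow \Der_k(R)/R\varepsilon$, whence $\findeg(\Der_k(R)/R\varepsilon)\le \findeg(\Der_k(A)/A\varepsilon_A)\le d-2$, with no loss of the correction term $a(A)-a(R)$. You instead invoke the exact formula of \autoref{UB}(c), $\findeg(\Der_k(R)/R\varepsilon)=\max\{\indeg\omega_R^*,\,a(R)+1\}$, and bound the two terms separately: $a(R)\le d-3$ by the Artinian-reduction count (correct: $\dim_k B=e(R)=d$ while $B_i\neq 0$ for all $0\le i\le a(R)+2$), and $\indeg\omega_R^*\le d-2$ by identifying $\omega_R^*$ with $\bigoplus_m H^0(\C,\omega_\C^{-1}(m))$ — legitimate, since $\depth\,\omega_R^*\ge 2$ and $\widetilde{\omega_R}\cong\omega_\C$ for the normal arithmetically Cohen--Macaulay ring $R$ — and then producing an anticanonical section via Riemann--Roch once Castelnuovo's genus bound guarantees $\deg\bigl(\omega_\C^{-1}(d-2)\bigr)\ge 2g-1$ (after reducing to a nondegenerate embedding; your stated bound $g\le (d^2-2d+3)/4$ is indeed weaker than Castelnuovo's, so it holds for all $n\ge 4$). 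The trade-off: the paper's argument is shorter, purely algebraic, and makes the role of characteristic zero transparent (it enters only through Seidenberg), while yours imports a substantial classical theorem from projective geometry, but in exchange needs only that $d$ be prime to the characteristic and records the sharper identity $\findeg(\Der_k(R)/R\varepsilon)=\max\{\indeg\omega_R^*,a(R)+1\}$, which for curves of small genus improves on the bound $d-2$.
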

\begin{proof} We may assume that $k$ is infinite and $n\ge3$. Let $x_1, x_2, x_3$ be general linear forms in $R$ and consider the subalgebra $A=k[x_1,x_2,x_3]$ of $R$. Notice that $A\subset R$ is a finite and birational extension by \autoref{UV} and that $A$ is a hypersurface ring.  Thus \autoref{UB}(d) gives
\[ \findeg (\Der_k(A)/A\varepsilon_A ) \leq a(A) +1 \, .\]
Also observe that  $e(A)=e(R)$ by \autoref{UV}, hence $a(A)=e(A)-3=e(R)-3=d-3$.

Now part (a) follows because 
\[\indeg (\Der_k(R)/\m^{-1} \varepsilon_R) \leq \findeg (\Der_k(A)/A\varepsilon_A ) + a(A) -a(R)\, \]
by \autoref{GP}.
If the assumptions of part (b) are satisfied, then  $R$ is the integral closure of $A$ and $R$ is a domain. Hence every derivation of $A$ can be 
extended to a derivation of $R$, according to \cite{Seidenberg}*{Theorem, page 168}. From \autoref{Dsed} in the proof of 
\autoref{GP} we see that there are embeddings 
$$
\begin{tikzcd}
 \Der_k(A)/A\varepsilon_A   \arrow[hookrightarrow]{r}
  & \Der_k(A,R)/\m^{-1}\varepsilon_A   \arrow[r, hookleftarrow, "\varphi"]  & \Der_k(R)/\m^{-1}\varepsilon_R\, .
\end{tikzcd}
$$
Since every derivation of $A$ can be extended to a derivation of $R$, the map $\varphi$ is an isomorphism. Thus we obtain an embedding $\, \Der_k(A)/A\varepsilon_A  \hookrightarrow \Der_k(R)/\m^{-1} \varepsilon_R \,$. As $\depth R >0 $, this embedding shows that
\[\findeg (\Der_k(R)/\m^{-1} \varepsilon_R) \leq \findeg (\Der_k(A)/A\varepsilon_A )\, ,\]
which proves part (b).
\end{proof}

\vspace{.0000001cm}

\begin{proposition} Let $k$ be a perfect field and $\C\subset \mathbb P_k^{n-1}$ be a reduced curve that is arithmetically Cohen-Macaulay. Let $R$ be the homogenous coordinate ring of $\C$. One has
\[
\findeg (\Der_k(R)/R\varepsilon) \le \max\{ r(R)\cdot (a(R)+2)-2\, , \, a(R)+1\}\, .\]
 
\end{proposition}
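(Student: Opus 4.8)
The plan is to reduce the estimate to a bound on the faithful initial degree of $\omega_R^*$ and then to an elementary degree count after an Artinian reduction. We may assume $k$ is infinite, since a separable base field extension changes neither $a(R)$, nor the Cohen--Macaulay type $r(R)$, nor the faithful initial degree in question; write $a=a(R)$ and $r=r(R)$. Since $\C$ is arithmetically Cohen--Macaulay we have $\depth R\ge 2$, so \autoref{UB}(b) gives
\[\findeg(\Der_k(R)/R\varepsilon)\le\max\{\findeg\omega_R^*,\ a+1\},\]
and it suffices to bound $\findeg\omega_R^*$. When $R$ is Gorenstein, that is $r=1$, \autoref{UB}(e) already yields the sharp $\findeg(\Der_k(R)/R\varepsilon)\le a+1=\max\{r(a+2)-2,\,a+1\}$. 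So I assume $r\ge 2$ and prove the cleaner inequality $\findeg\omega_R^*\le 2a+2$; this is enough, because $2a+2\le r(a+2)-2$ whenever $r\ge 2$.

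First I would bound $\findeg\omega_R^*$ by the largest degree $\epsilon_{\max}$ of a minimal homogeneous generator of $\omega_R^*$. Because $R$ is reduced and Cohen--Macaulay it is generically Gorenstein, so the trace ideal of $\omega_R$ lies in no minimal prime of $R$; hence for each minimal prime $\p$ at least one minimal generator $g_\ell$ of $\omega_R^*$ has nonzero image in the rank-one localization $(\omega_R^*)_{\p}$. Choosing general forms $h_\ell$ of degree $\epsilon_{\max}-\deg g_\ell$ avoiding all minimal primes and taking a general linear combination $\varphi=\sum_\ell c_\ell\,h_\ell\,g_\ell$, I obtain a homogeneous element of $\omega_R^*$ of degree $\epsilon_{\max}$ that is nonzero at every minimal prime. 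As $R$ is reduced and equidimensional this forces $\ann_R\varphi=0$, so $\varphi$ is faithful and $\findeg\omega_R^*\le\epsilon_{\max}$.

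It then remains to show $\epsilon_{\max}\le 2a+2$, which is the technical heart. Let $x_1,x_2$ be general linear forms and $B=R/(x_1,x_2)$, an Artinian graded ring of Cohen--Macaulay type $r$ whose top nonzero degree is $a(B)=a+2$. By graded Nakayama, $\epsilon_{\max}$ equals the top generator degree of the $B$-module $\omega_R^*\otimes_R B$. The crucial identification is the base-change isomorphism
\[\omega_R^*\otimes_R B\cong\Hom_B(\omega_B,B)(2),\]
which I would deduce from $\omega_R\otimes_R B\cong\omega_B(-2)$ together with the fact that $x_1,x_2$ is a regular sequence on the maximal Cohen--Macaulay modules $R$, $\omega_R$, and $\omega_R^*$. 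Granting this, the bound follows from a degree count: $\omega_B$ is concentrated in degrees $[-(a+2),0]$ and $B$ in degrees $[0,a+2]$, so every nonzero homogeneous map $\omega_B\to B$ has degree at most $2(a+2)$; thus $\Hom_B(\omega_B,B)$ vanishes above degree $2(a+2)$, and after the twist by $2$ we get $\epsilon_{\max}\le 2(a+2)-2=2a+2$, as required.

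The step I expect to be the main obstacle is the base-change isomorphism: the natural comparison map $\Hom_R(\omega_R,R)\otimes_R B\to\Hom_B(\omega_R\otimes_R B,B)$ need not be an isomorphism on formal grounds, and the possible nonvanishing of $\Ext^1_R(\omega_R,R)$, which is supported on the non-Gorenstein locus, is precisely what must be controlled when descending modulo $x_1$ and then $x_2$. I would handle this by a one-variable-at-a-time depth argument, exploiting that $\omega_R^*$ is maximal Cohen--Macaulay. Alternatively, one can bypass the isomorphism: since $\omega_R^*$ is maximal Cohen--Macaulay, its top generator degree is at most $a(\omega_R^*)+2$, and $a(\omega_R^*)=\topdeg \HH^2_{\m}(\omega_R^*)$; by graded local duality $a(\omega_R^*)=-\indeg\Hom_R(\omega_R^*,\omega_R)$, so the entire problem reduces to the single inequality $a(\omega_R^*)\le 2a(R)$, which can be attacked directly by estimating this initial degree.
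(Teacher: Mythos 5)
Your opening reduction is the same as the paper's: both proofs invoke \autoref{UB}(b) to reduce everything to bounding $\findeg \omega_R^*$ (and your Gorenstein detour via \autoref{UB}(e) is fine, as is your observation that a general combination of minimal generators of $\omega_R^*$ twisted into top generator degree is faithful over a reduced ring). After that, however, you diverge and try to prove the \emph{stronger} bound $\findeg \omega_R^* \le 2a(R)+2$ for $r(R)\ge 2$, and this is where the proposal has a genuine gap. The entire argument rests on the base-change statement $\omega_R^*\otimes_R B \cong \Hom_B(\omega_B,B)(2)$ (or at least injectivity of the natural map), which you yourself flag as the main obstacle but never prove, and neither of your two proposed fixes works. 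The ``one-variable-at-a-time depth argument'' cannot succeed on the stated grounds: the obstruction to base change is $\Ext^1_R(\omega_R,R)$, which is supported on the non-Gorenstein locus and is therefore nonzero precisely in the case $r(R)\ge 2$ you are treating. Concretely, after the first reduction one gets an exact sequence $0\to \omega_R^*/x_1\omega_R^*\to \Hom_{R_1}(\omega_{R_1},R_1)(1)\to T_1\to 0$ with $T_1$ the $x_1$-torsion of $\Ext^1_R(\omega_R,R)$, and tensoring with $R/(x_2)$ produces a term $\Tor_1^{R_1}(T_1,B)$ mapping onto the kernel of your comparison map; maximal Cohen--Macaulayness of $\omega_R^*$ says nothing about $T_1$, and the degrees of $\Ext^1_R(\omega_R,R)$ are not controlled by $a(R)$. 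So the degree count on $\Hom_B(\omega_B,B)(2)$ does not bound $\omega_R^*\otimes_R B$.

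Worse, your fallback reduction is provably false: the inequality $a(\omega_R^*)\le 2a(R)$ fails for the rational normal quartic in $\mathbb P^4_k$. There $a(R)=-1$ and $\omega_R^*\cong (y_0,y_1,y_2)(1)$, and one computes (e.g.\ via graded local duality, $a(\omega_R^*)=-\indeg \Hom_R(\omega_R^*,\omega_R)$, where $\Hom_R(\omega_R^*,\omega_R)\cong R(-1)$ since the canonical class is $2$-torsion in the divisor class group) that $a(\omega_R^*)=-1>-2=2a(R)$. Note this curve is smooth, arithmetically Cohen--Macaulay, of type $r=3$, so it lies squarely in the case you are arguing; it does not contradict your target inequality $\findeg\omega_R^*\le 2a+2$ (both sides equal $0$ there), but it kills the route you propose for proving it. Finally, be aware that your target inequality is strictly stronger than the proposition for $r\ge 3$, so it cannot be waved through: the paper never proves it, and its actual proof produces the $r$-dependence in an essential way --- it takes a general minimal generator $\alpha$ of $\omega_R$ of maximal degree with $\ann_R\alpha=0$, uses the last matrix of the minimal free resolution (entries of degree $\le a(R)+2$) to find a homogeneous non-zerodivisor $b\in\ann_R(\omega_R/R\alpha)$ with $\deg b\le (r(R)-1)(a(R)+2)$, and concludes from $b\,(R\alpha)^*\subset \omega_R^*$ and $\findeg (R\alpha)^*\le a(R)$. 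Unless you can either prove the base-change injectivity or establish $\findeg\omega_R^*\le 2a(R)+2$ by some other means, the proposal does not constitute a proof.
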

\begin{proof} In view of \autoref{UB}(b) it suffices to prove that $\findeg \omega_R^* \le r(R) \cdot (a(R)+2)-2$. We may assume that $n\ge 3$ and that $\C\subset \mathbb P_k^{n-1}$  is non-degenerate. We write $S$ for the coordinate ring of $ \mathbb P_k^{n-1}$ and consider a minimal homogeneous free $S$-resolution  $\mathbb F_{\bullet}$ of $R$.   Since $R$ is Cohen-Macaulay, the resolution  $\mathbb F_{\bullet}$ has length $n-2$ and $F_{n-2}$ is generated in degrees at most $a(R)+n$. Moreover, $\indeg F_{n-3} \ge n-2$ because the curve is non-degenerate. It follows that the entries of $\varphi$, the last matrix in the resolution $\mathbb F_{\bullet}$, have degrees at most $a(R)+2$. 

Let $\alpha$ be a general homogeneous minimal generator of $\omega_R$ of maximal degree. Observe that ${\rm ann}_R\, \alpha=0$. Moreover, the graded module
$\omega_R/R \alpha$ is minimally generated by $r(R)-1$ homogeneous elements and is presented by the transpose of $\varphi$, with one row removed.
This is a matrix with $r(R)-1$ rows and homogeneous entries of degrees at most $a(R)+2$. The ideal $\a$ of $(r(R) -1) \times (r(R)-1)$ minors of this matrix
satisfies $\a \subset \ann_R(\omega_R/R\alpha) \subset {\sqrt \a}$, has positive grade, and is generated by forms of degrees at most $(r(R)-1)(a(R)+2)$.
Thus, there exists a homogeneous non-zerodivisor $b \in \a \subset \ann_R(\omega_R/R\alpha)$ with $\deg b \leq (r(R)-1)(a(R)+2)$. 

Now the exact sequence
$$ 0 \longrightarrow \omega_R^* \longrightarrow (R\alpha)^* \longrightarrow \Ext^1_R(\omega_R/R\alpha, R) $$
shows that 
$$b (R \alpha)^* \subset \omega_R^* \, .$$
Since $(R \alpha)^* \cong R(\deg \alpha)$ and $\deg \alpha \geq  \indeg \omega_R = -a(R)$, it follows that 
$\findeg (R\alpha)^* \leq a(R)$. As moreover $b$ is a non-zerodivisor, we conclude that
$$\findeg \omega_R^* \leq \findeg b (R\alpha)^* = \deg b + \findeg (R \alpha)^* \leq (r(R)-1)(a(R)+2)+a(R) \, ,$$
as required.
\end{proof}

We finish this section by providing the minimal graded free resolution of the module $\Der_k (R)/R\varepsilon$ for the case of a smooth arithmetically Cohen-Macaulay curve in $\mathbb P_k^3$. From this we obtain, for instance, the initial degree, the minimal number of generators, and the entire Hilbert series of  $\Der_k (R)/R\varepsilon $. In particular, we see that the upper bound of \autoref{UB}(d) fails dramatically without the  nearly Gorenstein assumption.

\begin{theorem}\label{curvein3} Let $k$ be a perfect field and $\C\subset \mathbb P_k^{3}$ be a curve of degree $d$ that is smooth and arithmetically Cohen-Macaulay. Let $R$ be the homogenous coordinate ring of $\C$ and $S=k[x_1, \ldots, x_4]$  be the homogenous coordinate ring of $\mathbb P_k^{3}$. Let
$$F_{\bullet}: 0 \lto F_2=\oplus_{j=1}^{n-1} S(-b_j) \stackrel{\varphi}{\lto} F_1=\oplus_{i=1}^{n} S(-a_i) \lto S$$
be the minimal homogenous $S$-free resolution of $R$. We may assume that $a_1\le \ldots \le a_n$. 
\begin{enumerate}[$(a)$]
\item 
\[\Der_k(R)/R\varepsilon\cong  \begin{cases}\m (-a(R))& \mbox{if } n=2  \mbox{ and d is  not a multiple of the characteristic} \\ 
 \omega_R^*& n\ge 3 \, .\end{cases}
\]
\item 
\[\indeg (\Der_k(R)/R\varepsilon)=  \begin{cases}a_1+a_2-3 & \mbox{if } n=2  \mbox{ and d is  not a multiple of the characteristic} \\ 
a_1+a_2-4& n\ge 3 \, .\end{cases}
\]
\item If $n \ge 3$, then the minimal homogenous $S$-free resolution of $\, \Der_k (R)/R \varepsilon$ is of the form
\[
\begin{tikzcd}[row sep={-1.3em}]  
&  \bigoplus\limits_{2\le j_1\le j_2 \le n-1} S(-b_{j_1}-b_{j_2}+4) &  \bigoplus\limits_{\substack{2\le j\le n-1 \\ 1\le i\le n}} S(-b_j-a_i+4)  & \\
0 \arrow[r, shorten >=8ex]   &\quad  \bigoplus  \arrow[r,  shorten >=10ex] \qquad \qquad  & \qquad \quad  \bigoplus  \arrow[r,  shorten >=3ex] \qquad \qquad & \bigoplus\limits_{1\le i_1 < i_2\le n} S(-a_{i_1}-a_{i_2}+4)\\
& \bigoplus\limits_{1\le j\le n-1} S(-b_1-b_j+4) & \bigoplus\limits_{1\le i\le n} S(-b_1-a_i+4)  &
\end{tikzcd}
\]
\item Assume $n\ge 3$ and let $\psi$ be the $n-2$ by $n$ matrix obtained by deleting the first column of $\varphi$. One has 
$\htt I_{n-2}(\psi)=3$ and
$$\Der_k(R)/R\varepsilon \cong \omega_R^*\cong \frac{I_{n-2}(\psi)}{I_{n-1}(\varphi)} (4-b_1)\, .
$$
\item Assume $n\ge 3$. Write $F_2=F_{21} \oplus F_{22}$ where $F_{21}$  is generated by the first basis element of $F_2$ and $F_{22}$ is generated by the remaining basis elements, so that $\psi: F_{22}\lto F_1, $ and let $\pi: F_2 \twoheadrightarrow F_{22}$ be the natural projection. Write $-^{\vee}=\Hom_S(-, S)\, .$ Set $b=\sum\limits_{j=1}^{n-1} b_j$ and $b'=\sum\limits_{j=2}^{n-1}b_j.$ Consider the diagram
$$
\begin{tikzcd}[row sep={4em}]  
0\arrow{r} & D_2(F_{22}) \otimes \bigwedge\limits^n F_1^{\vee}(-b') \arrow{r} &F_{22} \otimes \bigwedge\limits^{n-1} F_1^{\vee}(-b') \arrow{r} & \bigwedge\limits^{n-2} F_1^{\vee}(-b') \\
 &  \hspace{3cm} 0   \arrow[r]  & F_2 \otimes \bigwedge\limits^n F_1^{\vee} (-b)\arrow{r} \arrow[u, "-\pi \otimes \delta"] & \bigwedge\limits^{n-1} F_1^{\vee}(-b) \arrow[u, "\delta"]  \, ,
\end{tikzcd}
$$
where the first and the second row are the truncated Eagon-Northcott complexes associated to the matrices $\psi$ and $\varphi$, respectively, and $\delta$ is the differential in the Koszul complex of the sequence consisting of the entries of the first column of $\varphi$. These vertical maps give a morphism of complexes $u_{\bullet}$, and the mapping cone $C(u_{\bullet})$ is a minimal homogeneous $S$-free resolution of $\Der_k (R)/R\varepsilon.$ 
\end{enumerate}
\end{theorem}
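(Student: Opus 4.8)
The plan is to treat $n=2$ and $n\ge 3$ separately and, in the latter case, to first identify $\Der_k(R)/R\varepsilon$ with $\omega_R^*$ and then resolve $\omega_R^*$ by hand. For $n=2$ the matrix $\varphi$ is a single column, so $I=(\Delta_1,\Delta_2)$ is a complete intersection with $\deg \Delta_i=a_i$ and $b_1=a_1+a_2$; thus $R$ is Gorenstein and \autoref{SmoothGOR} gives $\Der_k(R)/R\varepsilon\cong \m(-a(R))$ together with $\indeg=a(R)+1=a_1+a_2-3$, settling (a) and (b) here. From now on I assume $n\ge 3$; note $R$ is a normal domain (smooth $+$ ACM $\Rightarrow$ connected, reduced, irreducible, normal). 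Dualizing the Hilbert--Burch resolution $0\to F_2\xrightarrow{\varphi}F_1\to S\to R\to 0$ into $\omega_S=S(-4)$ yields the minimal resolution $0\to S(-4)\to F_1^\vee(-4)\xrightarrow{\varphi^\vee}F_2^\vee(-4)\to \omega_R\to 0$, hence the presentation $\bar F_1^\vee(-4)\xrightarrow{\bar\varphi^{t}}\bar F_2^\vee(-4)\to \omega_R\to 0$ over $R$, and applying $\Hom_R(-,R)$ gives
\[\omega_R^*\cong \ker\bigl(\bar\varphi\colon \bar F_2(4)\to \bar F_1(4)\bigr).\]

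Next I would prove (d). Writing $\varphi=[\,v_1\mid \psi\,]$, the reduction $\bar\psi$ is torsionfree of maximal column rank $n-2$ over $\Quot(R)$, so the first--coordinate projection $\ker\bar\varphi\to R(4-b_1)$ is injective, with kernel $\ker\bar\psi=0$. Its image is the colon ideal $(\,\mathrm{im}\,\bar\psi :_R \bar v_1\,)$, and I would identify this with $I_{n-2}(\psi)R=I_{n-2}(\psi)/I$ (using $I\subseteq I_{n-2}(\psi)$, itself the Laplace expansion of the minors of $\varphi$ along $v_1$): one inclusion is Cramer's rule, $I_{n-2}(\psi)\cdot \bar v_1\subseteq \mathrm{im}\,\bar\psi$, and the reverse is a Fitting--ideal computation for $\coker\bar\psi$, whose degree-$2$ Fitting ideal is $I_{n-2}(\psi)$. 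This gives $\omega_R^*\cong (I_{n-2}(\psi)/I)(4-b_1)$. For the height claim, $V(I_{n-2}(\psi))\subseteq V(I)=\C$ has dimension $\le 1$; smoothness forces the generator of $\ker\bar\varphi$ to have nonzero first coordinate, so $I_{n-2}(\psi)\ne 0$ in $R$ and $\dim V(I_{n-2}(\psi))\le 0$, whence $\htt I_{n-2}(\psi)\ge 3$; together with the Eagon--Northcott bound $\htt I_{n-2}(\psi)\le (n-(n-2)+1)\cdot 1=3$ this yields $\htt I_{n-2}(\psi)=3$.

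With $\htt I_{n-2}(\psi)=3$ and $\htt I_{n-1}(\varphi)=2$ both maximal, the truncated Eagon--Northcott complexes are acyclic: the bottom row $P_\bullet$ of $\varphi$ resolves $I$ (for codimension two it is just $F_\bullet$ itself, so $P_1=F_2$, $P_0=F_1$ up to twist) and the top row $Q_\bullet$ of $\psi$ resolves $I_{n-2}(\psi)$. The chain map $u_\bullet\colon P_\bullet\to Q_\bullet$ assembled from the Koszul contraction $\delta$ against $v_1$ lifts $\iota\colon I\hookrightarrow I_{n-2}(\psi)$, so the mapping cone $C(u_\bullet)$ resolves $\coker\iota=I_{n-2}(\psi)/I\cong \omega_R^*(b_1-4)$; since every entry of $u_\bullet$ and of both differentials lies in $\m_S$, the cone is minimal. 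This is the content of (e). Twisting $C(u_\bullet)$ by $4-b_1$ and inserting the relation $\sum_i a_i=\sum_j b_j$ (valid because $\dim R=2$ makes the Hilbert-series numerator divisible by $(1-t)^2$) turns $\wedge^{n-2}F_1^\vee(-b')$ into $\wedge^2 F_1(4)$ and matches every term with the three free modules displayed in (c); reading off $\wedge^2 F_1(4)=\bigoplus_{i_1<i_2}S(-a_{i_1}-a_{i_2}+4)$ gives $\indeg \omega_R^*=a_1+a_2-4$, which is (b). Finally $a_1+a_2-4>a(R)=\max_j b_j-4$ (smoothness; this is exactly where non-complete-intersectionness enters), so \autoref{anticanonicalM} gives $\Der_k(R)/R\varepsilon\cong \omega_R^*$, completing (a), and hence (b) and (c) for the module itself.

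The main obstacle is part (d): making the colon/Cramer identification $\omega_R^*\cong (I_{n-2}(\psi)/I)(4-b_1)$ fully precise, and in particular establishing $\htt I_{n-2}(\psi)=3$, which hinges on showing the rank-one kernel generator of $\bar\varphi$ has nonzero first coordinate so that deleting the first column does not drop the generic rank --- this is where smoothness must be used with care, and may require choosing the first column (or the generating set) general. Two secondary points need explicit verification: the strict inequality $a_1+a_2-4>a(R)$ required to invoke \autoref{anticanonicalM}, and the minimality of the mapping cone (no unit entries), which follows from minimality of $\varphi$ but should be stated. Throughout, the twist bookkeeping relating (c), (d) and (e) --- the factor $4-b_1$ and the identity $\sum_i a_i=\sum_j b_j$ --- must be carried consistently.
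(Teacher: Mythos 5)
Your overall architecture coincides with the paper's: the $n=2$ case via \autoref{SmoothGOR}; for $n\ge 3$ the identification $\omega_R^*\cong (I_{n-2}(\psi)/I_{n-1}(\varphi))(4-b_1)$, the mapping cone of the two truncated Eagon--Northcott complexes for (e) and (c), and \autoref{anticanonicalM} together with $a_1+a_2-4>a(R)$ for (a) and (b). The genuine gap is exactly at the point you flag, part (d), and neither of your proposed repairs works as stated. The reverse inclusion $(\im\overline{\psi}:_R \overline{v}_1)\subseteq I_{n-2}(\psi)R$ is not ``a Fitting-ideal computation for $\coker\overline{\psi}$'': that colon is the annihilator of the single element $[\overline{v}_1]\in\coker\overline{\psi}$, and Fitting ideals control the annihilator of a module, not of one of its elements (for an arbitrary element the annihilator can be anything up to all of $R$, e.g.\ when the element lies in $\im\overline{\psi}$). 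The paper's substitute for this step is a real theorem: it presents $\omega_R/R\alpha$ (where $\alpha$ is the image of the first basis vector of $F_2^{\vee}(-4)$) by the transposed matrix $\psi^{\vee}$ over $S$, and once $\grade I_{n-2}(\psi)\ge 3=n-(n-2)+1$ is known, the Buchsbaum--Eisenbud annihilator theorem (the paper's reference [BEann]) gives $\ann_S(\omega_R/R\alpha)=I_{n-2}(\psi)$ on the nose; the link to the dual is then made by realizing $\omega_R$ as an ideal $K$ and computing $\omega_R^*(b_1-4)\cong\Hom_R(\omega_R,R\alpha)\cong R\beta:_RK=\ann_R(\omega_R/R\alpha)$. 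Your colon ideal does equal this annihilator, but proving that is tantamount to the argument just described (or to showing that both ideals are divisorial --- using that $S/I_{n-2}(\psi)$ is Cohen--Macaulay by Eagon--Northcott --- and agree in codimension one); it does not follow from Cramer's rule plus generalities about Fitting ideals.

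The other half of (d), the rank statement you need ($\ker\overline{\psi}=0$, equivalently $I_{n-2}(\psi)\not\subseteq I$, which also drives your height computation), is not where smoothness enters, and no genericity of the deleted column is needed. The clean argument is the paper's torsion argument: $\omega_R$ is a rank-one torsionfree module over the domain $R$, so for any nonzero $\alpha$ the quotient $\omega_R/R\alpha=\coker(\overline{\psi}^{\vee})$ is a torsion $R$-module; if $\overline{\psi}$ had rank $n-3$, this cokernel would have rank one, a contradiction. This works for every column of $\varphi$ and uses only that $R$ is a Cohen--Macaulay domain; it simultaneously yields $\dim_S(\omega_R/R\alpha)\le 1$ and hence $\htt I_{n-2}(\psi)\ge 3$. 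Smoothness --- more precisely, primality of $I$ --- is instead what is needed for your other unproved step, $a_1+a_2-4>a(R)$: the paper derives it from the degree matrix $u_{ij}=b_j-a_i$ of $\varphi$, showing $u_{j+2,j}>0$ for all $j$ because $I$ is prime, whence $a_2=u_{1,n-1}+\sum_{j=1}^{n-2}u_{j+2,j}>b_{n-1}-a_1$; ``non-complete-intersectionness'' alone does not give this inequality. With these two points repaired --- the torsion/rank argument and Buchsbaum--Eisenbud --- your Cramer inclusion, mapping-cone construction, minimality check, and the bookkeeping with $\sum_i a_i=\sum_j b_j$ all go through and essentially reproduce the paper's proof.
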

\begin{proof} We first prove the claim about the height and the second isomorphism in part (d). Notice that 
$$F_{\bullet}^{\vee}: 0 \lto S(-4) \lto  F_1^{\vee}(-4) \stackrel{\varphi^{\vee}}{\lto}  F_2^{\vee}(-4)=\oplus_{j=1}^{n-1} S(b_j-4) $$
is a minimal homogeneous free $S$-resolution of $\omega_R$. Since $\omega_R$ is a torsionfree $R$-module and $R$ is a domain, the image $\alpha \in \omega_R$ of the first basis element of $F_2^{\vee}(-4)$  generates a submodule $R\alpha \cong  R(b_1-4).$ Notice that $\dim ( \omega_R/R\alpha ) \le 1.$ As an $S$-module, $ \omega_R/R\alpha $ is presented by the $n$ by $n-2$ matrix $\psi^{\vee}.$ It follows that $\htt I_{n-2} (\psi)=\htt I_{n-2} (\psi^{\vee}) \ge 3.$ Therefore  $\ann_S (\omega_R/R\alpha) =I_{n-2}(\psi^{\vee})=I_{n-2}(\psi)$
according to \cite{BEann}*{Theorem page 232}.

On the other hand, a shift of $\omega_R$ is isomorphic to a homogenous ideal $K$ of $R$. Let $\beta \in K$ be  the element corresponding to $\alpha.$ One has
\begin{eqnarray*}
 \omega_R^* (b_1-4)&\cong& \Hom(\omega R, R\alpha)\cong \Hom(K, R\beta)\cong R\beta:_R K  \\
 &=&\ann_R (K/R\beta) =\ann_R (\omega_R/R\alpha)=\frac{\ann_S (\omega_R/R\alpha)}{I}
 \\
 &=&\frac{I_{n-2}(\psi)}{I_{n-1}(\varphi)}\, .
\end{eqnarray*}

Next we prove parts (a) and (b), which will also completes the proof of (d). If $n=2$, the assertions follow from \autoref{SmoothGOR}. Hence we may assume that $n\ge 3$. The second isomorphism in (d) shows that $\indeg \omega_R^*=\indeg I_{n-2}(\psi)+b_1-4.$ On the other hand, $\indeg I_{n-2}(\psi)=\sum\limits_{j=2}^{n-1} b_j -\sum\limits_{i=2}^{n}a_i =a_1+a_2-b_1$. The last equality holds because $\sum\limits_{j=1}^{n-1} b_j =\sum\limits_{i=1}^{n}a_i,$ by the Hilbert-Burch theorem. We conclude that $\indeg \omega_R^*=a_1+a_2-4.$ Now parts (a) and (b) follow from \autoref{anticanonicalM} once we have shown that $a_1+a_2-4>a(R).$ 

To this end we may assume that  $b_1\le \ldots \le b_{n-1}.$ Hence $a(R)=b_{n-1}-4$ and we need to prove that $a_1+a_2>b_{n-1}$. We consider the degree matrix associated to $\varphi$, which is the $n-1$ by $n$ matrix with entries $u_{ij}=b_j-a_i.$ Notice that $\varphi_{ij}=0$ if $u_{ij}\le 0$. It easily follows that $u_{j+2,j}>0$ for all $j$ since $I$ is a prime ideal (see also \cite{GM89}*{page 3142}). As $a_2=u_{1,n-1}+ \sum\limits_{j=1}^{n-2} u_{j+2,j}$ and $n\ge 3,$ we see that $a_2>b_{n-1}-a_1$. 

We now prove (e). Since $\htt I_{n-2}(\psi)\ge 3$ by part (d), the two truncated Eagon-Northcott complexes are minimal homogeneous   $S$-free resolutions of $I_{n-2}(\psi)$ and $I_{n-1}(\varphi)$, respectively. One easily checks that $u_{\bullet}$ is a morphism of complexes. Thus $C(u_{\bullet})$ is a homogeneous $S$-free resolution of $\frac{I_{n-2}(\psi)}{I_{n-1}(\varphi)}.$ It is minimal because the matrices of the vertical maps have entries in $\m_S.$ We deduce part (c) from (e), repeatedly using the equality $\sum\limits_{j=1}^{n-1} b_j =\sum\limits_{i=1}^{n}a_i.$
\end{proof}

\medskip

\section{The Euler derivation in the module of derivation}\label{secEuler}


In proving the graded case of the Zariski-Lipman conjecture, Hochster showed that, for any Noetherian positively graded  algebra $R$ over a field of characteristic zero, the Euler derivation is  a minimal generator of $\Der_k(R)$, unless $R$ is a polynomial ring over a subalgebra \cite{Hoc}*{pg 412}. One may wonder whether the Euler derivation can generate a free direct summand. In this section we use the results from \autoref{SecEK} to address this issue and the related question of whether the natural map  $\, \Der_k(R) \to L^*$ of \autoref{1.2} can be surjective. 

\smallskip

\begin{proposition}\label{splitting} Let $R$ be a two-dimensional Noetherian standard graded algebra over a field $k$, with homogeneous maximal ideal $\m$, and assume that the multiplicity of $R$ is not a multiple of the characteristic of $k$. If $R$ is Gorenstein, then the natural map $\, \Der_k(R) \to L^*$ is not surjective and the Euler derivation does not generate a free direct summand of $\ \Der_k(R)$. 
\end{proposition}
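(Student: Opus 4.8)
The plan is to deduce both assertions from the single statement that the natural map $\rho\colon \Der_k(R)\to L^*$ is \emph{not} surjective. Since $R$ is Gorenstein of dimension two we have $\depth R=2$, so $\m^{-1}=R$ and $L^*\cong H^*$, and \autoref{1.2} provides the exact sequence
\[
0\lto R\stackrel{\cdot\,\varepsilon}{\lto}\Der_k(R)\stackrel{\rho}{\lto}L^*\stackrel{\partial}{\lto}\Ext^1_R(\m,R)\cong\Ext^2_R(k,R),
\]
in which $\im\rho=\ker\partial$; thus $\rho$ is surjective precisely when $\partial=0$. First I would record why the ``free direct summand'' statement follows from non-surjectivity: if $\varepsilon$ generated a free direct summand, then $\Der_k(R)/R\varepsilon$ would be a direct summand of $\Der_k(R)=\Omega_k(R)^*$, which is a maximal Cohen--Macaulay $R$-module (being an $R$-dual over the Gorenstein ring $R$); hence $\Der_k(R)/R\varepsilon$ would be maximal Cohen--Macaulay as well. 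Applying $H^\bullet_{\m}$ to $0\to \Der_k(R)/R\varepsilon\to L^*\to T\to 0$, where $T=\im\partial$ has finite length, and using $H^0_{\m}(L^*)=0$ together with $H^1_{\m}(\Der_k(R)/R\varepsilon)=0$, would force $T=0$, i.e.\ $\rho$ onto --- contradicting non-surjectivity. So it suffices to prove $\partial\neq 0$.

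Next I would translate $\partial\neq 0$ into a statement about Koszul cycles. Applying $H^\bullet_{\m}$ to the Euler sequence $0\to L\to \Omega_k(R)\to \m\to 0$ produces a connecting homomorphism
\[
\delta\colon H^1_{\m}(\m)\lto H^2_{\m}(L).
\]
By graded local duality over the Gorenstein ring $R$ (with $\omega_R\cong R(a(R))$), the $\Ext$-sequence above is, up to the twist by $a(R)$, the graded Matlis dual of the local cohomology sequence of the Euler sequence, and under this identification $\partial$ corresponds to $\delta$. Consequently $\partial\neq 0$ if and only if $\delta\neq 0$, and the problem is reduced to showing that $\delta$ is nonzero. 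Here $L=Z_1(R)$ is exactly the module of Koszul cycles to which \autoref{Th4.3} applies.

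The main step is to prove $\delta\neq 0$ by transporting a nonvanishing from a Noether normalization, and this is the part I expect to require the most care. We may assume $k$ is infinite. Choosing general linear forms $x_1,x_2\in R_1$ and setting $A=k[x_1,x_2]$, the inclusion $A\subset R$ is a module-finite Noether normalization, and the assignments $dx_i\mapsto dx_i$ assemble into a morphism of Euler sequences
\[
\begin{tikzcd}[row sep={1.4em}]
0\arrow{r}&Z_1(A)\arrow{r}\arrow{d}&\Omega_k(A)\arrow{r}{\varepsilon_A}\arrow{d}&\n\arrow{r}\arrow{d}&0\\
0\arrow{r}&L\arrow{r}&\Omega_k(R)\arrow{r}{\varepsilon_R}&\m\arrow{r}&0
\end{tikzcd}
\]
of $A$-modules (recall $H^\bullet_{\m}=H^\bullet_{\n}$ on $R$-modules because $A\subset R$ is finite). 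Naturality of the connecting homomorphism then yields a commutative square
\[
\begin{tikzcd}[row sep={1.4em}]
H^1_{\n}(\n)\arrow{r}{\delta_A}\arrow{d}&H^2_{\n}(Z_1(A))\arrow{d}\\
H^1_{\m}(\m)\arrow{r}{\delta}&H^2_{\m}(L)\,.
\end{tikzcd}
\]
In degree zero the top map $\delta_A$ is an isomorphism by \autoref{preparation}(b) (it is the map $\gamma$ there, with $d=2$), while the right-hand vertical map $H^2_{\n}(Z_1(A))\to H^2_{\m}(L)=H^2_{\m}(Z_1(R))$ is nonzero in degree zero by \autoref{Th4.3} applied to the module-finite map $A\to R$ --- this is exactly where the hypothesis that the multiplicity of $R$ is not a multiple of the characteristic enters. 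Hence the composite ``top then right'' is nonzero in degree zero, so the composite ``left then bottom'' is nonzero, which forces $\delta\neq 0$.

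Thus $\partial\neq 0$, and by the reduction of the first paragraph both conclusions follow. I expect the only genuinely delicate points to be the duality bookkeeping of the second paragraph (identifying $\partial$ with the Matlis dual of $\delta$) and the verification that \autoref{preparation} and \autoref{Th4.3} fit into a \emph{single} naturality square attached to the inclusion $A\subset R$; the remaining reductions (to $k$ infinite, the finite-length vanishing argument) are routine.
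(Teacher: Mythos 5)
Your proposal is correct and is essentially the paper's own argument: both reduce the two assertions to the nonvanishing of the connecting map $H^1_{\m}(\m)\to H^2_{\m}(L)$ (transferred to the $\Ext$-sequence of \autoref{1.2} by graded local duality over the Gorenstein ring $R$), prove that nonvanishing by fitting \autoref{Th4.3} into a naturality square of connecting homomorphisms, and settle the direct-summand claim by the same depth-lemma considerations. The only difference is cosmetic: the paper builds its square from a polynomial ring $S$ of dimension at least three mapping \emph{onto} $R$, where the top connecting map $H^1_{\m_S}(\m_S)\to H^2_{\m_S}(Z_1(S))$ is an isomorphism simply because $\Omega_k(S)$ is free of depth $\ge 3$, whereas you build it from a Noether normalization $A=k[x_1,x_2]\subset R$ and quote \autoref{preparation}(b) for the degree-zero isomorphism; both squares then invoke \autoref{Th4.3} in exactly the same way.
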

\begin{proof} Let $S$ be a polynomial ring over $k$ of dimension $\ge 3$ with homogeneous maximal ideal $\m_S$ that maps homogeneously onto $R$. The commutative diagram with exact rows 
$$\begin{tikzcd} 0 \arrow{r} &Z \arrow{r} \arrow{d}& \Omega_k(S)  \arrow{d}\arrow{r} &\m_S \arrow{d} \arrow{r} &0 \\
0 \arrow{r} &L \arrow{r}  & \Omega_k(R) \arrow{r} &\m \arrow{r} &0              
 \end{tikzcd} $$
induces a commutative diagram 
$$\begin{tikzcd} H^1_{\m_S}(\m_S) \arrow{r}{\alpha} \arrow{d}&  H^2_{\m_S}(Z)  \arrow{d}{\beta}  \hphantom{ \ .}\\
 H^1_{\m}(\m) \arrow{r}{\gamma}  &  H^2_{\m}(L) \ .     
 \end{tikzcd} $$
Here $\alpha$ is an isomorphism since ${\rm depth}_{\m_S} \,  \Omega_k(S)\ge 3$, and $\beta$ is nonzero by \autoref{Th4.3}. We conclude that $\gamma$ is nonzero. 

We use the exact sequence of \autoref{1.2}
$$\begin{tikzcd}0 \arrow{r} &\Der_k(R)/R\, \varepsilon \arrow{r} {\delta} & L^* \arrow{r}{\nu} &\Ext^2_R(k,R)   \cong \Ext^1_R(\m,R)            
 \end{tikzcd} 
 $$
that was also induced by the 
exact sequence
$ \ 0 \to L \to  \Omega_k(R)\to \m \to 0     \      . $    
 Since $R$ is Gorenstein and $\gamma\not=0$, local duality shows that $\nu\not=0$. Thus $\delta$ is not surjective. Moreover, ${\rm depth} \Der_k(R)/R \, \varepsilon=1$ by the depth lemma because ${\rm im}\, \nu$ has depth zero. Thus $R \, \varepsilon$ cannot be a direct summand of $\Der_k(R)$, a module of depth two. 
\end{proof}

Surprisingly, if $R$ is Cohen--Macaulay, but not Gorenstein then the natural map $\Der_k(R)\to L^*$ can be surjective in dimension two. This is always the case for the coordinate rings of rational normal curves of degree $n\ge 3$ in $\mathbb P^n_k,$ as we will see in \autoref{scrolls} below.

\begin{lemma}\label{lemmascroll} Let $T$ be a standard graded Noetherian domain with ${\rm grade} \, T_{+}\ge 2$. Let $s \in \mathbb N$ be invertible in $T$ and denote the    $s$th Veronese functor by $\--^{(s)}.$ Then 
$$ {\rm Der}_{T_0}(T^{(s)})=({\rm Der}_{T_0}(T))^{(s)}\, .
$$
\end{lemma}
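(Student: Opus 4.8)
The plan is to show that the natural restriction map is an isomorphism of graded $T^{(s)}$-modules. First I would verify it is well defined: a $T_0$-derivation $D$ of $T$ that is homogeneous of degree divisible by $s$, say $\deg D = sj$, sends $T^{(s)}_i = T_{si}$ into $T_{s(i+j)} = T^{(s)}_{i+j}$, so its restriction is a $T_0$-derivation of $T^{(s)}$; extending additively produces a $T^{(s)}$-linear map $\Phi\colon (\Der_{T_0}(T))^{(s)} \to \Der_{T_0}(T^{(s)})$. As both sides are graded it suffices to argue degree by degree, so I may assume the derivations in sight are homogeneous. Injectivity is immediate from $T$ being a domain in which $s$ is a unit: if $D \in \ker\Phi$ and $t \in T_1$, then $t^s \in T^{(s)}$ gives $0 = D(t^s) = s\,t^{s-1}D(t)$, whence $D(t)=0$ since $st^{s-1}\neq 0$; because $T$ is standard graded, $T_1$ generates $T$ over $T_0$ and the Leibniz rule forces $D=0$.

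The surjectivity is the crux. Let $K = \Quot(T)$ and $L = \Quot(T^{(s)})$. Each homogeneous $t \in T$ satisfies $t^s \in T^{(s)} \subseteq L$, so $T$ is integral over $T^{(s)}$ and $K/L$ is finite; moreover $X^s - t^s$ is separable because $s$ is invertible, so $K/L$ is separable. Hence a given homogeneous $\partial \in \Der_{T_0}(T^{(s)})$ extends first, by the quotient rule, to $\partial_L \in \Der_{T_0}(L)$, and then uniquely to $D_K \in \Der_{T_0}(K)$. The remaining point is to show $D_K$ carries $T$ back into $T$. For $t, t_0 \in T_1$ the elements $t_0^{s-1}t$ and $t_0^s$ lie in $T^{(s)}$; expanding $D_K(t_0^{s-1}t) = \partial(t_0^{s-1}t)$ and $D_K(t_0^s) = \partial(t_0^s)$ by Leibniz and eliminating $D_K(t_0)$ yields
$$t_0^{s}\,D_K(t) = t_0\,\partial(t_0^{s-1}t) - \frac{s-1}{s}\,t\,\partial(t_0^{s}) \in T .$$
Thus the ideal $\mathfrak b = (t_0^s \mid t_0 \in T_1)$ multiplies $D_K(t)$ into $T$. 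Since $T$ is standard graded one has $\sqrt{\mathfrak b} = T_+$, so $\grade \mathfrak b = \grade T_+ \ge 2$, and a length-two regular sequence inside $\mathfrak b$ forces $D_K(t)\in T$ for every $t \in T_1$. By the Leibniz rule $D_K(T) \subseteq T$, so $D := D_K|_T$ is a $T_0$-derivation of $T$, homogeneous of degree $sj$, hence lies in $(\Der_{T_0}(T))^{(s)}$ and satisfies $\Phi(D) = D|_{T^{(s)}} = \partial$.

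I expect the main obstacle to be precisely this descent step $D_K(T)\subseteq T$: the separability argument produces $D_K$ only on $K$, and passing from ``$\mathfrak b\cdot D_K(t) \subseteq T$'' to ``$D_K(t)\in T$'' is exactly what $\grade T_+ \ge 2$ buys, via $H^0_{T_+}(T) = H^1_{T_+}(T) = 0$ (equivalently, via the elementary regular-sequence cancellation $fx,gx\in T \Rightarrow x\in T$ for a regular sequence $f,g$). That the hypothesis is genuinely needed is visible already for $T$ a polynomial ring in one variable and $s=2$, where $\grade T_+ = 1$ and $\Phi$ fails to be surjective. In writing this up I would also record the small points that a homogeneous unit such as $s$ lies in $T_0$, so that $\frac{s-1}{s}$ is a legitimate coefficient, and that $T_1\neq 0$, which is forced by $\grade T_+\ge 2$.
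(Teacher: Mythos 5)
Your proof is correct, but it takes a genuinely different route from the paper's. You argue entirely at the level of derivations: injectivity of the restriction map from the domain hypothesis, and surjectivity by extending a given $\partial\in\Der_{T_0}(T^{(s)})$ to the fraction fields (separability of $\Quot(T^{(s)})\subset\Quot(T)$ via the explicit minimal polynomials $X^s-t^s$) and then descending to $T$ using the identity $t_0^{s}D_K(t)=t_0\,\partial(t_0^{s-1}t)-\tfrac{s-1}{s}\,t\,\partial(t_0^{s})$ together with a length-two regular sequence inside $(t_0^s \mid t_0\in T_1)$. The paper instead dualizes the cotangent exact sequence $T\otimes_{T^{(s)}}\Omega_{T_0}(T^{(s)})\to\Omega_{T_0}(T)\to\Omega_{T^{(s)}}(T)\to 0$ into $T$: the very same Leibniz manipulations you perform (there applied to $d(x^s)$ and $d(\ell x^{s-1})$) show that $\Omega_{T^{(s)}}(T)$ is supported in $V(T_+)$, so that $\grade T_+\ge 2$ forces $\Hom_T(\Omega_{T^{(s)}}(T),T)=\Ext^1_T(\Omega_{T^{(s)}}(T),T)=0$, and dualization then yields the identification $\Der_{T_0}(T)=\Der_{T_0}(T^{(s)},T)$; separability of the fraction-field extension is read off from the vanishing of $\Omega_{\Quot(T^{(s)})}(\Quot(T))$ rather than from minimal polynomials, and the lemma follows by a degree count. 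So both proofs rest on the same two pillars, separability and $\grade T_+\ge 2$, but your regular-sequence cancellation ($fx,gx\in T\Rightarrow x\in T$) is the element-level avatar of the paper's $\Hom$/$\Ext^1$ vanishing against a module supported in $V(T_+)$. What yours buys is an elementary, self-contained argument in which the role (and sharpness) of the grade hypothesis is completely transparent; what the paper's buys is functoriality, with uniqueness and $T_0$-linearity of the extension coming for free, plus the slightly stronger identification of $\Der_{T_0}(T)$ with $\Der_{T_0}(T^{(s)},T)$. The one small point you should make explicit is the homogeneity of the descended derivation $D=D_K|_T$: it follows from your displayed formula, which shows $D(t)$ is homogeneous of degree $sj+1$ for every $t\in T_1$, combined with the Leibniz rule on the algebra generators.
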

\begin{proof}We may assume that $T_+\not=0$ and $s\geq 2$. We write $R=T^{(s)}$ and 
consider the exact sequence
\begin{equation}\label{canonicalseq}\begin{tikzcd} T\otimes_R \Omega_{T_0}(R) \arrow[r, "\varphi"] &\Omega_{T_0}(T) \arrow{r}  &  \Omega_R(T) \arrow{r} & 0          
\end{tikzcd}
\end{equation}
We first prove that $\Supp(\Omega_R(T))\subset V(T_{+}).$ Let $d:T\to \Omega_{T_0}(T)$ denote the universal derivation. Let $\p\in \Spec(T)\setminus V(T_{+})$ 
and  $\ell$  an arbitrary linear form in $T$. We need to show that $d(\ell) \in (\im\varphi)_{\p}.$ We choose $x\in T_1\setminus \p.$  Since $s\,x^{s-1}d(x)=d(x^s)\in \im\varphi$ it follows that $d(x)\in (\im\varphi)_{\p}.$
Now the containment $(s-1)\,x^{s-2}\ell \, d(x)+x^{s-1}d(\ell)=d(\ell x^{s-1})\in \im \varphi \,$ implies that $d(\ell)\in (\im \varphi)_{\p}.$

Let $K$ and $L$ be the quotient fields of $R$ and $T$, respectively. Since $\Omega_K(L)=0$ by the above, the field extension $K \subset L$ is separable algebraic. Therefore $\varphi \otimes_T L$ is an isomorphism, which shows that ${\rm ker} \, \varphi$ is a torsion module. Again, since  $\Supp(\Omega_R(T))\subset V(T_{+})$, it follows that ${\rm grade}\, \Omega_R(T) \ge 2, $ hence ${\rm Ext}^1_T(\Omega_R(T), T)=0$. Now, dualizing the sequence \autoref{canonicalseq} into $T$ gives the identification
$$ {\rm Der}_{T_0}(T)= {\rm Der}_{T_0}(R,T)\, .
$$

Thus $\,  {\rm Der}_{T_0}(R)\subset  {\rm Der}_{T_0}(T),$ and a degree argument immediately yields the desired equality.
\end{proof}

\begin{corollary}\label{corscrolls}
Let $T=T_0[z_1, \ldots, z_t]$ be a standard graded polynomial ring with $t\ge 2$,  and let $s \ge 2$ be invertible in $T$. Then 
$ {\rm Der}_{T_0}(T^{(s)})$ is the $T^{(s)}$-submodule of $\ \Der_{T_0}(T)=\oplus_{i=1}^{t} \ T \, \frac{\partial}{\partial z_i}$ minimally generated by the  homogeneous elements 
$ z_i \frac{\partial}{\partial z_j}$ for $1\le i, j\le t$.
\end{corollary}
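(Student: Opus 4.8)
The plan is to read the corollary off from \autoref{lemmascroll}, the only substantive work being to identify the Veronese module appearing on its right-hand side. Since $T=T_0[z_1,\dots,z_t]$ is a polynomial ring it is a standard graded Noetherian domain, the sequence $z_1,\dots,z_t$ is $T$-regular, and $\grade T_+=t\ge 2$; as $s$ is invertible in $T$, \autoref{lemmascroll} yields
$$\Der_{T_0}(T^{(s)})=\bigl(\Der_{T_0}(T)\bigr)^{(s)}.$$
Here $\Der_{T_0}(T)=\bigoplus_{i=1}^{t} T\,\frac{\partial}{\partial z_i}$ is $T$-free on the derivations $\frac{\partial}{\partial z_i}$, each homogeneous of degree $-1$, so that every $z_i\,\frac{\partial}{\partial z_j}$ is homogeneous of degree $0$. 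Writing $M=\bigl(\Der_{T_0}(T)\bigr)^{(s)}$, it therefore remains to prove that $M$, the submodule of $\Der_{T_0}(T)$ consisting of all homogeneous derivations whose degree is divisible by $s$, is minimally generated over $T^{(s)}$ by the $t^2$ elements $z_i\,\frac{\partial}{\partial z_j}$.

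For generation, I would take a nonzero homogeneous element $\omega=\sum_{j} f_j\,\frac{\partial}{\partial z_j}\in M$. Its degree is some multiple $sn$ of $s$, and since the derivations $\frac{\partial}{\partial z_i}$ have minimal degree $-1$ while $s\ge 2$, one has $sn\ge 0$ and each nonzero $f_j$ is a form of degree $sn+1\ge 1$. Hence every monomial of $f_j$ is divisible by some variable, say it equals $z_i\,g$ with $g$ a monomial of degree $sn$; then $g\in T_{sn}\subset T^{(s)}$ and the corresponding summand satisfies
$$z_i\,g\,\frac{\partial}{\partial z_j}=g\cdot\Bigl(z_i\,\frac{\partial}{\partial z_j}\Bigr).$$
Summing over all monomials of all the $f_j$ exhibits $\omega$ as a $T^{(s)}$-linear combination of the $z_i\,\frac{\partial}{\partial z_j}$, so these elements generate $M$.

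For minimality, I would observe that all of these generators lie in degree $0$ and that $M$ vanishes in negative degrees (a derivation of degree $sn<0$ would require forms of degree $sn+1\le 0$), so $M$ is generated in degree $0$; its degree-$0$ component equals $\bigoplus_j (T_1)\,\frac{\partial}{\partial z_j}$, which is $T_0$-free with basis exactly $\{\,z_i\,\frac{\partial}{\partial z_j}\,\}_{1\le i,j\le t}$ because $\Der_{T_0}(T)$ is $T$-free and $T_1=\bigoplus_i T_0 z_i$. Since the irrelevant ideal $T^{(s)}_+$ is concentrated in degrees $\ge s>0$, passing to $M/T^{(s)}_+M$ leaves the degree-$0$ part unchanged, so the $t^2$ elements map to a $T_0$-basis of $\bigl[M/T^{(s)}_+M\bigr]_0$; by graded Nakayama they form a minimal generating set. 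The argument is routine once \autoref{lemmascroll} is available; the only point needing attention is the congruence bookkeeping in the Veronese, namely that each $f_j$ sits in degree $\equiv 1\pmod s$, which is precisely what allows a single variable to be split off with cofactor landing in $T^{(s)}$.
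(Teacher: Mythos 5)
Your proof is correct and takes essentially the same route as the paper: both apply \autoref{lemmascroll} and then identify $({\rm Der}_{T_0}(T))^{(s)}$ with $T^{(s)}\,[{\rm Der}_{T_0}(T)]_0$, using that ${\rm Der}_{T_0}(T)$ is free on generators of degree $-1$ while $-s<-1\le 0$. Your monomial-splitting and graded Nakayama arguments simply spell out the degree bookkeeping (and the minimality claim) that the paper compresses into a single line.
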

\begin{proof} Applying \autoref{lemmascroll} we obtain
\[ {\rm Der}_{T_0}(T^{(s)})=({\rm Der}_{T_0}(T))^{(s)}=T^{(s)} [{\rm Der}_{T_0}(T)]_0\, , 
\]
where the last equality holds because ${\rm Der}_{T_0}(T)$ is generated in degree $-1$ and $-s<-1\le 0$. 
\end{proof}

The coordinate ring of the rational normal curve of degree $n$ in $\mathbb P^n_k$ is of the form $R=S/I$, where $S=k[x_0, \ldots, x_n]$ and $I$ is the ideal generated 
by the maximal minors of the matrix
\begin{equation}\label{scrollM}
\begin{bmatrix}
    x_{0}       & x_{1} & x_{2} & \dots & \dots & x_{n-1} \\
    x_{1}       & x_{2} &  \dots &  \dots& x_{n-1} & x_{n} 
    \end{bmatrix}\, .
\end{equation}
 \vspace{0.01in}
 
\noindent
We write $y_i$ for the images of $x_i$ in $R$. 
 
\begin{proposition}\label{scrolls} Let $R$ be the coordinate ring of the rational normal curve of degree $n\ge 3$ in $\mathbb P^n_k,\ $ and assume that the characteristic of the field $k$ is not a divisor of $n$.
\begin{enumerate}[$($a$)$]
\item The module $\Der_k(R)$ is the $R$-submodule of $\ \Der_k(S,R)=\oplus_{i=0}^{n} \ R \, \frac{\partial}{\partial x_i}$ minimally generated by the following  $4$ homogeneous elements of degree $0$,
$$ \sum_{i=0}^{n-1}\, (n-i)\, y_i \, \frac{\partial}{\partial x_i}\ , \qquad  \sum_{i=0}^{n-1}\, (n-i)\, y_{i+1} \, \frac{\partial}{\partial x_i}\ ,  \qquad \sum_{i=1}^{n}\,  i \, y_{i-1} \, \frac{\partial}{\partial x_i}\ , \qquad \sum_{i=1}^{n}\,  i \, y_{i}\,  \frac{\partial}{\partial x_i}\ ;
$$
in particular, $\Der_k(R)/R\varepsilon$ is minimally generated by three homogeneous element of degree zero. 
\item $L^*\cong (y_0,y_1, y_2)(1)\, .$
\item The natural map  $\Der_k(R)\to L^*$ is surjective. 
\end{enumerate}
\end{proposition}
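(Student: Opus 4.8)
The plan is to identify $R$ with the $n$th Veronese subring $T^{(n)}$ of the polynomial ring $T=k[u,v]$, under which $y_i$ corresponds to $u^{n-i}v^i$; since the characteristic of $k$ does not divide $n$, the integer $n$ is invertible in $k$. For part (a) I would apply \autoref{corscrolls} with two variables and Veronese degree $n\ge 3$, obtaining that $\Der_k(R)=\Der_k(T^{(n)})$ is minimally generated over $R$ by the four degree-zero derivations $u\,\partial/\partial u$, $u\,\partial/\partial v$, $v\,\partial/\partial u$, $v\,\partial/\partial v$ of $T$. To rewrite these inside $\Der_k(S,R)=\bigoplus_i R\,\partial/\partial x_i$ I evaluate each on the generators $y_i=u^{n-i}v^i$, the $i$th coordinate of a derivation $D$ being $D(y_i)$; the elementary computations $(u\,\partial/\partial u)(y_i)=(n-i)y_i$, $(v\,\partial/\partial u)(y_i)=(n-i)y_{i+1}$, $(u\,\partial/\partial v)(y_i)=i\,y_{i-1}$, $(v\,\partial/\partial v)(y_i)=i\,y_i$ reproduce exactly the four listed vectors. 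Since $\Der_k(R)\hookrightarrow\Der_k(S,R)$ is the injective dual of $R\otimes_S\Omega_k(S)\twoheadrightarrow\Omega_k(R)$, these four vectors form a minimal generating set. Finally $u\,\partial/\partial u+v\,\partial/\partial v$ multiplies each $y_i$ by $n$, so it equals $n\varepsilon$; thus $\varepsilon=\tfrac1n(u\,\partial/\partial u+v\,\partial/\partial v)$ has nonzero image in $k\otimes_R\Der_k(R)\cong k^4$, and applying $k\otimes_R-$ to $0\to R\varepsilon\to\Der_k(R)\to\Der_k(R)/R\varepsilon\to 0$ shows $\Der_k(R)/R\varepsilon$ is minimally generated by three degree-zero elements.

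For part (b) I would first reduce $L^{*}$ to $\omega_R^{*}$: as $\C\cong\mathbb{P}^1$ is smooth and arithmetically Cohen--Macaulay and $\depth R=2$, \autoref{1.2} gives $L^{*}\cong H^{*}$ and \autoref{3iso}(b) gives $H^{*}\cong(\wedge^2\Omega_k(R))^{*}\cong\omega_R^{*}$, so $L^{*}\cong\omega_R^{*}$. It then remains to compute $\omega_R^{*}$. Viewing $R$ as the arithmetically Cohen--Macaulay cone over $\C\cong\mathbb{P}^1$ embedded by $\mathcal{O}(n)$, one has $[\omega_R]_d=H^0(\mathbb{P}^1,\mathcal{O}(nd-2))=T_{nd-2}$; dualizing the associated divisorial module into $R$ (equivalently, realizing $\omega_R^{*}$ as the fractional ideal $\{q\in\Quot(T):q\,\omega_R\subseteq R\}$) identifies $\omega_R^{*}$ with the graded $R$-module $\bigoplus_d T_{nd+2}$. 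On the other hand $(y_0,y_1,y_2)=(u^n,u^{n-1}v,u^{n-2}v^2)=u^{n-2}(u^2,uv,v^2)$, so its degree-$e$ component is $u^{n-2}T_{n(e-1)+2}$; hence multiplication by $u^{n-2}$ is a graded $R$-isomorphism $\bigoplus_d T_{nd+2}\xrightarrow{\ \sim\ }(y_0,y_1,y_2)(1)$. Combining, $L^{*}\cong\omega_R^{*}\cong(y_0,y_1,y_2)(1)$.

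For part (c) I note that, since $\depth R\ge 2$ forces $\m^{-1}=R$, \autoref{1.2} exhibits the natural map $\Der_k(R)\to L^{*}$ as factoring through a homogeneous injection $\Der_k(R)/R\varepsilon\hookrightarrow L^{*}$; it therefore suffices to show this injection is bijective, and I would do so by comparing Hilbert functions. By \autoref{lemmascroll}, $\Der_k(R)=(\Der_k T)^{(n)}=(T(1)^2)^{(n)}$, so $\dim_k[\Der_k(R)]_d=2(nd+2)$ for $d\ge 0$; subtracting $\dim_k R_d=nd+1$ yields $\dim_k[\Der_k(R)/R\varepsilon]_d=nd+3$ for $d\ge 0$ (and $0$ otherwise). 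By part (b), $L^{*}\cong(y_0,y_1,y_2)(1)$ has the identical Hilbert function $nd+3$. Thus the injection is onto in every degree, hence an isomorphism, and in particular $\Der_k(R)\to L^{*}$ is surjective.

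The main obstacle is part (b): parts (a) and (c) are essentially formal once the Veronese identification together with \autoref{corscrolls} and \autoref{lemmascroll} are in hand, whereas (b) rests on pinning down the anticanonical module $\omega_R^{*}$ as an explicit graded module and matching it with $(y_0,y_1,y_2)(1)$. The reduction $L^{*}\cong\omega_R^{*}$ is a black-box use of \autoref{1.2} and \autoref{3iso}, but the precise description of $\omega_R^{*}$ for the Veronese cone, and the careful bookkeeping of the degree shift introduced by the factor $u^{n-2}$, are where the genuine care is required.
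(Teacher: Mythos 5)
Your proof is correct, and while part (a) coincides with the paper's argument (Veronese identification plus \autoref{corscrolls}, then rewriting the four generators in the coordinates $\partial/\partial x_i$), parts (b) and (c) take genuinely different routes. For (b), both you and the paper reduce to $L^*\cong\omega_R^*$ via \autoref{1.2} and \autoref{3iso}(b) (note the paper inserts the one-line reduction ``we may assume $k$ is perfect'' before invoking these, since \autoref{setupcurves} requires perfectness); but the paper then computes $\omega_R^*$ from the determinantal formula $\omega_R\cong(y_0,y_1)^{n-2}(n-3)$ by evaluating the colon ideal $y_0^{n-2}R:_R(y_0,y_1)^{n-2}$ inside $T=k[u,v]$, whereas you use the section-ring description $[\omega_R]_d=H^0(\mathbb P^1,\mathcal O(nd-2))$ (equivalently, the Goto--Watanabe isomorphism $\omega_{T^{(n)}}\cong(\omega_T)^{(n)}$) and exhibit the isomorphism onto $(y_0,y_1,y_2)(1)$ as multiplication by $u^{n-2}$. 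Your identification of $\omega_R^*$ with $\{q\in\Quot(T):q\,\omega_R\subseteq R\}$ is legitimate but glosses over a small point: since all homogeneous elements of $\omega_R\subset T$ have degrees congruent mod $n$, their ratios lie in $\Quot(R)$, so $\omega_R$ is rank one and every $R$-linear map $\omega_R\to R$ is indeed multiplication by such a $q$. For (c), the paper argues by a degree-zero generator count: the embedding $\Der_k(R)/R\varepsilon\hookrightarrow L^*$ of \autoref{1.2} maps a module minimally generated by three degree-zero elements into a module generated by three degree-zero elements, forcing surjectivity; you instead compare full Hilbert functions, using \autoref{lemmascroll} to get $\dim_k[\Der_k(R)]_d=2(nd+2)$ and hence $\dim_k[\Der_k(R)/R\varepsilon]_d=nd+3=\dim_k[(y_0,y_1,y_2)(1)]_d$ for all $d\ge 0$. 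The paper's count is shorter and needs no Hilbert function computation; your argument costs a little more bookkeeping but yields more, namely the entire Hilbert series of $\Der_k(R)/R\varepsilon$ and the fact that the embedding is an isomorphism in every degree.
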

\begin{proof} We consider the polynomial ring $T=k[u,v]$, where the variables $u$ and $v$ are given degree $\frac{1}{n}$. By mapping $y_i$ to $u^{n-i}v^i$, one identifies $R$ with the Veronese subring $k[\{u^{n-i}v^{i}\}]=\oplus_{j \in {\mathbb Z}_{\ge0}} T_j$ of $T$. 
By \autoref{corscrolls} the $R$-submodule $\, \Der_k(R)$ of $\Der_k(T)$ is minimally generated by the elements of degree zero $u\frac{\partial}{\partial u}$, $v\frac{\partial}{\partial u}$,  $u\frac{\partial}{\partial v}$, $v\frac{\partial}{\partial v}$. 

Consider the natural map  $S \to T$ of $k$-algebras with $x_i \mapsto u^{n-i}v^i$. It induces a $T$-linear map $\Omega_k(S)\otimes T \to \Omega_k(T)$ with $dx_i \mapsto (n-i)u^{n-i-1}v^idu + i u^{n-i}v^{i-1} dv$. Dualizing into $T$, we obtain a map $\, {\rm Der}_k(T) \to {\rm Der}_k(S,T)\, $ with $\, \frac{\partial}{\partial u}\mapsto \sum_{i=0}^{n-1}\, (n-i)\, u^{n-i-1}v^i  \, \frac{\partial}{\partial x_i}\, $ and $\, \frac{\partial}{\partial v}\mapsto \sum_{i=1}^{n}\, i\, u^{n-i}v^{i-1}  \, \frac{\partial}{\partial x_i}$.

Using the identification of ${\rm Der}_k(R)$ as an $R$-submodule of ${\rm Der}_k(T) $ and ${\rm Der}_k(S,T),$

$$
\begin{tikzcd}[row sep=small]
{\rm Der}_k(T)  \arrow{rr}  &  & {\rm Der}_k(S,T)\\ 
& \arrow[ul, phantom, sloped, "\supset"]~{\rm Der}_k(R)  \arrow[ur, phantom, sloped, "\subset"]&
\end{tikzcd} 
$$
\noindent
the generators $$u\frac{\partial}{\partial u}\ ,  \qquad  v\frac{\partial}{\partial u}\ ,  \qquad   u\frac{\partial}{\partial v}\ ,  \qquad  v\frac{\partial}{\partial v}$$ of ${\rm Der}_k(R)$ become $$ \sum_{i=0}^{n-1}\, (n-i)\, y_i \, \frac{\partial}{\partial x_i}\ , \qquad  \sum_{i=0}^{n-1}\, (n-i)\, y_{i+1} \, \frac{\partial}{\partial x_i}\ ,  \qquad \sum_{i=1}^{n}\,  i \, y_{i-1} \, \frac{\partial}{\partial x_i}\ , \qquad \sum_{i=1}^{n}\,  i \, y_{i}\,  \frac{\partial}{\partial x_i}\ .
$$

We now prove (b). We may assume that $k$ is perfect. As the rational normal curve is smooth, \autoref{1.2} and 
\autoref{3iso}(b)  imply that $L^*\cong \omega_R^*.$

Since $R$ is a determinantal ring, one has $\omega_R \cong (y_0,y_1)^{n-2}(n-3).$ 
It follows that
\begin{eqnarray*}
\omega_R^*(-1) & \cong & y_0^{n-2}R:_R(y_0,y_1)^{n-2}R \\
&=& (u^{n(n-2)}T\cap R):_R  (u^n,u^{n-1}v)^{n-2} R\\
&=& (u^{n(n-2)}T:_T  (u^n,u^{n-1}v)^{n-2} R) \cap R\\
&=& (u^{n(n-2)}T:_T  (u^n,u^{n-1}v)^{n-2} T) \cap R\\
&=& (u^{n-2}T) \cap R \\
&=& (y_0, y_1, y_2).
\end{eqnarray*}

To prove part (c) recall that according to \autoref{1.2}, the natural map $\, \Der_k(R) \lto L^*$ induces an embedding $\, \Der_k(R)/R\varepsilon \hookrightarrow L^*.$ Now use that $\, \Der_k(R)/R\varepsilon \,$ is minimally generated by $3$ homogenous elements of degree zero according to (a) and $L^*$ is generated by $3$ homogeneous elements of degree zero by (b). \end{proof}

\noindent{\bf Acknolwedgment.} Part of this paper was written at
 Mathematisches Forschungsinstitut Oberwolfach (MFO), Germany,
 while the third and fifth authors participated in a Research in Pairs. Part of this paper was written also at the  Instituto de Matem\'atica Pura e Aplicada (IMPA) Rio de Janeiro, Brazil. The authors are very appreciative of the hospitality offered by MFO and IMPA.

\bibliography{references}
\end{document}